\definecolor{Blue}{rgb}{0,0,1}
\definecolor{Red}{rgb}{1,0,0}
\numberwithin{equation}{section}
\theoremstyle{plain}
\newtheorem{thm}{Theorem}[section]
\newtheorem{lemma}[thm]{Lemma}
\newtheorem{definition}[thm]{Definition}
\newtheorem{proposition}[thm]{Proposition}
\newtheorem{corollary}[thm]{Corollary}
\newtheorem{assumption}[thm]{Assumption}
\newtheorem{remark}{Remark}
\DeclareMathOperator{\Var}{Var}
\DeclareMathOperator{\E}{E}
\DeclareMathOperator{\adm}{adm}
\def\E{\mathbb{E}}
\def\G{\mathbb{G}}
\def\N{\mathbb{N}}
\def\R{\mathbb{R}}
\def\Z{\mathbb{Z}}
\def\N{\mathbb{N}}
\def\P{\mathbb{P}}
\def\tn{\tilde{n}}
\newenvironment{theorem}{\begin{thm}}{\end{thm}}
\providecommand*{\diff}%
        {\@ifnextchar^{\DIfF}{\DIfF^{}}}
\def\DIfF^#1{%
        \mathop{\mathrm{\mathstrut d}}%
                \nolimits^{#1}\gobblespace
}
\def\gobblespace{%
        \futurelet\diffarg\opspace}
\def\opspace{%
        \let\DiffSpace\!%
        \ifx\diffarg(%
                \let\DiffSpace\relax
        \else
                \ifx\diffarg\[%
                        \let\DiffSpace\relax
                \else
                        \ifx\diffarg\{%
                                \let\DiffSpace\relax
                        \fi\fi\fi\DiffSpace}
\begin{document}

\begin{frontmatter}
	\title{Locally Adaptive Confidence Bands\thanksref{T1}}
	\runtitle{Locally Adaptive Confidence Bands}
	\thankstext{T1}{Supported by the DFG Collaborative Research Center 823, Subproject C1, and DFG Research Grant RO 3766/4-1.}

	\begin{aug}
		\author{\fnms{Tim} \snm{Patschkowski}\ead[label=e1]{tim.patschkowski@ruhr-uni-bochum.de}}
		\and
		\author{\fnms{Angelika} \snm{Rohde}\ead[label=e2]{angelika.rohde@stochastik.uni-freiburg.de}}

		\affiliation{Ruhr-Universit\"at Bochum and Albert-Ludwigs-Universit\"at Freiburg}

		\address{Fakult\"at f\"ur Mathematik \\
		Ruhr-Universit\"at Bochum \\
		44780 Bochum \\
		Germany \\
		\printead{e1}\\\\
		and\\\\
		Mathematisches Institut\\
		Albert-Ludwigs-Universit\"at Freiburg\\
		Eckerstra{\ss}e 1\\
		79104 Freiburg im Breisgau\\
		Germany\\
		%\phantom{E-mail:\ }
		\printead{e2}}
	\end{aug}

\begin{abstract}
\scriptsize
We develop honest and locally adaptive confidence bands for probability densities. They provide substantially improved confidence statements in case of inhomogeneous smoothness, and are easily implemented and visualized. The article contributes conceptual work on locally adaptive inference as a straightforward modification of the global setting imposes severe obstacles for statistical purposes. Among others, we introduce a statistical notion of local H\"older regularity and prove a correspondingly strong version of local adaptivity. We substantially relax the straightforward localization of the self-similarity condition in order not to rule out prototypical densities. The set of densities permanently excluded from the consideration is shown to be pathological in a mathematically rigorous sense. On a technical level, the crucial component for the verification of honesty is the identification of an asymptotically least favorable stationary case by means of Slepian's comparison inequality. 
\end{abstract}

\begin{keyword}
Local regularity and local adaptivity, honesty, confidence bands in density estimation.
\end{keyword}

\end{frontmatter}

\small

\section{Introduction} 
Let $X_1,\dots, X_n$ be independent real-valued random variables which are identically distributed according to some unknown probability measure $\P_p$ with Lebesgue density $p$.  %The aim is to develop asymptotically honest and {\it locally} adaptive confidence bands for the unknown density restricted to some interval $[a,b]$
Assume that $p$ belongs to a nonparametric function class $\mathcal P$. For any interval $[a,b]$ and any significance level $\alpha\in(0,1)$, a confidence band for $p$, described by a family of random intervals $C_n(t,\alpha), t \in [a,b]$, is said to be (asymptotically) honest with respect to $\mathcal P$ if the %asymptotic 
coverage inequality
\begin{align*}
	\liminf_n \inf_{p \in \mathcal P} \P^{\otimes n}_p \Big( p(t) \in C_n(t,\alpha) \text{ for all } \  t \in [a,b] \Big) \geq 1- \alpha
\end{align*}
is satisfied. The aim of this article is to develop honest confidence bands $C_n(t,\alpha), t \in [a,b]$, with smallest possible width $\vert C_n(t,\alpha) \vert$ for every $t \in [a,b]$. %, rather than with smallest possible worst case width $\sup_{t \in [a,b]} \vert C_n(t,\alpha) \vert$. 
Adaptive confidence sets maintain specific coverage probabilities over a large union of models while shrinking at the fastest possible nonparametric rate simultaneously over all submodels. If $\mathcal P$ is some class of densities within a union of H\"older balls $\mathcal H(\beta,L)$ with fixed radius $L>0$, the confidence band is called globally adaptive, cf. \cite{cai_low2004}, if for every $\beta>0$ and for every $\varepsilon > 0$ there exists some constant $c>0$, such that
\begin{align*}
	\limsup_n\sup_{p \in \mathcal H(\beta,L) \cap \mathcal P} \P^{\otimes n}_p \left( \sup_{t \in [a,b]} \vert C_n(t,\alpha) \vert \geq c \cdot r_n(\beta) \right) < \varepsilon.
\end{align*}
Here, $r_n(\beta)$ denotes the minimax-optimal rate of convergence for estimation under supremum norm loss over $\mathcal H(\beta,L) \cap \mathcal P$, possibly inflated by additional logarithmic factors. However, if $\mathcal{P}$ equals the set of {\it all} densities contained in
$$
\bigcup_{0<\beta\leq\beta^*}\mathcal H(\beta,L),
$$
honest and adaptive confidence bands provably do not exist although adaptive estimation is possible. Indeed, \cite{low1997} shows that honest random-length intervals for a probability density at a fixed point cannot have smaller expected width than fixed-length confidence intervals with the size corresponding to the lowest regularity under consideration. %The accuracy of a confidence interval does therefore not benefit from the data. 
Consequently, it is not even possible to construct a family of random intervals $C_n(t,\alpha), t \in [a,b]$, whose expected length shrinks at the fastest possible rate simultaneously over two distinct nested H\"older balls with fixed radius, and which is at the same time asymptotically honest for the union $\mathcal P$ of these H\"older balls. Numerous attempts have been made to tackle this adaptation problem in alternative formulations. Whereas \cite{genovese_wasserman2008} relax the coverage property and do not require the confidence band to cover the function itself but a simpler surrogate function capturing the original function's significant features, most of the approaches are based on a restriction of the parameter space. Under qualitative shape constraints, \cite{hengartner_stark1995}, \cite{duembgen1998, duembgen2003}, and \cite{davies_kovac_meise2009} achieve adaptive inference. Within the models of nonparametric regression and Gaussian white noise, \cite{picard_tribouley2000} succeeded to construct pointwise adaptive confidence intervals under a self-similarity condition on the parameter space, see also \cite{kueh2012} for thresholded needlet estimators. Under a similar condition, \cite{gine_nickl2010} even develop asymptotically honest confidence bands for probability densities whose width is adaptive to the global H\"older exponent. \cite{bull2012} proved that a slightly weakened version of the self-similarity condition is necessary and sufficient. \cite{kerkyacharian_nickl_picard2012} develop corresponding results in the context of needlet density estimators on compact homogeneous manifolds. Under the same type of self-similarity condition, adaptive confidence bands are developed under a considerably generalized Smirnov-Bickel-Rosenblatt assumption based on Gaussian multiplier bootstrap, see \cite{chernozhukov_chetverikov_kato2014Conf}. \cite{hoffmann_nickl2011} introduce a nonparametric distinguishability condition, under which adaptive confidence bands exist for finitely many models under consideration. Their condition is shown to be necessary and sufficient. \\
Similar important conclusions concerning adaptivity in terms of confidence statements are obtained under Hilbert space geometry with corresponding $L^2$-loss, see \cite{juditsky_lambert-lacroix2003}, \cite{baraud2004}, \cite{genovese_wasserman2005}, \cite{cai_low2006}, \cite{robins_vandervaart2006}, \cite{bull_nickl2013}, and \cite{nickl_szabo2016}. Concerning $L^p$-loss, we also draw attention to \cite{carpentier2013}. \\
In this article, we develop locally adaptive confidence bands. They provide substantially improved confidence statements in case of inhomogeneous smoothness. %They support a profound understanding of adaptivity in connection with confidence statements. 
Conceptual work on locally adaptive inference is contributed as a straightforward modification of the global setting imposes severe obstacles for statistical purposes. It is already delicate to specify what a "locally adaptive confidence band" should be. Disregarding any measurability issues, one possibility is to require a confidence band $C_{n,\alpha} = (C_{n,\alpha}(t))_{t \in [0,1]}$ to satisfy for every interval $U \subset [a,b]$ and for every $\beta$ (possibly restricted to a prescribed range)
\begin{align*}
	\limsup_{n \to \infty}\sup_{\substack{p \in {\mathcal P}:\\ p_{\arrowvert U_{\delta}}\in \mathcal H_{U_{\delta}}(\beta,L^*)} }\P_p^{\otimes n} \left( \vert C_{n,\alpha}(t) \vert \geq \eta \, r_n(\beta) \text{ for some }Êt \in U \right) \rightarrow 0
\end{align*}
as $\eta \to \infty$, where $U_{\delta}$ is the $\delta$-enlargement of $U$. However, this definition reflects a weaker notion of local adaptivity than the statistician may have in mind. On the other hand, we prove that, uniformly over the function class $\mathcal P$ under consideration, adaptation to the local or pointwise regularity in the sense of \cite{daoudi_levyvehel_meyer1998}, \cite{seuret_levyvehel2002} or \cite{jaffard1995,jaffard2006} is impossible. Indeed, not even adaptive estimation with respect to pointwise regularity at a fixed point is achievable. On this way, we introduce a statistically suitable notion of local regularity $\beta_{n,p}(t), t \in [a,b]$, depending in particular on the sample size $n$. We prove a corresponding strong version of local adaptivity,  
%We carefully address the notions of local regularity \todo{??}and local adaptation as straightforward modifications of the concepts and assumptions in \cite{gine_nickl2010} exclude many important functions from the class under consideration. For instance, a prespecified upper bound on the range of adaptation for the local regularity for any point already imposes severe limitations and rules out functions behaving locally like polynomials. \todo{Genauer?}
while we substantially relax the straightforward localization of the global self-similarity condition in order not to rule out prototypical densities. 
%In order to make adaptation possible, our results are stated under this type of localized self-similarity condition. 
The set of functions which is excluded from our parameter space diminishes for growing sample size and the set of permanently excluded functions is  shown to be pathological in a mathematically rigorous sense.
%The new locally adaptive confidence band has asymptotic coverage probability of at least $1-\alpha$ uniformly over an increasing class of densities $\mathscr P_n$. Besides, it is adaptive in the sense that for any $\varepsilon > 0$ there exists some $n_0(\varepsilon) \in \N$, such that
%\begin{align*}
%	\sup_{p \in \mathscr P_n} \P_p^{\otimes n} \Big( \vert C_n(t,\alpha) \vert \geq r_n(\beta_{n,p}(t)) \cdot (\log n)^{\kappa} \text{ for some } t \in [a,b] \Big) < \varepsilon
%\end{align*}
%for all $n \geq n_0(\varepsilon)$. 
Our new confidence band appealingly relies on a discretized evaluation of a modified Lepski-type density estimator, including an additional supremum in the empirical bias term in the bandwidth selection criterion. A suitable discretization and a locally constant approximation allow to piece the pointwise constructions together in order to obtain a continuum of confidence statements. The complex construction makes the asymptotic calibration of the confidence band to the level $\alpha$ non-trivial. Whereas the analysis of the related globally adaptive procedure of \cite{gine_nickl2010} reduces to the limiting distribution of the supremum of a stationary Gaussian process, our locally adaptive approach leads to a highly non-stationary situation. A crucial component is therefore the identification of a stationary process as a least favorable case by means of Slepian's comparison inequality, subsequent to a Gaussian reduction using recent non-asymptotic techniques of \cite{chernozhukov_chetverikov_kato2014Approx}. Due to the discretization, the band is computable and feasible from a practical point of view without losing optimality between the mesh points. Our results are exemplarily formulated in the density estimation framework but can be mimicked in other nonparametric models. To keep the representation concise we restrict the theory to locally adaptive kernel density estimators. The ideas can be transferred to wavelet estimators to a large extent as has been done for globally adaptive confidence bands in \cite{gine_nickl2010}. \\
The article is organized as follows. Basic notations are introduced in Section~\ref{sec:preliminairies}. Section \ref{sec:kap3} presents the main contributions, that  is a substantially relaxed localized self-similarity condition in Subsection \ref{subsec:3.1}, the construction and in particular the asymptotic calibration of the confidence band in Subsection \ref{Subsec:construction} as well as its strong local adaptivity properties in Subsection \ref{subsec:3.3}. Important supplementary results are postponed to Section \ref{sec:aux}, whereas Section \ref{sec:proofs} presents the proofs of the main results. Appendix \ref{AuxResults} contains technical tools for the proofs of the main results. % and of the auxiliary results in Subsection \ref{subsec:prooflemma}. 

\section{Preliminaries and notation} \label{sec:preliminairies}

Let $X_1, \ldots, X_n$, $n \geq 4$, be independent random variables identically distributed according to some unknown probability measure $\P_p$ on $\R$ with continuous Lebesgue density $p$. %$\E_p^{\otimes n}$ denotes the expectation with respect to the $n$-fold product measure $\P_p^{\otimes n}$. 
Subsequently, we consider kernel density estimators
\begin{align*}
	\hat{p}_{n}(\cdot,h) = \frac{1}{n} \sum_{i=1}^n K_h \left( X_i - \cdot \right)
\end{align*}
with bandwidth $h > 0$ and rescaled kernel $K_h(\cdot) = h^{-1} K(\cdot/h)$, where $K$ is a measurable and symmetric kernel with support contained in $[-1,1]$, integrating to one, and of bounded variation. Furthermore, $K$ is said to be of order $l \in \N$ if
\begin{align*}
	\int x^j K(x) \diff x = 0 \ \text{ for } 1 \leq j \leq l, \quad \int x^{\l + 1} K(x) \diff x = c \ \text{ with } c \neq 0.
\end{align*}
For bandwidths of the form $h=c \cdot 2^{-j}, j \in \N$, we abbreviate the notation writing $\hat p_n(\cdot,h)=\hat p_n(\cdot,j)$ and $K_h=K_j$. The open Euclidean ball of radius $r$ around some point $t \in \R$ is referred to as $B(t,r)$. Subsequently, the sample is split into two subsamples. For simplicity, we divide the sample into two parts of equal size $\tn = \lfloor n/2 \rfloor$, leaving possibly out the last observation. Let
\begin{align*}
	\chi_1 = \{ X_1, \ldots, X_{\tn} \}, \quad \chi_2 = \{ X_{\tn+1}, \ldots, X_{2\tn} \}
\end{align*}
be the distinct subsamples and denote by $\hat p_n^{(1)}(\cdot, h)$ and $\hat p_n^{(2)}(\cdot,h)$ the kernel density estimators with bandwidth $h$ based on $\chi_1$ and $\chi_2$, respectively. $\E_p^{\chi_1}$ and $\E_p^{\chi_2}$ denote the expectations with respect to the product measures
\begin{align*}
	\P_p^{\chi_1} &= \text{joint distribution of }ÊX_1, \ldots, X_{\tn}, \\
	\P_p^{\chi_2} &= \text{joint distribution of }ÊX_{\tn+1}, \ldots, X_{2\tn}, 
\end{align*}
respectively. \\
For some measure $Q$, we denote by $\Vert \cdot \Vert_{L^p(Q)}$ the $L^p$-norm with respect to $Q$. Is $Q$ the Lebesgue measure, we just write $\Vert \cdot \Vert_p$, whereas $\Vert \cdot \Vert_{\sup}$ denotes the uniform norm. For any metric space $(M,d)$ and subset $K \subset M$, we define the covering number $N(K,d,\varepsilon)$ as the minimum number of closed balls with radius at most $\varepsilon$ (with respect to $d$) needed to cover $K$. If the metric $d$ is induced by a norm $\Arrowvert \cdot \Arrowvert$, we write also $N(K,\Vert \cdot \Vert,\varepsilon)$ for $N(K,d,\varepsilon)$. As has been shown by \cite{nolan_pollard1987} (Section 4 and Lemma~22), the class
\begin{align*}
	\mathcal{K} = \left\{ K \left( \frac{\cdot - t}{h} \right) : t \in \R, \, h>0 \right\}
\end{align*}
with constant envelope $\Vert K \Vert_{\sup}$ satisfies
\begin{align} \label{covering_K}
	N \left( \mathcal K, \Vert \cdot \Vert_{L^p(Q)}, \varepsilon \Vert K \Vert_{\sup} \right) \leq \left( \frac{A}{\varepsilon} \right)^\nu, \quad 0 < \varepsilon \leq 1, \quad p=1,2
\end{align}
for all probability measures $Q$ and for some finite and positive constants $A$ and $\nu$. \\
For $k \in \N$ we denote the $k$-th order Taylor polynomial of the function $p$ at point $y$ by $P^p_{y,k}$. Denoting furthermore by $\lfloor \beta \rfloor = \maxÊ\{ n \in \N \cup \{ 0 \} : n < \beta \}$, the H\"older class $\mathcal H_U(\beta)$ to the parameter $\beta > 0$ on the open interval $U \subset \R$ is defined as the set of functions $p:U \to \R$ admitting derivatives up to the order $\lfloor \beta \rfloor$ and having finite
%\begin{align*}
%	\mathcal H_U(\beta,L) =  \{Êf: U \to \R : \Vert f \Vert_{\beta,U} \leq L \},
%\end{align*}
H\"older norm
\begin{align*}
	\Vert p \Vert_{\beta,U} = \sum_{k=0}^{\lfloor \beta \rfloor} \Vert p^{(k)} \Vert_U + \sup_{\substack{x,y \, \in \, U \\ x \neq y}} \frac{\vert p^{(\lfloor \beta \rfloor)}(x) - p^{(\lfloor \beta \rfloor)}(y) \vert}{\vert x-y \vert^{\beta-\lfloor \beta \rfloor}} < \infty.
\end{align*}
The corresponding H\"older ball with radius $L>0$ is denoted by $\mathcal H_U(\beta,L)$. With this definition of $\Vert \cdot \Vert_{\beta,U}$, the H\"older balls are nested, that is
\begin{align*}
	\mathcal H_U(\beta_2,L) \subset \mathcal H_U(\beta_1,L)
\end{align*}
for $0<\beta_1 \leq \beta_2<\infty$ and $\vert U \vert < 1$. Finally, $\mathcal H_U(\infty,L) = \bigcap_{\beta>0} \mathcal H_U(\beta,L)$ and $\mathcal H_U(\infty) = \bigcap_{\beta>0} \mathcal H_U(\beta)$. Subsequently, for any real function $f(\beta)$, the expression $f(\infty)$ is to be read as $\lim_{\beta \to \infty} f(\beta)$, provided that this limit exists. Additionally, the class of probability densities $p$, such that $p_{\vert U}$ is contained in the H\"older class $\mathcal H_U(\beta,L)$ is denoted by $\mathcal P_U(\beta,L)$. The indication of $U$ is omitted when $U = \R$. 
%The following results focus on the interplay of rates of convergence and local regularity. Thus, for the sake of readability, logarithmic factors in the rates are not optimized. Important constants are numbered consecutively. 

%Furthermore, for any $D \subset \R$, $\mathscr{H}_D(\beta,L)$ denotes the H\"older class with parameters $\beta > 0$ and $L > 0$ of functions defined on $D$. That is, for $\beta \leq 1$, the class $\mathscr{H}_D(\beta,L)$ contains all functions $f: D \to \R$ with
%\begin{align*}
%	\vert f(x) - f(y) \vert \leq L \vert x-y \vert^\beta \text{ for all } x,y \in D.
%\end{align*}
%In case of $\beta > 1$, $\mathscr{H}_D(\beta,L)$ contains all functions $f: D \to \R$, which admit derivatives up to the order $\lfloor \beta \rfloor = \max \{ n \in \N_0 : n < \beta \}$ and satisfy
%\begin{align*}
%	\vert f(x) - P_{y,\lfloor \beta \rfloor}(x) \vert \leq L \vert x-y \vert^\beta \text{ for all } x,y \in D,
%\end{align*}
%where $P_{y,\lfloor \beta \rfloor}$ denotes the Taylor polynomial of degree $\lfloor \beta \rfloor$ of $p$ at the point $y \in D$. We furthermore assume, that the parameters $\beta$ and $L$ are contained in some range of adaptation $[\beta_*, \beta^*]$ and $[L_*, L^*]$, respectively. 

\section{Main results} \label{sec:kap3}

In this section we pursue the new approach of locally adaptive confidence bands and present the main contribution of this article. A notion of local H\"older regularity tailored to statistical purposes, a corresponding condition of admissibility of a class of functions over which both asymptotic honesty and adaptivity (in a sense to be specified) can be achieved, as well as the construction of the new confidence band are presented. As compared to globally adaptive confidence bands, our confidence bands provide improved confidence statements for functions with inhomogeneous smoothness. 

\smallskip

		\begin{figure}[H] 
		\centering
  		\includegraphics[width=0.77\textwidth]{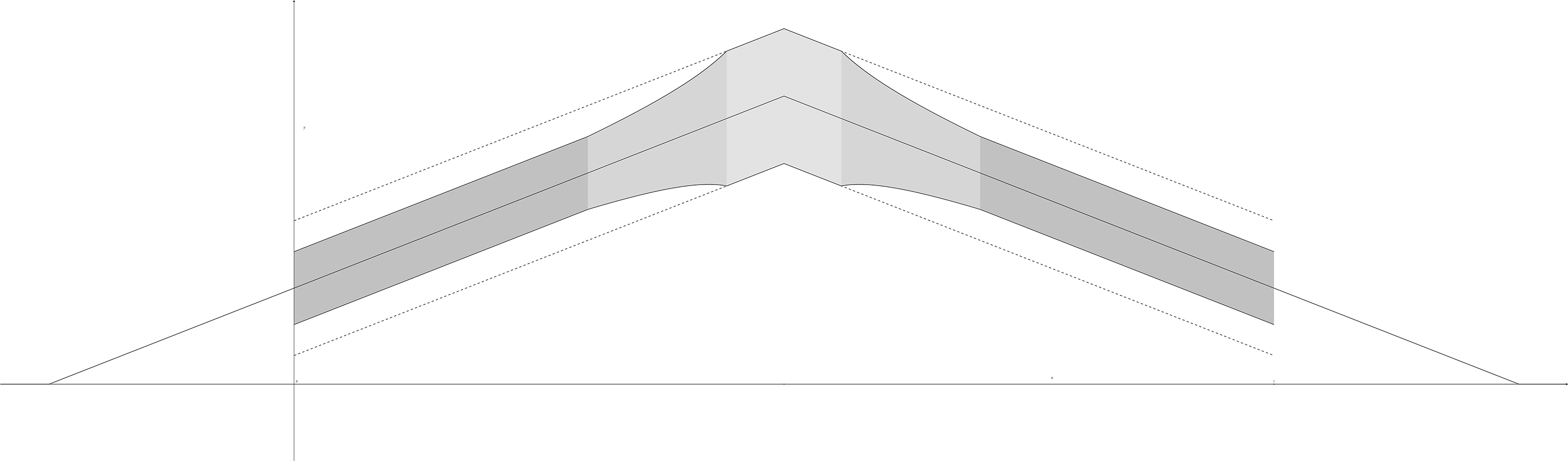}
		\caption{Comparison of locally and globally adaptive confidence bands}
		\label{fig:tri_den}
		\end{figure}

\noindent Figure \ref{fig:tri_den} illustrates the kind of adaptivity that the construction should reveal. The shaded area sketches the intended locally adaptive confidence band as compared to the globally adaptive band (dashed line) for the triangular density and for fixed sample size $n$. This density is not smoother than Lipschitz at its maximal point but infinitely smooth at both sides. The region where globally and locally adaptive confidence bands coincide up to logarithmic factors (light gray regime in Figure~\ref{fig:tri_den}) should shrink as the sample size increases, resulting in a substantial benefit of the locally adaptive confidence band outside of a shrinking neighborhood of the maximal point (dark grey regime together with a shrinking middle grey transition regime in Figure \ref{fig:tri_den}).

\subsection{Admissible functions} \label{subsec:3.1}

As already pointed out in the introduction, 
%From a theoretical and practical point of view the quantification of the uncertainty of an estimation procedure is necessary for meaningful results in applications. It is of particular interest for estimators adapting to certain unknown features of the model such as the smoothness of a density. However, it is well-known that although there exists a multitude of adaptive estimators, none of these estimators reveals to which submodel it adapts. That is, these adaptive estimators are best possible but do not exhibit how good they are in a specific situation. %Consequently, in the globally adaptive framework, no estimator $\hat p_n$ of $p$ reveals a risk bound $r_n$, such that $\Vert \hat p_n - p \Vert_{\sup} \leq r_n$ with prespecified probability. 
no confidence band exists which is simultaneously honest and adaptive. 
It is necessary to impose a condition which guarantees the possibility of recovering the unknown smoothness parameter from the data. The subsequently introduced notion of admissibility aligns to the self-similarity condition as used in \cite{picard_tribouley2000} and \cite{gine_nickl2010} among others. Their self-similarity condition  ensures that the data contains enough information to infer on the function's regularity. As also emphasized in \cite{nickl2015}, self-similarity conditions turn out to be compatible with commonly used adaptive procedures and have been shown to be sufficient and necessary for adaptation to a continuum of smoothing parameters in \cite{bull2012} when measuring the performance by the $L^{\infty}$-loss.
%The idea of imposing additional structural assumption and thereby removing certain functions from the parameter space has been widely used by many authors including \cite{picard_tribouley2000}, \cite{gine_nickl2010}, \cite{kerkyacharian_nickl_picard2012}, and \cite{nickl_szabo2016}, who all impose a "self-similarity" condition requiring the function's regularity to coincide on large and small scales. This assumption ensures that the data contains enough information to infer on the function's regularity. More precisely, it allows to draw conclusions for the function's behavior on small scales from the observable regularity on large scales. As also emphasized in \cite{nickl2015}, self-similarity conditions turn out to be compatible with commonly used adaptive procedures and have been shown to be sufficient and necessary for adaptation to a continuum of smoothing parameters in \cite{bull2012} when measuring the performance by the $L^{\infty}$ loss. For instance, 
\cite{gine_nickl2010} consider globally adaptive confidence bands over the set
\begin{align} \label{self_sim_global}
	&\bigcup_{\beta_* \leq \beta \leq \beta^*} \Bigg\{ p \in \mathcal P (\beta,L) : p \geq \delta \text{ on } [-\varepsilon, 1+\varepsilon], \  
	\frac{c}{2^{j\beta}} \leq \Vert K_j \ast p - p \Vert_{\sup}Ê\text{ for all }Êj\geq j_0 \Bigg\}
\end{align} 
for some constant $c>0$ and $0 < \varepsilon < 1$, where $\beta^* = l+1$ with $l$ the order of the kernel. They work on the scale of H\"older-Zygmund rather than H\"older classes. For this reason they include the corresponding bias upper bound condition which is not automatically satisfied for $\beta=\beta^*$ in that case. 
%The latter condition allows to draw conclusions for the function's behavior on small scales from the observable regularity on large scales. 

\begin{remark} \label{second_der}
As mentioned in \cite{gine_nickl2010}, if $K(\cdot) = \frac{1}{2} \mathbbm{1} \{Ê\cdot \in [-1,1]Ê\}$ is the rectangular kernel, the set of all twice differentiable densities $p \in \mathcal P(2,L)$ that are supported in a fixed compact interval $[a,b]$ satisfies \eqref{self_sim_global} with a constant $c > 0$. The reason is that due to the constraint of being a probability density, $\Vert p'' \Vert_{\sup}$  is bounded away from zero uniformly over this class, in particular $p''$ cannot vanish everywhere.
\end{remark}

\noindent A localized version of the self-similarity condition characterizing the above class reads as follows.

\smallskip
\noindent
For any nondegenerate interval $(u,v) \subset [0,1]$, there exists some $\beta \in [\beta_*, \beta^*]$ with $p_{\arrowvert (u,v)}\in\mathcal P_{(u,v)}(\beta,L^*)$ and
\begin{align}\label{self_sim_local}
c \cdot 2^{-j\beta}Ê\leq \sup_{s \in (u+2^{-j}, v - 2^{-j})}Ê\vert (K_j \ast p)(s)-p(s) \vert
\end{align}
for all $j \geq j_0\vee \log_2(1/(v-u))$.

\begin{remark} \label{remark1} Inequality \eqref{self_sim_local} can be satisfied only for
\begin{align*}
	\tilde \beta = \tilde \beta_p(U) = \sup \left\{ \beta \in (0,\infty] : p_{\vert U} \in \mathcal H_{U}( \beta) \right\}.
\end{align*}
The converse is not true, however.

\smallskip
	\indent $(i)$ There exist functions $p: U \to \R$, $U \subset \R$ some interval, which are not H\"older continuous to their exponent $\tilde \beta$. The Weierstra{\ss} function $W_1: U \to \R$ with
	\begin{align*}
		W_1(\cdot) = \sum_{n=0}^{\infty} 2^{-n} \cos \left( 2^n \pi \, \cdot \right)
	\end{align*}
	is such an example. Indeed, \cite{hardy1916} proves that
	\begin{align*}
		W_1(x+h)-W_1(x) = O\left( \vert h \vert \log \left( \frac{1}{\vert h \vert}Ê\right) \right),
	\end{align*}
	which implies the H\"older continuity to any parameter $\beta < 1$, hence $\tilde \beta \geq 1$. Moreover, he shows in the same reference that $W_1$ is nowhere differentiable, meaning that it cannot be Lipschitz continuous, that is $\tilde\beta=1$ but $W_1 \notin \mathcal H_U(\tilde \beta)$. 
		
	\smallskip
	$(ii)$ It can also happen that $p_{\vert U}Ê\in \mathcal H_U(\tilde \beta)$ but
\begin{align} \label{klein_lip}
	\limsup_{\delta \to 0}Ê\sup_{\substack{\vert x-y \vert \leq \delta \\ x,y \in U}}Ê\frac{\vert p^{(\lfloor \tilde \beta \rfloor)}(x) - p^{(\lfloor \tilde\beta \rfloor)}(y) \vert}{\vert x-y \vert^{\tilde\beta-\lfloor \tilde\beta \rfloor}} = 0,
\end{align}
meaning that the left-hand side of \eqref{self_sim_local} is violated. In the analysis literature, the subset of functions in $\mathcal H_U(\tilde\beta)$ satisfying \eqref{klein_lip} is called little Lipschitz (or little H\"older) space. As a complement of an open and dense set, it forms a nowhere dense subset of $\mathcal H_U(\tilde \beta)$. 
\end{remark}

\noindent Due to the localization, a condition like \eqref{self_sim_local} rules out examples which seem to be typical to statisticians. Assume that $K$ is a kernel of order $l$ with $l \geq 1$, and recall $\beta^* = l+1$. Then \eqref{self_sim_local} excludes for instance the triangular density in Figure \ref{fig:tri_den} because both sides are linear, in particular the second derivative exists and vanishes when restricted to an interval $U$ which does not contain the maximal point. In contrast to the observation in Remark \ref{second_der}, $\Vert p''Ê\Vert_{U}$ may vanish for subintervals $U \subset [a,b]$. For the same reason, densities with a constant piece are excluded. In general, if $p$ restricted to the $2^{-j_0}$-enlargement of $U$ is a polynomial of order at most $l$, \eqref{self_sim_local} is violated as the left-hand side is not equal to zero. In view of these deficiencies, a condition like \eqref{self_sim_local} is insufficient for statistical purposes. 

\smallskip
\noindent To circumvent this deficit, we introduce $\Vert \cdot \Vert_{\beta,\beta^*,U}$ by
\begin{align} \label{mod_hoelder}
	\Vert p \Vert_{\beta,\beta^*,U} = \sum_{k=0}^{\lfloor \beta \wedge \beta^* \rfloor} \big\Vert p^{(k)} \big\Vert_{U} + \sup_{\substack{x,y \, \in \, U \\ x \neq y}} \frac{\left\vert p^{(\lfloor \beta \wedge \beta^* \rfloor)}(x) - p^{(\lfloor \beta \wedge \beta^*\rfloor)}(y) \right\vert}{\vert x-y \vert^{\beta-\lfloor \beta \wedge \beta^* \rfloor}}
\end{align}
for $\beta > 0$ and for some bounded open subinterval $U \subset \R$. As verified in Lemma~\ref{ineq_holdernorm1}, $\Vert p \Vert_{\beta_1,\beta^*,U}Ê\leq \Vert p \Vert_{\beta_2,\beta^*,U}$ for $0 < \beta_1 \leq \beta_2 < \infty$ and $\vert U \vertÊ\leq 1$. With the help of $\Vert \cdot \Vert_{\beta,\beta^*,U}$, we formulate a localized self-similarity type condition in the subsequent Assumption \ref{self_sim}, which does not  exclude these prototypical densities as mentioned above. For any bounded open interval $U \subset \R$, let $\mathcal H_{\beta^*,U}(\beta,L)$ be the set of functions $p:U\to\R$ admitting derivatives up to the order $\lfloor \beta \wedge \beta^* \rfloor$ with $\Vert p \Vert_{\beta,\beta^*,U} \leq L$. Moreover, $\mathcal H_{\beta^*,U}(\beta)$ is the set of functions $p : U \to \R$, such that $\Vert p \Vert_{\beta,\beta^*,U}$ is well-defined and finite. Correspondingly, $\mathcal H_{\beta^*,U}(\infty,L) = \bigcap_{\beta>0}Ê\mathcal H_{\beta^*,U}(\beta,L)$ and $\mathcal H_{\beta^*,U}(\infty) = \bigcap_{\beta>0}Ê\mathcal H_{\beta^*,U}(\beta)$. Define furthermore
\begin{align} \label{betaU}
	\beta_p(U) = \sup \left\{ \beta \in (0,\infty] : p_{\vert U} \in \mathcal H_{\beta^*,U}(\beta,L^*) \right\}.
\end{align}
\begin{remark} \label{stern}
If for some open interval $U \subset [0,1]$ the derivative $p_{\vert U}^{(\beta^*)}$ exists and
$$p_{\vert U}^{(\beta^*)}Ê\equiv 0,$$ then $\Vert p \Vert_{\beta,\beta^*,U}$ is finite uniformly over all $\beta > 0$. If 
$$p_{\vert U}^{(\beta^*)}Ê\not\equiv 0,$$ then $ \beta^*$, $\Vert p \Vert_{\beta,\beta^*,U}$ is finite if and only if $\beta \leq \beta^* $ as a consequence of the mean value theorem. That is, $\beta_p(U) \in (0,\beta^*] \cup \{Ê\infty \}$.
\end{remark}

%\begin{align*}
%	\beta_n(t) = \argmax_{\beta \in (0,\infty]} \left\{  \Vert p \Vert_{\beta,\beta^*,n,t} \leq L^* \right\}
%\end{align*}
%\begin{align*}
%	\beta(U) = \argmax_{\beta \in (0,\infty]} \left\{  \Vert p \Vert_{\beta,\beta^*,U} \leq L^* \right\}
%\end{align*}
%for a function $p:\R\to\R$. 

%\begin{assumption} \label{self_sim}
%Any density $p$ in the set of admissible densities satisfies
%\begin{align} \label{admissible_density}
%	b_2 2^{-j\beta(B(t,2^{-j+1}))} \geq \sup_{s \in B(t,2^{-j})} \left\vert \E_p^{\otimes \tn} \hat{p}_{n}^{(2)}(s,2^{-j}) - p(s) \right\vert \geq \frac{2^{-j\beta(B(t,2^{-j+1}))}}{\log \tn}
%\end{align}
%for any $t \in [0,1]$ and for any $j \geq \log \log \tn$.
%\end{assumption}

\begin{assumption} \label{self_sim}
For sample size $n \in \N$, some $0<\varepsilon<1$, $0<\beta_*<1$, and $L^* > 0$, a~density $p$ is said to be admissible if $p \in \mathcal P_{(-\varepsilon,1+\varepsilon)}(\beta_*,L^*)$ and the following holds true: for any $t \in [0,1]$ and for any $h \in \mathcal G_{\infty}$ with
$$ \mathcal G_{\infty} =  \{ 2^{-j} : j \in \N, \, j \geq j_{\min} = \lceil 2 \vee \log_2 (2/\varepsilon) \rceil \},$$
there exists some $\beta \in [\beta_*,\beta^*] \cup \{Ê\inftyÊ\}$ such that the following conditions are satisfied for $u = h$ or $u = 2h$:
\begin{align} \label{assumption1}
	p_{\vert B(t,u)} \in \mathcal H_{\beta^*,B(t,u)}(\beta,L^*)
\end{align}
and
\begin{align} \label{admissible_density}
	\sup_{s \in B(t,u-g)} \left\vert (K_g \ast p)(s) - p(s) \right\vert \geq \frac{g^{\beta}}{\log n}
\end{align}
for all $g \in \mathcal G_{\infty}$ with $g \leq u/8$.

\smallskip
\noindent The set of admissible densities is denoted by $\mathscr P_n^{\adm} = \mathscr P_n^{\adm}(K,\beta_*,L^*,\varepsilon)$.
\end{assumption}

\begin{lemma} \label{remark2}
Any admissible density $p \in \mathscr P_n^{\adm}(K,\beta_*,L^*,\varepsilon)$ can satisfy \eqref{assumption1} and \eqref{admissible_density} for $\beta = \beta_p(B(t,u))$ only.
\end{lemma}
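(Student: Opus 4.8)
The plan is to establish the two inequalities $\beta\le\beta_p(B(t,u))$ and $\beta\ge\beta_p(B(t,u))$ separately, where $\beta\in[\beta_*,\beta^*]\cup\{\infty\}$ denotes the exponent furnished by Assumption~\ref{self_sim} for the given $t$, $h$ and the corresponding choice $u\in\{h,2h\}$; throughout write $U:=B(t,u)$ and note $|U|=2u\le1$ by the definition of $j_{\min}$.

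\emph{The easy direction.} If $\beta<\infty$, condition \eqref{assumption1} exhibits $\beta$ as a member of the set $\{\beta'\in(0,\infty]:p_{\vert U}\in\mathcal H_{\beta^*,U}(\beta',L^*)\}$, so $\beta\le\beta_p(U)$ by \eqref{betaU}. If $\beta=\infty$, then $p_{\vert U}\in\mathcal H_{\beta^*,U}(\infty,L^*)=\bigcap_{\beta'>0}\mathcal H_{\beta^*,U}(\beta',L^*)$, whence $\beta_p(U)\ge\beta'$ for every finite $\beta'$, i.e.\ $\beta_p(U)=\infty=\beta$ and there is nothing left to prove.

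\emph{The hard direction.} It remains to rule out $\beta<\beta_p(U)$ when $\beta<\infty$; I argue by contradiction. By Remark~\ref{stern} we have $\beta_p(U)\in(0,\beta^*]\cup\{\infty\}$, giving two cases. If $\beta_p(U)\le\beta^*$, pick $\beta'\in(\beta,\beta_p(U))$; by monotonicity of the modified H\"older norm (Lemma~\ref{ineq_holdernorm1}, using $|U|\le1$) one has $\|p\|_{\beta',\beta^*,U}\le L^*$, so with $m:=\lfloor\beta'\rfloor=\lfloor\beta'\wedge\beta^*\rfloor\le l$ a Taylor expansion of $p$ at $s$, together with $\int K=1$ and the vanishing moments $\int y^kK(y)\,dy=0$ for $1\le k\le l$, yields for every $g\in\mathcal G_\infty$ with $g\le u/8$ and every $s\in B(t,u-g)$ (so that $[s-g,s+g]\subset U$) the classical bias estimate
\begin{align*}
	\bigl|(K_g\ast p)(s)-p(s)\bigr|\;=\;\Bigl|\int_{-1}^{1}K(y)\Bigl(p(s+gy)-\sum_{k=0}^{m}\tfrac{p^{(k)}(s)}{k!}(gy)^k\Bigr)dy\Bigr|\;\le\;C\,g^{\beta'},
\end{align*}
with $C=C(K,L^*)<\infty$ (the integrand is dominated by $|K(y)|\,\tfrac{L^*}{m!}\,|gy|^{\beta'}$ once $|p^{(m)}(\xi)-p^{(m)}(s)|\le L^*|\xi-s|^{\beta'-m}$ is invoked, this H\"older bound being part of $\|p\|_{\beta',\beta^*,U}\le L^*$). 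Since $\beta'>\beta$ and $\mathcal G_\infty$ contains arbitrarily small dyadic scales, we may choose $g\in\mathcal G_\infty$ with $g\le u/8$ and $g<(C\log n)^{-1/(\beta'-\beta)}$, for which $Cg^{\beta'}=(Cg^{\beta'-\beta})g^{\beta}<g^{\beta}/\log n$; this contradicts \eqref{admissible_density}. If instead $\beta_p(U)=\infty$, Remark~\ref{stern} forces $p^{(\beta^*)}_{\vert U}\equiv0$, so $p_{\vert U}$ agrees on $U$ with a polynomial of degree at most $l$, and the same computation gives $(K_g\ast p)(s)-p(s)=0<g^{\beta}/\log n$ (using $\beta<\infty$, $g>0$) for all admissible $g$ and all $s\in B(t,u-g)\ne\emptyset$, again contradicting \eqref{admissible_density}. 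Hence $\beta=\beta_p(U)$.

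\emph{Main obstacle.} There is no deep difficulty here: the single substantive ingredient is the textbook bias bound $|(K_g\ast p)(s)-p(s)|\le Cg^{\beta'}$ for $p_{\vert U}\in\mathcal H_{\beta^*,U}(\beta',L^*)$, and everything else is bookkeeping. The two points that do require care are the consistent handling of the value $\infty$ (both as a possible value of $\beta$ and of $\beta_p(U)$), routed through Remark~\ref{stern}, and the observation that $\mathcal G_\infty$ supplies scales $g$ that are simultaneously admissible ($g\le u/8$) and fine enough to absorb the $\log n$ factor.
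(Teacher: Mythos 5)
Your proposal is correct and follows essentially the same route as the paper: the inequality $\beta\le\beta_p(B(t,u))$ is immediate from \eqref{assumption1} and \eqref{betaU}, and the reverse inequality is obtained by contradiction, playing the bias upper bound $Cg^{\beta'}$ for some $\beta'\in(\beta,\beta_p(B(t,u)))$ (which the paper simply cites as Lemma~\ref{bias_up} rather than re-deriving via Taylor expansion) against the lower bound $g^{\beta}/\log n$ of \eqref{admissible_density} at small dyadic scales $g$. The only cosmetic difference is that you treat $\beta_p(B(t,u))=\infty$ as a separate polynomial case, whereas the paper's single argument with a finite $\beta'>\beta$ already covers it.
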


\noindent By construction, the collection of admissible densities is increasing with the number of observations, that is $\mathscr P_n^{\adm} \subset \mathscr P_{n+1}^{\adm}$, $n \in \N$. The logarithmic denominator even weakens the assumption for growing sample size, permitting smaller and smaller Lipschitz constants. 

%\noindent Together with Remark \ref{remark1} $(ii)$, this implies
%\begin{align*}
%	\limsup_{\delta \rightarrow 0} \sup_{\substack{ \vert x-y \vert \leq \delta \\ x,y \in B(t,h)}} \frac{\left\vert p^{(\lfloor \beta_p(B(t,h)) \rfloor)}(x) - p^{(\lfloor \beta_p(B(t,h)) \rfloor)}(y) \right\vert}{\vert x-y \vert^{\beta_p(B(t,h))-\lfloor \beta_p(B(t,h)) \rfloor}} > 0,
%\end{align*}
%for any admissible density $p \in \mathscr P_n^{adm}(K)$ with $\beta_p(B(t,h)) \leq \beta^*$.

%If the degree is at least $ \beta^* $, then 
%\begin{align*}
%	p(t+hx) - P_{t,\beta^* -1}(t+hx) = c (hx)^{\beta^*},
%\end{align*}
%so that the orders of the left- and right-hand side in \eqref{admissible_density} coincide. %The function is not admissible if the degree is strictly larger than $\beta^*$. 

\begin{remark}
Assumption \ref{self_sim} does not require an admissible function to be totally "unsmooth" everywhere. For instance, if $K$ is the rectangular kernel  and $L^*$ is sufficiently large, the triangular density as depicted in Figure \ref{fig:tri_den} is (eventually -- for sufficiently large $n$) admissible. It is globally not smoother than Lipschitz, and the bias lower bound condition \eqref{admissible_density} is (eventually) satisfied for $\beta=1$ and pairs $(t,h)$ with $\vert t - 1/2 \vert \leq (7/8)h$. Although the bias lower bound condition to the exponent $\beta^*=2$ is not satisfied for any $(t,h)$ with $t \in [0,1]Ê\setminus (1/2-h,1/2+h)$, these tuples $(t,h)$ fulfill \eqref{assumption1} and \eqref{admissible_density} for $\beta = \infty$, which is not excluded anymore by the new Assumption \ref{self_sim}. Finally, if the conditions \eqref{assumption1} and \eqref{admissible_density} are not simultaneously satisfied for some pair $(t,h)$ with
\begin{align*}
	\frac{7}{8} h < \left\vert t - \frac{1}{2} \right\vert < h,
\end{align*}
then they are fulfilled for the pair $(t,2h)$ and $\beta = 1$, because $\vert t - 1/2 \vert < (7/8)2h$.
\end{remark}

\noindent In view of this remark, it is crucial not to require \eqref{assumption1} and \eqref{admissible_density} to hold for \textit{every} pair $(t,h)$. We now denote by
\begin{align*}
	\mathscr P_n &= \mathscr P_n(K,\beta_*,L^*,\varepsilon,M) = \left\{p \in  \mathscr P_n^{\adm}(K,\beta_*,L^*,\varepsilon): \inf_{x\in[-\varepsilon,1+\varepsilon]}p(x) \geq M \right\} 
\end{align*}
the set of admissible densities  being bounded below by $M>0$ on $[-\varepsilon,1+\varepsilon]$. We restrict our considerations to combinations of parameters for which the class $\mathscr P_n$ is non-empty.

\smallskip
\noindent The remaining results of this subsection are about the massiveness of the function classes $\mathscr P_n$.  They are stated for the particular case of the rectangular kernel. Other kernels may be treated with the same idea; verification of \eqref{admissible_density} however appears to require a case-by-case analysis for different kernels. The following proposition demonstrates that the pointwise minimax rate of convergence remains unchanged when passing from the class $\mathcal H(\beta,L^*)$ to $\mathscr P_n \cap \mathcal H(\beta,L^*)$. 

\begin{proposition}[Lower pointwise risk bound] \label{low_ptw_riskbound}
For the rectangular kernel $K_R$ there exists some constant $M>0$, such that for any $t \in [0,1]$, for any $\beta \in [\beta_*,1]$, for any $0<\varepsilon<1$, and for any $k \geq k_0(\beta_*)$ there exists some $x>0$ and some $L(\beta)>0$ with
\begin{align*}
	\inf_{T_n} \sup_{\substack{p \in \mathscr P_k : \\ p_{\vert (-\varepsilon,1+\varepsilon)} \in \mathcal H_{(-\varepsilon,1+\varepsilon)}(\beta,L)}} \P_p^{\otimes n} \left( n^{\frac{\beta}{2\beta+1}}Ê\left\vert T_n(t) - p(t) \right\vert \geq x \right) > 0
\end{align*}
for all $L \geq L(\beta)$, for the class $\mathscr P_k = \mathscr P_k(K_R,\beta_*,L^*,\varepsilon,M)$, where the infimum is running over all estimators $T_n$ based on $X_1,\dots, X_n$.
\end{proposition}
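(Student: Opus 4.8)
The plan is a classical two-point (Le Cam) lower bound; the only genuinely new ingredient is to keep both hypotheses inside the class $\mathscr P_k$. Throughout, $K=K_R$ denotes the rectangular kernel, $t\in[0,1]$, $\beta\in[\beta_*,1]$ and $\varepsilon\in(0,1)$ are fixed, and I put $h_n=n^{-1/(2\beta+1)}$, so that $n\,h_n^{2\beta+1}=1$ and $h_n^\beta=n^{-\beta/(2\beta+1)}$. As base density I would take a fixed $p_0=p_0^{(\beta,\varepsilon)}$ that is ``uniformly $\beta$-self-similar with a margin'' on $[-\varepsilon,1+\varepsilon]$: a density with $\inf_{[-\varepsilon,1+\varepsilon]}p_0\ge 2M$ and $\Vert p_0\Vert_{\beta_*,(-\varepsilon,1+\varepsilon)}\le L^*$, with $\beta_{p_0}(B(t',u))=\beta$ for every ball $B(t',u)\subset(-\varepsilon,1+\varepsilon)$, and such that
\[
 \sup_{s\in B(t',u-g)}\bigl|(K_g\ast p_0)(s)-p_0(s)\bigr|\ \ge\ \frac{2\,g^\beta}{\log k}
\]
for all $u,g\in\mathcal G_\infty$ with $g\le u/8$. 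Such a $p_0$ is obtained by perturbing a fixed smooth, strictly positive density by $\lambda\,W_\beta\,\phi$, where $W_\beta$ is a Weierstra{\ss}-type function of exponent $\beta$, $\phi$ a smooth cut-off equal to $1$ on a neighbourhood of $[0,1]$, and $\lambda>0$ a constant: for the rectangular kernel $(K_g\ast W_\beta)(s)-W_\beta(s)$ is of exact order $g^\beta$ by a direct dyadic-frequency computation, so the displayed margin forces $\lambda\gtrsim 1/\log k$, whereas positivity and the $\mathcal H(\beta_*,L^*)$-constraint force $\lambda\le\lambda_{\max}(\beta_*)$; this window is nonempty precisely when $k\ge k_0(\beta_*)$, which is where that hypothesis is used. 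The alternative is the localized perturbation
\[
 p_1\ =\ p_0\ +\ c_n\,\psi\!\left(\frac{\cdot-t}{h_n}\right),\qquad c_n=a\,h_n^\beta,
\]
where $\psi$ is supported in $(-1,1)$, satisfies $\int\psi=0$ (so that $p_1$ is again a probability density) and $\psi(0)\neq 0$, and is itself of Weierstra{\ss}-type of exponent $\beta$ (hence nowhere smoother than $\beta$), and $a>0$ is a small constant.

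The elementary verifications are standard. Separation: $|p_1(t)-p_0(t)|=c_n|\psi(0)|=a|\psi(0)|\,n^{-\beta/(2\beta+1)}$, so the claim will hold with $x:=\tfrac12 a|\psi(0)|$. Indistinguishability: since $p_0\ge 2M$ on $B(t,h_n)$ for $n$ large,
\[
 \chi^2(\P_{p_1},\P_{p_0})\ \le\ \frac{1}{2M}\int c_n^2\,\psi\!\left(\frac{y-t}{h_n}\right)^{2}dy\ =\ \frac{a^2\Vert\psi\Vert_2^2}{2M}\,h_n^{2\beta+1},
\]
so $n\,\chi^2(\P_{p_1},\P_{p_0})\le a^2\Vert\psi\Vert_2^2/(2M)$ can be made as small as desired by shrinking $a$; tensorisation and the two-point lemma then give $\inf_{T_n}\max_{j\in\{0,1\}}\P_{p_j}^{\otimes n}\bigl(n^{\beta/(2\beta+1)}|T_n(t)-p_j(t)|\ge x\bigr)\ge\tfrac12\bigl(1-\Vert\P_{p_0}^{\otimes n}-\P_{p_1}^{\otimes n}\Vert_{\TV}\bigr)$, which is bounded below by a positive constant uniformly in $n$. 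Hölder control: $\Vert p_1\Vert_{\beta,(-\varepsilon,1+\varepsilon)}\le\Vert p_0\Vert_{\beta,(-\varepsilon,1+\varepsilon)}+c_n[\psi((\cdot-t)/h_n)]_\beta+c_n\Vert\psi\Vert_\infty\le\Vert p_0\Vert_{\beta,(-\varepsilon,1+\varepsilon)}+C(\psi)\,a=:L(\beta)$, so the statement holds for every $L\ge L(\beta)$; and $p_1\ge 2M-c_n\Vert\psi\Vert_\infty\ge M$ for $n$ large since $c_n\to 0$.

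The substantial step is $p_1\in\mathscr P_k^{\adm}$: for every $t'\in[0,1]$ and every $h\in\mathcal G_\infty$ one must exhibit an exponent $\beta'$ and a choice $u\in\{h,2h\}$ verifying \eqref{assumption1} and \eqref{admissible_density}. Two observations make this tractable. First, since neither $p_0$ nor the bump profile is anywhere smoother than $\beta$, one has $\beta_{p_1}(B(t',u))=\beta$ for every ball, so by Lemma~\ref{remark2} the exponent is forced to be $\beta'=\beta$, and then \eqref{assumption1} follows from the Hölder bound of the previous paragraph (after, if necessary, invoking nonemptiness of $\mathscr P_k$ to place $L^*$ in the admissible range). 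Second, using $[\psi]_\beta<\infty$, for every $s$ and every $g\in\mathcal G_\infty$
\[
 \Bigl|\bigl(K_g\ast c_n\psi((\cdot-t)/h_n)\bigr)(s)-c_n\psi\bigl((s-t)/h_n\bigr)\Bigr|\ \le\ C\,a\,g^\beta
\]
uniformly. Combining this with the margin built into $p_0$ yields, uniformly in $s$, $|(K_g\ast p_1)(s)-p_1(s)|\ge|(K_g\ast p_0)(s)-p_0(s)|-C\,a\,g^\beta$, whence
\[
 \sup_{s\in B(t',u-g)}\bigl|(K_g\ast p_1)(s)-p_1(s)\bigr|\ \ge\ \frac{2\,g^\beta}{\log k}-C\,a\,g^\beta\ \ge\ \frac{g^\beta}{\log k}
\]
as soon as $a\le 1/(C\log k)$, a final constraint on $a$ compatible with the smallness already required. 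This gives \eqref{admissible_density} with $\beta'=\beta$; the freedom $u\in\{h,2h\}$ is held in reserve for the book-keeping that guarantees the shrunken ball $B(t',u-g)$ always meets a region on which the self-similarity margin of $p_0$ is effective, in particular near the boundary of the bump support $B(t,h_n)$. I expect precisely this uniform-over-$(t',h,g)$ admissibility check, together with the construction of the self-similar background $p_0$ carrying a \emph{quantitative} margin for the rectangular kernel (the step genuinely using $k\ge k_0(\beta_*)$), to be the main obstacle; the Le Cam part itself is routine.
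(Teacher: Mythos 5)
Your overall strategy coincides with the paper's: a two-point Le Cam bound with a Weierstra{\ss}-based, quantitatively self-similar background density $p_0$ (the paper takes $p_0=\tfrac16+\tfrac{1-2^{-\beta}}{12}W_\beta(\cdot-t)$ on $B(t,2)$ with linear ramps, so the ``smooth part'' is constant and the margin of Lemma~\ref{weier_lemma} is not eaten by it — worth noting, since with a generic smooth base the $O(g^2)$ bias of the base need not be dominated by a margin of order $1/\log k$ at constant scales $g$). Where you genuinely diverge is the alternative hypothesis. The paper does not add a bump: it \emph{flattens} $p_0$ to a constant on $B(t,g_{\beta,n})$ (compensating the mass at $t+9/4$), so that the separation is $W_\beta(0)-W_\beta(g_{\beta,n})\gtrsim g_{\beta,n}^\beta$ and the admissibility of $p_{1,n}$ is checked by a case analysis on the position of $B(t',h)$ relative to the flattened ball, using the $\beta=\infty$ option of Assumption~\ref{self_sim} inside it and the $u\in\{h,2h\}$ flexibility at its boundary. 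Your additive perturbation $c_n\psi((\cdot-t)/h_n)$ with $[\,c_n\psi((\cdot-t)/h_n)\,]_\beta=a[\psi]_\beta$ instead preserves admissibility \emph{uniformly} by the triangle inequality, $\sup_s|(K_g\ast p_1)(s)-p_1(s)|\ge 2g^\beta/\log k-\Vert K\Vert_1 a[\psi]_\beta g^\beta$, at the price of building a factor-two margin into $p_0$ and taking $a\asymp 1/\log k$ (hence $x\asymp 1/\log k$, which the statement permits since $x$ may depend on $k$). This is a clean and arguably simpler admissibility check; the paper's flattening has the separate advantage that the alternative is locally in $\mathcal H(\infty,L)$, which is what gets recycled in Proposition~\ref{optimal_adaptation}.

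There is one concrete gap: the endpoint $\beta=1$. Your construction uses a Weierstra{\ss}-type profile of exponent $\beta$ both in $p_0$ and in $\psi$, but $W_1$ is \emph{not} Lipschitz (it lies only in the Zygmund class, cf.\ Remark~\ref{remark1}), so for $\beta=1$ neither $p_0$ nor $p_1$ would satisfy $p_{\vert(-\varepsilon,1+\varepsilon)}\in\mathcal H_{(-\varepsilon,1+\varepsilon)}(1,L)$ for any finite $L$, and condition \eqref{assumption1} with exponent $1$ fails. The paper handles this case separately (Part~2 of its proof) with a triangular density and triangular bumps, for which the Lipschitz norm is finite and the rectangular-kernel bias at the kink is exactly $g/32$; you would need the analogous replacement of $W_1$ by a sawtooth/triangular profile for $\beta=1$. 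Beyond that, the remaining loose ends (that $L^*$ must be large enough for \eqref{assumption1} to hold with the class's own radius, and the existence of a mean-zero, compactly supported, $\beta$-H\"older $\psi$ with $\psi(0)\neq0$) are at the same level of informality as the paper's own treatment and are easily discharged.
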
 

\noindent Note that the classical construction for the sequence of hypotheses in order to prove minimax lower bounds consists of a smooth density distorted by small $\beta$-smooth perturbations, properly scaled with the sample size $n$. However, there does not exist a fixed constant $c>0$, such that all of its members are contained in the class \eqref{self_sim_global}. Thus, the  constructed hypotheses in our proof are substantially more complex, for which reason we restrict attention to $\beta \leq 1$.

\smallskip
\noindent Although Assumption \ref{self_sim} is getting weaker for growing sample size, some densities are permanently excluded from consideration. The following proposition states that the exceptional set of permanently excluded densities is pathological.

\begin{proposition} \label{nowhere_dense} 
	For the rectangular kernel $K_R(\cdot) = \frac{1}{2}Ê\mathbbm{1}Ê\{ \cdot \in [-1,1]Ê\}$, let
	\begin{align*}
		\mathscr R = \bigcup_{n \in \N}Ê\mathscr P_n^{\adm}(K_R,\beta_*,L^*,\varepsilon).
	\end{align*}
	Then, for any $t \in [0,1]$, for any $h \in \mathcal G_{\infty}$ and for any $\beta \in [\beta_* , 1)$, the set
	$$ \mathcal P_{B(t,h)}(\beta, L^*) \setminus \mathscr R_{\vert B(t,h)} $$
	is nowhere dense in 
	$\mathcal P_{B(t,h)}(\beta, L^*)$
	 with respect to $\Vert \cdot \Vert_{\beta,B(t,h)}$. 
\end{proposition}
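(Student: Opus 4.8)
\emph{Strategy.} It suffices to exhibit a subset of $\mathscr R_{\vert B(t,h)}$ that is open and dense in $\bigl(\mathcal P_{B(t,h)}(\beta,L^*),\,\Vert\cdot\Vert_{\beta,B(t,h)}\bigr)$: its complement is then closed with empty interior, and $\mathcal P_{B(t,h)}(\beta,L^*)\setminus\mathscr R_{\vert B(t,h)}$, being contained in that complement, is nowhere dense. I use throughout that $\beta<1$, so $\lfloor\beta\rfloor=\lfloor\beta\wedge\beta^*\rfloor=0$ and $\Vert\cdot\Vert_{\beta,I}$ (resp.\ $\Vert\cdot\Vert_{\beta,\beta^*,I}$) is the supremum norm plus the $\beta$-Hölder seminorm $[\cdot]_{\beta,I}$, and that for the rectangular kernel $(K_{R,g}\ast p)(s)-p(s)=\tfrac{1}{2g}\int_{s-g}^{s+g}\bigl(p(u)-p(s)\bigr)\,du$, whence $\bigl\vert(K_{R,g}\ast e)(s)-e(s)\bigr\vert\le[e]_{\beta,I}\,g^{\beta}$ whenever $B(s,g)\subset I$. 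Call $p$ \emph{uniformly $\beta$-self-similar on $B(t,h)$ with constant $c>0$} if
\[
\sup\bigl\{\bigl\vert(K_{R,g}\ast p)(s)-p(s)\bigr\vert:\ s\in I,\ B(s,g)\subset I\bigr\}\ \ge\ c\,g^{\beta}
\]
for every dyadic $g\le h/8$ and every subinterval $I\subset B(t,h)$ with $\vert I\vert\ge 10g$.

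\emph{Openness and admissibility.} The displayed kernel inequality shows that the set $G$ of densities which are uniformly $\beta$-self-similar on $B(t,h)$ with \emph{some} $c>0$ and satisfy the strict inequalities $\Vert p\Vert_{\beta,B(t,h)}<L^*$, $\inf_{B(t,h)}p>0$, $\int_{B(t,h)}p<1$ is stable under small $\Vert\cdot\Vert_{\beta,B(t,h)}$-perturbations (the constant merely halving). Every $p\in G$ extends to an admissible density: outside $B(t,h)$ replace it by a value‑matched copy of a fixed density $p_\dagger\propto 1+\eta\,W_{\beta_*}$ near $[-\varepsilon,1+\varepsilon]$, with $W_{\beta_*}$ a lacunary Weierstrass function of exponent $\beta_*$ and $\eta>0$ small enough that the concatenation lies in $\mathcal P_{(-\varepsilon,1+\varepsilon)}(\beta_*,L^*)$ — this uses $B(t,h)\subset(-\varepsilon,1+\varepsilon)$, the slack $L^*-\Vert p\Vert_{\beta,B(t,h)}>0$, and the fact that on any interval meeting the complement of $B(t,h)$ the pertinent exponent is $\beta_p(\cdot)=\beta_*$, for which the budget is ample since $[p]_{\beta_*,B(t,h)}\le L^*(2h)^{\beta-\beta_*}$ is small. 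Checking Assumption~\ref{self_sim} for this extension is a finite case analysis over pairs $(t',h')$: when $B(t',h')\subset B(t,h)$ use $u'=h'$, $\beta'=\beta$ and uniform $\beta$-self-similarity; when $B(t',h')$ lies in the complement use $\beta'=\beta_*$ and the robust $\beta_*$-self-similarity of $p_\dagger$; when $B(t',h')$ straddles $\partial B(t,h)$, pass to $u'=2h'$ so that the portion of $B(t',2h')$ in the complement of $B(t,h)$ has length of order $h'$, whence $\beta'=\beta_*$ serves at every scale $g\le u'/8$. All constants being fixed once $p$ is, $\log n$ large enough places the extension in $\mathscr P_n^{\adm}$; hence a whole $\Vert\cdot\Vert_{\beta,B(t,h)}$-ball about $p$ lies in $\mathscr R_{\vert B(t,h)}$, so $G$ has an open neighbourhood contained in $\mathscr R_{\vert B(t,h)}$.

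\emph{Density of $G$.} Given $p\in\mathcal P_{B(t,h)}(\beta,L^*)$ and $\rho>0$, first move into the region of strict inequalities by $p_1:=(1-\tau)p+\tau q_0$ for a fixed smooth density $q_0$ with those properties and $\tau$ small enough that $\Vert p_1-p\Vert_{\beta,B(t,h)}<\rho/2$; then set $p^*:=p_1+\theta\,W$ with $W(x)=\sum_{k\ge0}a^{-k\beta}\cos\!\bigl(a^k\pi(x-t)/h\bigr)$, $a=2^{m}$, $m=m(\beta)$ large. A direct computation with $K_{R,g}$ (the $k$-th summand contributes $a^{-k\beta}\cos(\cdot)\,(\operatorname{sinc}(a^k\pi g/h)-1)$, the resonant index being isolated for large $m$) shows that $W$ is uniformly $\beta$-self-similar on \emph{every} subinterval of $B(t,h)$ with a constant depending only on $m,\beta$, and that $W_{\vert I}$ has exponent exactly $\beta$ with non-vanishing roughness at every dyadic scale, for each subinterval $I$. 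For $\theta\in(0,\theta_0)$ small, $\Vert\theta W\Vert_{\beta,B(t,h)}<\rho/2$ and $p^*$ stays in the region of strict inequalities; the remaining task is to pick $\theta$ so that $p_1$'s bias does not cancel $\theta\,W$'s along any sequence of scales on any subinterval. Becoming ``smoother than $\beta$'' on a fixed dyadic subinterval $I$ is a linear condition on $p^*$, and since $W_{\vert I}$ has exponent exactly $\beta$, at most one $\theta$ can cause it for each $I$ — so all but countably many $\theta$ are excluded for this reason. The quantitative non-cancellation, $\inf_{I,g}\,g^{-\beta}\sup_{s}\vert(K_{R,g}\ast p^*-p^*)(s)\vert>0$, is secured by choosing $W$ — if necessary after randomising its phases, which fixes one admissible deterministic $W$ — so that $p_1$ is not parallel to $W$ at any scale; then every sufficiently small $\theta$ works, and $p^*\in G$ with $\Vert p^*-p\Vert_{\beta,B(t,h)}<\rho$.

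\emph{Main obstacle.} The crux is the last non-cancellation step: one additive perturbation must dominate the (a priori adversarial) bias of $p_1$ \emph{simultaneously at every dyadic scale}, not merely along a subsequence — a single scale at which the perturbed density is too flat forces $\log n\to\infty$ and wrecks admissibility via Lemma~\ref{remark2}. The exclusion of bad $\theta$ is elementary and linear for the Hölder exponent, but for the quantitative bias bound the admissible set of $\theta$ is governed only after a careful — essentially probabilistic — choice of the Weierstrass perturbation that decorrelates it from $p_1$ uniformly in scale; the remaining ingredients (stability, the $\beta_*$-rough extension, and the case analysis under Assumption~\ref{self_sim}) are routine bookkeeping.
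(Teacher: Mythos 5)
Your topological frame (exhibit an open set $G\subset\mathscr R_{\vert B(t,h)}$ that is dense in $\mathcal P_{B(t,h)}(\beta,L^*)$) and your openness step are essentially the paper's: the paper defines $E_n(\beta)$ by a uniform bias lower bound with constant $2/\log n$ and shows that the $\Vert K_R\Vert_1^{-1}/\log n$-neighbourhood $A_n(\beta)$ still satisfies the bound with constant $1/\log n$, exactly your ``constant merely halving'' computation. The gap is in the density step, and it sits precisely where you flag the ``main obstacle.'' Adding a single global perturbation $\theta W$ and hoping that $p_1$'s bias does not cancel $\theta W$'s bias \emph{at every dyadic scale and on every subinterval simultaneously} does not work as argued: your linear-algebra exclusion (``at most one $\theta$ per subinterval'') only rules out exact degeneracies, whereas the quantitative requirement $\sup_s\vert(K_{R,g}\ast p^*-p^*)(s)\vert\ge c\,g^\beta$ fails on an \emph{interval} of $\theta$-values for each fixed scale and location (near-cancellation, not exact cancellation). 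The union over countably many scales and locations of these bad intervals need not miss any small $\theta$, and the probabilistic phase-randomisation you invoke is not carried out — for a fixed scale/location the failure event has probability bounded below by a constant (the tolerance is a fixed fraction of $g^\beta$, the same order as $W$'s contribution), so no union bound over infinitely many scales closes the argument.

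The paper avoids this entirely with a recursive multiscale construction: it adds rescaled sine bumps $S_{l,\beta,j}$ \emph{only at those dyadic locations and scales where the partial sum's bias is still too small} (the index sets $J_l$), and the bumps are engineered so that for $l\ge k+1$ both $S_{l,\beta,j}$ and $K_{R,2^{-k}}\ast S_{l,\beta,j}$ vanish exactly at the coarser-scale test points $t_{j,i}(k)$ (the convolution integrates whole periods). Hence the bias of the limit $\tilde p_\delta$ at scale $2^{-k}$ equals that of the finite partial sum $p_{k,\delta}$, and the dichotomy $j\in J_k$ versus $j\notin J_k$ yields the uniform lower bound $\tfrac12 c_9\delta(1-2/\pi)g^\beta$ by a two-line triangle inequality in either case. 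No genericity or decorrelation argument is needed. To repair your proof you would need to replace the single perturbation $\theta W$ by some such adaptive, scale-by-scale correction (or supply a genuinely new argument for the uniform non-cancellation); as written, the density of $G$ is not established.
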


\noindent Among more involved approximation steps, the proof reveals the existence of functions with the same regularity in the sense of Assumption \ref{self_sim} on \textit{every} interval for $\beta \in (0,1)$. This property is closely related to but does not coincide with the concept of mono-H\"older continuity from the analysis literature, see for instance \cite{barral_durand_jaffard_seuret2013}. \cite{hardy1916} shows that the Weierstra{\ss} function is mono-H\"older continuous for $\beta\in(0,1)$. For any $\beta\in(0,1]$, the next lemma shows that Weierstra{\ss}' construction
\begin{align} \label{wf}
	W_{\beta}(\cdot) = \sum_{n=0}^\infty 2^{-n\beta} \cos (2^n \pi \, \cdot )
\end{align}
satisfies the bias condition \eqref{admissible_density} for the rectangular kernel to the exponent $\beta$ on any subinterval $B(t,h)$, $t \in [0,1]$, $h \in \mathcal G_{\infty}$. 

\begin{lemma} \label{weier_lemma}
For all $\beta \in (0,1)$, the Weierstra{\ss} function $W_{\beta}$ as defined in \eqref{wf} satisfies $W_{\beta \vert U} \in \mathcal H_U(\beta,L_W)$ with some Lipschitz constant $L_W = L_W(\beta)$ for every open interval $U$. For the rectangular kernel $K_R$ and $\beta \in (0,1]$, the Weierstra{\ss} function fulfills the bias lower bound condition
\begin{align*}
	\sup_{s \in B(t,h-g)} \left\vert (K_{R,g} \ast W_{\beta})(s) - W_{\beta}(s) \right\vert 
		&> \left( \frac{4}{\pi}-1 \right) g^{\beta}
\end{align*}
for any $t \in \R$ and for any $g,h \in \mathcal G_{\infty}$ with $g \leq h/2$.
\end{lemma}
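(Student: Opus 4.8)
The plan is to prove the two assertions separately: the H\"older bound by the classical dyadic block estimate, and the bias bound by computing the convolution exactly from the Fourier series of $W_\beta$ and then evaluating at a well-chosen point. For the H\"older part, fix an open interval $U$, a point $x$, and $0<|\delta|<1$, and split $W_\beta(x+\delta)-W_\beta(x)=\sum_{n\ge0}2^{-n\beta}\big(\cos(2^n\pi(x+\delta))-\cos(2^n\pi x)\big)$ at level $N=\lceil\log_2(1/|\delta|)\rceil$. For $n\le N$ the Lipschitz bound $|\cos(2^n\pi(x+\delta))-\cos(2^n\pi x)|\le 2^n\pi|\delta|$ gives a contribution $\le\pi|\delta|\sum_{n\le N}2^{n(1-\beta)}=O(2^{N(1-\beta)}|\delta|)=O(|\delta|^\beta)$ with constant depending only on $\beta$; for $n>N$ the trivial bound $|\cos-\cos|\le2$ gives $\le2\sum_{n>N}2^{-n\beta}=O(2^{-N\beta})=O(|\delta|^\beta)$. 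For $|\delta|\ge1$ one uses $|W_\beta(x+\delta)-W_\beta(x)|\le2\Vert W_\beta\Vert_\infty\le 2(1-2^{-\beta})^{-1}|\delta|^\beta$. Together with $\Vert W_\beta\Vert_U\le\sum_n2^{-n\beta}=(1-2^{-\beta})^{-1}$, this bounds $\Vert W_{\beta \vert U}\Vert_{\beta,U}$ by a constant $L_W(\beta)$ depending on $\beta$ only, not on $U$.

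For the bias bound write $g=2^{-j}$, so that $2^jg=1$, and note $K_{R,g}(x)=\frac1{2g}\mathbbm{1}\{x\in[-g,g]\}$, hence $(K_{R,g}\ast W_\beta)(s)=\frac1{2g}\int_{s-g}^{s+g}W_\beta$. Term-by-term integration of the (uniformly convergent) Fourier series gives $\frac1{2g}\int_{s-g}^{s+g}\cos(2^n\pi u)\,du=\sigma_{j,n}\cos(2^n\pi s)$, where $\sigma_{j,n}=\sin(2^{n-j}\pi)/(2^{n-j}\pi)$; thus $\sigma_{j,n}=0$ for $n\ge j$ and $\sigma_{j,n}\in[2/\pi,1)$ for $n<j$. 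Using the self-similarity $W_\beta(u)=\sum_{n<j}2^{-n\beta}\cos(2^n\pi u)+g^\beta W_\beta(2^ju)$, together with $\frac1{2g}\int_{s-g}^{s+g}W_\beta(2^ju)\,du=0$ (since $W_\beta$ has period $2$ and mean zero), one obtains
\[
F(s):=(K_{R,g}\ast W_\beta)(s)-W_\beta(s)=\underbrace{\sum_{n=0}^{j-1}2^{-n\beta}(\sigma_{j,n}-1)\cos(2^n\pi s)}_{=:\,L(s)}-\,g^\beta W_\beta(2^js).
\]
The elementary inequality $0<1-\sin x/x\le x^2/6$ on $(0,\pi/2]$, applied with $x=\pi2^{n-j}$, yields $|L(s)|\le\frac{\pi^2}{6}\sum_{n<j}2^{-n\beta}4^{n-j}<\frac{\pi^2}{6}g^\beta$ for all $s$ and all $\beta\le1$. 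Next, the closed interval $[t-(h-g),\,t+(h-g)]$ has length $2(h-g)\ge2g=2^{1-j}$ — this is precisely where $g\le h/2$ enters — hence contains some $s_1$ with $2^js_1\in2\mathbb{Z}$; for such $s_1$ every $2^n\pi s_1$ with $n\ge j$ is an even multiple of $\pi$, so $W_\beta(2^js_1)=W_\beta(0)=(1-2^{-\beta})^{-1}\ge2$. Therefore $F(s_1)<-(1-2^{-\beta})^{-1}g^\beta+\frac{\pi^2}{6}g^\beta\le-(2-\frac{\pi^2}{6})g^\beta<0$, and since $F$ is continuous and $B(t,h-g)$ is dense in $[t-(h-g),\,t+(h-g)]$,
\[
\sup_{s\in B(t,h-g)}|F(s)|\ \ge\ |F(s_1)|\ >\ \Big(2-\frac{\pi^2}{6}\Big)g^\beta\ >\ \Big(\frac4\pi-1\Big)g^\beta,
\]
the last inequality being equivalent to $3-\frac{\pi^2}{6}-\frac4\pi>0$.

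I expect the main obstacle to be the exact Fourier identity for $F$ — in particular verifying that the frequencies $\ge j$ collapse precisely to $-g^\beta W_\beta(2^js)$ (self-similarity) and that their convolution average vanishes — after which the estimate of $L(s)$ and the choice of $s_1$ are routine bookkeeping. The only genuinely delicate quantitative point is the closing numerical inequality $2-\pi^2/6>4/\pi-1$, whose slack is only about $0.08$; if a safer margin were wanted, one would instead take a dyadic point $s_1$ of the form $\ell\,2^{1-j}$ for which $W_\beta(2^js_1)$ is maximal \emph{and} $L(s_1)\le0$, at the price of one short extra computation.
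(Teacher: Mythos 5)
Your proof is correct. The H\"older part is the paper's argument verbatim (dyadic split at $N\approx\log_2(1/|\delta|)$, Lipschitz bound below, trivial bound above), so there is nothing to compare there. For the bias lower bound you take a genuinely different and, in my view, cleaner route. The paper also evaluates at a point $s_0$ with $2^j s_0\in2\Z$ (guaranteed to exist in $[t-(h-g),t+(h-g)]$ by $g\le h/2$, exactly as in your argument), but then works term by term with the quantities $I_n(s_0,g)=\cos(2^n\pi s_0)\bigl(\sin(2^n\pi g)/(2^n\pi g)-1\bigr)$, pairing the blocks $0\le n\le k-3$ against $k+1\le n\le 2k-2$ to show their sum is nonpositive, and finishing with a three-case analysis on the signs of $\cos(2^{k-1}\pi s_0)$ and $\cos(2^{k-2}\pi s_0)$ for the remaining indices $n\in\{k-2,k-1,k\}$. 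You instead observe that the frequencies $n\ge j$ collapse exactly to $-g^{\beta}W_{\beta}(2^js)$ (self-similarity plus the vanishing of the convolution of the rescaled, mean-zero, $2$-periodic tail), and then control the entire low-frequency remainder $L(s)$ by the single uniform bound $|L(s)|<\tfrac{\pi^2}{6}\sum_{n<j}2^{-n\beta}4^{n-j}<\tfrac{\pi^2}{6}g^{\beta}$, valid for all $s$ and all $\beta\le1$. This eliminates the case analysis entirely and yields the slightly better constant $2-\pi^2/6\approx0.355$ versus the paper's $4/\pi-1\approx0.273$; the closing numerical inequality $3-\pi^2/6-4/\pi>0$ is tight but correct, and your passage from the closed interval to the open ball $B(t,h-g)$ via continuity is also handled properly. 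The only price of your approach is that it leans on the exact self-similarity and zero-mean periodicity of $W_\beta$, so it is slightly less template-like for other kernels, but for the rectangular kernel it is a clean improvement in exposition.
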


\noindent The whole scale of parameters $\beta \in[\beta_*,1]$ in Proposition \ref{nowhere_dense}  can be covered by passing over from H\"older classes to H\"older-Zygmund classes in the definition of $\mathscr P_n$. %H\"older-Zygmund classes continuously extend the H\"older classes, \todo{Referenz} see for instance \cite{cohen_daubechies_vial1993}, and these classes are equivalent for $\beta \notin \N$. 
Although the Weierstra{\ss}  function $W_1$ in \eqref{wf} is not Lip\-schitz, a classical result, see \cite{heurteaux2005} or \cite{mauldin_williams1986} and references therein, states that $W_1$ is indeed contained in the Zygmund class $\Lambda_1$. That is, it satisfies
\begin{align*}
	\vert W_1(x+h) - W_1(x-h) - 2W_1(x) \vert \leq C \vert h \vert
\end{align*}
for some $C>0$ and for all $x \in \R$ and for all $h>0$. Due to the symmetry of the rectangular kernel $K_R$, it therefore fulfills the bias upper bound
$$
\left\Vert  K_{R,g} \ast W_{1} - W_{1} \right\Vert_{\sup} \leq C'g^{\beta}\ \ \ \text{for all $g\in (0,1]$}.
$$
The local adaptivity theory can be likewise developed on the scale of H\"older-Zygmund rather than H\"older classes -- here, we restrict attention to H\"older classes because they are commonly considered in the theory of kernel density estimation.

\subsection{Construction of the confidence band} \label{Subsec:construction}

The new confidence band is based on a kernel density estimator with variable bandwidth incorporating a localized but not the fully pointwise \cite{lepski1990} bandwidth selection procedure. A suitable discretization and a locally constant approximation allow to piece the pointwise constructions together in order to obtain a continuum of confidence statements. The complex construction makes the asymptotic calibration of the confidence band to the level $\alpha$ non-trivial. Whereas the analysis of the related globally adaptive procedure of \cite{gine_nickl2010} reduces to the limiting distribution of the supremum of a stationary Gaussian process, our locally adaptive approach leads to a highly non-stationary situation. An essential component is therefore the identification of a stationary process as a least favorable case by means of Slepian's comparison inequality.

\medskip
\noindent We now describe the procedure. The interval $[0,1]$ is discretized into equally spaced grid points, which serve as evaluation points for the locally adaptive estimator.
% Although we are concerned with local adaptation issues and the adaptive estimator is consequently based on a $t$-dependent bandwidth, the resulting confidence band turns out to be easily computable as it appealingly only involves finitely many evaluations of the estimator. 
 We discretize by a mesh of width
\begin{align*}
	\delta_n = \left\lceil 2^{\frac{j_{\min}}{\beta_*}} \left(\frac{\log \tn}{\tn} \right)^{-\kappa_1} (\log \tn)^{\frac{2}{\beta_*}} \right\rceil^{-1}
\end{align*}
with $\kappa_1 \geq 1/(2\beta_*)$ and set $\mathcal H_n = \{ k\delta_n : k \in \Z \}$. Fix now constants
\begin{align} \label{c22}
	c_1 > \frac{2}{\beta_* \log 2} \quad \text{and}Ê\quad \kappa_2 > c_1 \log 2 + 4.
\end{align}
Consider the set of bandwidth exponents
\begin{align*}
	\mathcal J_n = \left\{ j \in \mathbb{N} \ : \  j_{\min} \leq j \leq j_{\max} = \left\lfloor \log_2 \left(\frac{\tn}{(\log \tn)^{\kappa_2}}\right) \right\rfloor \right\}.
\end{align*} 
The bound $j_{\min}$ ensures that $2^{-j}Ê\leq \varepsilon \wedge (1/4)$ for all $j \in \mathcal J_n$, and therefore avoids that infinite smoothness in \eqref{beta_n} and the corresponding local parametric rate is only attainable in trivial cases as the interval under consideration is $[0,1]$. The bound $j_{\max}$ is standard and particularly guarantees consistency of the kernel density estimator with minimal bandwidth within the dyadic grid of bandwidths
\begin{align*} 
	\mathcal{G}_n = \Big\{ 2^{-j} : j \in \mathcal{J}_n \Big\}.
\end{align*}
We define the set of admissible bandwidths for $t \in [0,1]$ as
\begin{align} \label{setA}
\begin{split}
	\mathcal{A}_n(t) = \Big\{ j \in \mathcal{J}_n : \max_{s \in B\left(t,\frac{7}{8} \cdot 2^{-j}\right) \cap \mathcal H_n} \left\vert \hat{p}_{n}^{(2)} (s,m) - \hat{p}_{n}^{(2)}(s,m') \right\vert \leq c_2 \sqrt{\frac{\log \tn}{\tn 2^{- m}}} \\ \text{ for all } m,m' \in \mathcal{J}_n \text{ with } m > m' > j+2 \Big\},
\end{split}
\end{align}
with constant $c_2 = c_2(A, \nu, \beta_*, L^*, K,\varepsilon)$ specified in the proof of Proposition \ref{bw_band}. Furthermore, let
\begin{align} \label{lepski_bw}
	\hat{j}_n(t) = \min \mathcal{A}_n(t), \quad t \in [0,1],
\end{align}
and $\hat h_n(t) = 2^{-\hat j_n(t)}$. Note that a slight difference to the classical Lepski procedure is the additional maximum in \eqref{setA}, which reflects the idea of adapting localized but not completely pointwise for fixed sample size $n$. The bandwidth \eqref{lepski_bw} is determined for all mesh points $k\delta_n, k \in T_n = \{1, \ldots, \delta_n^{-1} \}$ in $[0,1]$, and set piecewise constant in between. Accordingly, with
\begin{align*}
	\hat h_{n,1}^{loc}(k) &= 2^{-\hat j_n((k-1) \delta_n) - u_n}, \quad \hat h_{n,2}^{loc}(k) = 2^{-\hat j_n(k \delta_n) - u_n},
\end{align*}
where $u_n = c_1 \log \log \tn$ is some sequence implementing the undersmoothing, the estimators are defined as
\begin{align} \label{loc_const}
\begin{split}
	\hat h_n^{loc}(t) &= \hat h_{n,k}^{loc} = \min \left\{ \hat h_{n,1}^{loc}(k), \hat h_{n,2}^{loc}(k) \right\} \qquad \text{and} \\
	\hat p_n^{loc}(t,h) &= \hat p_n^{(1)}(k\delta_n,h)
\end{split}
\end{align}
for $t \in I_k = [(k-1) \delta_n, k \delta_n)$, $k \in T_n \setminus \{Ê\delta_n^{-1}Ê\}$, $I_{\delta_n^{-1}} = [1- \delta_n, 1]$. The following theorem lays the foundation for the construction of honest and locally adaptive confidence bands.

\begin{theorem}[Least favorable case] \label{limitdistribution}
For the estimators defined in \eqref{loc_const} and normalizing sequences
\begin{align*} 
	a_n = c_3 (-2 \log \delta_n)^{1/2}, \quad b_n =  \frac{3}{c_3} \left\{ (-2 \log \delta_n)^{1/2} - \frac{\log(-\log \delta_n) + \log 4\pi}{2 (-2\log \delta_n)^{1/2}} \right\},
\end{align*} 
with $c_3 = \sqrt{2}/TV(K)$, it holds
\begin{align*}
		&\liminf_{n \to \infty} \inf_{p \in \mathscr P_n} \P_p^{\otimes n} \left( a_n \left( \sup_{t \in [0,1]} \sqrt{\tn \hat h_n^{loc}(t)} \left\vert \hat p_n^{loc}(t, \hat h_n^{loc}(t)) - p(t) \right\vert - b_n \right) \leq x \right) \\
		&\hspace{8cm} \geq 2\,\P\Big(\sqrt{L^*} G \leq x\Big)-1
\end{align*}
for some standard Gumbel distributed random variable $G$.
\end{theorem}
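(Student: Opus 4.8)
The plan is to reduce the distribution of the normalized supremum to that of the supremum of a stationary Gaussian process, and then identify an asymptotically least favorable case within $\mathscr P_n$ by a Slepian-type comparison argument. I would proceed in four stages.

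\emph{Stage 1: Bandwidth localization.} The first step is to control the data-driven bandwidths $\hat h_n^{loc}(t)$. Invoking the as-yet-unstated Proposition~\ref{bw_band} (on the event defining the admissible bandwidth set $\mathcal A_n(t)$), I would argue that, uniformly over $p \in \mathscr P_n$ and with $\P_p^{\otimes n}$-probability tending to one, $\hat j_n(t)$ is sandwiched between a lower bound dictated by the bias lower bound condition \eqref{admissible_density} (so that the localized Lepski criterion cannot choose too small a bandwidth without being detected) and the oracle bandwidth corresponding to the local regularity $\beta_p(B(t,u))$ from Lemma~\ref{remark2}. Because of the undersmoothing sequence $u_n = c_1 \log\log\tn$, the bias term of $\hat p_n^{loc}(t,\hat h_n^{loc}(t))$ is asymptotically negligible relative to $b_n^{-1}\sqrt{\tn \hat h_n^{loc}(t)}$ times the stochastic term; this is why undersmoothing is built into the construction. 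The net effect is that $\sqrt{\tn \hat h_n^{loc}(t)}|\hat p_n^{loc}(t,\hat h_n^{loc}(t)) - p(t)|$ is, up to $o_{\P}(1/a_n)$ uniformly in $t$, equal to the centered stochastic fluctuation $\sqrt{\tn \hat h_n^{loc}(t)}|\hat p_n^{(1)}(k\delta_n,\hat h_n^{loc}(t)) - \E_p^{\chi_1}\hat p_n^{(1)}(k\delta_n,\hat h_n^{loc}(t))|$ evaluated at the random (but $\chi_2$-measurable, hence independent of $\chi_1$) bandwidth.

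\emph{Stage 2: Gaussian approximation.} Conditionally on $\chi_2$, the estimator $\hat p_n^{(1)}$ is a sum of i.i.d.\ bounded functions indexed by the finite grid $\mathcal H_n$ and the finite bandwidth set $\mathcal G_n$, with VC-type covering bound \eqref{covering_K}. I would apply the non-asymptotic coupling of \cite{chernozhukov_chetverikov_kato2014Approx} to replace the empirical supremum by the supremum of a Gaussian process $\mathbb G_p$ with the same covariance structure, with an approximation error that is $o_{\P}(1/a_n)$ after multiplication by $a_n$ — here the polynomial-in-$\tn$ size of $\delta_n^{-1}$ and the choice of $j_{\max}$ keep the CCK rate under control. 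The limiting object is then $a_n(\sup_{t}|\mathbb G_p(t,\hat h_n^{loc}(t))| - b_n)$ where $\mathbb G_p$ has covariance governed by $p$ and the kernel; after standardizing by $\sqrt{\tn \hat h}$ the pointwise variance is $\approx p(t)\|K\|_2^2$, and the correlation across neighboring grid points decays like that of a kernel-smoothing process, i.e.\ with effective range set by the bandwidth.

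\emph{Stage 3: Slepian comparison and the stationary least favorable case.} This is the heart of the argument and the main obstacle: because the selected bandwidth $\hat h_n^{loc}$ varies with $t$ and $p$ varies over $\mathscr P_n$, the Gaussian process $\mathbb G_p$ is genuinely non-stationary, and one cannot directly read off a limit law. The idea is to bound the correlation function of the standardized process $\mathbb G_p / \sqrt{\Var}$ from above, uniformly over $p \in \mathscr P_n$ and over all admissible bandwidth configurations, by the correlation function of a single \emph{stationary} Gaussian process $Z$ — the natural candidate being the stationary process arising from convolving white noise with the fixed kernel $K$ at the \emph{smallest} admissible bandwidth, rescaled so its variance is $L^*$ (this is where $L^* = \sup p$-type bounds and $\sqrt{L^*}$ in the statement enter). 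Slepian's inequality then yields $\P(\sup_t \mathbb G_p(t,\hat h_n^{loc}(t)) \le x) \ge \P(\sup_t \tilde Z(t) \le x)$ for the comparison process, and a symmetrization/union-bound step upgrades this from the one-sided supremum to the absolute value, producing the factor $2\P(\cdot) - 1$ in the conclusion. The delicate points are (i) verifying the correlation-domination inequality uniformly — it must hold after the common variance normalization, which requires the lower bound $p \ge M$ from the definition of $\mathscr P_n$ and careful handling of the points where $\hat j_n$ jumps between mesh cells $I_k$; and (ii) ensuring the number of "effective" grid points over which the supremum is taken matches $-\log\delta_n$ so that the extreme-value normalization is correct.

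\emph{Stage 4: Extreme value limit.} Finally, for the stationary comparison process $\tilde Z$ with variance $L^*$, correlation decaying fast enough (exponentially, as the kernel has compact support and bounded variation), and $\delta_n^{-1}$ evaluation points, I would invoke the classical Gumbel limit theorem for maxima of stationary Gaussian sequences (of the Berman / Leadbetter type, adapted to the triangular-array setting as in \cite{gine_nickl2010}): with $a_n = c_3(-2\log\delta_n)^{1/2}$ and $b_n$ as given, $a_n(\sup_t \tilde Z(t)/\sqrt{L^*} - b_n/\sqrt{L^*}) \Rightarrow G$ standard Gumbel, so that $\sqrt{L^*}^{-1} a_n(\sup_t \tilde Z(t) - b_n) \Rightarrow G$; the constant $c_3 = \sqrt2/\TV(K)$ is precisely what calibrates the level-crossing intensity of the kernel-smoothed process. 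Combining Stages 1–4 gives the stated $\liminf$ lower bound, uniformly over $\mathscr P_n$. I expect Stage 3 — making the Slepian domination rigorous and uniform across the data-dependent bandwidth selection and across $\mathscr P_n$ — to be by far the most technical step; the rest is a careful but standard assembly of Lepski-type bandwidth control, CCK Gaussian coupling, and classical Gaussian extreme value theory.
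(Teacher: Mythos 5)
Your outline reproduces the paper's architecture step for step: bias negligibility via Proposition~\ref{bw_band} and the undersmoothing sequence $u_n$; a conditional-on-$\chi_2$ Gaussian coupling of the grid maximum via \cite{chernozhukov_chetverikov_kato2014Approx} over a VC class built from \eqref{covering_K}; a Slepian comparison to a stationary least favorable case; and an extreme value limit, with the factor $2\,\P(\cdot)-1$ coming from treating the two one-sided exceedance events separately. The divergence — and the genuine gap — is in how Stage 3 is realized, because your concrete candidate for the comparison process would not deliver the stated normalizing constants.

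The paper's quantitative input is a \emph{uniform bound on the increments} of the density-normalized Gaussian process $Y_{n,p}(k)=(\hat h_{n,k}^{loc})^{-1/2}\int K\big((k\delta_n-x)/\hat h_{n,k}^{loc}\big)\,dW(x)$: writing the bounded-variation kernel as $K(x)=\int_{-1}^x g\,dP$ with $P$ a symmetric probability measure and $g$ odd, $\vert g\vert\le TV(K)$, and applying Cauchy--Schwarz yields $\E\big(Y_{n,p}(k)-Y_{n,p}(l)\big)^2\le TV(K)^2$ for all $k\neq l$, uniformly over $p\in\mathscr P_n$ and over the selected bandwidths. The comparison process is then the kernel process at bandwidth $\delta_n/2$ — not the smallest admissible bandwidth $2^{-j_{\max}}$, and not rescaled to variance $L^*$ — multiplied by $TV(K)/\Vert K\Vert_2$, so that its values at the $\delta_n^{-1}$ grid points are exactly i.i.d.\ $N(0,TV(K)^2/2)$ with constant increments $TV(K)^2$. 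Slepian is applied after equalizing second moments by \emph{adding a common independent Gaussian} to both processes (dividing by the standard deviation, as your ``common variance normalization'' suggests, would destroy the increment domination), and Stage 4 then reduces to the classical Gumbel limit for $\delta_n^{-1}$ i.i.d.\ standard normals rather than a Berman-type theorem for dependent stationary sequences; this is precisely where $c_3=\sqrt 2/TV(K)$ comes from. The $\sqrt{L^*}$ in the statement does not enter through the variance of the comparison process but through the earlier normalization of the empirical process by $\sqrt{p(k\delta_n)}\le\sqrt{L^*}$, which rescales the threshold $x$ to $x/\sqrt{L^*}$. Your proposed process (smallest admissible bandwidth, variance $L^*$) has constant increments $2L^*$, the required domination $2L^*\ge TV(K)^2$ is not guaranteed by the assumptions, and even when it holds the resulting centering and scaling would depend on $L^*$ instead of $TV(K)$, so the stated $a_n,b_n$ would not emerge. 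The missing idea is thus the total-variation bound on the increments together with the matched i.i.d.\ comparison process at mesh scale; the remainder of your plan matches the paper.
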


\noindent The proof of Theorem \ref{limitdistribution} is based on several completely non-asymptotic approximation techniques. The asymptotic Koml\'os-Major-Tusn\'ady-approximation technique, used in \cite{gine_nickl2010}, has been evaded using non-asymptotic Gaussian approximation results recently developed in \cite{chernozhukov_chetverikov_kato2014Approx}. The essential component of the proof of Theorem \ref{limitdistribution} is the application of Slepian's comparison inequality to reduce considerations from a non-stationary Gaussian process to the least favorable case of a maximum of $\delta_n^{-1}$ independent and identical standard normal random variables. 

\noindent With $q_{1-\alpha/2}$ denoting the $(1-\alpha/2)$-quantile of the standard Gumbel distribution, we define the confidence band as the family of piecewise constant random intervals $C_{n,\alpha} = (C_{n,\alpha}(t))_{t \in [0,1]}$ with
\begin{align} \label{conf_band}
\begin{split}
	C_{n,\alpha}(t) = \left[ \hat p_n^{loc}(t, \hat h_n^{loc}(t)) - \frac{q_n(\alpha)}{\sqrt{\tn \hat h_n^{loc}(t)}}, \ \ \hat p_n^{loc}(t, \hat h_n^{loc}(t)) + \frac{q_n(\alpha)}{\sqrt{\tn \hat h_n^{loc}(t)}} \right]
\end{split}
\end{align}
and
\begin{align*}
	q_n(\alpha) = \frac{\sqrt{L^*} \cdot q_{1-\alpha/2}}{a_n} + b_n.
\end{align*}
For fixed $\alpha > 0$, $q_n(\alpha) = O(\sqrt{\log n})$ as $n$ goes to infinity.

%\begin{align*}
%		&\lim_{n\to\infty}\sup_{p \in \mathscr P_n} \P_p^{\otimes n} \Big( p(t) \in C_n(p,t) \text{ for all } t \in [0,1] \Big) \\
%	&\quad = \lim_{n\to\infty}\sup_{p \in \mathscr P_n} \P_p^{\otimes n} \left( A_n \left( \sup_{t \in [0,1]} \frac{\left\vert \hat p_n^{loc}(t, \hat h_n^{loc}(t)) - p(t) \right\vert}{\hat z_n(t)} - 3B_n \right) \leq q_{1-\frac{\alpha}{2}} \right) \\
%	&\quad \geq 1-\alpha.
%\end{align*}

\begin{corollary}[Honesty]
The confidence band as defined in \eqref{conf_band} satisfies
	\begin{align*}
\liminf_{n\to\infty}Ê\inf_{p \in \mathscr P_n} \P_p^{\otimes n} \Big( p(t) \in C_{n,\alpha}(t) \text{ for every } t \in [0,1] \Big) \, \geq \,1-\alpha.
	\end{align*}
\end{corollary}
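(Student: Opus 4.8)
The plan is to derive the Honesty Corollary directly from Theorem~\ref{limitdistribution} by translating the statement about the supremum of the normalized deviation into a coverage statement for the band~\eqref{conf_band}. First I would observe that, by the very definition of $C_{n,\alpha}(t)$ in~\eqref{conf_band}, the event
\begin{align*}
	\Big\{ p(t) \in C_{n,\alpha}(t) \text{ for every } t \in [0,1] \Big\}
\end{align*}
coincides with
\begin{align*}
	\left\{ \sup_{t \in [0,1]} \sqrt{\tn \hat h_n^{loc}(t)} \left\vert \hat p_n^{loc}(t, \hat h_n^{loc}(t)) - p(t) \right\vert \leq q_n(\alpha) \right\},
\end{align*}
since the half-width of the interval at $t$ is exactly $q_n(\alpha)/\sqrt{\tn \hat h_n^{loc}(t)}$ and $p$ is constant on each $I_k$ only in the sense that $\hat p_n^{loc}$ is evaluated at the mesh point, so one must be slightly careful: the band is piecewise constant in $t$ while $p$ is not, hence the "for every $t$" over $[0,1]$ reduces to "for every mesh point $k\delta_n$" up to the locally constant approximation error, which has to be controlled (this is presumably handled by supplementary results in Section~\ref{sec:aux} bounding $\sup_{t\in I_k}|p(t)-p(k\delta_n)|$ against the band width via the $\beta_*$-Hölder regularity of $p$ on $(-\varepsilon,1+\varepsilon)$). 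Granting that the supremum in Theorem~\ref{limitdistribution} already incorporates this reduction, the identification of events is immediate.

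Next I would unwind the definition $q_n(\alpha) = \sqrt{L^*}\,q_{1-\alpha/2}/a_n + b_n$. Rearranging,
\begin{align*}
	\left\{ \sup_{t \in [0,1]} \sqrt{\tn \hat h_n^{loc}(t)} \left\vert \hat p_n^{loc}(t, \hat h_n^{loc}(t)) - p(t) \right\vert \leq q_n(\alpha) \right\}
\end{align*}
equals
\begin{align*}
	\left\{ a_n\left( \sup_{t \in [0,1]} \sqrt{\tn \hat h_n^{loc}(t)} \left\vert \hat p_n^{loc}(t, \hat h_n^{loc}(t)) - p(t) \right\vert - b_n \right) \leq \sqrt{L^*}\, q_{1-\alpha/2} \right\},
\end{align*}
because $a_n > 0$. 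Applying Theorem~\ref{limitdistribution} with $x = \sqrt{L^*}\, q_{1-\alpha/2}$ gives
\begin{align*}
	\liminf_{n\to\infty} \inf_{p \in \mathscr P_n} \P_p^{\otimes n}\Big( p(t) \in C_{n,\alpha}(t) \text{ for every } t \in [0,1] \Big) \geq 2\,\P\big(\sqrt{L^*}\, G \leq \sqrt{L^*}\, q_{1-\alpha/2}\big) - 1.
\end{align*}
Since $\sqrt{L^*} > 0$, the event $\{\sqrt{L^*}\, G \leq \sqrt{L^*}\, q_{1-\alpha/2}\}$ is $\{G \leq q_{1-\alpha/2}\}$, which by the definition of the $(1-\alpha/2)$-quantile of the standard Gumbel law has probability $1-\alpha/2$. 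Hence the right-hand side equals $2(1-\alpha/2) - 1 = 1 - \alpha$, which is exactly the claimed bound.

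The genuinely substantive content is entirely in Theorem~\ref{limitdistribution}; the corollary is a one-line consequence modulo the bookkeeping of the locally constant approximation. Thus the main obstacle, if any, is purely presentational: making sure that the supremum over $t \in [0,1]$ in Theorem~\ref{limitdistribution} is formulated so that it already dominates the coverage event for the piecewise-constant band (i.e., that the approximation of $p$ on the intervals $I_k$ by its value at $k\delta_n$ is absorbed), and that the quantile bookkeeping $2(1-\alpha/2)-1 = 1-\alpha$ is stated cleanly. No additional probabilistic argument is needed beyond invoking the theorem and the elementary identity for the Gumbel quantile.
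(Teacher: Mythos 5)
Your proposal is correct and is exactly the intended derivation: the paper states the corollary without proof because the coverage event is, by construction of $q_n(\alpha)$, identical to the event in Theorem~\ref{limitdistribution} with $x=\sqrt{L^*}\,q_{1-\alpha/2}$, giving $2(1-\alpha/2)-1=1-\alpha$. Your hedging about the piecewise-constant approximation is unnecessary — the theorem's supremum already involves the true $p(t)$ for all $t\in[0,1]$, so the identification of events is exact and the approximation issue is handled inside the proof of the theorem itself.
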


\subsection{Local H\"older regularity and local adaptivity} \label{subsec:3.3}
In the style of global adaptivity in connection with confidence sets one may call a confidence band $C_{n,\alpha} = (C_{n,\alpha}(t))_{t \in [0,1]}$ locally adaptive if for every interval $U \subset [0,1]$,
\begin{align*}
	\limsup_{n \to \infty}\sup_{p \in {\mathscr {P}_n}_{\arrowvert U_{\delta}} \cap \mathcal H_{\beta^*,U_{\delta}}(\beta,L^*)} \P_p^{\chi_2} \Big( \vert C_{n,\alpha}(t) \vert \geq \eta \cdot  r_n(\beta) \text{ for some }Êt \in U \Big) \rightarrow 0
\end{align*}
as $\eta \to \infty$, for every $\beta \in [\beta_*,\beta^*]$, where $U_{\delta}$ is the open $\delta$-enlargement of $U$. As a consequence of the subsequently formulated Theorem \ref{loc_adapt}, our confidence band satisfies this notion of local adaptivity up to a logarithmic factor. However, in view of the imagination illustrated in Figure \ref{fig:tri_den} the statistician aims at a stronger notion of adaptivity, where the asymptotic statement is not formulated for an arbitrary but fixed interval $U$ only. Precisely, the goal would be to adapt even to some pointwise or local H\"older regularity, two well established notions from analysis.

\begin{definition}[Pointwise H\"older exponent, \cite{seuret_levyvehel2002}]
Let $p: \R \to \R$ be a function, $\beta>0$, $\beta \notin \N$, and $t \in \R$. Then $p \in \mathcal H_t(\beta)$ if and only if there exists a real $R>0$, a polynomial $P$ with degree less than $\lfloor \beta \rfloor$, and a constant $c$ such that
\begin{align*}
	\vert p(x) - P(x-t) \vert \leq c \vert x-t \vert^{\beta}
\end{align*}
for all $x \in B(t,R)$. The pointwise H\"older exponent is denoted by
\begin{align*}
	\beta_p(t) = \sup \{ \beta : p \in \mathcal H_t(\beta) \}.
\end{align*}
\end{definition}

\begin{definition}[Local H\"older exponent, \cite{seuret_levyvehel2002}] \phantom{a} \\
Let $p: \Omega \to \R$ be a function and $\Omega \subset \R$ an open set. One classically says that $p \in \mathcal H_{loc}(\beta, \Omega)$, where $0<\beta<1$, if there exists a constant $c$ such that
\begin{align*}
	\left\vert p(x) - p(y) \right\vert \leq c \vert x-y \vert^{\beta}
\end{align*}
for all $x,y \in \Omega$. If $m < \beta < m+1$ for some $m \in \N$, then $p \in \mathcal H_{loc}(\beta,\Omega)$ means that there exists a constant $c$ such that 
\begin{align*}
	\left\vert \partial^m p(x) - \partial^m p(y) \right\vert \leq c \vert x-y \vert^{\beta-m}
\end{align*}
for all $x,y \in \Omega$. Set now
\begin{align*}
	\beta_p(\Omega) = \sup \{ \beta : p \in \mathcal H_{loc}(\beta,\Omega) \}. 
\end{align*}
Finally, the local H\"older exponent in $t$ is defined as
\begin{align*}
	\beta_p^{loc}(t) = \sup \{ \beta_p(O_i) : i \in I \},
\end{align*}
where $(O_i)_{i \in I}$ is a decreasing family of open sets with $\cap_{i \in I} O_i = \{ t \}$. [By Lemma~2.1 in \cite{seuret_levyvehel2002}, this notion is well defined, that is, it does not depend on the particular choice of the decreasing sequence of open sets.]
\end{definition}

\noindent The next proposition however shows that attaining the minimax rates of convergence corresponding to the pointwise or local H\"older exponent (possibly inflated by some logarithmic factor) uniformly over $\mathscr{P}_n$ is an unachievable goal.

\begin{proposition} \label{optimal_adaptation}
For the rectangular kernel $K_R$ there exists some constant $M>0$, such that for any $t \in [0,1]$, for any $\beta \in [\beta_*,1]$, for any $0<\varepsilon<1$, and for any $k \geq k_0(\beta_*)$ there exists some $x>0$ and constants $L=L(\beta)>0$ and $c_4°	 = c_4°(\beta)>0$ with
\begin{align*}
	\inf_{T_n} \sup_{p \in \mathscr S_k(\beta)} \P_p^{\otimes n} \left( n^{\frac{\beta}{2\beta+1}} \left\vert T_n(t) - p(t) \right\vert \geq x \right) > 0\ \ \ \text{for all $k \geq k_0(\beta_*)$}
\end{align*}
with
\begin{align*}
	\mathscr S_k(\beta) &= \mathscr S_k(L,\beta,\beta_*,M,K_R,\varepsilon)  \\
	&= \Big\{ p \in \mathscr P_k(K_R,\beta_*,L,\varepsilon,M) : \exists \, r \geq c_4° \, n^{-\frac{1}{2\beta+1}} \\
	&\hspace{2.5cm} \text{ such that } \, p_{\vert B(t,r)} \in \mathcal H_{B(t,r)}(\infty,L) \Big\} \cap \mathcal H_{(-\varepsilon,1+\varepsilon)}(\beta,L),
\end{align*}
where the infimum is running over all estimators $T_n$ based on $X_1,\dots, X_n$.
\end{proposition}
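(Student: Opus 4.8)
The plan is to establish the lower bound by a two-point (or many-point) hypothesis argument in the spirit of the classical pointwise lower bounds of \cite{low1997}, but with the hypotheses engineered so that \emph{all} of them lie in $\mathscr S_k(\beta)$ — in particular so that each perturbed density remains admissible in the sense of Assumption \ref{self_sim} and simultaneously possesses a small interval $B(t,r)$ of infinite smoothness with $r \gtrsim n^{-1/(2\beta+1)}$. First I would fix a base density $p_0$ that is admissible, bounded below by $M$ on $[-\varepsilon,1+\varepsilon]$, and that is a fixed ``rough'' density (e.g.\ built from a rescaled Weierstra{\ss} function $W_\beta$ as in Lemma \ref{weier_lemma}, glued smoothly to a constant near the boundary) so that the bias lower bound \eqref{admissible_density} holds for exponent $\beta$ on every relevant ball. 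Crucially, $p_0$ should already be flat (locally polynomial of degree $0$, i.e.\ constant) on a fixed interval around $t$ of length a fixed constant, so that $p_{0\vert B(t,r)}\in\mathcal H_{B(t,r)}(\infty,L)$ trivially for that fixed radius; shrinking $r$ to order $n^{-1/(2\beta+1)}$ only makes this easier and leaves room for the perturbation.

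Next I would introduce the competing hypothesis $p_1 = p_0 + \gamma_n\,\phi\big((\cdot - t)/h_n\big)$, where $\phi$ is a fixed smooth, compactly supported, mean-zero bump, $h_n \asymp n^{-1/(2\beta+1)}$ is the pointwise-optimal bandwidth, and $\gamma_n \asymp h_n^{\beta} \asymp n^{-\beta/(2\beta+1)}$ is chosen so that $p_1 - p_0$ has $\mathcal H(\beta,\cdot)$-norm bounded by a constant while $|p_1(t) - p_0(t)| \asymp n^{-\beta/(2\beta+1)}$. The separation at $t$ is then of exactly the claimed order $n^{-\beta/(2\beta+1)}$, and a standard bound on $\chi^2$- or Hellinger-affinity between $\P_{p_0}^{\otimes n}$ and $\P_{p_1}^{\otimes n}$ — using $n\gamma_n^2 h_n \asymp 1$ and the lower bound $p_0\ge M>0$ — shows the two product measures are not asymptotically mutually singular, so no estimator can separate them with vanishing error; this yields the strictly-positive-probability conclusion via the standard reduction (e.g.\ Le Cam's two-point lemma). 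The bump must be placed off $t$ by a tiny margin, or taken with a zero of high order at the center, so that $p_1$ still has an infinitely smooth piece $B(t,r)$ with $r$ of the required order — one can, for instance, support $\phi$ in $[1/2,1]$ rescaled, so the perturbation lives in $B(t,h_n)\setminus B(t,h_n/2)$ and $p_1$ coincides with the (locally constant, hence $C^\infty$) $p_0$ on $B(t,h_n/2)$, giving $r \asymp h_n/2 \asymp n^{-1/(2\beta+1)}$ as needed; this also keeps $p_1$ admissible since the only balls where the bias-lower-bound exponent could change are those contained in $B(t,h_n)$, and Assumption \ref{self_sim} need not hold for \emph{every} $(t,h)$ (cf.\ the remark following Lemma \ref{remark2}) but only for a suitable $u\in\{h,2h\}$, which can be arranged by choosing the admissible alternative scale.

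The main obstacle, as the remark after Proposition \ref{low_ptw_riskbound} already signals, is precisely \emph{verifying admissibility of both hypotheses simultaneously}: the textbook perturbation argument produces densities that need not satisfy the localized self-similarity / bias-lower-bound condition \eqref{admissible_density} with a sample-size-independent constant, and here we additionally must carry along the $C^\infty$-window $B(t,r)$. I would handle this by the Weierstra{\ss}-type construction of Lemma \ref{weier_lemma} (which gives the bias lower bound $\gtrsim g^\beta$ uniformly, hence certainly $\ge g^\beta/\log k$ eventually) on the region away from $t$, a genuinely flat $C^\infty$ window near $t$ where \eqref{assumption1}--\eqref{admissible_density} are satisfied with $\beta=\infty$, and smooth interpolation on a thin transition layer, checking that on every ball $B(s,u)$ with $s\in[0,1]$, $u\in\mathcal G_\infty$, one can pick $u'\in\{u,2u\}$ and an exponent $\beta'\in[\beta_*,\beta^*]\cup\{\infty\}$ for which both conditions hold — using that $\beta_*<1$ and restricting to $\beta\le 1$ exactly so that the Weierstra{\ss} regularity matches. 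The perturbation $\gamma_n\phi((\cdot-t)/h_n)$, being $O(h_n^\beta)$ in $\mathcal H(\beta,\cdot)$-norm and supported away from $t$, perturbs the bias quantities in \eqref{admissible_density} by at most $O(h_n^\beta)=o(g^\beta/\log k)$ whenever $g\gg h_n$, and on scales $g\lesssim h_n$ the relevant balls fall inside the $C^\infty$-window or the support of the bump where one argues directly; so admissibility is preserved for $p_1$ for all large $k$. Once both hypotheses are certified to belong to $\mathscr S_k(\beta)$ for $k\ge k_0(\beta_*)$, the Le Cam bound closes the argument with $x>0$ any fixed fraction of $|p_1(t)-p_0(t)|\,n^{\beta/(2\beta+1)}$.
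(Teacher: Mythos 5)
There is a genuine gap at the heart of your two-point construction: the separation at $t$ and the $C^\infty$-window at $t$ are mutually exclusive the way you have set things up. You propose to support the bump $\phi$ in an annulus $B(t,h_n)\setminus B(t,h_n/2)$ (or to give it a high-order zero at the center) so that $p_1$ coincides with the locally constant $p_0$ on $B(t,h_n/2)$; but then $p_1(t)=p_0(t)$ and the two hypotheses are not separated at the evaluation point at all, so Le Cam's lemma yields nothing. The alternative — a bump with $\phi(0)\neq 0$ centered at $t$ — restores the separation $\gamma_n\asymp h_n^{\beta}$ but destroys membership in $\mathcal H_{B(t,r)}(\infty,L)$ for a \emph{fixed} $L$: a profile of height $h_n^{\beta}$ and width $h_n$ has $k$-th derivatives of order $h_n^{\beta-k}\to\infty$, and $\mathcal H_{B(t,r)}(\infty,L)=\bigcap_{\beta'}\mathcal H_{B(t,r)}(\beta',L)$ requires all of them bounded by the same $L$. (A plateau profile constant on $B(t,h_n/2)$ could in principle reconcile the two requirements, but then one must re-verify Assumption \ref{self_sim} on every ball meeting the transition ramp, which is precisely the delicate part your sketch does not address.)

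The paper's resolution is different and is the key idea you are missing: it takes \emph{two perturbed} hypotheses $p_{1,n}$ and $p_{2,n}$, each obtained by flattening the same Weierstra{\ss}-based density to a constant on balls $B(t,g_{\beta,n})$ and $B(t,c_{18}\,g_{\beta,n})$ respectively (with mass compensated far from $t$). Both are then genuinely constant — hence in $\mathcal H_{B(t,r)}(\infty,L)$ with a fixed $L$ — on a ball of radius $\asymp n^{-1/(2\beta+1)}$, yet their constant values are $W_{\beta}(g_{\beta,n})$ versus $W_{\beta}(c_{18}\,g_{\beta,n})$ up to affine renormalization, which differ by $\gtrsim g_{\beta,n}^{\beta}$ precisely because $W_{\beta}$ saturates its H\"older modulus (Lemma \ref{weier_lemma} and display \eqref{dist_weier}). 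The remainder of your plan (admissibility of the Weierstra{\ss} base via the bias lower bound of Lemma \ref{weier_lemma}, the case analysis over balls $B(t',u)$ with $u\in\{h,2h\}$, the Kullback--Leibler computation with $n\,g_{\beta,n}^{2\beta+1}\asymp 1$ using $p\geq M$, and the reduction via Theorem 2.2 of \cite{tsybakov2009}) matches the paper, but without the two-scale flattening the argument does not close.
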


\noindent The proposition furthermore reveals that if a density $p \in \mathscr P_k$ is H\"older smooth to some exponent $\eta > \beta$ on a ball around $t$ with radius at least of the order $n^{-1/(2\beta+1)}$, then no estimator for $p(t)$ can achieve a better rate than $n^{-\beta/(2\beta+1)}$. We therefore
% FOLGERUNG AUS DER PROPOSITION: (IM TEXT SCHREIBEN)
%\begin{proposition} \label{optimal_adaptation}
%Let $\mathscr P_k = \mathscr P_k(L^*,\beta_*,M,K_R,\varepsilon)$ with $K_R$ the rectangular kernel. For any $t \in [0,1]$, for any $\beta \in [\beta_*,1]$, and for any $\eta >\beta$, there exists some $x>0$, such that
%\begin{align*}
%	\inf_{T_n} \sup_{p \in \mathscr P_k(\beta,\eta)} \P_p^{\otimes n} \left( n^{\frac{\beta}{2\beta+1}} \left\vert T_n(t) - p(t) \right\vert \geq x \right) > 0\ \ \ \text{for all $k \geq k_0(\beta_*)$}
%\end{align*}
%with
%\begin{align*}
%	\mathscr P_k(\beta,\eta) = \left\{ p \in \mathscr P_k : \exists \, r \geq \frac{1}{8} g_{\beta} \; \text{ with } \, p_{\vert B(t,r)} \in \mathcal H_{\vert B(t,r)}(\eta,L^*) \right\},
%\end{align*}
%where the infimum is running over all estimators $T_n$ based on $X_1,\dots, X_n$.
%\end{proposition} 
introduce an $n$-dependent statistical notion of local regularity for any point $t$. Roughly speaking, we intend it to be the maximal $\beta$ such that the density attains this H\"older exponent within $B(t,h_{\beta,n})$, where $h_{\beta,n}$ is of the optimal adaptive bandwidth order $(\log n/n)^{1/(2\beta+1)}$. 
%As usual, our adaptation issue relies on the restriction to some fixed range of adaptation $[\beta_*,\beta^*]$ and $[L_*,L^*]$. On a technical level, the upper bound $\beta^*$ for the H\"older exponent ensures that the kernel can be chosen of sufficiently large order to appropriately control the estimator's bias. For our purposes, an upper bound for the H\"older exponent in every location imposes severe limitations. Any function which can be represented as a polynomial in a neighborhood around some point is immediately excluded from the class, yet producing no uncontrollable bias if the polynomial degree is less than $\lfloor \beta^* \rfloor +1$. In this case, the $(\lfloor \beta^* \rfloor + 1)$-th derivative is constant and all higher derivatives vanish, such that no kernel of order higher than $\lfloor \beta^* \rfloor$ is necessary to control the bias. 
We realize this idea with $\Vert \cdot \Vert_{\beta,\beta^*,U}$ as defined in \eqref{mod_hoelder} and used in Assumption \ref{self_sim}. 
%\begin{align} \label{beta-t-norm}
%	\Vert p \Vert_{\beta,\beta^*,n,t} = \Vert p \Vert_{\beta,\beta^*,B(t,h_{\beta,n})},
%\end{align}
%where 

\begin{definition}[$n$-dependent local H\"older exponent] \label{ndep_hoelder}
With the classical optimal bandwidth within the class $\mathcal H(\beta)$
\begin{align*} 
	h_{\beta,n} = 2^{-j_{\min}} \cdot \left( \frac{\log \tn}{\tn} \right)^{\frac{1}{2\beta +1}},
\end{align*}
define the class $\mathcal H_{\beta^*,n,t}(\beta,L)$ as the set of functions $p : B(t,h_{\beta,n}) \to \R$, such that $p$ admits derivatives up to the order $\lfloor \beta \wedge \beta^* \rfloor$ and $\Vert p \Vert_{\beta,\beta^*,B(t,h_{\beta,n})} \leq L$, and $\mathcal H_{\beta^*,n,t}(\beta)$ the class of functions $p : B(t,h_{\beta,n}) \to \R$ for which $\Vert p \Vert_{\beta,\beta^*,B(t,h_{\beta,n})}$ is well-defined and finite. The $n$-dependent local H\"older exponent for the function $p$ at point $t$ is defined as
\begin{align} \label{beta_n}
	 \beta_{n,p}(t) = \sup \Big\{ \beta > 0 : p_{\vert B(t,h_{\beta,n})} \in \mathcal H_{\beta^*,n,t }(\beta,L^*) \Big\}.
\end{align}
If the supremum is running over the empty set, we set $\beta_{n,p}(t)=0$.
 \end{definition}

\noindent Finally, the next theorem shows that the confidence band adapts to the $n$-dependent local H\"older exponent.

\begin{theorem}[Strong local adaptivity] \label{loc_adapt}
There exists some $\gamma = \gamma(c_1)$, such that
\begin{align*}
	\limsup_{n \to \infty} \sup_{p \in \mathscr P_n} \P_p^{\chi_2} \left( \sup_{t \in [0,1]} \vert C_{n,\alpha}(t) \vert \cdot \left( \frac{\log \tn}{\tn} \right)^{-\frac{\beta_{n,p}(t)}{2\beta_{n,p}(t)+1}} \geq (\log \tn)^{\gamma} \right) = 0.
\end{align*}
\end{theorem}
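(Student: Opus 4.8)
The plan is to control the width $|C_{n,\alpha}(t)| = 2 q_n(\alpha)/\sqrt{\tn \hat h_n^{loc}(t)}$ uniformly in $t \in [0,1]$. Since $q_n(\alpha) = O(\sqrt{\log n})$, it suffices to establish a lower bound on the selected local bandwidth $\hat h_n^{loc}(t)$ of the form $\hat h_n^{loc}(t) \gtrsim (\log\tn/\tn)^{1/(2\beta_{n,p}(t)+1)} \cdot (\log \tn)^{-\gamma'}$, uniformly over $t$ and over $p \in \mathscr P_n$, up to an event of vanishing probability. Because $\hat h_n^{loc}(t)$ is obtained from $\hat j_n$ evaluated at the two mesh points bracketing $t$ (shifted by the undersmoothing $u_n = c_1 \log\log\tn$, which only contributes a polylog factor $2^{u_n} = (\log\tn)^{c_1\log 2}$), and since $\beta_{n,p}$ cannot jump too wildly between mesh points at distance $\delta_n$ (here one uses that $\delta_n$ is much finer than any $h_{\beta,n}$, a consequence of $\kappa_1 \geq 1/(2\beta_*)$), the problem reduces to a pointwise statement: with high probability, $\hat j_n(s) \leq j_{\beta_{n,p}(s),n} + C$ for every mesh point $s$, where $2^{-j_{\beta,n}} \asymp h_{\beta,n}$.

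The key step is therefore an oracle-type inequality for the Lepski-type selector $\hat j_n(s) = \min \mathcal A_n(s)$ defined in \eqref{setA}. Writing $\beta = \beta_{n,p}(s)$ and letting $j^\circ$ be the largest index in $\mathcal J_n$ with $2^{-j^\circ} \geq h_{\beta,n}$, I would show that $j^\circ \in \mathcal A_n(s)$ with probability tending to one, uniformly over $s$ and $p$. This is the usual Lepski argument: for $m > m' > j^\circ + 2$, decompose $\hat p_n^{(2)}(s',m) - \hat p_n^{(2)}(s',m')$ into its stochastic part and its bias part. The stochastic part is controlled, uniformly over all $s' \in B(s, \tfrac78 2^{-j^\circ}) \cap \mathcal H_n$ and all $m, m' \in \mathcal J_n$, by a uniform-in-bandwidth deviation bound for kernel density estimators — this is exactly where the covering bound \eqref{covering_K} of \cite{nolan_pollard1987} and a Talagrand/Bernstein concentration inequality enter, yielding a bound of order $\sqrt{\log\tn/(\tn 2^{-m})}$; here one fixes $c_2$ large enough (as promised in Proposition \ref{bw_band}). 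The bias part is controlled using that $p_{|B(s,h_{\beta,n})} \in \mathcal H_{\beta^*, n, s}(\beta, L^*)$, so that $\|K_m \ast p - p\|_{\sup, B} \lesssim 2^{-m\beta} \leq 2^{-j^\circ \beta} \asymp h_{\beta,n}^\beta \asymp \sqrt{\log\tn/(\tn h_{\beta,n})} \asymp \sqrt{\log\tn/(\tn 2^{-j^\circ})}$, which is dominated by the threshold for $m \geq j^\circ$; one must also check that $B(s, \tfrac78 2^{-m}) \subset B(s, h_{\beta,n}) \subset (-\varepsilon, 1+\varepsilon)$ so the Hölder control applies, which holds since $2^{-m} \leq 2^{-j^\circ} \asymp h_{\beta,n}$ and $h_{\beta,n} \to 0$. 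Combining the two parts with a triangle inequality shows $j^\circ$ satisfies the defining inequality of $\mathcal A_n(s)$, hence $\hat j_n(s) \leq j^\circ$, i.e. $\hat h_n(s) \geq 2^{-j^\circ} \asymp h_{\beta,n}$.

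The main obstacle is the genuine \emph{uniformity}: the bound must hold simultaneously over all $t \in [0,1]$ (equivalently all $\delta_n^{-1}$ mesh points, which is only polynomially many and absorbed by a union bound against the sub-Gaussian tail), over all $p \in \mathscr P_n$ (so the stochastic bound must be a supremum-norm-in-$p$ statement, handled by the VC-type entropy bound which is uniform in the underlying measure $Q$), and — most delicately — over the \emph{unknown, $s$-dependent and $p$-dependent} target exponent $\beta_{n,p}(s)$, which ranges over $[\beta_*, \beta^*] \cup \{\infty\}$. Since $\beta \mapsto j_{\beta,n}$ takes only $O(\log n)$ distinct values on the relevant grid, one discretizes the range of $\beta$, proves the oracle inequality for each value on this grid, and takes a union bound; the case $\beta_{n,p}(s) = \infty$ is handled separately and is in fact easiest, as then $\hat j_n(s)$ can be as large as $j_{\max}$, giving width of the near-parametric order $\sqrt{\log\tn/\tn}$, consistent with the claimed rate since $(\log\tn/\tn)^{-\beta/(2\beta+1)} \to (\log\tn/\tn)^{-1/2}$ as $\beta \to \infty$. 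Assembling these pieces, accounting for the $2^{u_n}$ undersmoothing factor and the mesh-to-point passage, yields the exponent $\gamma = \gamma(c_1)$ with $\gamma$ chosen to dominate $c_1 \log 2 + \tfrac12$ plus the residual logarithmic factors from $q_n(\alpha)$ and the concentration bounds.
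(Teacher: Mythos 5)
Your proposal is correct and follows essentially the same route as the paper: the width reduces to a lower bound on $\hat h_n^{loc}(t)$, which the paper obtains from the upper-bound half of Proposition \ref{bw_band} (the Lepski oracle inequality you sketch, proved via the same bias/stochastic decomposition and uniform VC-type concentration) combined with Lemma \ref{grid_opt} for the mesh-to-point passage and the bookkeeping of the undersmoothing factor $2^{u_n}$. The only difference is organizational: the paper isolates the oracle inequality as a supplementary proposition, so its proof of Theorem \ref{loc_adapt} is a short reduction, whereas you inline that argument.
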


\noindent Note that the case $\beta_{n,p}(t) = \infty$ is not excluded in the formulation of Theorem \ref{loc_adapt}. That is, if $p_{\vert U}$ can be represented as a polynomial of degree strictly less than~$\beta^*$, the confidence band attains even adaptively the parametric width $n^{-1/2}$, up to logarithmic factors. In particular, the band can be tighter than $n^{-\beta^*/(2\beta^*+1)}$. In general, as long as $\delta \leq \varepsilon$ and $B(t, h_{\beta^*,n}) \subset U_{\delta}$,
\begin{align*}
	\beta_{n,p}(t) \geq \beta_p(U_{\delta}) \quad \text{ for all } t \in U.
\end{align*}

\begin{corollary}[Weak local adaptivity]
For every interval $U \subset [0,1]$,
\begin{align*}
	\limsup_{n \to \infty}\sup_{p \in {\mathscr {P}_n}_{\arrowvert U_{\delta}} \cap \mathcal H_{\beta^*,U_{\delta}}(\beta,L^*)} \P_p^{\chi_2} \left( \sup_{t \in U} \vert C_{n,\alpha}(t) \vert \geq \left( \frac{\log \tn}{\tn}Ê\right)^{\frac{\beta}{2\beta+1}} (\log \tn)^{\gamma} \right) 
\end{align*}
is equal to zero for every $\beta \in [\beta_*,\beta^*]$, where $U_{\delta}$ is the open $\delta$-enlargement of $U$ and $\gamma$ as in Theorem~\ref{loc_adapt}.
\end{corollary}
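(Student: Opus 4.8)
The plan is to derive the Weak local adaptivity corollary as an immediate consequence of Theorem~\ref{loc_adapt}, the key point being to establish the pointwise comparison $\beta_{n,p}(t) \geq \beta$ for every $t \in U$ under the hypothesis $p_{\vert U_\delta} \in \mathcal H_{\beta^*,U_\delta}(\beta,L^*)$, and then to invoke monotonicity of the rate exponent. First I would fix $\beta \in [\beta_*,\beta^*]$ and a density $p$ with $p_{\vert U_\delta} \in \mathcal H_{\beta^*,U_\delta}(\beta,L^*)$, and note that since $h_{\beta,n} \to 0$, for $n$ large enough (uniformly in $t \in U$, since $U$ has positive distance $\delta$ from the complement of $U_\delta$) the ball $B(t,h_{\beta,n})$ is contained in $U_\delta$. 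Because the modified H\"older norm $\Vert \cdot \Vert_{\beta,\beta^*,\cdot}$ is monotone in the domain of evaluation (smaller domain, smaller norm; here $|B(t,h_{\beta,n})| \le 1$ so Lemma~\ref{ineq_holdernorm1} and its monotonicity apply), we get $\Vert p \Vert_{\beta,\beta^*,B(t,h_{\beta,n})} \le \Vert p \Vert_{\beta,\beta^*,U_\delta} \le L^*$, hence $p_{\vert B(t,h_{\beta,n})} \in \mathcal H_{\beta^*,n,t}(\beta,L^*)$, which by Definition~\ref{ndep_hoelder} yields $\beta_{n,p}(t) \ge \beta$. This is exactly the inequality displayed right after Theorem~\ref{loc_adapt}.

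Next I would use the monotonicity of the function $\beta \mapsto \beta/(2\beta+1)$, which is strictly increasing on $(0,\infty]$, to conclude that for every such $t$,
\begin{align*}
	\left( \frac{\log \tn}{\tn} \right)^{\frac{\beta_{n,p}(t)}{2\beta_{n,p}(t)+1}} \le \left( \frac{\log \tn}{\tn} \right)^{\frac{\beta}{2\beta+1}},
\end{align*}
since $\log\tn/\tn < 1$ for $n$ large. Consequently, on the event appearing in the corollary, namely $\sup_{t\in U} |C_{n,\alpha}(t)| \ge (\log\tn/\tn)^{\beta/(2\beta+1)} (\log\tn)^\gamma$, there is a point $t \in U \subset [0,1]$ with
\begin{align*}
	|C_{n,\alpha}(t)| \cdot \left( \frac{\log \tn}{\tn} \right)^{-\frac{\beta_{n,p}(t)}{2\beta_{n,p}(t)+1}} \ge |C_{n,\alpha}(t)| \cdot \left( \frac{\log \tn}{\tn} \right)^{-\frac{\beta}{2\beta+1}} \ge (\log\tn)^\gamma,
\end{align*}
so that the event in the corollary is contained (for $n$ large) in the event in Theorem~\ref{loc_adapt} with the supremum over $[0,1]$. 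Taking $\P_p^{\chi_2}$-probabilities, then the supremum over the smaller class $\{p \in \mathscr{P}_{n\vert U_\delta} \cap \mathcal H_{\beta^*,U_\delta}(\beta,L^*)\} \subset \mathscr{P}_n$, and finally $\limsup_n$, the right-hand side is dominated by the quantity in Theorem~\ref{loc_adapt}, which is zero.

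The only genuine subtlety — and the step I would be most careful about — is making the containment $B(t,h_{\beta,n}) \subset U_\delta$ hold \emph{uniformly} in $t \in U$ and reconciling the fact that $\beta_{n,p}(t)$ is defined via a ball that may stick slightly outside $U$ itself; this is precisely why the enlargement $U_\delta$ is built into the hypothesis, and why one needs $h_{\beta,n} < \delta$ eventually, which holds since $\beta \ge \beta_*$ gives a uniform upper bound $h_{\beta,n} \le h_{\beta_*,n} \to 0$. One should also check that the $\limsup$ being zero for each fixed $\beta$ is all that is claimed (the statement is ``for every $\beta$'', not uniformly in $\beta$), so no further uniformization over $\beta$ is needed. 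Everything else is bookkeeping: the inclusion of events, monotonicity of the exponent, and the trivial fact $\mathscr{P}_{n\vert U_\delta} \cap \mathcal H_{\beta^*,U_\delta}(\beta,L^*) \subset \mathscr{P}_n$ after restriction. I expect no serious obstacle beyond these domain-inclusion details.
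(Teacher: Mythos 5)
Your argument is correct and is exactly the derivation the paper intends: the corollary is stated without separate proof because it follows immediately from Theorem~\ref{loc_adapt} together with the inequality $\beta_{n,p}(t)\geq\beta$ for $t\in U$ (noted in the display right after that theorem), the monotonicity of $\beta\mapsto\beta/(2\beta+1)$, and the inclusion of the restricted class into $\mathscr P_n$. Your handling of the domain-inclusion $B(t,h_{\beta,n})\subset U_{\delta}$ for large $n$ is the only substantive point, and you treat it correctly.
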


%\section{Future research}
%
%We line up with existing results and presume the densities to be bounded away from zero on the interval under consideration. In order to elude this assumption or even to extend the results to the whole real line, techniques of \cite{gine_koltchinskii_zinn2004} and \cite{gine_koltchinskii_sakhanenko2003,gine_koltchinskii_sakhanenko2004} can be employed. Furthermore, \cite{patschkowski_rohde2016} investigated a previously unstudied effect concerning the accuracy of density estimation in regions where the density approaches zero. In fact, it has been shown that density estimation is possible with best possible $p$-dependent minimax speed of convergence
%\begin{align*}
%	\psi^n_{p(t),\beta} = n^{-\frac{\beta}{\beta+1}} \vee \left( \frac{p(t)}{n} \right)^{\frac{\beta}{2\beta+1}}
%\end{align*}
%up to some logarithmic factor, globally adaptive over a union of H\"older classes. The constructed adaptive estimator attaining this rate is based on a $t$-dependent Lepski-type bandwidth as well and neither needs prior knowledge about the H\"older exponent nor about the location of lowest density regions. Our confidence bands may also benefit from this improved density estimator, resulting in even tighter confidence bands whenever the density is small. \todo{Quantilsbaender...}
%

\section{Supplementary notation and results} \label{sec:aux} 

The following auxiliary results are crucial ingredients in the proofs of Theorem \ref{limitdistribution} and Theorem \ref{loc_adapt}. \\
\noindent Recalling the quantity $h_{\beta,n}$ in Definition \ref{ndep_hoelder}, Proposition \ref{bw_band} shows that $2^{-\hat j_n(\cdot)}$ lies in a band around 
\begin{align} \label{h_opt}
	\bar{h}_n(\cdot) = h_{\beta_{n,p}(\cdot),n}
\end{align}
uniformly over all admissible densities $p \in \mathscr P_n$. Proposition \ref{bw_band} furthermore reveals the necessity to undersmooth, which has been already discovered by \cite{bickel_rosenblatt1973}, leading to a bandwidth deflated by some logarithmic factor. Set now
\begin{align*}
	\bar{j}_n(\cdot) = \left\lfloor \log_2 \left( \frac{1}{\bar{h}_n(\cdot)} \right) \right\rfloor + 1, 
\end{align*}
such that the bandwidth $2^{-\bar{j}_n(\cdot)}$ is an approximation of $\bar{h}_n(\cdot)$ by the next smaller bandwidth on the grid $\mathcal{G}_n$ with
$$\frac{1}{2} \bar{h}_n(\cdot) \leq 2^{-\bar{j}_n(\cdot)} \leq \bar{h}_n(\cdot).$$

\medskip

\noindent The next proposition states that the procedure chooses a bandwidth which simultaneously in the location $t$ is neither too large nor too small.

\begin{proposition} \label{bw_band}
	The bandwidth $\hat j_n(\cdot)$ defined in \eqref{lepski_bw} satisfies
	\begin{align*}
		\lim_{n \to \infty} \sup_{p \in \mathscr P_n} \left\{ 1 - \P_p^{\chi_2} \left( \hat j_n(k\delta_n) \in \Big[ k_n(k\delta_n), \, \bar j_n(k\delta_n)+1 \Big] \text{ for all } k \in T_n \right) \right\} = 0
	\end{align*}
	where $k_n(\cdot) = \bar j_n(\cdot) - m_n$, and $m_n = \frac{1}{2} c_1 \log \log \tn$.
\end{proposition}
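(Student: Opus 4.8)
The plan is to establish the two-sided containment $\hat j_n(k\delta_n) \in [k_n(k\delta_n), \bar j_n(k\delta_n)+1]$ by separately controlling the probabilities of undershooting and overshooting, then taking a union bound over the $|T_n| = \delta_n^{-1}$ mesh points, which is polynomial in $n$ and hence absorbed by the logarithmic slack once each individual event has superpolynomially small failure probability. Throughout, all randomness lives in $\chi_2$, since $\mathcal A_n(t)$ is built from $\hat p_n^{(2)}$ only.

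\textbf{Upper bound (the band is not chosen too large).} I would show that with overwhelming $\P_p^{\chi_2}$-probability, every $j$ with $j \le \bar j_n(k\delta_n)$ satisfies the defining inequality of $\mathcal A_n(k\delta_n)$ in \eqref{setA}, so that $\hat j_n(k\delta_n) = \min \mathcal A_n(k\delta_n) \le \bar j_n(k\delta_n)$ (the extra $+1$ gives room to spare). The key is a uniform deviation bound: for $m > m' > j+2$, write $\hat p_n^{(2)}(s,m) - \hat p_n^{(2)}(s,m') = [\hat p_n^{(2)}(s,m) - K_m \ast p(s)] - [\hat p_n^{(2)}(s,m') - K_{m'}\ast p(s)] + [K_m \ast p(s) - K_{m'}\ast p(s)]$. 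For the stochastic terms one invokes Talagrand's inequality together with the VC-type covering bound \eqref{covering_K} for the kernel class $\mathcal K$, yielding a uniform-in-$(s,m)$ bound of order $\sqrt{\log \tn /(\tn 2^{-m})}$. For the bias difference, since $j \le \bar j_n(k\delta_n)$ means $2^{-j}$ is at least of order $\bar h_n(k\delta_n) = h_{\beta_{n,p}(k\delta_n),n}$, the ball $B(k\delta_n, 2^{-j})$ is contained in a ball on which $p$ has the regularity $\beta := \beta_{n,p}(k\delta_n)$ recorded by Definition \ref{ndep_hoelder}, so standard Hölder bias estimates give $|K_m\ast p - K_{m'}\ast p|_{\sup} \lesssim 2^{-m'\,(\beta\wedge\beta^*)} \lesssim 2^{-j\beta} \lesssim (\log\tn/\tn)^{\beta/(2\beta+1)}$, which is dominated by $\sqrt{\log\tn/(\tn 2^{-m})}$ for $m,m' > j+2$ once the stochastic-term constant $c_2$ is chosen large enough — this fixes the constant $c_2$ referenced in \eqref{setA}. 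One must be a little careful when $\beta = \infty$ (polynomial piece): then the bias difference vanishes and the argument is only easier.

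\textbf{Lower bound (the band is not chosen too small).} Here I would show $\hat j_n(k\delta_n) \ge k_n(k\delta_n) = \bar j_n(k\delta_n) - m_n$ by proving that no $j < \bar j_n(k\delta_n) - m_n$ can lie in $\mathcal A_n(k\delta_n)$, i.e. for such small $j$ there exist $m > m' > j+2$ violating the inequality in \eqref{setA}. This is where the admissibility condition \eqref{admissible_density} enters decisively: on the relevant ball, the genuine bias $\sup_s|K_g\ast p(s) - p(s)|$ is bounded below by $g^{\beta}/\log n$ for the operative exponent $\beta = \beta_p(B(t,u))$ (Lemma \ref{remark2}), which for $j$ this much smaller than $\bar j_n$ exceeds the stochastic fluctuation level $\sqrt{\log\tn/(\tn 2^{-m})}$ — the factor $m_n = \tfrac12 c_1\log\log\tn$ is calibrated precisely so that $2^{-k_n\beta}/\log n$ still beats the noise. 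Decomposing $\hat p_n^{(2)}(s,m') - \hat p_n^{(2)}(s,m) = [K_{m'}\ast p(s) - p(s)] - [K_m\ast p(s) - p(s)] + (\text{stochastic})$ and taking $m$ large (so $K_m\ast p \approx p$) isolates the bias at scale $m'$; choosing $m'$ near $j$ and using the lower bias bound against the upper deviation bound from the same Talagrand/covering estimate forces the max in \eqref{setA} to exceed $c_2\sqrt{\log\tn/(\tn 2^{-m})}$, so $j \notin \mathcal A_n(k\delta_n)$.

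\textbf{Assembly and main obstacle.} Combining the two parts, for each fixed $k$ the failure probability is superpolynomially small (stretched-exponential in a power of $\log\tn$, from Talagrand), uniformly over $p \in \mathscr P_n$ because the bias controls depend on $p$ only through the admissibility parameters $(\beta_*,\beta^*,L^*,\varepsilon,M)$ and the $n$-dependent exponent which is exactly what $\bar h_n$, hence $\bar j_n$ and $k_n$, are defined from. A union bound over $k \in T_n$, $|T_n| = \delta_n^{-1} = O(n^{\kappa_1})$ up to logs, then drives the total failure probability to $0$, giving the claim. The main obstacle I anticipate is the lower-bound direction: one must verify that the bias lower bound \eqref{admissible_density}, stated for balls $B(t,u)$ with $u \in \{h,2h\}$ centered at the \emph{mesh point} $t = k\delta_n$, transfers correctly to the max over $s \in B(k\delta_n,\tfrac78 2^{-j})\cap\mathcal H_n$ appearing in \eqref{setA} — the discretization of $s$ over $\mathcal H_n$ and the $7/8$-shrinkage of the ball must be reconciled with the $B(t,u-g)$ in \eqref{admissible_density}, and one needs $\delta_n$ small enough (which is why $\delta_n$ carries the factor $(\log\tn)^{2/\beta_*}$) that the discretized supremum still sees the near-extremal point. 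Handling the $\beta=\infty$ exponent and the $u\in\{h,2h\}$ dichotomy cleanly within the union bound is the remaining bookkeeping.
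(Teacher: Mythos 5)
Your strategy coincides with the paper's: the upper direction rests on H\"older bias upper bounds (Lemma \ref{bias_up} via Assumption \ref{self_sim} and Lemma \ref{remark2}) plus a uniform VC/Talagrand-type deviation bound over the grid (Lemma \ref{lemma_deviation}, via Gin\'e--Guillou), the lower direction on the admissibility lower bound \eqref{admissible_density} with the $m_n$-gap calibrated against the $1/\log n$ factor, and the two obstacles you flag -- transferring the continuum supremum in \eqref{admissible_density} to the max over $\mathcal H_n$ (which is where the $(\log\tn)^{2/\beta_*}$ factor in $\delta_n$ is spent) and the $u\in\{h,2h\}$ dichotomy (handled by noting that $\hat j_n(t)$ and $\hat j_n(t)+1$ both lie in $\mathcal A_n(t)$) -- are exactly the ones the paper resolves. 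One correction: your upper-bound step asserts that \emph{every} $j\le\bar j_n(k\delta_n)$ lies in $\mathcal A_n(k\delta_n)$; this is false for $j\ll\bar j_n(k\delta_n)$, since on the large ball $B(k\delta_n,\tfrac{7}{8}2^{-j})$ the density may be only $\beta_*$-regular and the admissibility condition then forces a genuine bias at scale $2^{-(j+3)}$ exceeding the threshold $c_2\sqrt{\log\tn/(\tn 2^{-m})}$ for $m$ near $j_{\max}$ -- precisely the mechanism your own lower bound exploits. What is needed, and what your bias-plus-deviation argument actually delivers, is only that the single index $\bar j_n(k\delta_n)+1$ belongs to $\mathcal A_n(k\delta_n)$ with high probability, which is how the paper argues.
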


\begin{lemma} \label{grid_opt}
Let $s,t \in [0,1]$ be two points with $s<t$, and let $z \in (s,t)$. If
\begin{align} \label{stclose}
	\vert s-t \vert \leq \frac{1}{8} \, h_{\beta_*,n}
\end{align}
then
\begin{align*}
	\frac{1}{3} \, \bar h_n(z) \leq \min \left\{ \bar h_n(s), \bar h_n(t) \right\} \leq 3 \, \bar h_n(z).
\end{align*}
\end{lemma}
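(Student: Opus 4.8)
The statement compares the near-optimal bandwidths $\bar h_n(\cdot) = h_{\beta_{n,p}(\cdot),n}$ at two nearby points, so the natural strategy is to control how much $\beta_{n,p}(\cdot)$ can change between $s$ and $t$ and then translate this into control on $h_{\beta,n}$ as a function of $\beta$. First I would record the elementary but crucial monotonicity: since $\beta \mapsto h_{\beta,n} = 2^{-j_{\min}}(\log\tn/\tn)^{1/(2\beta+1)}$ is \emph{increasing} in $\beta$ (the exponent $1/(2\beta+1)$ decreases but $\log\tn/\tn < 1$), and since $\beta_{n,p}(z) \in (0,\beta^*] \cup \{\infty\}$ always, the radius $h_{\beta,n}$ ranges only over the compact-in-effect interval $[h_{\beta_*,n}, h_{\beta^*,n}]$ (with the convention $h_{\infty,n} = 2^{-j_{\min}}$). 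Consequently the ratio $h_{\beta,n}/h_{\beta',n}$ for $\beta,\beta' \in [\beta_*,\infty]$ is bounded above by $h_{\beta^*,n}/h_{\beta_*,n} = (\log\tn/\tn)^{1/(2\beta^*+1) - 1/(2\beta_*+1)} \to \infty$, which is far too crude; the point of the lemma is that when $s,t$ are as close as \eqref{stclose}, the exponents themselves are forced to be comparable.

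\textbf{Key steps.} The heart of the argument is a two-sided comparison of $\beta_{n,p}(s)$, $\beta_{n,p}(t)$ and $\beta_{n,p}(z)$. For the lower bound $\min\{\bar h_n(s),\bar h_n(t)\} \geq \frac13\bar h_n(z)$: let $\beta = \beta_{n,p}(z)$, so by Definition \ref{ndep_hoelder} (and Remark \ref{stern}/Lemma \ref{remark2}) we have $p_{\vert B(z,h_{\beta,n})} \in \mathcal H_{\beta^*,n,z}(\beta,L^*)$, i.e. $\Vert p\Vert_{\beta,\beta^*,B(z,h_{\beta,n})} \leq L^*$. Now I would verify that $B(s, \tfrac13 h_{\beta,n})$ and $B(t,\tfrac13 h_{\beta,n})$ are both contained in $B(z,h_{\beta,n})$: indeed $\vert s-z\vert, \vert t-z\vert \leq \vert s-t\vert \leq \tfrac18 h_{\beta_*,n} \leq \tfrac18 h_{\beta,n}$ (using $h_{\beta_*,n} \leq h_{\beta,n}$ by monotonicity), so the enlarged balls fit with room to spare — in fact $B(s,\tfrac23 h_{\beta,n}) \subset B(z,h_{\beta,n})$ already. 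Monotonicity of $\Vert\cdot\Vert_{\beta,\beta^*,U}$ in $U$ then gives $\Vert p\Vert_{\beta,\beta^*,B(s,\tfrac23 h_{\beta,n})} \leq L^*$. To conclude $\beta_{n,p}(s) \geq$ something, I must compare $h_{\beta,n}$ with $h_{\beta_{n,p}(s),n}$; the subtlety is that $\tfrac23 h_{\beta,n}$ is not exactly of the form $h_{\beta',n}$. Here I would invoke the (essentially logarithmic) stability of $h_{\beta,n}$ under small perturbations of $\beta$: a multiplicative factor like $2/3$ on the radius corresponds to a perturbation of $\beta$ that is $O(\log\log\tn / \log\tn) \to 0$, so $h_{\beta,n}$ and $(2/3)h_{\beta,n}$ are sandwiched by $h_{\beta'',n}$ for $\beta''$ within $o(1)$ of $\beta$; combined with the definition of $\beta_{n,p}(s)$ as a supremum, this yields $h_{\beta_{n,p}(s),n} \geq (2/3 - o(1)) h_{\beta,n} \geq \tfrac13 h_{\beta_{n,p}(z),n}$ for $n$ large, which is $\bar h_n(s) \geq \tfrac13\bar h_n(z)$. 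The same argument with $s$ replaced by $t$ gives the other half of the minimum. For the upper bound $\min\{\bar h_n(s),\bar h_n(t)\} \leq 3\bar h_n(z)$: by symmetry it suffices to handle $\bar h_n(s) \leq 3\bar h_n(z)$, equivalently $\beta_{n,p}(s)$ is not too much larger than $\beta_{n,p}(z)$; run the previous reasoning with the roles of $z$ and $s$ exchanged (still using $\vert s-z\vert \leq \tfrac18 h_{\beta_*,n}$), obtaining $\bar h_n(z) \geq \tfrac13\bar h_n(s) \geq \tfrac13\min\{\bar h_n(s),\bar h_n(t)\}$, hence $\min\{\bar h_n(s),\bar h_n(t)\} \leq 3\bar h_n(z)$.

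\textbf{Main obstacle.} The routine part is the ball inclusions; the delicate part is the passage between ``$\Vert p\Vert_{\beta,\beta^*,B(s,\lambda h_{\beta,n})} \leq L^*$ for a fixed $\lambda \in (0,1)$'' and ``$\beta_{n,p}(s) \geq \beta'$ for an explicit $\beta'$'', because $\beta_{n,p}$ is defined through balls whose radii are tied to the \emph{same} exponent appearing in the norm. I expect the cleanest route is to observe that for $\beta,\beta' \in [\beta_*,\beta^*]$ one has $\log(h_{\beta,n}/h_{\beta',n}) = (\log\tn/\tn) \cdot$(something)$\cdot(\beta'-\beta)/((2\beta+1)(2\beta'+1)) \cdot (-\log(\log\tn/\tn))$ — i.e., a change of $\beta$ by $\Delta$ changes $\log h_{\beta,n}$ by $\Theta(\Delta\log\log\tn)$ wait, by $\Theta(\Delta \cdot (-\log(\log\tn/\tn))) = \Theta(\Delta\log\tn)$, so a bounded multiplicative change in the radius is absorbed by an $O(1/\log\tn)$ change in $\beta$. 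Since $1/\log\tn \to 0$ while $\beta_*$ is a fixed positive floor, this perturbation cannot push $\beta_{n,p}(s)$ outside its natural range, and the constant $3$ (rather than the tighter $2$ or $3/2$ one might hope for) comfortably swallows the $o(1)$ slack for all sufficiently large $n$; a small separate remark will be needed for the degenerate cases $\beta_{n,p}(\cdot) = \infty$ (where $\bar h_n(\cdot) = 2^{-j_{\min}}$ is the maximal value and the inequalities are immediate) and $\beta_{n,p}(\cdot) = 0$ (which, given $p \in \mathscr P_n \subset \mathcal P_{(-\varepsilon,1+\varepsilon)}(\beta_*,L^*)$, cannot occur on $[0,1]$ since $\beta_{n,p}(t) \geq \beta_*$ there).
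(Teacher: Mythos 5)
Your proposal is correct and follows essentially the same route as the paper's proof: the ball inclusions forced by $\vert s-t\vert\leq\frac18 h_{\beta_*,n}$, the monotonicity of $\Vert\cdot\Vert_{\beta,\beta^*,U}$ in both $\beta$ (Lemma \ref{ineq_holdernorm1}) and $U$, and the conversion of a radius comparison into an exponent comparison by choosing an auxiliary $\tilde\beta$ with $h_{\tilde\beta,n}$ equal to the target radius — which is exactly your ``$O(1/\log\tn)$ perturbation of $\beta$'' phrased via the intermediate-value property of $\beta\mapsto h_{\beta,n}$. The paper organizes the two inequalities asymmetrically (the upper bound directly, the lower bound by contradiction, yielding constants $2$ and $8/17$), whereas you derive both from one symmetric one-sided comparison, but the underlying mechanism is the same.
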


\begin{lemma} \label{lemma_deviation}
There exist positive and finite constants $c_5° = c_5°(A,\nu,K)$ and $c_6°=c_6°(A,\nu,L^*,K)$, and some $\eta_0 = \eta_0(A, \nu, L^*, K) > 0$, such that
\begin{align*}
	&\sup_{p \in \mathscr P_n} \P_p^{\chi_i} \left( \sup_{s \in \mathcal H_n} \max_{h \in \mathcal G_n} \sqrt{\frac{\tn h}{\log \tn}} \Big\vert \hat{p}_{n}^{(i)} (s,h) - \E_p^{\chi_i} \hat{p}_{n}^{(i)}(s,h) \Big\vert > \eta \right) \leq c_5°  \tn^{- c_6° \eta}, \quad i=1,2
\end{align*}
for sufficiently large $n \geq n_0(A,\nu,L^*,K)$ and for all $\eta \geq \eta_0$.
\end{lemma}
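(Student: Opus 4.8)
\textbf{Proof plan for Lemma~\ref{lemma_deviation}.}

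The plan is to bound the fluctuation of the empirical process
\[
\sup_{s \in \mathcal H_n} \max_{h \in \mathcal G_n} \sqrt{\frac{\tn h}{\log \tn}}\,\bigl\vert \hat p_n^{(i)}(s,h) - \E_p^{\chi_i}\hat p_n^{(i)}(s,h)\bigr\vert
\]
by Talagrand's concentration inequality applied to the VC-type class $\mathcal K$ of rescaled kernels, for each fixed bandwidth $h$, and then take a union bound over the $\lvert \mathcal G_n\rvert = O(\log n)$ bandwidths and the $\lvert \mathcal H_n\rvert = O(\delta_n^{-1})$ grid points. First I would fix $i$, $h = 2^{-j} \in \mathcal G_n$ and $s \in \mathcal H_n$, and write $\hat p_n^{(i)}(s,h) - \E_p^{\chi_i}\hat p_n^{(i)}(s,h) = \tn^{-1}\sum_{k}\bigl(g_{s,h}(X_k) - \E_p g_{s,h}(X_k)\bigr)$ with $g_{s,h}(\cdot) = K_h(\cdot - s) = h^{-1}K((\cdot - s)/h)$. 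The summands are bounded by $\lVert K\rVert_{\sup} h^{-1}$ in absolute value, and since $p \leq L^*$ on the relevant region (all $p \in \mathscr P_n \subset \mathcal P_{(-\varepsilon,1+\varepsilon)}(\beta_*,L^*)$, and $j \geq j_{\min}$ forces $B(s,h)$ into a fixed neighbourhood of $[0,1]$), the variance satisfies $\Var_p(g_{s,h}(X_k)) \leq \E_p g_{s,h}^2 \leq L^* h^{-1}\lVert K\rVert_2^2$. Bernstein's inequality then gives, for the single pair $(s,h)$,
\[
\P_p^{\chi_i}\!\left(\sqrt{\tfrac{\tn h}{\log \tn}}\,\bigl\vert \hat p_n^{(i)}(s,h) - \E_p^{\chi_i}\hat p_n^{(i)}(s,h)\bigr\vert > \eta\right) \leq 2\exp\!\left(-\frac{c\,\eta^2 \log \tn}{L^*\lVert K\rVert_2^2 + \eta\lVert K\rVert_\infty\sqrt{\log\tn/(\tn h)}}\right),
\]
and for $h \geq 2^{-j_{\max}} \gtrsim (\log\tn)^{\kappa_2}/\tn$ the second term in the denominator is $O(1)$ uniformly, so the bound is at most $2\exp(-c'\eta \log\tn) = 2\tn^{-c'\eta}$ once $\eta \geq \eta_0$.

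The second step is the union bound: multiplying by $\lvert \mathcal H_n\rvert\lvert \mathcal G_n\rvert$, and noting $\delta_n^{-1}$ is only polynomial in $\tn$ (of order $(\log\tn/\tn)^{-\kappa_1}(\log\tn)^{2/\beta_*}$, hence $\leq \tn^{C}$ for a fixed $C = C(\kappa_1,\beta_*)$), the total probability is at most $2\tn^{C+1}\cdot\tn^{-c'\eta} \leq c_5 \tn^{-c_6\eta}$ after enlarging $\eta_0$ so that $c'\eta_0 > C+1$ and setting $c_6 = c'/2$, say. Here one must be slightly careful that the supremum over $s \in \mathcal H_n$ is a genuine finite maximum — which it is, since $\mathcal H_n \cap [\text{relevant compact}]$ is finite — so no chaining over a continuum is needed and the VC bound \eqref{covering_K} is invoked only implicitly (through the boundedness of $\lVert K\rVert_{\sup}$ and $\lVert K\rVert_2$, which are all that Bernstein requires).

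The main obstacle I anticipate is purely bookkeeping: tracking the dependence of the constants $c_5, c_6, \eta_0$ on $(A,\nu,L^*,K)$ as stated, and in particular making sure the exponent $c_6\eta$ is \emph{linear} in $\eta$ rather than quadratic. This forces the argument to be carried out in the regime where the Bernstein denominator is variance-dominated — which is exactly guaranteed by $j \leq j_{\max} = \lfloor\log_2(\tn/(\log\tn)^{\kappa_2})\rfloor$, so that $\tn h \geq (\log\tn)^{\kappa_2}$ and $\sqrt{\log\tn/(\tn h)} \to 0$ — and then one obtains $\exp(-c''\eta^2\log\tn/(1 + o(1)\eta))$; for $\eta$ in a \emph{bounded} range this is $\gtrsim \exp(-c''\eta^2\log\tn)$ which is even better than linear, while for large $\eta$ the linear-in-$\eta$ rate $\tn^{-c_6\eta}$ is what survives and is what the statement asserts. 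A clean way to present this is to split into $\eta_0 \leq \eta \leq \eta_1$ (use the sub-Gaussian tail) and $\eta > \eta_1$ (use the Poissonian/linear tail), or simply to bound $\eta^2/(1+\eta) \geq \eta/2$ for $\eta \geq 1$ and absorb everything. If a uniform-in-$p$ statement with the Talagrand route is preferred instead of the elementary union bound, one would replace Bernstein by Talagrand's inequality for the class $\{g_{s,h} : s \in \mathcal H_n\}$ at fixed $h$, with envelope $\lVert K\rVert_\infty h^{-1}$ and using \eqref{covering_K} to control the expected supremum via Dudley's entropy integral; the arithmetic is the same and the union is then only over $h \in \mathcal G_n$.
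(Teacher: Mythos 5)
Your argument is essentially correct but follows a genuinely different route from the paper. The paper treats the whole double supremum at once as an empirical process indexed by the class $\mathscr E_n = \{(\tn h\log\tn)^{-1/2}K((\cdot-s)/h): s\in\mathcal H_n,\ h\in\mathcal G_n\}$, verifies that this class is of VC type via \eqref{covering_K}, computes the uniform variance proxy $\sigma_n^2 = L^*\Vert K\Vert_2^2/(\tn\log\tn)$ and envelope $U_n=\Vert K\Vert_{\sup}(\log\tn)^{-3/2}$, and then invokes Proposition~2.2 of \cite{gine_guillou2001} (a Talagrand-type inequality for VC classes); the linear-in-$\eta$ exponent comes out of the Poissonian tail $\exp(-\frac{\eta}{cU_n}\log(1+c'\eta U_n\log\tn))$ combined with $\log(1+x)\geq x/2$. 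This is why the constants $c_5,c_6,\eta_0$ in the statement carry the VC characteristics $A,\nu$. Your route — pointwise Bernstein plus a union bound over the $O(\delta_n^{-1}\log\tn)=\tn^{O(1)}$ index pairs — reaches the same conclusion with the same variance/envelope computation and, as you correctly note, the key points are that $\tn h\geq(\log\tn)^{\kappa_2}$ keeps the Bernstein denominator variance-dominated up to an $\eta\cdot o(1)$ term, and that $\eta^2/(a+b\eta)\gtrsim\eta$ for $\eta\geq\eta_0$ yields the linear rate that survives the polynomial union-bound cost. What the paper's approach buys is that the cardinality of the index set never enters; what yours buys is elementarity and constants free of $(A,\nu)$. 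The one point where you are too quick is the finiteness of $\mathcal H_n$: as defined, $\mathcal H_n=\{k\delta_n:k\in\Z\}$ is infinite, and outside a fixed neighbourhood of $[0,1]$ the class $\mathscr P_n$ gives no pointwise bound on $p$, so neither a countable union bound nor the variance bound $\E_p g_{s,h}^2\leq L^*h^{-1}\Vert K\Vert_2^2$ is available there. You should state explicitly that the supremum is taken (as in all applications of the lemma) over $s\in\mathcal H_n\cap[-\varepsilon/2,1+\varepsilon/2]$, so that $B(s,h)\subset(-\varepsilon,1+\varepsilon)$ for all $h\in\mathcal G_n$ and the index set has cardinality $O(\delta_n^{-1})$; with that restriction made precise, your proof goes through.
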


%\begin{lemma} \label{lemma_deviation}
%There exist constants $c_{22}°>0$ and $\eta_0= \eta_0(A,\nu,\Vert K \Vert_{\sup},c_{22}°)$, such that for any $\eta > \eta_0$ 
%\begin{align*}
%	\sup_{p \in \mathscr P} \P_p^{\otimes \tn} \left( \sup_{s \in [0,1]} \max_{h \in \mathcal G_n} \sqrt{\frac{\tn h}{\log \tn}} \Big\vert \hat{p}_{n,h}^{(i)} (s) - \E_p^{\otimes \tn} \hat{p}_{n,h}^{(i)}(s) \Big\vert > \eta \right) \leq 4c_{22}° \tn^{- \frac{\eta^2}{8^2 c_{22}°^2 \Vert K \Vert_2^2}}, \quad i=1,2
%\end{align*}
%for sufficiently large $n \geq 8$. 
%\end{lemma}

\noindent The next lemma states extends the classical upper bound on the bias for the modified H\"older classes $\mathcal H_{\beta^*,B(t,U)}(\beta,L)$.

\begin{lemma} \label{bias_up}
Let $t \in \R$ and $g,h > 0$. Any density $p:\R\to\R$ with $p_{\vert B(t,g+h)} \in \mathcal H_{\beta^*,B(t,g+h)}(\beta,L)$ for some $0 < \beta \leq \infty$ and some $L>0$ satisfies
\begin{align} \label{hallo}
	\sup_{s \in B(t,g)} \left\vert (K_h \ast p)(s) - p(s) \right\vert \leq b_2 h^{\beta}
\end{align}
for some positive and finite constant $b_2 = b_2(L,K)$.
%There exists some finite and positive constant $b_2 = b_2(L^*,K)$, such that for any $p \in \mathscr P_n^{\adm}$ and for any $t \in [0,1]$,
%\begin{align*}
%	\sup_{s \in B(t,g)} \left\vert (K_h \ast p)(s) - p(s) \right\vert \leq b_2 h^{\beta(B(t,g+h))}
%\end{align*}
%for all bandwidths $g,h \in \mathcal G_n$ and for sufficiently large $n \geq n_0(\varepsilon)$. In particular,
%\begin{align*}
%	\sup_{s \in B(t,g)} \left\vert (K_h \ast p)(s) - p(s) \right\vert \leq b_2 h^{\beta_n(t)}
%\end{align*}
%if $g+h \leq \bar h_n(t)$.
\end{lemma}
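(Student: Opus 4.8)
The plan is to split into the two regimes $\beta \leq \beta^*$ and $\beta > \beta^*$ and reduce both to a Taylor expansion argument that exploits the symmetry and the vanishing-moment properties of $K$, together with the bound $\Vert p \Vert_{\beta,\beta^*,B(t,g+h)} \leq L$ built into the modified H\"older class. Fix $s \in B(t,g)$ and write $(K_h \ast p)(s) - p(s) = \int_{[-1,1]} K(y)\big(p(s-hy) - p(s)\big)\diff y$, using $\int K = 1$; note that since $|s-t| < g$ and $|hy| \leq h$, the argument $s-hy$ stays in $B(t,g+h)$, so all derivative and H\"older bounds of $p$ on $B(t,g+h)$ are available. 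Let $r = \lfloor \beta \wedge \beta^* \rfloor$ and Taylor-expand $p$ around $s$ to order $r$:
\begin{align*}
	p(s-hy) - p(s) = \sum_{k=1}^{r} \frac{(-hy)^k}{k!} p^{(k)}(s) + R_r(s,hy),
\end{align*}
where, by the integral form of the remainder and the H\"older bound on $p^{(r)}$, one has $|R_r(s,hy)| \leq \frac{\Vert p\Vert_{\beta,\beta^*,B(t,g+h)}}{r!} |hy|^{\beta \wedge \beta^*}$ when $\beta \leq \beta^*$ (using $p^{(r)}(s-hy) - p^{(r)}(s) = O(|hy|^{\beta - r})$ pointwise inside the remainder integral), and $|R_r(s,hy)| \leq C\, \Vert p\Vert_{\beta,\beta^*,B(t,g+h)} |hy|^{r+1}$ when $\beta > \beta^*$ (so that $r = \beta^*$ and the $(\beta^*+1)$-st derivative, which need not exist, is replaced by a Lipschitz estimate on $p^{(\beta^*)}$; actually here $r = \beta^*$ exactly and one uses $|hy|^{r+1} \leq |hy|^{r + (\beta\wedge\beta^* - r)}$ is false, so instead in this regime one simply expands to order $r = \beta^*$ and bounds the remainder by $\Vert p^{(\beta^*)}\Vert \cdot |hy|^{\beta^*}$ times a crude constant, which already suffices since then the bias is $O(h^{\beta^*}) = O(h^{\beta \wedge \beta^*})$ and we only claim the exponent $\beta$ when $\beta \leq \beta^*$).

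Next, integrate the expansion against $K(y)\diff y$. The polynomial terms $\sum_{k=1}^{r} \frac{(-h)^k p^{(k)}(s)}{k!} \int y^k K(y)\diff y$ vanish: the odd powers because $K$ is symmetric, and all powers $1 \leq k \leq r \leq l$ because $K$ is of order $l = \beta^* - 1$, so $\int y^k K(y)\diff y = 0$ for $1 \leq k \leq l$. (When $\beta = \beta^*$ exactly, $r = \lfloor \beta^* \rfloor = \beta^* - 1 = l$ if $\beta^* \in \N$, which it is, so the cancellation still covers all polynomial terms; the potential order-$\beta^*$ term does not appear because $r = \beta^* - 1 < \beta^*$.) Hence only the remainder survives and
\begin{align*}
	\big|(K_h \ast p)(s) - p(s)\big| \leq \int_{[-1,1]} |K(y)|\, |R_r(s,hy)|\diff y \leq \frac{L}{r!}\, h^{\beta \wedge \beta^*} \int_{[-1,1]} |K(y)|\, |y|^{\beta \wedge \beta^*}\diff y,
\end{align*}
and since $|y| \leq 1$ on the support of $K$ and $h \leq g+h$ may exceed $1$ a priori—but in the intended application $h \in \mathcal G_\infty$ so $h \leq 1$, and in any case the statement is for $g,h>0$ with the constant allowed to depend on $K$; if $h$ could exceed $1$ one absorbs this—we get $h^{\beta\wedge\beta^*} \leq h^{\beta}$ is the wrong direction when $\beta > \beta^*$, so in that case one reads $h^{\beta^*}$ as the bound and notes $\mathcal H_{\beta^*,U}(\infty,L) \subset \mathcal H_{\beta^*,U}(\beta^*,L)$ gives exactly the $\beta^*$-bias rate, which is what \eqref{hallo} asserts once one interprets $h^\beta$ for $\beta = \infty$ via the convention $h^\infty$ being dominated by $h^{\beta^*}$; thus set $b_2 = b_2(L,K) = \frac{L}{(\beta^*-1)!}\int |K(y)||y|^{\beta\wedge\beta^*}\diff y \leq \frac{L\Vert K\Vert_{\sup}}{(\beta^*-1)!} \cdot 2$.

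The main obstacle is the bookkeeping at the boundary case $\beta = \beta^*$ (and $\beta = \infty$): one must verify that the modified H\"older norm $\Vert\cdot\Vert_{\beta,\beta^*,U}$, which truncates the differentiation order at $\lfloor\beta\wedge\beta^*\rfloor = \beta^* - 1$, still controls the Taylor remainder of order $\beta^* - 1$ with the right exponent, and that no uncontrolled $\beta^*$-th derivative enters—this is exactly where the definition \eqref{mod_hoelder} and Remark~\ref{stern} do the work, since for $\beta > \beta^*$ the relevant quantity is $\sup|p^{(\beta^*-1)}(x)-p^{(\beta^*-1)}(y)|/|x-y|^{\beta - (\beta^*-1)}$, and for the remainder estimate one only needs the weaker fact that $p^{(\beta^*-1)}$ is Lipschitz, which follows from $\Vert p^{(\beta^*)}\Vert_U < \infty$ (equivalently from $\Vert p\Vert_{\beta,\beta^*,U} < \infty$ with $\beta \geq \beta^*$). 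A second minor point is handling $h > 1$ if that is not excluded by hypothesis; restricting attention to $h \leq 1$ (as in all applications via $\mathcal G_\infty$) or letting $b_2$ depend on an upper bound for $h$ removes it. Everything else is the standard kernel-bias computation.
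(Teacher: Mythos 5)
Your argument is fine in the regime $\beta\leq\beta^*$ and there it coincides with the paper's proof (Taylor expansion of order $\lfloor\beta\wedge\beta^*\rfloor$, cancellation of the polynomial terms by the order-$l$ property of $K$ with $l=\beta^*-1$, and a Peano-remainder bound via the H\"older seminorm). But there is a genuine gap for $\beta\in(\beta^*,\infty]$: you explicitly settle for the bound $h^{\beta^*}$ there ("we only claim the exponent $\beta$ when $\beta\leq\beta^*$", "one reads $h^{\beta^*}$ as the bound"), whereas the lemma asserts $b_2h^{\beta}$ for \emph{all} $0<\beta\leq\infty$, with $h^{\infty}$ read as $\lim_{\beta\to\infty}h^{\beta}$ by the convention in Section~\ref{sec:preliminairies}, i.e.\ the bias must be shown to vanish identically when $\beta=\infty$. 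The point you miss is that the hypothesis $\Vert p\Vert_{\beta,\beta^*,B(t,g+h)}\leq L$ is \emph{not} merely a Lipschitz bound on $p^{(\beta^*-1)}$: by definition \eqref{mod_hoelder}, for $\beta>\beta^*$ it controls the increment ratio $\vert p^{(\beta^*-1)}(x)-p^{(\beta^*-1)}(y)\vert/\vert x-y\vert^{\beta-\beta^*+1}$ with exponent $\beta-\lfloor\beta\wedge\beta^*\rfloor=\beta-\beta^*+1>1$. Inserting exactly this seminorm into the Peano remainder gives $h^{\lfloor\beta\wedge\beta^*\rfloor}\cdot h^{\beta-\lfloor\beta\wedge\beta^*\rfloor}=h^{\beta}$ for every finite $\beta$, with no truncation at $\beta^*$; and for $\beta=\infty$ the hypothesis holds for every finite $\beta$, so the bias is bounded by $L\Vert K\Vert_1 h^{\beta}$ for all $\beta>0$ and is therefore exactly zero. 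This is how the paper argues, and the full strength is needed downstream: the contradiction in the proof of Lemma~\ref{remark2} requires the bias rate $g^{\beta'}$ for some $\beta'$ that may exceed $\beta^*$, and the parametric width in Theorem~\ref{loc_adapt} when $\beta_{n,p}(t)=\infty$ requires the bias to vanish, not merely to be $O(h^{\beta^*})$.

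A secondary remark: your concern about $h>1$ is unnecessary. The remainder estimate uses $\vert s'-s\vert\leq h$ and the seminorm bound directly, so the inequality $\leq L\Vert K\Vert_1h^{\beta}$ holds for arbitrary $g,h>0$ without restricting $h\leq1$ or letting $b_2$ depend on an upper bound for $h$.
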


\begin{lemma} \label{bias_zygmund}
For symmetric kernels $K$ and $\beta=1$, the bias bound \eqref{hallo} continues to hold if the Lipschitz balls are replaced by the corresponding  Zygmund balls.
\end{lemma}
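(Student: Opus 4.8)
The plan is to re-examine the proof of Lemma \ref{bias_up} at the single scale $\beta=1$ and track where the Lipschitz hypothesis was used, replacing it by the Zygmund (second-difference) bound. Recall that for $\beta=1$ we have $\lfloor \beta \wedge \beta^* \rfloor = 1$ (since $\beta^* = l+1 \geq 2$), so membership of $p_{\vert B(t,g+h)}$ in the modified H\"older ball $\mathcal H_{\beta^*,B(t,g+h)}(1,L)$ amounts to a control of $\Vert p \Vert_{B(t,g+h)}$, $\Vert p' \Vert_{B(t,g+h)}$ and the Lipschitz seminorm of $p'$. In the Zygmund version these are replaced by the requirement that $p$ lie in the Zygmund ball: $\Vert p \Vert_{B(t,g+h)} \leq L$ together with $\vert p(x+u) + p(x-u) - 2p(x)\vert \leq L\vert u\vert$ for all admissible $x,u$. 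I would start from the identity
\begin{align*}
	(K_h \ast p)(s) - p(s) = \int K(w)\,\big( p(s - hw) - p(s) \big)\,\diff w,
\end{align*}
valid for $s \in B(t,g)$ because $K$ is supported in $[-1,1]$ and hence the integrand only samples $p$ on $B(t,g+h) \subset B(t,g+h)$.

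Next I would exploit the symmetry of $K$. Since $K(w) = K(-w)$, one may symmetrize the integral and write
\begin{align*}
	(K_h \ast p)(s) - p(s) = \frac{1}{2}\int K(w)\,\big( p(s-hw) + p(s+hw) - 2p(s) \big)\,\diff w.
\end{align*}
Now the Zygmund bound applies directly to the bracketed second difference with increment $u = hw$: its absolute value is at most $L\,\vert hw\vert \leq L h \vert w\vert$. Integrating against $\vert K(w)\vert$ over $[-1,1]$ gives
\begin{align*}
	\big\vert (K_h \ast p)(s) - p(s) \big\vert \leq \frac{L h}{2}\int_{-1}^{1} \vert w\vert\,\vert K(w)\vert\,\diff w = b_2\, h,
\end{align*}
with $b_2 = b_2(L,K) = \tfrac{L}{2}\int_{-1}^{1}\vert w\vert\,\vert K(w)\vert\,\diff w < \infty$ because $K$ is of bounded variation and compactly supported, hence bounded and integrable. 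Taking the supremum over $s \in B(t,g)$ yields \eqref{hallo} with $\beta = 1$, which is exactly the claim.

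The only subtle point is the justification of the symmetrization and of the interchange of $\int K(w)\diff w = 1$ with the pointwise evaluation $p(s)$: these require only that $K$ integrate to one and that $p$ be (say) continuous and locally bounded on $B(t,g+h)$, both of which hold since $p$ lies in a Zygmund ball there (Zygmund regularity forces continuity). Thus there is no real obstacle; the lemma follows because the symmetric kernel never "sees" the first-difference (Lipschitz) structure of $p$ — it only probes the symmetric second difference, which is precisely what the Zygmund condition controls. I would remark, as a one-line observation, that this is exactly the mechanism behind the bias upper bound for $W_1$ used earlier in the discussion following Lemma \ref{weier_lemma}.
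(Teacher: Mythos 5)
Your proposal is correct and follows exactly the paper's argument: symmetrize the bias integral using $K(w)=K(-w)$ to express it as an integral of the second difference $p(s+hw)+p(s-hw)-2p(s)$ against $K$, then apply the Zygmund bound. The paper's proof is the same one-line symmetrization, so no further comment is needed.
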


%\begin{corollary} \label{impl_bias_lemma}
%For any $n \in \N$, for any function $p \in \mathscr P_n^{\adm}(K)$, and for any $g,h \in \mathcal G_n$ with $g < h/4$, Lemma \ref{remark2} and Lemma \ref{bias_up} imply
%\begin{align} \label{allg_umgebung}
%	\sup_{s \in B(t,u-g)} \left\vert \E_p^{\otimes n} \hat p_n(s,g) - p(s) \right\vert \leq b_2 g^{\beta_p(B(t,u))},
%\end{align}
%for $u=h$ or $u=2h$, with constant $b_2 = b_2(L^*,K)$. In particular,
%\begin{align} \label{opt_umgebung}
%	\sup_{s \in B(t,h-g)} \left\vert \E_p^{\otimes n} \hat p_n(s,g) - p(s) \right\vert \leq b_2 g^{\beta_{n,p}(t)}
%\end{align}
%if $h \leq \frac{1}{2} \bar h_n(t)$.
%\end{corollary} \todo{Formulierung wie in Assumption 3.1}

\section{Proofs} \label{sec:proofs}

We first prove the results of Section \ref{sec:kap3} in Subsection \ref{subsec:proofkap3} and afterwards proceed with the proofs of the results Section \ref{sec:aux} in Subsection \ref{subsec:prooflemma}. \\
For the subsequent proofs we recall the following notion of the theory of empirical processes.  

\begin{definition}
A class of measurable functions $\mathscr H$ on a measure space $(S, \mathscr S)$ is a $Vapnik$-$\check{C}ervonenkis$ $class$ (VC class) of functions with respect to the envelope $H$ if there exists a measurable function $H$ which is everywhere finite with $\sup_{h \in \mathscr H} \vert h \vert \leq H$ and finite numbers $A$ and $v$, such that
\begin{align*}
	\sup_{Q} N\Big(\mathscr H, \Vert \cdot \Vert_{L^2(Q)}, \varepsilon \Vert H \Vert_{L^2(Q)}\Big) \leq \left( \frac{A}{\varepsilon}Ê\right)^{v}
\end{align*}
for all $0<\varepsilon<1$, where the supremum is running over all probability measures $Q$ on $(S,\mathscr S)$ for which $\Vert H \Vert_{L^2(Q)} < \infty$.
\end{definition}

\noindent \cite{nolan_pollard1987} call a class $Euclidean$ with respect to the envelope $H$ and with characteristics $A$ and $\nu$ if the same holds true with $L^1(Q)$ instead of $L^2(Q)$. The following auxiliary lemma is a direct consequence of the results in the same reference.

\begin{lemma} \label{Eucl_VC}
If a class of measurable functions $\mathscr H$ is Euclidean with respect to a constant envelope $H$ and with characteristics $A$ and $\nu$, then the class 
\begin{align*}
	\tilde{\mathscr H} = \left\{ h - \E_{\P} h : h \in \mathscr H \right\}
\end{align*}
is a VC class  with envelope $2H$ and characteristics $A' = 4\sqrt{A} \vee 2A$ and $\nu' = 3\nu$ for any probability measure $\P$. 
\end{lemma}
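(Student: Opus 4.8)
\textbf{Proof plan for Lemma~\ref{Eucl_VC}.}
The plan is to pass from the $L^1$-covering numbers of the Euclidean class $\mathscr H$ to $L^2$-covering numbers of the centered class $\tilde{\mathscr H}$, and to control how the envelope changes under centering. First I would record the elementary facts: since $\sup_{h\in\mathscr H}|h|\le H$ with $H$ a constant, every $h\in\mathscr H$ has $\|h\|_{L^2(Q)}\le \|H\|_{L^2(Q)}=H$ for all probability measures $Q$; and for the centered functions $h-\E_{\P}h$ we have $|h-\E_{\P}h|\le 2H$, so $2H$ is a valid constant envelope for $\tilde{\mathscr H}$. This disposes of the envelope bookkeeping.

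The core step is the covering-number comparison. For a fixed probability measure $Q$ on $(S,\mathscr S)$, I would relate $L^2(Q)$-balls to $L^1$-balls by the standard trick for uniformly bounded classes: for $h_1,h_2$ with $|h_i|\le H$,
\begin{align*}
	\|h_1-h_2\|_{L^2(Q)}^2 = \int |h_1-h_2|^2\,dQ \le 2H\int|h_1-h_2|\,dQ = 2H\,\|h_1-h_2\|_{L^1(Q)}.
\end{align*}
Hence an $L^1(Q)$-cover of $\mathscr H$ at radius $\varepsilon^2 H/2$ is an $L^2(Q)$-cover at radius $\varepsilon\sqrt{H^2}=\varepsilon H$ (after centering each center function, which does not increase $L^2(Q)$-distances since centering is a translation by a constant and $\|(h_1-c)-(h_2-c)\|=\|h_1-h_2\|$). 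Therefore
\begin{align*}
	N\Big(\tilde{\mathscr H},\|\cdot\|_{L^2(Q)},\varepsilon H\Big) \le N\Big(\mathscr H,\|\cdot\|_{L^1(Q)},\tfrac{\varepsilon^2 H}{2}\Big).
\end{align*}
Now apply the Euclidean property of $\mathscr H$ with its constant envelope $H$: $N(\mathscr H,\|\cdot\|_{L^1(Q)},\delta H)\le (A/\delta)^{\nu}$ for $0<\delta<1$. Taking $\delta=\varepsilon^2/2$ gives $N(\tilde{\mathscr H},\|\cdot\|_{L^2(Q)},\varepsilon H)\le (2A/\varepsilon^2)^{\nu}=(2A)^{\nu}\varepsilon^{-2\nu}$.

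It remains to repackage this into the required VC form $(A'/\varepsilon)^{\nu'}$ with constant envelope $2H$, i.e.\ to bound $N(\tilde{\mathscr H},\|\cdot\|_{L^2(Q)},\varepsilon\cdot 2H)$ by $(A'/\varepsilon)^{\nu'}$ with $A'=4\sqrt A\vee 2A$ and $\nu'=3\nu$. Replacing $\varepsilon$ by $2\varepsilon$ above yields $N(\tilde{\mathscr H},\|\cdot\|_{L^2(Q)},2\varepsilon H)\le (2A)^{\nu}(2\varepsilon)^{-2\nu}=(2A/(4\varepsilon^2))^{\nu}=(A/(2\varepsilon^2))^{\nu}$. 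One then checks the purely numerical inequality $A/(2\varepsilon^2)\le (A'/\varepsilon)^{3}$, equivalently $A^{1/3}/(2^{1/3}\varepsilon^{2/3})\le A'/\varepsilon$, i.e.\ $A' \ge A^{1/3}\varepsilon^{1/3}/2^{1/3}$ for all $0<\varepsilon<1$; since the right side is at most $A^{1/3}/2^{1/3}\le\max(A,1)$, the stated choice $A'=4\sqrt A\vee 2A$ comfortably suffices (the generous constants in \cite{nolan_pollard1987} absorb the exact exponent arithmetic), and $\nu'=3\nu$ follows from raising to the cube. The main obstacle is essentially bookkeeping: tracking the quadratic loss in the radius (the $\varepsilon^2$ from the $L^1\to L^2$ passage) and verifying that it is absorbed by the prescribed inflation of the characteristics to $(A',\nu')$; there is no deep step, and all ingredients are either elementary or quoted directly from \cite{nolan_pollard1987}.
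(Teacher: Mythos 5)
There is a genuine gap in the centering step. You write that the cover transfers to $\tilde{\mathscr H}$ ``after centering each center function, which does not increase $L^2(Q)$-distances since centering is a translation by a constant and $\|(h_1-c)-(h_2-c)\|=\|h_1-h_2\|$.'' But centering subtracts a \emph{different} constant from each function: $\tilde h_1 = h_1 - \E_{\P}h_1$ and $\tilde h_2 = h_2 - \E_{\P}h_2$, so $\tilde{\mathscr H}$ is not a rigid translate of $\mathscr H$. The correct identity is
\begin{align*}
\tilde h_1 - \tilde h_2 = (h_1-h_2) - \E_{\P}(h_1-h_2),
\end{align*}
whence by the triangle inequality
\begin{align*}
\|\tilde h_1 - \tilde h_2\|_{L^2(Q)} \leq \|h_1-h_2\|_{L^2(Q)} + |\E_{\P}(h_1-h_2)| \leq \|h_1-h_2\|_{L^2(Q)} + \|h_1-h_2\|_{L^1(\P)}.
\end{align*}
The second term lives on the \emph{fixed} measure $\P$, whereas the VC property must be verified uniformly over all test measures $Q$; for $Q\neq\P$, $L^2(Q)$-closeness of $h$ to a cover center says nothing about $L^1(\P)$-closeness. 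So your $L^2(Q)$-cover of $\mathscr H$ does not yield an $L^2(Q)$-cover of $\tilde{\mathscr H}$, and the covering bound you derive is unjustified.

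Repairing this requires a cover that is simultaneously small in $L^2(Q)$ and in $L^1(\P)$; this is precisely what the paper achieves by multiplying the two covering numbers via Lemma~14 of \cite{nolan_pollard1987}: an exponent $2\nu$ from the $L^2(Q)$ covering (using the Euclidean-implies-VC fact from p.~789 of the same reference) times an exponent $\nu$ from the $L^1(\P)$ covering, giving $\nu'=3\nu$ and $A'=4\sqrt A\vee 2A$. Your route, relying only on the $L^1(Q)\to L^2(Q)$ square-root trick, would at best control the uncentered class and explains neither the need for nor the appearance of $3\nu$; the terminal numerical remark that ``generous constants absorb the arithmetic'' does not supply the missing $L^1(\P)$ control.
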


\begin{proof}
For any probability measure $\P$ and for any functions $\tilde h_1 = h_1 - \E_{\P} h_1$, $\tilde h_2 = h_2 - \E_{\P} h_2 \in \tilde{\mathscr H}$ with $h_1, h_2 \in \mathscr H$, we have
\begin{align*}
	\Vert \tilde h_1 - \tilde h_2 \Vert_{L^2(Q)} \leq \Vert h_1 - h_2 \Vert_{L^2(Q)} + \Vert h_1 - h_2 \Vert_{L^1(\P)}.
\end{align*}
For any $0<\varepsilon \leq 1$, we obtain as a direct consequence of Lemma 14 in \cite{nolan_pollard1987}
\begin{align} \label{covering_product}
\begin{split}
	&N \left( \tilde{\mathscr H}, L^2(Q),2 \varepsilon \Vert H \Vert_{L^2(Q)} \right) \\
	&\hspace{2cm} \leq N \left( \mathscr H, L^2(Q), \frac{\varepsilon \Vert H \Vert_{L^2(Q)}}{2} \right) \cdot N \left( \mathscr H, L^1(\P), \frac{\varepsilon \Vert H \Vert_{L^1(\P)}}{2} \right).
\end{split}
\end{align}
\cite{nolan_pollard1987}, page 789, furthermore state that the Euclidean class $\mathscr H$ is also a VC class with respect to the envelope $H$ and with
\begin{align*}
	N \left( \mathscr H, L^2(Q), \frac{\varepsilon \Vert H \Vert_{L^2(Q)}}{2} \right) \leq \left( \frac{4\sqrt{A}}{\varepsilon} \right)^{2\nu},
\end{align*}
whereas
\begin{align*}
	N \left( \mathscr H, L^1(\P), \frac{\varepsilon \Vert H \Vert_{L^1(\P)}}{2} \right) \leq \left( \frac{2A}{\varepsilon} \right)^{\nu}.
\end{align*}
Inequality \eqref{covering_product} thus implies
\begin{align*}
	N \left( \tilde{\mathscr H}, L^2(Q), 2\varepsilon \Vert H \Vert_{L^2(Q)} \right) \leq \left( \frac{4\sqrt{A} \vee 2A}{\varepsilon} \right)^{3\nu}.
\end{align*}
\end{proof}

\subsection{Proofs of the results in Section \ref{sec:kap3}} \label{subsec:proofkap3}

\begin{proof}[Proof of Lemma \ref{remark2}]
Let $p \in \mathscr P_n^{\adm}(K,\beta_*,L^*,\varepsilon)$ be an admissible density. That is, for any $t \in [0,1]$ and for any $h \in \mathcal G_{\infty}$ there exists some $\beta \in [\beta_*, \beta^*] \cup \{ \infty \}$, such that for $u=h$ or $u=2h$ both 
\begin{align*}
	p_{\vert B(t,u)} \in \mathcal H_{\beta^*,B(t,u)}(\beta,L^*)
\end{align*}
and
\begin{align*}
	\sup_{s \in B(t,u-g)} \left\vert (K_g \ast p)(s) - p(s) \right\vert \geq \frac{g^{\beta}}{\log n} \quad \text{ for all } g \in \mathcal G_{\infty} \text{ with } g \leq u/8
\end{align*}
hold. By definition of $\beta_p(B(t,u))$ in \eqref{betaU}, we obtain $\beta_p(B(t,u)) \geq \beta$. We now prove by contradiction that also $\beta_p(B(t,u)) \leq \beta$. If $\beta = \infty$, the proof is finished. Assume now that $\beta < \infty$ and that $\beta_p(B(t,u)) > \beta$. Then, by Lemma \ref{ineq_holdernorm1}, there exists some $\beta < \beta' < \beta_p(B(t,u))$ with $p_{\vert B(t,u)} \in \mathcal H_{\beta^*,B(t,u)}(\beta', L^*)$. By Lemma \ref{bias_up}, there exists some constant $b_2 = b_2(L^*,K)$ with
\begin{align*}
	b_2 g^{\beta'} \geq \sup_{s \in B(t,u-g)} \left\vert (K_g \ast p)(s) - p(s) \right\vert \geq \frac{g^{\beta}}{\log n}
\end{align*}
for all $g \in  \mathcal G_{\infty}$ with $g \leq u/8$, which is a contradiction. 
\end{proof}

\begin{proof}[Proof of Proposition \ref{low_ptw_riskbound}]
The proof is based on a reduction of the supremum over the class to a maximum over two distinct hypotheses. 

\paragraph{Part 1}
For $\beta \in [\beta_*, 1)$, the construction of the hypotheses is based on the Weierstra{\ss} function as defined in \eqref{wf} and is depicted in Figure \ref{fig:hypos1}. Consider the function $p_0: \R \to \R$ with
\begin{align*}
	p_0(x) = \begin{cases}
	0, \quad &\text{if } \ \vert x-t \vert \geq \frac{10}{3} \\
	\frac{1}{4} + \frac{3}{16}(x-t+2), \quad &\text{if } \  -\frac{10}{3} < x-t  < -2 \\
	\frac{1}{6} + \frac{1-2^{-\beta}}{12} W_{\beta}(x-t), \quad &\text{if } \  \vert x-t \vert \leq 2 \\
	\frac{1}{4} - \frac{3}{16}(x-t-2), \quad &\text{if } \  2 < x-t  < \frac{10}{3} \\
	\end{cases}
\end{align*}
and the function $p_{1,n} : \R \to \R$ with
\begin{align*}
	p_{1,n}(x) = p_0(x) + q_{t+\frac{9}{4},n}(x) - q_{t,n}(x), \quad x \in \R,
\end{align*}
where
\begin{align*}
	q_{a,n}(x) = \begin{cases}
	0, \quad &\text{if } \ \vert x-a \vert > g_{\beta,n} \\
	\frac{1-2^{-\beta}}{12} \Big(W_{\beta}(x-a) - W_{\beta}(g_{\beta,n})\Big), \quad &\text{if } \ \vert x-a \vert \leq g_{\beta,n}
	\end{cases}
\end{align*}
for $g_{\beta,n} = \frac{1}{4} n^{-1/(2\beta+1)}$ and $a \in \R$. Note that $p_{1,n \vert B(t,g_{\beta,n})}$ is constant with value
\begin{align*}
	p_{1,n}(x) = \frac{1}{6} + \frac{1-2^{-\beta}}{12}ÊW_{\beta}(g_{\beta,n}) \quad \text{ for all } x \in B(t,g_{\beta,n}).
\end{align*}

		\begin{figure}[H] 
		\centering
  		\includegraphics[width=.95\textwidth]{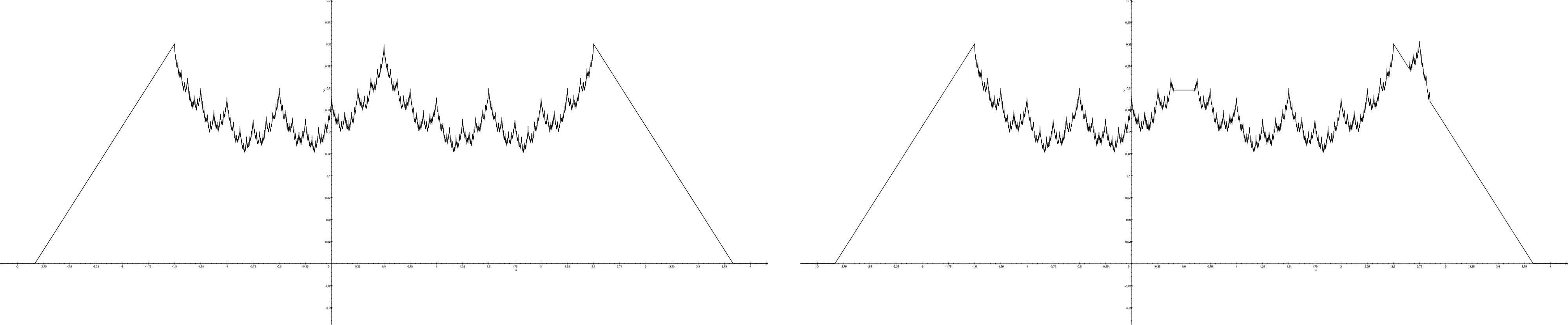}
		\caption{Functions $p_0$ and $p_{1,n}$ for $t=0.5$, $\beta=0.5$ and $n=100$}
		\label{fig:hypos1}
		\end{figure}
		\vspace{-5mm}

\noindent We now show that both $p_0$ and $p_{1,n}$ are contained in the class $\mathscr P_k$ for sufficiently large $k \geq k_0(\beta_*)$ with 
$$p_{0 \vert (-\varepsilon,1+\varepsilon)}, \, p_{1,n \vert (-\varepsilon,1+\varepsilon)}\in \mathcal H_{(-\varepsilon,1+\varepsilon)}(\beta,L^*).$$

\noindent $(i)$ We first verify that $p_0$ integrates to one. Then, it follows directly that also $p_{1,n}$ integrates to one. We have
\begin{align*}
	\int p_0(x) \diff x &= \int_{t-\frac{10}{3}}^{t-2} \left( \frac{1}{4} + \frac{3}{16}(x-t+2) \right) \diff x \\
	&\qquad + \int_{t-2}^{t+2} \left( \frac{1}{6} + \frac{1-2^{-\beta}}{12} W_{\beta}(x-t) \right) \diff x \\
	&\qquad + \int_{t+2}^{t+\frac{10}{3}} \left(\frac{1}{4} - \frac{3}{16}(x-t-2) \right) \diff x \\
	&= \frac{1}{6} + \frac{2}{3} + \frac{1-2^{-\beta}}{12} \int_{-2}^2 W_{\beta}(x) \diff x + \frac{1}{6} \\
	&= 1,
\end{align*}
where the last equality is due to
\begin{align*}
	\int_{-2}^2 W_{\beta}(x) \diff x = \sum_{k=0}^{\infty} 2^{-k\beta} \int_{-2}^2 \cos(2^k \pi x) \diff x = 0.
\end{align*}

\smallskip
\noindent $(ii)$ Next, we check the non-negativity of $p_0$ and $p_{1,n}$ to show that they are probability density functions. We prove non-negativity for $p_0$, whereas non-negativity of $p_{1,n}$ is an easy implication. Since $p_0(-10/3)=0$ and $p_0$ is linear on $(t-10/3,t-2)$ with positive derivative, $p_0$ is non-negative on $(t-10/3,t-2)$. Analogously, $p_0$ is non-negative on $(t+2,t+10/3)$. Note furthermore that
\begin{align} \label{schranke_weier}
	\vert W_{\beta}(x) \vert \leq W_{\beta}(0) = \sum_{k=0}^{\infty} 2^{-k\beta} = \frac{1}{1-2^{-\beta}}
\end{align}
for all $x \in \R$. Thus, for any $x \in \R$ with $\vert x-t \vert \leq 2$, we have
\begin{align*}
	p_0(x) = \frac{1}{6} + \frac{1-2^{-\beta}}{12} W_{\beta}(x-t) \geq \frac{1}{6} - \frac{1}{12} = \frac{1}{12} > 0.
\end{align*}

\smallskip
\noindent $(iii)$ As $p_0$ and also $p_{1,n}$ are bounded from below by $M=1/12$ on $B(t,2)$, we furthermore conclude that they are bounded from below by $M$ on $(-1,2) \subset B(t,2)$, and therefore on any interval $[-\varepsilon,1+\varepsilon]$ with $0<\varepsilon<1$. 

\smallskip
\noindent $(iv)$ We now verify that $p_{0 \vert (-\varepsilon,1+\varepsilon)}, \, p_{1,n \vert (-\varepsilon,1+\varepsilon)}\in \mathcal H_{(-\varepsilon,1+\varepsilon)}(\beta,L(\beta))$ for some positive constant $L(\beta)$. Note again that for any $0<\varepsilon<1$ and any $t \in [0,1]$, the inclusion $(-\varepsilon,1+\varepsilon) \subset B(t,2)$ holds. Thus,
\begin{align*}
	\sup_{\substack{x,y \in (-\varepsilon,1+\varepsilon) \\ x \neq y}} \frac{\vert p_0(x) - p_0(y) \vert}{\vert x-y \vert^{\beta}} = \frac{1-2^{-\beta}}{12} \cdot \sup_{\substack{x,y \in (-\varepsilon,1+\varepsilon) \\ x \neq y}} \frac{\vert W_{\beta}(x-t) - W_{\beta}(y-t) \vert}{\vert (x-t)-(y-t) \vert^{\beta}},
\end{align*}
which is bounded by some constant $c(\beta)$ according to Lemma \ref{weier_lemma}. Together with \eqref{schranke_weier} and with the triangle inequality, we obtain that
\begin{align*}
	p_{0 \vert (-\varepsilon,1+\varepsilon)} \in \mathcal H_{(-\varepsilon,1+\varepsilon)}(\beta,L)
\end{align*}
for some Lipschitz constant $L = L(\beta)$. The H\"older continuity of $p_{1,n}$ is now a simple consequence. The function $p_{1,n}$ is constant on $B(t,g_{\beta,n})$ and coincides with $p_0$ on $(-\varepsilon,1+\varepsilon) \setminus B(t,g_{\beta,n})$. Hence, it remains to investigate combinations of points $x \in (-\varepsilon,1+\varepsilon) \setminus B(t,g_{\beta,n})$ and $y \in B(t,g_{\beta,n})$. Without loss of generality assume that $x \leq t-g_{\beta,n}$. Then,
\begin{align*}
	\frac{\vert p_{1,n}(x)-p_{1,n}(y) \vert}{\vert x-y \vert^{\beta}} = \frac{\vert p_{1,n}(x)-p_{1,n}(t-g_{\beta,n}) \vert}{\vert x-y \vert^{\beta}} \leq \frac{\vert p_{1,n}(x)-p_{1,n}(t-g_{\beta,n}) \vert}{\vert x-(t-g_{\beta,n}) \vert^{\beta}} \leq L,
\end{align*}
which proves that also
\begin{align*}
	p_{1,n \vert (-\varepsilon,1+\varepsilon)} \in \mathcal H_{(-\varepsilon,1+\varepsilon)}(\beta,L).
\end{align*}

\smallskip
\noindent $(v)$ Finally, we address the verification of Assumption \ref{self_sim} for the hypotheses $p_0$ and $p_{1,n}$. Again, for any $t' \in [0,1]$ and any $h \in \mathcal G_{\infty}$ the inclusion $B(t',2h) \subset B(t,2)$ holds, such that in particular
\begin{align*}
	p_{0 \vert B(t',h)} \in \mathcal H_{\beta^*,B(t',h)}(\beta,L_W(\beta))
\end{align*}
for any $t' \in [0,1]$ and for any $h \in \mathcal G_{\infty}$ by Lemma \ref{weier_lemma}. Simultaneously, Lemma \ref{weier_lemma} implies 
\begin{align*}
	\sup_{s \in B(t',h-g)} \left\vert (K_{R,g} \ast p_0)(s) - p_0(s) \right\vert > \frac{1-2^{-\beta_*}}{12} \left( \frac{4}{\pi}-1 \right) g^{\beta} \geq \frac{g^{\beta}}{\log k}
\end{align*}
for all $g \leq h/2$ and for sufficiently large $k \geq k_0(\beta_*)$. That is, for any $t' \in [0,1]$, both \eqref{assumption1} and \eqref{admissible_density} are satisfies for $p_0$ with $u=h$ for any $h \in \mathcal G_{\infty}$. \\
\noindent Concerning $p_{1,n}$ we distinguish between several combinations of pairs $(t',h)$ with $t' \in [0,1]$ and $h \in \mathcal G_{\infty}$. 

\smallskip
\noindent $(v.1)$
If $B(t',h) \cap B(t,g_{\beta,n}) = \emptyset$, the function $p_{1,n}$ coincides with $p_0$ on $B(t',h)$, for which Assumption \ref{self_sim} has been already verified. 

\smallskip
\noindent $(v.2)$ 
If $B(t',h) \subset B(t,g_{\beta,n})$, the function $p_{1,n}$ is constant on $B(t',h)$, such that \eqref{assumption1} and \eqref{admissible_density} trivially hold for $u=h$ and $\beta=\infty$.

\smallskip
\noindent $(v.3)$ If $B(t',h) \cap B(t,g_{\beta,n}) \neq \emptyset$ and $B(t',h) \not\subset B(t,g_{\beta,n})$, we have that $t'+h > t + g_{\beta,n}$ or $t'-h < t-g_{\beta,n}$. As $p_{1,n \vert B(t,2)}$ is symmetric around $t$ we assume $t'+h > t + g_{\beta,n}$ without loss of generality. In this case, 
\begin{align*}
	(t' + 2h - g) - (t+g_{\beta,n}) > 2\left(\frac{h}{2}-g\right),
\end{align*}
such that 
\begin{align*}
	B\left(t'+\frac{3}{2}h, \frac{h}{2} - g \right) \subset B(t',2h-g) \setminus B(t,g_{\beta,n}).
\end{align*}
Consequently, we obtain
\begin{align*}
	\sup_{s \in B(t',2h-g)} \left\vert (K_{R,g} \ast p_{1,n})(s) - p_{1,n}(s) \right\vert \geq \sup_{s \in B\left(t'+\frac{3}{2}h, \frac{h}{2} - g \right)} \left\vert (K_{R,g} \ast p_{1,n})(s) - p_{1,n}(s) \right\vert.
\end{align*}
If $2h \geq 8g$, we conclude that $h/2 \geq 2g$, so that Lemma \ref{weier_lemma} finally proves Assumption \ref{self_sim} for $u=2h$ to the exponent $\beta$ for sufficiently large $k \geq k_0(\beta_*)$.

\smallskip
\noindent Combining $(i) - (v)$, we conclude that $p_0$ and $p_{1,n}$ are contained in the class $\mathscr P_k$ with $p_{0 \vert (-\varepsilon,1+\varepsilon)}, p_{1,n \vert (-\varepsilon,1+\varepsilon)}\in \mathcal H_{(-\varepsilon,1+\varepsilon)}(\beta,L^*)$ for sufficiently large $k \geq k_0(\beta_*)$. The absolute distance of the two hypotheses in $t$ is at least
\begin{align} \label{dist_weier}
	\vert p_0(t) - p_{1,n}(t) \vert &= \frac{1-2^{-\beta}}{12} \left( W_{\beta}(0) - W_{\beta}(g_{\beta,n}) \right) \notag \\
					&= \frac{1-2^{-\beta}}{12} \sum_{k=0}^{\infty} 2^{-k\beta} \left( 1 - \cos(2^k \pi g_{\beta,n}) \right) \notag \\
					&\geq \frac{1-2^{-\beta_*}}{12} 2^{-\tilde k\beta} \left( 1 - \cos(2^{\tilde k} \pi g_{\beta,n}) \right) \notag \\
					&\geq 2c_7 \, g_{\beta,n}^{\beta} 
\end{align}
where $\tilde k \in \N$ is chosen such that $2^{-(\tilde k+1)} < g_{\beta,n} \leq 2^{-\tilde k}$ and
\begin{align*}
	c_7° = c_7°(\beta_*) = \frac{1-2^{-\beta_*}}{24}.
\end{align*}
It remains to bound the distance between the associated product probability measures $\P_0^{\otimes n}$ and $\P_{1,n}^{\otimes n}$. For this purpose, we analyze the Kullback-Leibler divergence between these probability measures, which can be bounded from above by
\begin{align*}
	K(\P_{1,n}^{\otimes n},\P_0^{\otimes n}) &= n \, K(\P_{1,n}, \P_0) \\
		&= n \int p_{1,n}(x) \log \left( \frac{p_{1,n}(x)}{p_0(x)} \right) \mathbbm{1} \left\{ p_0(x) > 0 \right\} \diff x \\
		&= n \int p_{1,n}(x) \log \left(1 + \frac{q_{t+\frac{9}{4},n}(x) - q_{t,n}(x)}{p_0(x)} \right) \mathbbm{1} \left\{ p_0(x) > 0 \right\} \diff x \\
		&\leq n \int q_{t+\frac{9}{4},n}(x) - q_{t,n}(x) + \frac{\left(q_{t+\frac{9}{4},n}(x) - q_{t,n}(x) \right)^2}{p_0(x)} \mathbbm{1} \left\{ p_0(x) > 0 \right\} \diff x \\
		&= n \int \frac{\left(q_{t+\frac{9}{4},n}(x) - q_{t,n}(x) \right)^2}{p_0(x)} \mathbbm{1} \left\{ p_0(x) > 0 \right\} \diff x \\
		&\leq 12 n \int \left(q_{t+\frac{9}{4},n}(x) - q_{t,n}(x) \right)^2 \diff x \\
		&= 24 n \int q_{0,n}(x)^2 \diff x \\
		&= 24 n \left( \frac{1-2^{-\beta}}{12} \right)^2 \int_{-g_{\beta,n}}^{g_{\beta,n}} \left( W_{\beta}(x)-W_{\beta}(g_{\beta,n}) \right)^2 \diff x \\
		&\leq 24 L(\beta)^2 n \left( \frac{1-2^{-\beta}}{12} \right)^2 \int_{-g_{\beta,n}}^{g_{\beta,n}} \left( g_{\beta,n}-x \right)^{2\beta} \diff x \\
		&\leq c_8° n  g_{\beta,n}^{2\beta+1} \\
		&\leq c_8° 
\end{align*}
using the inequality $\log(1+x) \leq x$, $x > -1$, Lemma \ref{weier_lemma}, and
\begin{align*}
	p_0(t+5/2) = \frac{5}{32} > M = \frac{1}{12},
\end{align*}
where
\begin{align*}
	c_8° = c_8°(\beta) = 48 L(\beta)^2 4^{-(2\beta+1)} 2^{2\beta} \left( \frac{1-2^{-\beta}}{12} \right)^2.
\end{align*}
Using now Theorem 2.2 in \cite{tsybakov2009},
\begin{align*}
	&\inf_{T_n} \sup_{\substack{p \in \mathscr P_k : \\ p_{\vert (-\varepsilon,1+\varepsilon)} \in \mathcal H_{(-\varepsilon,1+\varepsilon)}(\beta,L^*)}} \P_p^{\otimes n} \left( n^{\frac{\beta}{2\beta+1}}Ê\left\vert T_n(t) - p(t) \right\vert \geq c_7° \right) \\
	&\hspace{5cm} \geq \max \left\{ \frac{1}{4} \exp(-c_8°), \, \frac{1-\sqrt{c_8°/2}}{2} \right\} > 0.
\end{align*}

\paragraph{Part 2}
For $\beta = 1$, consider the function $p_0: \R \to \R$ with
\begin{align*}
	p_0(x) = \begin{cases}
	0, \quad &\text{if } \ \vert x-t \vert > 4 \\
	\frac{1}{4} - \frac{1}{16} \vert x-t \vert, \quad &\text{if } \  \vert x-t \vert \leq 4
	\end{cases}
\end{align*}
and the function $p_{1,n} : \R \to \R$ with
\begin{align*}
	p_{1,n}(x) = p_0(x) + q_{t+\frac{9}{4},n}(x) - q_{t,n}(x), \quad x \in \R,
\end{align*}
where
\begin{align*}
	q_{a,n}(x) = \begin{cases}
	0, \quad &\text{if } \ \vert x-a \vert > g_{1,n} \\
	\frac{1}{16} (g_{1,n}-\vert x-a \vert), \quad &\text{if } \ \vert x-a \vert \leq g_{1,n}
	\end{cases}
\end{align*}
for $g_{1,n} = n^{-1/3}$ and $a \in \R$. The construction is depicted in Figure \ref{fig:hypos_lip} below.

		\begin{figure}[H] 
		\centering
  		\includegraphics[width=.95\textwidth]{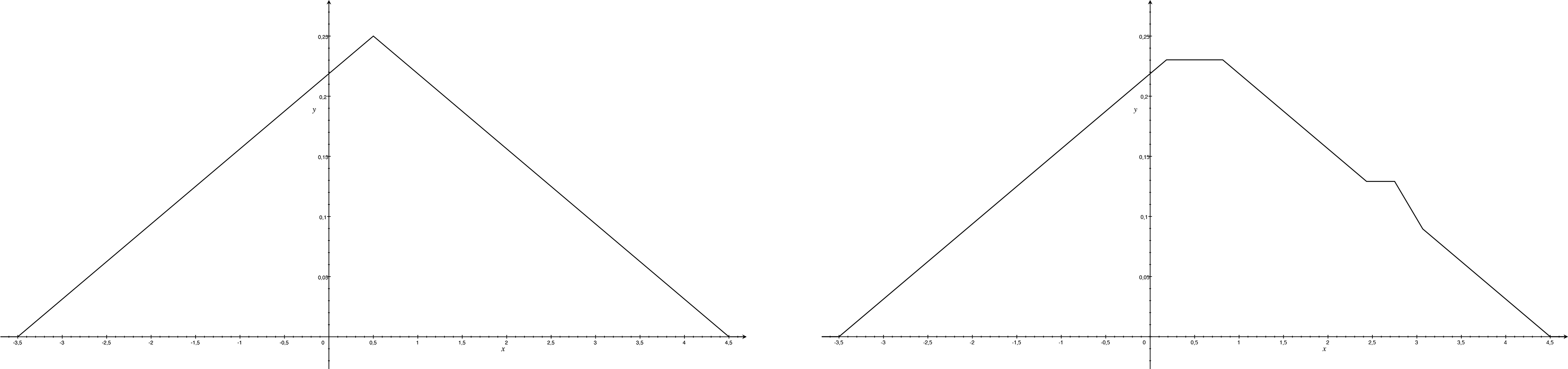}
		\caption{Functions $p_0$ and $p_{1,n}$ for $t=0.5$, $\beta=0.5$ and $n=10$}
		\label{fig:hypos_lip}
		\end{figure}
		\vspace{-5mm}

\smallskip
\noindent $(i)-(iii)$ Easy calculations show that both $p_0$ and $p_{1,n}$ are probability densities, which are bounded from below by $M=1/8$ on $B(t,2)$.

\smallskip
\noindent $(iv)$ We now verify that $p_{0 \vert (-\varepsilon,1+\varepsilon)}, \, p_{1,n \vert (-\varepsilon,1+\varepsilon)}\in \mathcal H_{(-\varepsilon,1+\varepsilon)}(1,L)$ for some Lip\-schitz constant $L>0$. Note again that for any $0<\varepsilon<1$ and any $t \in [0,1]$, the inclusion $(-\varepsilon,1+\varepsilon) \subset B(t,2)$ holds. Thus,
\begin{align*}
	\sup_{\substack{x,y \in (-\varepsilon,1+\varepsilon) \\ x \neq y}} \frac{\vert p_0(x) - p_0(y) \vert}{\vert x-y \vert} = \frac{1}{16} \cdot \sup_{\substack{x,y \in (-\varepsilon,1+\varepsilon) \\ x \neq y}} \frac{\vert \vert x-t \vert - \vert y-t \vert \vert}{\vert x-y \vert} \leq \frac{1}{16}.
\end{align*}
Since $p_0$ has maximal value $1/4$, we obtain that
\begin{align*}
	p_{0 \vert (-\varepsilon,1+\varepsilon)} \in \mathcal H_{(-\varepsilon,1+\varepsilon)}\left(1,\frac{5}{16} \right).
\end{align*}
For the same reasons as before, we also obtain 
\begin{align*}
	p_{1,n \vert (-\varepsilon,1+\varepsilon)} \in \mathcal H_{(-\varepsilon,1+\varepsilon)}\left(1,\frac{5}{16} \right).
\end{align*}

\smallskip
\noindent $(v)$ Finally, we address the verification of Assumption \ref{self_sim} for the hypotheses $p_0$ and $p_{1,n}$. Again, for any $t' \in [0,1]$ and any $h \in \mathcal G_{\infty}$ the inclusion $B(t',2h) \subset B(t,2)$ holds, and we distinguish between several combinations of pairs $(t',h)$ with $t' \in [0,1]$ and $h \in \mathcal G_{\infty}$. We start with $p_0$. 

\smallskip
\noindent $(v.1)$
If $t \notin B(t',h)$, it holds that $\Vert p \Vert_{\beta,B(t',h)} \leq 5/16$ for all $\beta > 0$, such that \eqref{assumption1} and \eqref{admissible_density} trivially hold for $u=h$ and $\beta=\infty$.

\smallskip
\noindent $(v.2)$
In case $t \in B(t',h)$, the function $p_{0 \vert B(t',2h)}$ is not differentiable and 
$$\Vert p_0 \Vert_{1,B(t',2h)} \leq 5/16.$$
Furthermore, $t \in B(t',2h-g)$ for any $g \in\mathcal G_{\infty}$ with $g < 2h/16$ and thus
\begin{align*}
	\sup_{s \in B(t',2h-g)} \left\vert (K_{R,g} \ast p)(s) - p(s) \right\vert &\geq \left\vert (K_{R,g} \ast p)(t) - p(t) \right\vert = \frac{1}{32} g.
\end{align*}
That is, \eqref{assumption1} and \eqref{admissible_density} are satisfied for $u=2h$ and $\beta=1$ for sufficiently large $n \geq n_0$. 

\smallskip
\noindent The density $p_{1,n}$ can be treated in a similar way. It is constant on the interval $B(t,g_{\beta,n})$. If $B(t',h)$ does not intersect with $\{t-g_{\beta,n}, t+g_{\beta,n}\}$, Assumption \ref{self_sim} is satisfied for $u=h$ and $\beta=\infty$. If the two sets intersect, $t-g_{\beta,n}$ or $t+g_{\beta,n}$ is contained in $B(t',2h-g)$ for any $g \in \mathcal G_{\infty}$ with $g < 2h/16$, and we proceed as before.

\smallskip
\noindent Again, combining $(i) - (v)$, it follows that $p_0$ and $p_{1,n}$ are contained in the class $\mathscr P_k$ with $p_{0 \vert (-\varepsilon,1+\varepsilon)}, p_{1,n \vert (-\varepsilon,1+\varepsilon)}\in \mathcal H_{(-\varepsilon,1+\varepsilon)}(1,L)$ for sufficiently large $k \geq k_0$ and some universal constant $L>0$. The absolute distance of the two hypotheses in $t$ equals
\begin{align*}
	\vert p_0(t) - p_{1,n}(t) \vert &= \frac{1}{16} g_{1,n}.
\end{align*}
To bound the Kullback-Leibler divergence between the associated product probability measures $\P_0^{\otimes n}$ and $\P_{1,n}^{\otimes n}$, we derive as before
\begin{align*}
	K(\P_{1,n}^{\otimes n},\P_0^{\otimes n}) &\leq n \int \frac{\left(q_{t+\frac{9}{4},n}(x) - q_{t,n}(x) \right)^2}{p_0(x)} \mathbbm{1} \left\{ p_0(x) > 0 \right\} \diff x \\
		&\leq 16 n \int \left(q_{t+\frac{9}{4},n}(x) - q_{t,n}(x) \right)^2 \diff x \\
		&= 32 n \int q_{0,n}(x)^2 \diff x \\
		&= \frac{1}{12},
\end{align*} 
using $p_0(t+5/2) > 1/16$. Using Theorem 2.2 in \cite{tsybakov2009} again,
\begin{align*}
	&\inf_{T_n} \sup_{\substack{p \in \mathscr P_k : \\ p_{\vert (-\varepsilon,1+\varepsilon)} \in \mathcal H_{(-\varepsilon,1+\varepsilon)}(1,L^*)}} \P_p^{\otimes n} \left( n^{\frac{1}{3}}Ê\left\vert T_n(t) - p(t) \right\vert \geq \frac{1}{32} \right) \\
	&\hspace{5cm} \geq \max \left\{ \frac{1}{4} \exp(-1/12), \, \frac{1-\sqrt{1/24}}{2} \right\} > 0.
\end{align*}
\end{proof}

\begin{proof}[Proof of Proposition \ref{nowhere_dense}]
Define 
\begin{align*}
	\tilde{\mathscr R} = \bigcup_{n \in \N}Ê\tilde{\mathscr R}_n
\end{align*}
with
\begin{align*}
	\tilde{\mathscr R}_n = &\Bigg\{ p \in \mathcal H_{(-\varepsilon,1+\varepsilon)}(\beta_*) : \forall \, t \in [0,1] \ \forall \, h \in \mathcal G_{\infty} \ \exists \,  \beta \in [\beta_*, \beta^*] \text{ with } \\
	&\hspace{0.5cm} p_{\vert B(t,h)} \in \mathcal H_{B(t,h)}(\beta) \text{and } \Vert (K_{R,g} \ast p) - p \Vert_{B(t,h-g)} \geq \frac{g^{\beta}}{\log n} \\
	&\hspace{5cm} \text{ for all }Êg \in \mathcal G_{\infty} \text{ with } g \leq h/8 \Bigg\}.
\end{align*}
Furthermore, let 
\begin{align*}
	E_n(\beta) = &\Bigg\{ p \in \mathcal H_{(-\varepsilon,1+\varepsilon)}(\beta) : \Vert (K_{R,g} \ast p) - p \Vert_{B(t,h-g)} \geq \frac{2}{\log n} g^{\beta} \text{ for all } t \in [0,1], \\
	&\hspace{3.5cm} \text{ for allÊ} \, \,Êh \in \mathcal G_{\infty} \text{, and for all }Êg \in \mathcal G_{\infty} \text{ with } g \leq h/8 \Bigg\}.
\end{align*}
Note that Lemma \ref{weier_lemma} shows that $E_n(\beta)$ is non-empty as soon as
\begin{align*}
	\frac{2}{\log n} \leq 1-\frac{4}{\pi}.
\end{align*}
Note additionally that $E_n(\beta) \subset \tilde{\mathscr R}_n$ for any $\beta \in [\beta_*,\beta^*]$, and
\begin{align*}
	\bigcup_{n \in \N} E_n(\beta) \subset \tilde{\mathscr R}.
\end{align*}
With
\begin{align*}
	A_n(\beta) = \left\{ \tilde f \in \mathcal H_{(-1,2)}(\beta) : \Vert \tilde f-f \Vert_{\beta,(-\varepsilon,1+\varepsilon)} <  \frac{\Vert K_R \Vert_1^{-1}}{\log n} \text{ for some } f \in E_n(\beta) \right\},
\end{align*}
we get for any $\tilde f \in A_n(\beta)$ and a corresponding $f \in E_n(\beta)$ with
\begin{align*}
 	\Vert \check f \Vert_{\beta,(-\varepsilon,1+\varepsilon)} < \Vert K_R \Vert_1^{-1} \frac{1}{\log n}
\end{align*}
and $\check f = \tilde f-f$, the lower bound 
\begin{align*}
	&\Big\Vert (K_{R,g} \ast \tilde f)-\tilde f \Big\Vert_{B(t,h-g)} \\
	& \geq \Big\Vert (K_{R,g} \ast f)-f \Big\Vert_{B(t,h-g)} - \Big\Vert \check f - (K_{R,g} \ast \check f)  \Big\Vert_{B(t,h-g)} \\
	& = \frac{2}{\log n} g^{\beta} - \sup_{s \in B(t,h-g)} \left\vert \int K_R(x) \left\{ \check f(s+gx) - \check f(s) \right\} \diff x \right\vert \\
%	& = \frac{2}{\log n} g^{\beta} - \sup_{s \in B(t,h-g)} \Bigg\vert \int K_R(x) \Bigg\{ \check f(s +g x) - P^{\check f}_{s,\lfloor \beta \rfloor}(s+gx) \\
%	&\hspace{7cm} + \sum_{k=1}^{\lfloor \beta \rfloor} \frac{\check f^{(k)}(s)}{k!} \cdot (gx)^k \Bigg\} \diff x \, \Bigg\vert \\
%		& \geq \frac{2}{\log n} g^{\beta} - \int \vert K_R(x) \vert \sup_{s \in B(t,h-g)} \left\vert \check f(s + g x) - P^{\check f}_{s,\lfloor \beta \rfloor}(s+gx) \right\vert \diff x \\
%		& \geq \frac{2}{\log n} g^{\beta} - \int \vert K_R(x) \vert \sup_{s \in B(t,h-g)} \sup_{s' \in B(s,g)} \Bigg\vert \frac{\check f^{(\lfloor \beta  \rfloor)}(s') - \check f^{(\lfloor \beta  \rfloor)}(s)}{\lfloor \beta \rfloor !} (gx)^{\lfloor \beta \rfloor} \Bigg\vert \diff x \\
		& \geq \frac{2}{\log n} g^{\beta} - g^{\beta} \cdot \int \vert K_R(x) \vert \sup_{s \in B(t,h-g)} \sup_{\substack{s' \in B(s,g) \\ s' \neq s}} \frac{\left\vert \check f(s') - \check f(s) \right\vert}{\vert s - s' \vert^{\beta}}  \diff x  \\
		& \geq \frac{2}{\log n} g^{\beta} - g^{\beta} \cdot \Vert K_R \Vert_1 \cdot \Vert \check f \Vert_{\beta,(-\varepsilon,1+\varepsilon)}  \\
		& \geq \frac{1}{\log n} g^{\beta}
\end{align*}
for all $g,h \in \mathcal G_{\infty}$ with $g \leq h/8$ and for all $t \in [0,1]$, and therefore
\begin{align*}
	A = \bigcup_{n \in \N} A_n(\beta) \subset \tilde{\mathscr R}.
\end{align*}
Clearly, $A_n(\beta)$ is open in $\mathcal H_{(-\varepsilon,1+\varepsilon)}(\beta)$. Hence, the same holds true for $A$. Next, we verify that $A$ is dense in $\mathcal H_{(-\varepsilon,1+\varepsilon)}(\beta)$. Let $p \in \mathcal H_{(-\varepsilon,1+\varepsilon)}(\beta)$ and let $\delta > 0$. We now show that there exists some function $\tilde p_{\delta} \in A$ with $\Vert p - \tilde p_{\delta} \Vert_{\beta,(-\varepsilon,1+\varepsilon)} \leq \delta$. For the construction of the function $\tilde p_{\delta}$, set the grid points
\begin{align*}
	t_{j,1}(k) = (4j+1) 2^{-k}, \quad t_{j,2}(k) = (4j+3) 2^{-k}
\end{align*}
for $j \in \{ -2^{k-2}, -2^{k-2}+1, \ldots, 2^{k-1}-1 \}$ and $k \geq 2$. The function $\tilde p_{\delta}$ shall be defined as the limit of a recursively constructed sequence. The idea is to recursively add appropriately rescaled sine waves at those locations where the bias condition is violated. Let $p_{1,\delta} = p$, and denote
\begin{align*}
	J_k = \bigg\{ j \in \{ -2^{k-2}, \ldots, 2^{k-1}-1 \} : &\max_{i=1,2} \Big\vert (K_{R,2^{-k}} \ast p_{k-1,\delta})(t_{j,i}(k)) -p_{k-1,\delta}(t_{j,i}(k)) \Big\vert \\
	 &\hspace{3cm}< \frac{1}{2}c_9° \, \delta \left( 1 - \frac{2}{\pi} \right) 2^{-k\beta} \bigg\}
\end{align*}
for $k \geq 2$, where
\begin{align*}
	c_9° = c_9°(\beta) = \left( \frac{3\pi}{2}  \cdot \frac{1}{1-2^{\beta-1}} + \frac{7}{1-2^{-\beta}} \right)^{-1}.
\end{align*}
For any $k \geq 2$ set
\begin{align*}
	p_{k,\delta}(x) = p_{k-1,\delta}(x) + c_9° \, \delta \sum_{j \in J_k} S_{k,\beta,j}(x)
\end{align*}
with functions
\begin{align*}
	S_{k,\beta,j}(x) = 2^{-k\beta} \sin \left( 2^{k-1} \pi x \right) \mathbbm{1} \left\{ \vert (4j+2) 2^{-k} - x \vert \leq 2^{-k+1} \right\}
\end{align*}
exemplified in Figure \ref{fig:sin_fct}. That is, 
\begin{align*}
	p_{k,\delta}(x) = p(x) + c_9° \, \delta \sum_{l=2}^k \sum_{j \in J_l} S_{l,\beta,j}(x),
\end{align*}
and we define $\tilde p_{\delta}$ as the limit
\begin{align*}
	\tilde p_{\delta}(x) &=  p(x) + c_9° \, \delta \sum_{l=2}^{\infty} \sum_{j \in J_l} S_{l,\beta,j}(x) \\
		&= p_{k,\delta}(x) + c_9° \, \delta \sum_{l=k+1}^{\infty} \sum_{j \in J_l} S_{l,\beta,j}(x).
\end{align*}
The function $\tilde p_{\delta}$ is well-defined as the series on the right-hand side converges: for fixed $l \in \N$, the indicator functions
\begin{align*}
	\mathbbm{1} \left\{ \vert (4j+2) 2^{-k} - x \vert \leq 2^{-k+1} \right\}, \quad j \in \{ -2^{l-2}, -2^{l-2}+1, \ldots, 2^{l-1}-1 \}
\end{align*}
have disjoint supports, such that
\begin{align*}
	 \bigg\Vert \sum_{j \in J_l} S_{l,\beta,j} \bigg\Vert_{(-\varepsilon,1+\varepsilon)} \leq 2^{-l\beta}.
\end{align*}

		\begin{figure}[H] 
		\centering
  		\includegraphics[width=.48\textwidth]{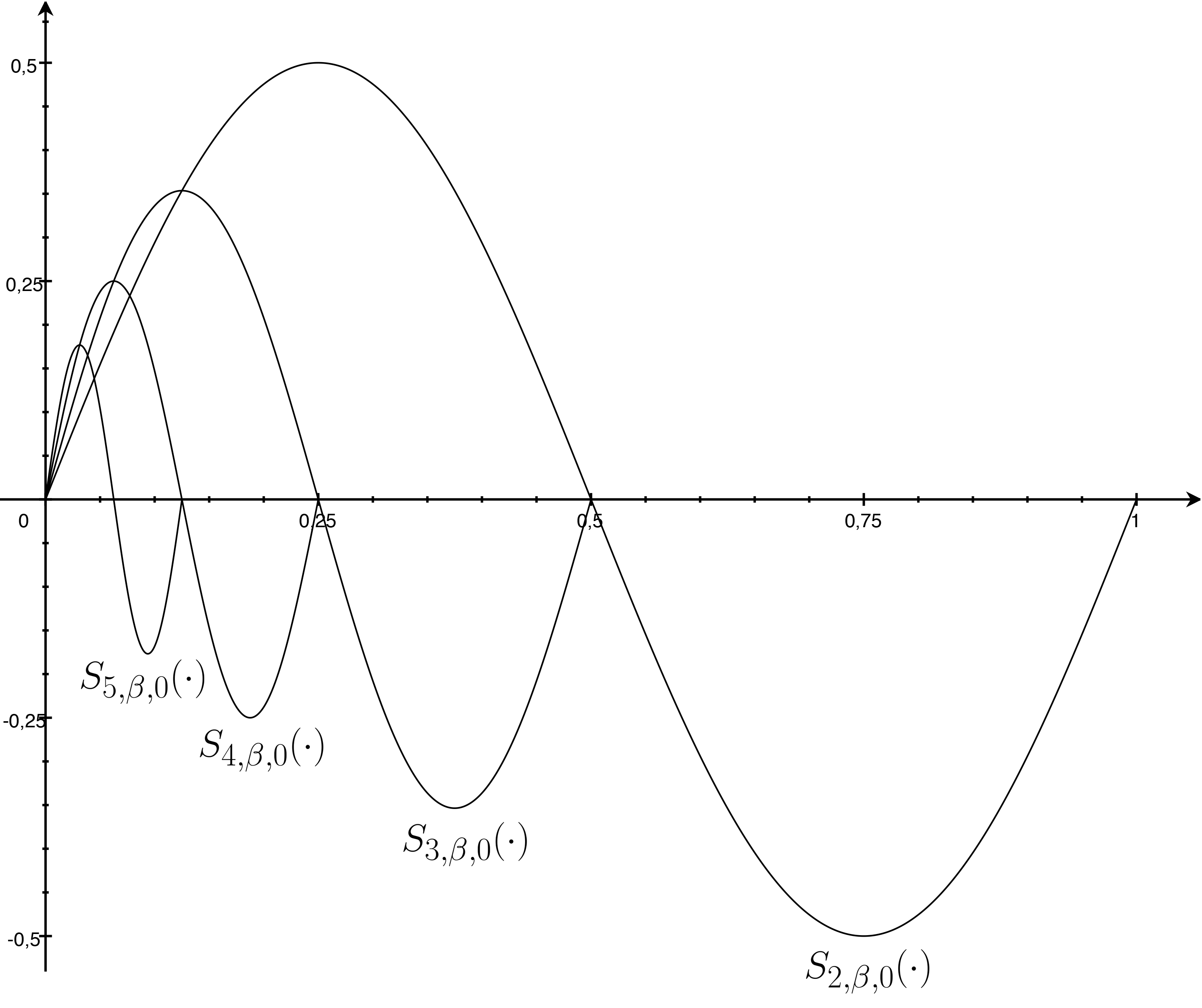}
		\caption{Functions $S_{k,\beta,0}$ for $k=2,\ldots,5$ and $\beta=0.5$}
		\label{fig:sin_fct}
		\end{figure}
		\vspace{-5mm}

\noindent It remains to verify that $\tilde p_{\delta} \in \bigcup_{n \in \N} E_n(\beta) \subset A$ and also $\Vert p - \tilde p_{\delta} \Vert_{\beta,(-\varepsilon,1+\varepsilon)} \leq \delta$. As concerns the inequality $\Vert p - \tilde p_{\delta} \Vert_{\beta,(-\varepsilon,1+\varepsilon)} \leq \delta$, it remains to show that
\begin{align*}
	\left\Vert \sum_{l=2}^{\infty} \sum_{j \in J_l} S_{l,\beta,j} \right\Vert_{\beta,(-\varepsilon,1+\varepsilon)} \leq \frac{1}{c_9°}.
\end{align*}
For $s,t \in (-\varepsilon,1+\varepsilon)$ with $\vert s-t \vert \leq 1$, we obtain
\begin{align}\label{sinus_indikator}
	&\left\vert \sum_{l=2}^{\infty} \sum_{j \in J_l} S_{l,\beta,j}(s) - \sum_{l=2}^{\infty} \sum_{j \in J_l} S_{l,\beta,j}(t) \right\vert \notag \\
	&\hspace{1cm} \leq \sum_{l=2}^{\infty} 2^{-l\beta} \bigg\vert \sin(2^{l-1} \pi s) \sum_{j \in J_l} \mathbbm{1} \{ \vert (4j+2)2^{-l} - s \vert \leq 2^{-l+1} \} \\
	&\hspace{3.5cm} - \sin(2^{l-1} \pi t) \sum_{j \in J_l} \mathbbm{1}\{ \vert (4j+2)2^{-l} - t \vert \leq 2^{-l+1} \} \bigg\vert. \notag
\end{align}
Choose now $k' \in \N$ maximal, such that both
\begin{align*}
	(4j+2) 2^{-k'} - 2^{-k'+1} \leq s \leq (4j+2) 2^{-k'} + 2^{-k'+1}
\end{align*}
and
\begin{align*}
	(4j+2) 2^{-k'} - 2^{-k'+1} \leq t \leq (4j+2) 2^{-k'} + 2^{-k'+1}
\end{align*}
for some $j \in \{ -2^{k'-2}, \ldots, 2^{k'-1}-1 \}$. For $2 \leq l \leq k'$, we have
\begin{align}Ê\label{sin_ohneIndikator}
	&\bigg\vert \sin(2^{l-1} \pi s) \sum_{j \in J_l} \mathbbm{1} \{ \vert (4j+2)2^{-l} - s \vert \leq 2^{-l+1} \} \notag \\
	&\hspace{4.5cm} - \sin(2^{l-1} \pi t) \sum_{j \in J_l} \mathbbm{1} \{ \vert (4j+2)2^{-l} - t \vert \leq 2^{-l+1} \} \bigg\vert \notag \\
	&\qquad \leq \bigg\vert \sin(2^{l-1} \pi s) - \sin(2^{l-1} \pi t) \bigg\vert \notag \\
	&\qquad \leq \min \left\{ 2^{l-1} \pi\vert s-t \vert, 2 \right\}
\end{align}
by the mean value theorem. For $l \geq k'+1$,
\begin{align*}
	&\bigg\vert \sin(2^{l-1} \pi s) \sum_{j \in J_l} \mathbbm{1} \{ \vert (4j+2)2^{-l} - s \vert \leq 2^{-l+1} \} \\
	&\hspace{4.5cm} - \sin(2^{l-1} \pi t) \sum_{j \in J_l} \mathbbm{1} \{ \vert (4j+2)2^{-l} - t \vert \leq 2^{-l+1} \} \bigg\vert \\
	&\leq \max \Bigg\{ \bigg\vert \sin(2^{l-1} \pi s) \bigg\vert, \bigg\vert \sin(2^{l-1} \pi t) \bigg\vert \Bigg\}.
\end{align*}
Furthermore, due to the choice of $k'$, there exists some $z \in [s,t]$ with
\begin{align*}
	\sin(2^{l-1} \pi z) = 0
\end{align*}
for all $l \geq k'+1$. Thus, for any $l \geq k'+1$, by the mean value theorem,
\begin{align*}
	\bigg\vert \sin(2^{l-1} \pi s) \bigg\vert &= \bigg\vert \sin(2^{l-1} \pi s) - \sin(2^{l-1} \pi z) \bigg\vert \\
		&\leq \min \left\{ 2^{l-1} \pi \vert s-zÊ\vert, 1 \right\} \\
		&\leq \min \left\{ 2^{l-1} \pi \vert s-tÊ\vert, 1 \right\}.
\end{align*}
Analogously, we obtain
\begin{align*}
	\bigg\vert \sin(2^{l-1} \pi t) \bigg\vert \leq \min \left\{ 2^{l-1} \pi \vert s-t \vert, 1 \right\}.
\end{align*}
Consequently, together with inequality \eqref{sinus_indikator} and \eqref{sin_ohneIndikator},
\begin{align*}
	&\left\vert \sum_{l=2}^{\infty} \sum_{j \in J_l} S_{l,\beta,j}(s) - \sum_{l=2}^{\infty} \sum_{j \in J_l} S_{l,\beta,j}(t) \right\vert \leq \sum_{l=2}^{\infty} 2^{-l\beta} \min \left\{ 2^{l-1} \pi \vert s-t \vert, \, 2 \right\}.
\end{align*}
Choose now $k \in \N \cup \{ 0 \}$, such that $2^{-(k+1)} < \vert s-t \vert \leq 2^{-k}$. If $k \leq 1$,
\begin{align*}
	\sum_{l=2}^{\infty} 2^{-l\beta} \min \left\{ 2^{l-1} \pi \vert s-t \vert, \, 2 \right\} \leq 2 \frac{2^{-2\beta}}{1-2^{-\beta}} \leq \frac{2}{1-2^{-\beta}} \vert s- t \vert^{\beta}.
\end{align*}
If $k \geq 2$, we decompose
\begin{align*}
	\sum_{l=2}^{\infty} 2^{-l\beta} \min \left\{ 2^{l-1} \pi \vert s-t \vert, \, 2 \right\} &\leq \frac{\pi}{2} \vert s-t \vert \, \sum_{l=0}^k 2^{l(1-\beta)} + 2 \sum_{l=k+1}^{\infty} 2^{-l\beta} \\
		&= \frac{\pi}{2} \vert s-t \vert \, \frac{2^{k(1-\beta)}-2^{\beta-1}}{1-2^{\beta-1}} + 2 \cdot \frac{2^{-(k+1)\beta}}{1-2^{-\beta}} \\
		&\leq \vert s-t \vert^{\beta} \cdot \left( \frac{\pi}{2}  \cdot \frac{1}{1-2^{\beta-1}} + \frac{2}{1-2^{-\beta}} \right).
\end{align*}
Since furthermore
\begin{align*}
	\left\Vert \sum_{l=2}^{\infty} \sum_{j \in J_l} S_{l,\beta,j} \right\Vert_{\sup} \leq \frac{1}{1-2^{-\beta}},
\end{align*}
we have
\begin{align*}
	\left\Vert \sum_{l=2}^{\infty} \sum_{j \in J_l} S_{l,\beta,j} \right\Vert_{\beta,(-\varepsilon,1+\varepsilon)} \leq 3 \left( \frac{\pi}{2}  \cdot \frac{1}{1-2^{\beta-1}} + \frac{2}{1-2^{-\beta}} \right) + \frac{1}{1-2^{-\beta}} = \frac{1}{c_9°}
\end{align*}
and finally $\Vert p-\tilde p_{\varepsilon} \Vert_{\beta,(-\varepsilon,1+\varepsilon)} \leq \delta$. In particular $\tilde p_{\delta} \in \mathcal H_{(-\varepsilon,1+\varepsilon)}(\beta)$.

\smallskip

\noindent We now show that the function $\tilde p_{\delta}$ is contained in $\bigcup_{n \in \N} E_n(\beta) \subset A$. For any bandwidths $g,h \in \mathcal G_{\infty}$ with $g \leq h/8$, it holds that $h-g \geq 4g$. Thus, for any $g=2^{-k}$ with $k \geq 2$ and for any $t \in (-\varepsilon,1+\varepsilon)$, there exists some $j = j(t,h,g) \in \{-2^{k-2}, \ldots, 2^{k-1}-1 \}$ such that both $t_{j,1}(k)$ and $t_{j,2}(k)$ are contained in $B(t,h-g)$, which implies
\begin{align} \label{reduction_2points}
	\sup_{s \in B(t,h-g)} \left\vert (K_{R,g} \ast \tilde p_{\delta})(s) - \tilde p_{\delta}(s) \right\vert \geq \max_{i=1,2} \left\vert (K_{R,g} \ast \tilde p_{\delta})(t_{j,i}(k)) - \tilde p_{\delta}(t_{j,i}(k)) \right\vert.
\end{align}
By linearity of the convolution and the theorem of dominated convergence,
\begin{align} \label{faltung_split}
	&(K_{R,g} \ast \tilde p_{\delta})(t_{j,i}(k)) - \tilde p_{\delta}(t_{j,i}(k)) \notag \\
	&\qquad= (K_{R,g} \ast p_{k,\delta})(t_{j,i}(k)) - p_{k,\delta}(t_{j,i}(k)) \notag \\
		&\qquad \qquad+ c_9° \, \delta \sum_{l=k+1}^{\infty} \sum_{j \in J_l} \Big( (K_{R,g} \ast S_{l,\beta,j})(t_{j,i}(k)) - S_{l,\beta,j}(t_{j,i}(k)) \Big).
\end{align}
We analyze the convolution $K_{R,g} \ast S_{l,\beta,j}$ for $l \geq k+1$. Here,
\begin{align*}
	 \sin \left( 2^{l-1} \pi \, t_{j,1}(k) \right) = \sin \left( 2^{l-k-1} \pi \, (4j+1)  \right) = 0
\end{align*}
and 
\begin{align*}
	 \sin \left( 2^{l-1} \pi \, t_{j,2}(k) \right) = \sin \left( 2^{l-k-1} \pi \, (4j+3)  \right) = 0.
\end{align*}
Hence,
\begin{align*}
	\sum_{j \in J_l} S_{l,\beta,j}(t_{j,i}(k)) = 0, \quad i=1,2
\end{align*}
for any $l \geq k+1$. Furthermore,
\begin{align*}
	(K_{R,g} \ast S_{l,\beta,j})(t_{j,i}(k)) &= \frac{1}{2g} \int_{-g}^g S_{l,\beta,j}(t_{j,i}(k)-x) \diff x \\
		&= \frac{1}{2g} \int_{t_{j,i}(k)-g}^{t_{j,i}(k)+g} S_{l,\beta,j}(x) \diff x, \quad i=1,2.
\end{align*}
Due to the identities
\begin{align*}
	(4j+2) 2^{-k} - 2^{-k+1} &= t_{j,1}(k) - g \\
	(4j+2) 2^{-k} + 2^{-k+1} &= t_{j,2}(k) + g,
\end{align*}
we have either
\begin{align*}
	\left[ (4j+2) 2^{-l} - 2^{-l+1}, (4j+2) 2^{-l} + 2^{-l+1} \right] \subset \left[  t_{j,1}(k) - g,  t_{j,2}(k) + g \right]
\end{align*}
or
\begin{align*}
	\left[ (4j+2) 2^{-l} - 2^{-l+1}, (4j+2) 2^{-l} + 2^{-l+1} \right] \cap \left[  t_{j,1}(k) - g,  t_{j,2}(k) + g \right] = \emptyset
\end{align*}
for any $l \geq k+1$. Therefore, for $i=1,2$,
\begin{align*}
	&\sum_{j \in J_l}(K_{R,g} \ast S_{l,\beta,j})(t_{j,i}(k)) \\
	&\quad = \sum_{j \in J_l}\frac{1}{2g} \int_{t_{j,i}(k)-g}^{t_{j,i}(k)+g} 2^{-l\beta} \sin \left( 2^{l-1} \pi x \right) \mathbbm{1} \left\{ \vert (4j+2) 2^{-l} - x \vert \leq 2^{-l+1} \right\} \diff x \\
	&\quad = 0
\end{align*}
such that equation \eqref{faltung_split} then simplifies to
\begin{align*}
	(K_{R,g} \ast \tilde p_{\delta})(t_{j,i}(k)) - \tilde p_{\delta}(t_{j,i}(k)) &= (K_{R,g} \ast p_{k,\delta})(t_{j,i}(k)) - p_{k,\delta}(t_{j,i}(k)), \quad i=1,2.
\end{align*}
Together with \eqref{reduction_2points}, we obtain
\begin{align*}
	\sup_{s \in B(t,h-g)} \left\vert (K_{R,g} \ast \tilde p_{\delta})(s) - \tilde p_{\delta}(s) \right\vert \geq \max_{i=1,2} \left\vert (K_{R,g} \ast p_{k,\delta})(t_{j,i}(k)) - p_{k,\delta}(t_{j,i}(k)) \right\vert
\end{align*}
for some $j \in \{-2^{k-2}, -2^{k-2}+1, \ldots, 2^{k-2}-1 \}$. If $j \notin J_k$, then
\begin{align*}
	&\max_{i=1,2} \left\vert (K_{R,g} \ast p_{k,\delta})(t_{j,i}(k)) - p_{k,\delta}(t_{j,i}(k)) \right\vert \\
	&\hspace{2cm}= \max_{i=1,2} \left\vert (K_{R,g} \ast p_{k-1,\delta})(t_{j,i}(k)) - p_{k-1,\delta}(t_{j,i}(k)) \right\vert \\
	&\hspace{2cm}\geq \frac{1}{2} c_9° \, \delta \left( 1 - \frac{2}{\pi} \right) g^{\beta}.
\end{align*}
If $j \in J_k$, then
\begin{align*}
	&\max_{i=1,2} \left\vert (K_{R,g} \ast p_{k,\delta})(t_{j,i}(k)) - p_{k,\delta}(t_{j,i}(k)) \right\vert \\
	&\quad \geq c_9° \, \delta \max_{i=1,2} \left\vert (K_{R,g} \ast S_{k,\beta,j})(t_{j,i}(k)) - S_{k,\beta,j}(t_{j,i}(k)) \right\vert \\
	&\qquad - \max_{i=1,2} \left\vert (K_{R,g} \ast p_{k-1,\delta})(t_{j,i}(k)) - p_{k-1,\delta}(t_{j,i}(k)) \right\vert \\
	&\quad \geq c_9° \, \delta \max_{i=1,2} \left\vert (K_{R,g} \ast S_{k,\beta,j})(t_{j,i}(k)) - S_{k,\beta,j}(t_{j,i}(k)) \right\vert - \frac{1}{2} c_9° \, \delta \left( 1 - \frac{2}{\pi} \right) g^{\beta}.
\end{align*}
Similar as above we obtain
\begin{align*}
	&(K_{R,g} \ast S_{k,\beta,j})(t_{j,1}(k)) - S_{k,\beta,j}(t_{j,1}(k)) \\
	&\hspace{2cm} = \frac{1}{2g} \int_{t_{j,1}(k)-g}^{t_{j,1}(k)+g} 2^{-k\beta} \sin \left( 2^{k-1} \pi x \right) \diff x - 2^{-k\beta} \\
	&\hspace{2cm} = \frac{1}{2g} 2^{-k\beta} \int_0^{2^{-k+1}} \sin \left( 2^{k-1} \pi x \right) \diff x - 2^{-k\beta} \\
	&\hspace{2cm} =  g^{\beta} \left( \frac{2}{\pi} - 1 \right)
\end{align*}
as well as
\begin{align*}
	&(K_{R,g} \ast S_{k,\beta,j})(t_{j,2}(k)) - S_{k,\beta,j}(t_{j,2}(k)) =  g^{\beta} \left( 1 - \frac{2}{\pi} \right),
\end{align*}
such that
\begin{align*}
	\max_{i=1,2} \left\vert (K_{R,g} \ast p_{k,\delta})(t_{j,i}(k)) - p_{k,\delta}(t_{j,i}(k)) \right\vert 
	&\geq \frac{1}{2} c_9° \, \delta \left( 1 - \frac{2}{\pi} \right) g^{\beta}.
\end{align*}
Combining the two cases finally gives
\begin{align*}
	\sup_{s \in B(t,h-g)} \left\vert (K_{R,g} \ast \tilde p_{\delta})(s) - \tilde p_{\delta}(s) \right\vert \geq \frac{1}{2} c_9° \, \delta \left( 1- \frac{2}{\pi} \right) g^{\beta}.
\end{align*}
In particular, $\tilde p_{\delta} \in E_n(\beta)$ for sufficiently large $n \geq n_0(\beta,\delta)$, and thus $\tilde p_{\delta} \in A$.

\smallskip

\noindent Since $A$ is open and dense in the class $\mathcal H_{(-\varepsilon,1+\varepsilon)}(\beta)$ and $A \subset \tilde{\mathscr R}$, the complement $\mathcal H_{(-\varepsilon,1+\varepsilon)}(\beta) \setminus \tilde{\mathscr R}$ is nowhere dense in $H_{(-\varepsilon,1+\varepsilon)}(\beta)$. Thus, because of
\begin{align*}
	 \mathcal H_{(-\varepsilon,1+\varepsilon)}(\beta)_{\vert B(t,h)} = \mathcal H_{B(t,h)}(\beta),
\end{align*}
and the fact that for any $x \in \mathcal H_{(-\varepsilon,1+\varepsilon)}(\beta)$ and any $z' \in \mathcal H_{B(t,h)}(\beta)$ with
\begin{align*}
	\Vert x_{\vert B(t,h)} -  z' \Vert_{\beta,B(t,h)} < \delta
\end{align*}
there exists an extension $z \in \mathcal H_{(-\varepsilon,1+\varepsilon)}(\beta)$ of $z'$ with
\begin{align*}
	\Vert x-z \Vert_{\beta,(-\varepsilon,1+\varepsilon)} < \delta,
\end{align*}
the set $\mathcal H_{B(t,h)}(\beta) \setminus \tilde{\mathscr R}_{\vert B(t,h)}$ is nowhere dense in $\mathcal H_{B(t,h)}(\beta)$. Since the property "nowhere dense" is stable when passing over to intersections and the corresponding relative topology, we conclude that
$$ \mathcal P_{B(t,h)}(\beta, L^*) \setminus \mathscr R_{\vert B(t,h)} $$
	is nowhere dense in 
	$\mathcal P_{B(t,h)}(\beta, L^*)$
	 with respect to $\Vert \cdot \Vert_{\beta,B(t,h)}$.
\end{proof}

\begin{proof}[Proof of Lemma \ref{weier_lemma}]
%Consider first the case $0<\beta<1$. 
As it has been proven in \cite{hardy1916} the Weierstra{\ss}  function $W_{\beta}$ is $\beta$-H\"older continuous everywhere. For the sake of completeness, we state the proof here. Because the Weierstra{\ss} function is $2$-periodic, it suffices to consider  points $s,t \in \R$ with $\vert s-t \vert \leq 1$. Note first that
\begin{align*}
	\vert W_{\beta}(s) - W_{\beta}(t) \vert &\leq 2 \sum_{n=0}^{\infty} 2^{-n\beta} \left\vert \sin \left( \frac{1}{2} 2^n \pi (s+t) \right) \right\vert \cdot \left\vert \sin \left( \frac{1}{2} 2^n \pi (s-t) \right) \right\vert \\
		&\leq 2 \sum_{n=0}^{\infty} 2^{-n\beta} \left\vert \sin \left( \frac{1}{2} 2^n \pi (s-t) \right) \right\vert.
\end{align*}
Choose $k \in \N \cup \{0\}$ such that $2^{-(k+1)} < \vert s-t \vert \leq 2^{-k}$. For all summands with index $n \leq k$, use the inequality $\vert \sin(x) \vert \leq \vert x \vert$ and for all summands with index $n>k$ use $\vert \sin(x) \vert \leq 1$, such that
\begin{align*}
	\vert W_{\beta}(s) - W_{\beta}(t) \vert &\leq 2 \sum_{n=0}^k 2^{-n\beta} \left\vert \frac{1}{2} 2^n \pi (s-t) \right\vert + 2 \sum_{n=k+1}^{\infty} 2^{-n\beta} \\
		&= \pi \left\vert s-t \right\vert \sum_{n=0}^k 2^{n(1-\beta)} + 2 \sum_{n=k+1}^{\infty} 2^{-n\beta}.
\end{align*}
Note that, 
\begin{align*}
	\sum_{n=0}^k 2^{n(1-\beta)} = \frac{2^{(k+1)(1-\beta)}-1}{2^{1-\beta}-1} = \frac{2^{k(1-\beta)}-2^{\beta-1}}{1-2^{\beta-1}} \leq \frac{2^{k(1-\beta)}}{1-2^{\beta-1}},
\end{align*}
and, as $2^{-\beta}<1$,
\begin{align*}
	\sum_{n=k+1}^{\infty} 2^{-n\beta} %&= \frac{1}{1-2^{-\beta}} - \sum_{n=0}^k 2^{-n\beta} \\
	%&= \frac{1}{1-2^{-\beta}} - \frac{1-2^{-(k+1)\beta}}{1-2^{-\beta}} \\
	&= \frac{2^{-(k+1)\beta}}{1-2^{-\beta}}.
\end{align*}
Consequently, we have
\begin{align*}
\begin{split}
	\vert W_{\beta}(s) - W_{\beta}(t) \vert &\leq \pi \left\vert s-t \right\vert \frac{2^{k(1-\beta)}}{1-2^{\beta-1}} + 2\frac{2^{-(k+1)\beta}}{1-2^{-\beta}} \\
		&\leq \left\vert s-t \right\vert^{\beta} \left( \frac{\pi}{1-2^{\beta-1}} + \frac{2}{1-2^{-\beta}} \right).
\end{split}
\end{align*}
Furthermore
\begin{align*}
	\Vert W_{\beta} \Vert_{\sup} \leq \sum_{n=0}^{\infty} 2^{-n\beta} = \frac{1}{1-2^{-\beta}},
\end{align*}
so that for any interval $U \subset \R$,
\begin{align*}
	\Vert W_{\beta} \Vert_{\beta,U} \leq \frac{\pi}{1-2^{\beta-1}} + \frac{3}{1-2^{-\beta}}.
\end{align*}

\noindent We now turn to the proof of bias lower bound condition. For any $0<\beta \leq 1$, for any $h \in \mathcal G_{\infty}$, for any $g=2^{-k} \in \mathcal G_{\infty}$ with $g \leq h/2$, and for any $t \in \R$, there exists some $s_0 \in [t-(h-g),t+(h-g)]$ with $\cos \left( 2^k \pi s_0 \right) = 1$, since the function $x \mapsto \cos(2^k \pi x)$ is $2^{1-k}$-periodic. Note that in this case also
\begin{align} \label{s0=0}
	\cos \left( 2^n \pi s_0 \right) = 1\quad \text{ for all } n \geq k.
\end{align}
The following supremum is now lower bounded by
\begin{align*}
	&\sup_{s \in B(t,h-g)} \left\vert \int K_{R,g}(x-s) W_{\beta}(x) \diff x - W_{\beta}(s) \right\vert \\*
	&\hspace{5cm} \geq \left\vert \int_{-1}^1 K_R(x) W_{\beta}(s_0+gx) \diff x - W_{\beta}(s_0) \right\vert.
\end{align*}
As furthermore
\begin{align*}
	\sup_{x \in \R} \left\vert K_R(x) 2^{-n\beta} \cos \left( 2^n \pi (s_0+gx) \right) \right\vert \leq \Vert K_R \Vert_{\sup} \cdot 2^{-n\beta}
\end{align*}
and
\begin{align*}
	\sum_{n=0}^{\infty} \Vert K_R \Vert_{\sup} \cdot 2^{-n\beta} = \frac{\Vert K_R \Vert_{\sup}}{1-2^{-\beta}} < \infty,
\end{align*}
%the series
%\begin{align*}
%	\sum_{n=0}^{\infty} K_R(x) 2^{-n\beta} \cos \left( 2^n \pi (s_0+gx) \right)
%\end{align*}
%converges uniformly on $[-1,1]$ according to Weierstra{\ss}' majorant criterion. Hence,
the dominated convergence theorem implies
\begin{align*}
	 &\left\vert \int_{-1}^1 K_R(x) W_{\beta}(s_0+gx) \diff x - W_{\beta}(s_0) \right\vert = \left\vert \sum_{n=0}^{\infty} 2^{-n\beta} I_n(s_0,g) \right\vert
\end{align*}
with
\begin{align*}
	 I_n(s_0,g) = \int_{-1}^1 K_R(x) \cos \left( 2^n \pi (s_0+gx) \right) \diff x - \cos \left( 2^n \pi s_0 \right).
\end{align*}
Recalling \eqref{s0=0}, it holds for any index $n \geq k$
\begin{align} \label{weier_large}
	I_n(s_0,g) &= \frac{1}{2} \cdot \frac{\sin(2^n \pi (s_0+g)) - \sin(2^n \pi (s_0-g))}{2^n \pi g} - 1\notag \\
	& = \frac{\sin(2^n \pi g) }{2^n \pi g} - 1\notag \\
	& = -1.
\end{align}
Furthermore, for any index $0 \leq n \leq k-1$ holds
\begin{align} \label{n<k}
	I_n(s_0,g)&= \frac{1}{2} \cdot \frac{\sin(2^n \pi (s_0+g)) - \sin(2^n \pi (s_0-g))}{2^n \pi g} - \cos \left( 2^n \pi s_0 \right) \notag \\
	& = \cos(2^n \pi s_0) \left( \frac{\sin(2^n \pi g)}{2^n \pi g} - 1 \right).
\end{align}
Using this representation, the inequality $\sin(x)\leq x$ for $x\geq 0$, and Lemma \ref{A3}, we obtain
\begin{align*}
	2^{-n\beta} I_n(s_0,g) &\leq 2^{-n\beta} \left( 1 - \frac{\sin(2^n \pi g)}{2^n \pi g} \right) \\
	&\leq 2^{-n\beta} \cdot \frac{(2^n \pi g)^2}{6} \\
	&\leq 2^{-n\beta + 2(n-k)+1}.
\end{align*}
Since $k-n-1 \geq 0$ and $\beta \leq 1$, this is in turn bounded by
\begin{align} \label{weier_small}
	2^{-n\beta} I_n(s_0,g) &\leq 2^{-(2k-n-2)\beta} \cdot 2^{2(n-k)+1+ 2(k-n-1)\beta}\notag \\*
		&\leq 2^{-(2k-n-2)\beta} \cdot 2^{2(n-k)+1+ 2(k-n-1)} \notag\\*
		&\leq 2^{-(2k-n-2)\beta}.
\end{align}
Taking together \eqref{weier_large} and \eqref{weier_small}, we arrive at
\begin{align*}
	\sum_{n=0}^{k-3} 2^{-n\beta} I_n(s_0,g) + \sum_{n=k+1}^{2k-2} 2^{-n\beta} I_n(s_0,g) &\leq \sum_{n=0}^{k-3} 2^{-(2k-n-2)\beta} - \sum_{n=k+1}^{2k-2} 2^{-n\beta} = 0.
\end{align*}
Since by \eqref{weier_large} also
\begin{align*}
	\sum_{n=2k-1}^{\infty} 2^{-n\beta} I_n(s_0,g) = - \sum_{n=2k-1}^{\infty} 2^{-n\beta} < 0,
\end{align*}
it remains to investigate
\begin{align*}
	\sum_{n=k-2}^k 2^{-n\beta} I_n(s_0,g).
\end{align*}
For this purpose, we distinguish between the three cases
\begin{align*}
	(i) \quad &\cos(2^{k-1} \pi s_0) = \cos(2^{k-2} \pi s_0) = 1 \\
	(ii) \quad &\cos(2^{k-1} \pi s_0) = -1, \ \cos(2^{k-2} \pi s_0) = 0 \\ 
	(iii) \quad &\cos(2^{k-1} \pi s_0) = 1, \ \cos(2^{k-2} \pi s_0) = -1
\end{align*}
and subsequently use the representation in \eqref{n<k}. In case $(i)$, obviously
\begin{align*}
	\sum_{n=k-2}^{k} 2^{-n\beta} I_n(s_0,g) \leq -2^{-k\beta} < 0.
\end{align*}
using $\sin(x) \leq x$ for $x \geq 0$ again. In case $(ii)$, we obtain for $\beta \leq 1$
\begin{align*}
	\sum_{n=k-2}^{k} 2^{-n\beta} I_n(s_0,g) &= 2^{-k\beta} 2^{\beta} \left( 1 - \frac{\sin(\pi/2)}{\pi/2} \right) - 2^{-k\beta} \leq  2^{-k\beta} \left( 1 -\frac{4}{\pi} \right) <0.
\end{align*}
Finally, in case $(iii)$, for $\beta \leq 1$,
\begin{align*}
	&\sum_{n=k-2}^{k} 2^{-n\beta} I_n(s_0,g) \\
	&\quad = 2^{-(k-1)\beta} \left( \frac{\sin(\pi/2)}{\pi/2} - 1 \right) - 2^{-(k-2)\beta} \left( \frac{\sin(\pi/4)}{\pi/4} - 1 \right) - 2^{-k\beta} \\
	&\quad = 2^{-(k-1)\beta} \left( \left( \frac{2}{\pi} - 1 \right) + 2^{\beta} \left( 1 - \frac{\sin(\pi/4)}{\pi/4} \right) \right) - 2^{-k\beta} \\
	&\quad < 2^{-(k-1)\beta} \left( \frac{2}{\pi} + 1 - 8 \, \frac{\sin(\pi/4)}{\pi} \right) - 2^{-k\beta} \\*
	&\quad < -2^{-k\beta} \\*
	&\quad < 0.
\end{align*}
That is,
\begin{align*}
	&\sup_{s \in B(t,h-g)} \left\vert \int K_{R,g}(x-s) W_{\beta}(x) \diff x - W_{\beta}(s) \right\vert \notag \\
		&\hspace{2cm} \geq \left\vert \int_{-1}^1 K_R(x) W_{\beta}(s_0+gx) \diff x - W_{\beta}(s_0) \right\vert \notag \\
		&\hspace{2cm} = - \sum_{n=0}^{\infty} 2^{-n\beta} I_n(s_0,g) \notag \\
		&\hspace{2cm} \geq - \sum_{n=k-2}^{k} 2^{-n\beta} I_n(s_0,g) \notag \\
		&\hspace{2cm} >  \left( \frac{4}{\pi}-1 \right) g^{\beta}.
\end{align*}
\end{proof}

\begin{proof}[Proof of Theorem \ref{limitdistribution}]
The proof is structured as follows. First, we show that the bias term is negligible. Then, we conduct several reduction steps to non-stationary Gaussian processes. We pass over to the supremum over a stationary Gaussian process by means of Slepian's comparison inequality, and finally, we employ extreme value theory for its asymptotic distribution.

\paragraph{Step 1 (Negligibility of the bias)}

Due to the discretization of the interval $[0,1]$ in the construction of the confidence band and due to the local variability of the confidence band's width, the negligibility of the bias is not immediate. For any $t \in [0,1]$, there exists some $k_t \in T_n$ with $t \in I_{k_t}$. Hence,
\begin{align*}
	&\sqrt{\tn \hat h_n^{loc}(t)} \left\vert \E_p^{\chi_1} \hat p_n^{loc}(t, \hat h_n^{loc}(t)) - p(t) \right\vert \\
	&\quad = \sqrt{\tn \hat h_{n,k_t}^{loc}} \Big\vert \E_p^{\chi_1} \hat p_n^{(1)}(k_t\delta_n, \hat h_{n,k_t}^{loc}) - p(t) \Big\vert \\
	&\quad \leq \sqrt{\tn \hat h_{n,k_t}^{loc}} \Big\vert \E_p^{\chi_1} \hat p_n^{(1)}(k_t\delta_n, \hat h_{n,k_t}^{loc}) - p(k_t\delta_n) \Big\vert + \sqrt{\tn \hat h_{n,k_t}^{loc}} \Big\vert p(k_t\delta_n) - p(t) \Big\vert.
\end{align*}
Assume $\hat j_n(k\delta_n) \geq k_n(k\delta_n) = \bar j_n(k\delta_n) - m_n$ for all $k \in T_n$, where $m_n$ is given in Proposition \ref{bw_band}. Since $\delta_n \leq \frac{1}{8} h_{\beta_*,n}$ for sufficiently large $n \geq n_0(\beta_*,\varepsilon)$,
\begin{align*}
	\hat h_{n,k_t}^{loc} &=  2^{m_n-u_n} \cdot \min \left\{ 2^{-\hat j_n((k_t-1) \delta_n) - m_n}, 2^{-\hat j_n(k_t \delta_n) - m_n} \right\} \\
		&\leq 2^{m_n-u_n} \cdot \min \left\{ \bar h_n((k_t-1)\delta_n), \bar h_n(k_t\delta_n) \right\} \\
		&\leq 3 \cdot 2^{m_n-u_n} \cdot \bar h_n(t)
\end{align*} 
by Lemma \ref{grid_opt}. In particular, $\delta_n + \hat h_{n,k_t}^{loc} \leq 2^{-(\bar j_n(t)+1)}$ holds for sufficiently large $n \geq n_0(c_1,\beta_*)$, so that Assumption \ref{self_sim}, Lemma \ref{remark2}, and Lemma \ref{bias_up} yield
\begin{align} \label{bias1}
	&\sup_{p \in \mathscr P_n} \sqrt{\tn \hat h_{n,k_t}^{loc}} \left\vert \E_p^{\chi_1} \hat p_n^{(1)}(k_t\delta_n, \hat h_{n,k_t}^{loc}) - p(k_t\delta_n) \right\vert \notag \\*
	&\hspace{2cm} \leq \sup_{p \in \mathscr P_n}  \sqrt{\tn \hat h_{n,k_t}^{loc}} \sup_{s \in B(t,\delta_n)} \left\vert \E_p^{\chi_1}  \hat p_n^{(1)}(s, \hat h_{n,k_t}^{loc})  - p(s) \right\vert \notag \\
	&\hspace{2cm} \leq \sup_{p \in \mathscr P_n}  b_2 \sqrt{\tn \hat h_{n,k_t}^{loc}} \left( \hat h_{n,k_t}^{loc} \right)^{\beta_p(B(t, 2^{-\bar j_n(t)}))} \notag \\
	&\hspace{2cm} \leq \sup_{p \in \mathscr P_n}  b_2 \sqrt{\tn \hat h_{n,k_t}^{loc}} \left( \hat h_{n,k_t}^{loc} \right)^{\beta_p(B(t, \bar h_n(t)))} \notag \\
	&\hspace{2cm} \leq \sup_{p \in \mathscr P_n} b_2 \left( 3 \cdot 2^{m_n-u_n} \right)^{\frac{2\beta_*+1}{2}} \sqrt{\tn \bar h_n(t)} \bar h_n(t)^{\beta_{n,p}(t)} \notag \\
	&\hspace{2cm} \leq c_{10}° \cdot (\log \tn)^{-\frac{1}{4} c_1 (2\beta_*+1)\log 2}
\end{align}
for some constant $c_{10} = b_2 \cdot 3^{(2\beta_*+1)/2}$, on the event
$$ \left\{Ê\hat j_n(k\delta_n) \geq k_n(k\delta_n)  \text{ for all }Êk \in T_nÊ\right\}. $$
Now, we analyze the expression
\begin{align*}
	\sqrt{\tn \hat h_{n,k_t}^{loc}} \Big\vert p(k_t\delta_n) - p(t) \Big\vert.
\end{align*}
For $t \in I_k$ and for $n \geq n_0$,
\begin{align*}
	\delta_n^{\beta_*} \leq 2^{-j_{\min}} \left( \frac{\log \tn}{\tn} \right)^{\kappa_1 \beta_*} 
		\leq 2^{-j_{\min}} \left( \frac{\log \tn}{\tn} \right)^{\frac{1}{2}} 
		%\leq 2^{-j_{\min}} \left( \frac{\log \tn}{\tn} \right)^{\frac{\beta_{n,p}(k\delta_n)}{2\beta_{n,p}(k\delta_n)+1}} 
		\leq 2^{-j_{\min}} \left( \frac{\log \tn}{\tn} \right)^{\frac{\beta_{n,p}(t)}{2\beta_{n,p}(t)+1}},
\end{align*}
such that on the same event
\begin{align} \label{bias2}
	\sup_{p \in \mathscr P_n} \sqrt{\tn \hat h_{n,k_t}^{loc}} \left\vert p(k_t\delta_n) - p(t) \right\vert &\leq \sup_{p \in \mathscr P_n} \sqrt{3} L^* \cdot 2^{\frac{1}{2} (m_n-u_n)} \sqrt{\tn \bar h_n(t)} \cdot \delta_n^{\beta_*} \notag \\
		&\leq c_{11}° \cdot (\log \tn)^{-\frac{1}{4} c_1 \log 2}
\end{align}
for some constant $c_{11}°=c_{11}°(\beta_*,L^*)$. Taking \eqref{bias1} and \eqref{bias2} together,
\begin{align*}
	&\sup_{p \in \mathscr P_n} \sup_{t \in [0,1]} a_n \sqrt{\tn \hat h_n^{loc}(t)} \left\vert \E_p^{\chi_1} \hat p_n^{loc}(t, \hat h_n^{loc}(t)) - p(t) \right\vert \mathbbm{1}Ê\left\{Ê\hat j_n(k\delta_n) \geq k_n(k\delta_n)  \forall \, k \in T_nÊ\right\} \\
	 &\hspace{11.5cm} \leq \varepsilon_{1,n},
\end{align*}
with
\begin{align*}
	\varepsilon_{1,n} = c_{10}° \cdot a_n (\log n)^{-\frac{1}{4} c_1 (2\beta_*+1)\log 2} + c_{11}° \cdot a_n (\log n)^{-\frac{1}{4} c_1 \log 2}.
\end{align*}
According to the definition of $c_1$ in \eqref{c22}, $\varepsilon_{1,n}$ converges to zero. Observe furthermore that 
\begin{align} \label{sup_process}
	\sup_{t \in [0,1]} \sqrt{\tn \hat h_n^{loc}(t)} \left\vert \hat p_n^{loc}(t, \hat h_n^{loc}(t)) - p(t) \right\vert
\end{align}
can be written as
\begin{align*}
	&\max_{k \in T_n} \sup_{t \in I_k} \sqrt{\tn \hat h_n^{loc}(t)} \left\vert \hat p_n^{loc}(t, \hat h_n^{loc}(t)) - p(t) \right\vert \\
	&\qquad = \max_{k \in T_n} \sqrt{\tn \hat h_{n,k}^{loc}} \max \left\{ \hat p_n^{(1)}(k\delta_n, \hat h_{n,k}^{loc}) - \inf_{t \in I_k} p(t), \ \ \sup_{t \in I_k} p(t) - \hat p_n^{(1)}(k\delta_n, \hat h_{n,k}^{loc}) \right\}
\end{align*}
with the definitions in \eqref{loc_const}. That is, the supremum in \eqref{sup_process} is measurable. Then, by means of Proposition \ref{bw_band}, with $x_{1,n} = x - \varepsilon_{1,n}$,
\begin{align} \label{first_max}
	&\inf_{p \in \mathscr P_n} \P_p^{\otimes n} \left( a_n \left\{ \sup_{t \in [0,1]} \sqrt{\tn \hat h_n^{loc}(t)} \left\vert \hat p_n^{loc}(t, \hat h_n^{loc}(t)) - p(t) \right\vert - b_n \right\} \leq x \right) \notag\\
	&\quad \geq \inf_{p \in \mathscr P_n} \P_p^{\otimes n} \left( a_n \left\{ \sup_{t \in [0,1]} \sqrt{\tn \hat h_n^{loc}(t)} \left\vert \hat p_n^{loc}(t, \hat h_n^{loc}(t)) - p(t) \right\vert - b_n \right\} \leq x, \right.\notag\\
	&\left. \hspace{3.9cm} \phantom{\frac{\left\vert \hat p_n^{loc}(t, \hat h_n^{loc}(t)) - p(t) \right\vert}{\hat z_n(t)}} \hat j_n(k\delta_n) \geq k_n(k\delta_n) \text{ for all } k \in T_n \right) \notag\\
	&\quad \geq \inf_{p \in \mathscr P_n} \P_p^{\otimes n} \left( a_n \left\{ \sup_{t \in [0,1]} \sqrt{\tn \hat h_n^{loc}(t)} \left\vert \hat p_n^{loc}(t, \hat h_n^{loc}(t)) - \E_p^{\chi_1} \hat p_n^{loc}(t, \hat h_n^{loc}(t)) \right\vert - b_n \right\} \leq x_{1,n}, \right. \notag\\
	&\left. \phantom{\frac{\left\vert \hat p_n^{loc}(t, \hat h_n^{loc}(t)) - p(t) \right\vert}{\hat z_n(t)}} \hspace{4cm} \hat j_n(k\delta_n) \geq k_n(k\delta_n) \text{ for all } k \in T_n  \right) \notag\\
	&\quad \geq \inf_{p \in \mathscr P_n}  \P_p^{\otimes n} \left( a_n \left\{ \sup_{t \in [0,1]} \sqrt{\tn \hat h_n^{loc}(t)} \left\vert \hat p_n^{loc}(t, \hat h_n^{loc}(t)) - \E_p^{\chi_1} \hat p_n^{loc}(t, \hat h_n^{loc}(t)) \right\vert - b_n \right\} \leq x_{1,n} \right) \notag\\
	&\hspace{5cm}-\sup_{p \in \mathscr P_n} \P_p^{\chi_2} \left( \hat j_n(k\delta_n) < k_n(k\delta_n) \text{ for some } k \in T_n  \right) \notag\\
	&\quad = \inf_{p \in \mathscr P_n} \E_p^{\otimes n} \Bigg[ \P_p^{\otimes n} \Bigg( a_n \Bigg\{ \max_{k \in T_n} \sqrt{\tn \hat h_{n,k}^{loc}} \left\vert \hat p_n^{(1)}(k \delta_n, \hat h_{n,k}^{loc}) - \E_p^{\chi_1} \hat p_n^{(1)}(k \delta_n, \hat h_{n,k}^{loc}) \right\vert \\
	&\hspace{8.5cm} - b_n \Bigg\} \leq x_{1,n} \Bigg\vert \chi_2 \Bigg) \Bigg] + o(1)\notag
\end{align} 
for $n \to \infty$.

\paragraph{Step 2 (Reduction to the supremum over a non-stationary Gaussian process)}
In order to bound \eqref{first_max} from below note first that
\begin{align*}
	&\P_p^{\otimes n} \Bigg( a_n \Bigg\{ \max_{k \in T_n} \sqrt{\tn \hat h_{n,k}^{loc}} \left\vert \hat p_n^{(1)}(k \delta_n, \hat h_{n,k}^{loc}) - \E_p^{\chi_1} \hat p_n^{(1)}(k \delta_n, \hat h_{n,k}^{loc}) \right\vert - b_n \Bigg\} \leq x_{1,n} \Bigg\vert \chi_2 \Bigg) \\
	& \geq \P_p^{\otimes n} \Bigg( a_n \Bigg\{ \max_{k \in T_n} \sqrt{\frac{\tn \hat h_{n,k}^{loc}}{p(k\delta_n)}} \left\vert \hat p_n^{(1)}(k \delta_n, \hat h_{n,k}^{loc}) - \E_p^{\chi_1} \hat p_n^{(1)}(k \delta_n, \hat h_{n,k}^{loc}) \right\vert - b_n \Bigg\} \leq \frac{x_{1,n}}{\sqrt{L^*}}  \Bigg\vert \chi_2 \Bigg).
\end{align*}
Using the identity $\vert x \vert = \max \{ x,-x\}$, we arrive at
\begin{align*}
	&\P_p^{\otimes n} \Bigg( a_n \Bigg\{ \max_{k \in T_n} \sqrt{\tn \hat h_{n,k}^{loc}} \left\vert \hat p_n^{(1)}(k \delta_n, \hat h_{n,k}^{loc}) - \E_p^{\chi_1} \hat p_n^{(1)}(k \delta_n, \hat h_{n,k}^{loc}) \right\vert - b_n \Bigg\} \leq x_{1,n} \Bigg\vert \chi_2 \Bigg) \notag\\*
	&\hspace{9.7cm} \geq 1-P_{1,p}-P_{2,p}
\end{align*}
with
\begin{align*}
	P_{1,p} &= \P_p^{\otimes n} \Bigg( a_n \Bigg\{ \max_{k \in T_n} \sqrt{\frac{\tn \hat h_{n,k}^{loc}}{p(k\delta_n)}} \Big( \hat p_n^{(1)}(k \delta_n, \hat h_{n,k}^{loc}) - \E_p^{\chi_1} \hat p_n^{(1)}(k \delta_n, \hat h_{n,k}^{loc}) \Big) \\
	&\hspace{8cm} - b_n \Bigg\} > \frac{x_{1,n}}{\sqrt{L^*}}  \Bigg\vert \chi_2 \Bigg) \\
	P_{2,p} &= \P_p^{\otimes n} \Bigg( a_n \Bigg\{ \max_{k \in T_n} \sqrt{\frac{\tn \hat h_{n,k}^{loc}}{p(k\delta_n)}} \Big( \E_p^{\chi_1} \hat p_n^{(1)}(k \delta_n, \hat h_{n,k}^{loc}) - \hat p_n^{(1)}(k \delta_n, \hat h_{n,k}^{loc}) \Big) \\
	&\hspace{8cm} - b_n \Bigg\} > \frac{x_{1,n}}{\sqrt{L^*}}  \Bigg\vert \chi_2 \Bigg).
\end{align*}
In order to approximate the maxima in $P_{1,p}$ and $P_{2,p}$ by a supremum over a Gaussian process, we verify the conditions in Corollary 2.2 developed recently in \cite{chernozhukov_chetverikov_kato2014Approx}. For this purpose, consider the empirical process
\begin{align*}
	\G_n^p f = \frac{1}{\sqrt{\tn}} \sum_{i=1}^{\tn} \Big( f(X_i) - \E_p f(X_i) \Big), \quad f \in \mathcal F_n
\end{align*}
indexed by
\begin{align*}
	\mathcal F_n^p = \left\{ f_{n,k} \ : \ k \in T_n \right\}
\end{align*}
with
\begin{align*}
	f_{n,k}:\R &\to \R \\
		x &\mapsto \left( \tn \hat h_{n,k}^{loc} \, p(k\delta_n) \right)^{-\frac{1}{2}}K \left( \frac{k\delta_n - x}{\hat h_{n,k}^{loc}} \right) .
\end{align*}
Note that \cite{chernozhukov_chetverikov_kato2014Approx} require the class of functions to be centered. We subsequently show that the class $\mathcal F_n^p$ is Euclidean, which implies by Lemma \ref{Eucl_VC} that the corresponding centered class is VC. It therefore suffices to consider the uncentered class $\mathcal F_n^p$. Note furthermore that $f_{n,k}$ are random functions but depend on the second sample $\chi_2$ only. Conditionally on $\chi_2$, any function $f_{n,k} \in \mathcal F_n^p$ is measurable as $K$ is continuous. Due to the choice of $\kappa_2$ and due to
\begin{align*} 
	\hat h_{n,k}^{loc} \geq 2^{-u_n} \cdot \frac{(\log \tn)^{\kappa_2}}{\tn} \geq \frac{(\log \tn)^{\kappa_2-c_1\log 2}}{\tn}
\end{align*} 
the factor
\begin{align} \label{hlocbound}
	\left( \tn \hat h_{n,k}^{loc} \, p(k\delta_n) \right)^{-\frac{1}{2}} \leq \frac{1}{\sqrt{M}} (\log \tn)^{\frac{1}{2}Ê(c_1\log 2 - \kappa_2)}
\end{align}
tends to zero logarithmically. We now show that $\mathcal F_n^p$ is Euclidean with envelope
\begin{align*}
	F_n = \frac{\Vert K \Vert_{\sup}}{\sqrt{M}} (\log \tn)^{\frac{1}{2}Ê(c_1\log 2 - \kappa_2)}.
\end{align*}
Note first that
%\begin{align*}
%	F_n \equiv \Vert K \Vert_{\sup} (\log \tn)^{\frac{1}{2} (\log 2-\kappa-1)}
%\end{align*}
%is an envelope of the class $\mathcal F_n^p$ with
%\begin{align*}
%	\Vert F_n \Vert_{L^2(Q)} = \Vert K \Vert_{\sup} (\log \tn)^{\frac{1}{2} (\log 2-\kappa-1)}
%\end{align*}
%for any probability measure $Q$. Using \eqref{hlocbound} again, 
%\begin{align*}
%	\mathcal F_n^p \subset \tilde{\mathcal F}_n^p = \left\{ f_{n,k,u} : k = 1, \ldots, \delta_n^{-1}, \ 0<u \leq 1 \right\}
%\end{align*}
%for sufficiently large $n \geq n_0(\kappa)$, with
%\begin{align*}
%	f_{n,k,u}(\cdot) = u \cdot K \left( \frac{k\delta_n - \cdot}{\hat h_{n,k}^{loc}} \right).
%\end{align*}
%Using a similar calculation as in \eqref{covnumbers}, for any $f_{n,k,u}, f_{n,l,v} \in \tilde{\mathcal F}_n^p$ and for any probability measure $Q$,
%\begin{align*}
%	\Vert f_{n,k,u} - f_{n,l,v} \Vert_{L^2(Q)} \leq \vert u-v \vert \cdot \Vert K \Vert_{\sup} + \left\Vert K \left( \frac{k\delta_n - \cdot}{\hat h_{n,k}^{loc}} \right) - K \left( \frac{l\delta_n - \cdot}{\hat h_{n,k}^{loc}} \right) \right\Vert_{L^2(Q)}
%\end{align*}
\begin{align*}
	\mathcal F_n^p \subset \mathscr F = \left\{ f_{u,h,t} : t \in \R, \ 0<u \leq \frac{1}{\sqrt{M}} (\log \tn)^{\frac{1}{2}Ê(c_1\log 2 - \kappa_2)}, \ 0<h\leq 1\right\}
\end{align*}
with
\begin{align*}
	f_{u,h,t}(\cdot) = u \cdot K \left( \frac{t - \cdot}{h} \right).
\end{align*}
Hence,
\begin{align*}
	N\left(\mathcal F_n^p, \Vert \cdot \Vert_{L^1(Q)}, \varepsilon F_n \right) \leq N\left(\mathscr F, \frac{\Vert \cdot \Vert_{L^1(Q)}}{F_n}, \varepsilon \right)
\end{align*}
for all probability measures $Q$ and it therefore suffices to show that $\mathscr F$ is Euclidean. To this aim, note that for any $f_{u,h,t}, f_{v,g,s} \in \mathscr F$ and for any probability measure $Q$,
\begin{align*}
	&\frac{\Vert f_{u,h,t} - f_{v,g,s} \Vert_{L^1(Q)}}{F_n} \\
	&\hspace{2cm}\leq \frac{\Vert f_{u,h,t} - f_{v,h,t} \Vert_{L^1(Q)}}{F_n} + \frac{\Vert f_{v,h,t} - f_{v,g,s} \Vert_{L^1(Q)}}{F_n} \\
	&\hspace{2cm}\leq \vert u-v \vert \cdot \frac{\Vert K \Vert_{\sup}}{F_n} + \frac{1}{\Vert K \Vert_{\sup}} \left\Vert K \left( \frac{t - \cdot}{h} \right) - K \left( \frac{s - \cdot}{g} \right) \right\Vert_{L^1(Q)}.
\end{align*}
Thus, using the estimate of the covering numbers in \eqref{covering_K} and Lemma 14 in \cite{nolan_pollard1987}, there exist constants $A' = A'(A,K)$ and $\nu' = \nu+1$ with
\begin{align*}
	\sup_{Q} N\left(\mathscr F, \frac{\Vert \cdot \Vert_{L^1(Q)}}{F_n}, \varepsilon \right) \leq  \left( \frac{A'}{\varepsilon} \right)^{\nu'}
\end{align*}
%\begin{align*}
%	\sup_{Q} N(\mathcal F_n', L^2(Q), \varepsilon) \leq \left( \left( \frac{8 \Vert K \Vert_{\sup}}{\varepsilon} + 1 \right) \vee 1 \right) \cdot \left( \frac{4A \Vert K \Vert_{\sup}}{\varepsilon} \vee 1 \right)^\nu \leq \left( \frac{A'}{\varepsilon} \right)^{\nu'}
%\end{align*}
for all $0<\varepsilon\leq 1$. That is, $\mathscr F$ is Euclidean with the constant function $F_n$ as envelope, and in particular
\begin{align} \label{FnVC}
\begin{split}
	\limsup_{n \to \infty}Ê\sup_{Q} N\left(\mathcal F_n^p, \Vert \cdot \Vert_{L^1(Q)}, \varepsilon F_n \right) \leq  \left( \frac{A'}{\varepsilon} \right)^{\nu'}.
\end{split}
\end{align}
Hence, by Lemma \ref{Eucl_VC}, the $\P_p$-centered class $\mathcal F_n^{p,0}$ corresponding to $\mathcal F_n^p$ is VC with envelope $2F_n$ and
\begin{align*}
	N\left(\mathcal F_n^{p,0}, \Vert \cdot \Vert_{L^2(Q)}, 2\varepsilon F_n \right) \leq  \left( \frac{A''}{\varepsilon} \right)^{\nu''}
\end{align*}
and VC characteristics $A'' = A''(A,K)$ and $\nu'' = \nu''(\nu)$. Next, we verify the Bernstein condition
\begin{align*}
	\sup_{p \in \mathscr P_n} \sup_{f \in \mathcal F_n^{p,0}}Ê\int \vert f(y) \vert^l p(y) \diff y \leq \sigma_n^2 B_n^{l-2}
\end{align*}
for some $B_n \geq \sigma_n > 0$ and $B_n \geq 2F_n$ and $l=2,3,4$. %the essential supremum of $F$ is $\Vert F \Vert_{L^{\infty}(\P_p)}=2\Vert K \Vert_{\sup}$ and
First, for $f_{n,k}^0 \in \mathcal F_n^{p,0}$,
\begin{align*}
	&\max_{k \in T_n} \int \vert f_{n,k}^0(y) \vert^2 \, p(y) \diff y \\
	&\quad = \max_{k\in T_n} \Big( \tn \, p(k\delta_n)  \Big)^{-1} \int_{-1}^1 \left\{ K( x ) - \hat h_{n,k}^{loc} \int K(y) p\left(k\delta_n+\hat h_{n,k}^{loc} y\right) \diff y \right\}^2 p\left(k\delta_n+\hat h_{n,k}^{loc}x\right) \diff x \\
	&\quad \leq \sigma_n^2
\end{align*}
with
\begin{align*}
	\sigma_n^2 = \frac{2L^* (\Vert K \Vert_{\sup}+L^* \Vert K \Vert_1)^2}{M\tn}.
\end{align*}
Also, using \eqref{hlocbound},
\begin{align*}
	&\max_{k \in T_n} \int \vert f_{n,k}(y) \vert^3 \, p(y) \diff y \\
	&\quad= \max_{k\in T_n} \left( \tn \hat h_{n,k}^{loc} \, p(k\delta_n) \right)^{-3/2} \hat h_{n,k}^{loc} \int_{-1}^1 \left\{ K( x ) - \hat h_{n,k}^{loc} \int K(x) p\left(k\delta_n+\hat h_{n,k}^{loc} y\right) \diff y \right\}^3 p\left(k\delta_n+\hat h_{n,k}^{loc} x\right) \diff x \\
		&\quad\leq \sigma_n^2 (\Vert K \Vert_{\sup} + L^* \Vert K \Vert_1) \cdot \max_{k \in T_n} \left( \tn \hat h_{n,k}^{loc} \, p(k\delta_n) \right)^{-1/2} \\
		&\quad\leq \sigma_n^2 \cdot B_n
\end{align*}
with
$$B_n = \max \left\{ 1 + L^* \frac{\Vert K \Vert_1}{\Vert K \Vert_{\sup}}, 2 \right\} F_n.$$
The condition
\begin{align*}
	\sup_{p \in \mathscr P_n} \sup_{f \in \mathcal F_n^{p,0}} \int \vert f(y) \vert^4 p(y) \diff y \leq \sigma_n^2 B_n^2
\end{align*}
follows analogously. Furthermore, it holds that $\Vert 2F_n \Vert_{\sup} = B_n$. According to Corollary~2.2 in \cite{chernozhukov_chetverikov_kato2014Approx}, for sufficiently large $n \geq n_0(c_1,\kappa_2,L^*,K)$ such that $B_n \geq \sigma_n$, there exists a random variable
\begin{align*}
	Z_{n,p}^0 \overset{\mathcal D}{=} \max_{f \in \mathcal F_n^{p,0}}  G_{\P_p} f ,
\end{align*}
and universal constants $c_{12}°$ and $c_{13}°$, such that for $\eta = \frac{1}{4} (\kappa_2 - c_1 \log 2 - 4)>0$ 
\begin{align*}
	&\sup_{p \in \mathscr P_n} \P \left( a_n \sqrt{\tn} \left\vert \max_{f \in \mathcal F_n^{p,0}}  \G_n f  - Z_{n,p}^0 \right\vert > \varepsilon_{2,n} \big\vert \chi_2 \right) \leq c_{12}° \left( (\log \tn)^{-\eta} + \frac{\log \tn}{\tn} \right),
\end{align*}
where 
\begin{align*}
	\varepsilon_{2,n} &= a_n \left( \frac{B_n K_n}{(\log \tn)^{-\eta/2}} + \frac{\tn^{1/4} \sqrt{B_n \sigma_n} K_n^{3/4}}{(\log \tn)^{-\eta/2}} + \frac{\tn^{1/3} (B_n\sigma_n^2 K_n^2)^{1/3}}{(\log \tn)^{-\eta/3}} \right)
\end{align*}
with $K_n = c_{13}°\nu''(\log \tn \vee \log(A'' B_n/\sigma_n))$, and $G_{\P_p}$ is a version of the $\P_p$-Brownian motion. That is, it is centered and has the covariance structure
\begin{align*}
	%\E [G_{\P_p} f  \cdot G_{\P_p} g ] = \P_p fg = 
	\E_p^{\chi_1} f(X_1) g(X_1)
\end{align*}
for all $f,g \in \mathcal F_n^{p,0}$. As can be seen from an application of the It\^o isometry, it possesses  in particular  the distributional representation
\begin{align} \label{rep_bb}
	(G_{\P_p} f)_{f \in {\mathcal F}_n^{p,0}} \overset{\mathcal D}{=} \left( \int f(x) \sqrt{p(x)} \diff W(x) \right)_{f \in {\mathcal F}_n^{p,0}},
\end{align}
where $W$ is a standard Brownian motion independent of $\chi_2$. An easy calculation furthermore shows that $\varepsilon_{2,n}$ tends to zero for $n \to \infty$ logarithmically due to the choice of $\eta$. Finally, 
\begin{align*}
	&\sup_{p \in \mathscr P_n} P_{1,p} \\
	& \leq \sup_{p \in \mathscr P_n}  \P_p^{\otimes n} \left( a_n \left( \sqrt{\tn} \max_{f \in \mathcal F_n^p} \mathbb G_n^p f  - b_n \right) > \frac{x_{1,n}}{\sqrt{L^*}}, \ a_n \sqrt{\tn} \left\vert \max_{f \in \mathcal F_n^{p,0}}  \G_n^p f  - Z_{n,p}^0 \right\vert \leq \varepsilon_{2,n} \Big\vert \chi_2 \right) \\
	&\qquad +\sup_{p \in \mathscr P_n} \P_p^{\otimes n} \left( a_n\sqrt{\tn} \left\vert \max_{f \in \mathcal F_n^{p,0}}  \G_n^p f  - Z_{n,p}^0 \right\vert > \varepsilon_{2,n} \Big\vert \chi_2 \right) \\
	 & \leq \sup_{p \in \mathscr P_n}  \P_p^{\otimes n} \left( a_n \left( \sqrt{\tn} \, Z_{n,p}^0 - b_n \right) > x_{2,n} \Big\vert \chi_2 \right) + o(1)
\end{align*}
for $n \to \infty$, with 
$$x_{2,n} = \frac{x_{1,n}}{\sqrt{L^*}} - \varepsilon_{2,n} = \frac{x - \varepsilon_{1,n}}{\sqrt{L^*}} - \varepsilon_{2,n} = \frac{x}{\sqrt{L^*}}+o(1).$$
The probability $P_{2,p}$ is bounded in the same way, leading to
\begin{align*}
	&\inf_{p \in \mathscr P_n} \P_p^{\otimes n} \Bigg( a_n \Bigg\{ \max_{k \in T_n} \sqrt{\tn \hat h_{n,k}^{loc}} \left\vert \hat p_n^{(1)}(k \delta_n, \hat h_{n,k}^{loc}) - \E_p^{\chi_1} \hat p_n^{(1)}(k \delta_n, \hat h_{n,k}^{loc}) \right\vert - b_n \Bigg\} \leq x_{1,n} \Bigg\vert \chi_2 \Bigg) \notag\\
	&\hspace{4cm} \geq 2 \, \inf_{p \in \mathscr P_n} \P_p^{\otimes n} \left( a_n \left( \sqrt{\tn} \, Z_{n,p}^0 - b_n \right) \leq x_{2,n} \Big\vert \chi_2 \right) -1+ o(1).
\end{align*}
Next, we show that there exists some sequence $(\varepsilon_{3,n})$ converging to zero, such that
\begin{align} \label{max-max}
	\sup_{p \in \mathscr P_n} \P \left( a_n \sqrt{\tn} \left\vert \max_{f \in \mathcal F_n^{p,0}} G_{\P_p} f - \max_{f \in \mathcal F_n^p} G_{\P_p} f \right\vert > \varepsilon_{3,n} \Big\vert \chi_2 \right) = o(1).
\end{align}
For this purpose, note first that
\begin{align*}
	&\sup_{p \in \mathscr P_n} \P \left( a_n \sqrt{\tn} \left\vert \max_{f \in \mathcal F_n^{p,0}} G_{\P_p} f - \max_{f \in \mathcal F_n^p} G_{\P_p} f \right\vert > \varepsilon_{3,n} \Big\vert \chi_2 \right) \\
	&\hspace{4cm} \leq \sup_{p \in \mathscr P_n} \P \left( \vert Y \vert a_n \sqrt{\tn}  \max_{f \in \mathcal F_n^p} \left\vert \P_p f \right\vert > \varepsilon_{3,n} \Big\vert \chi_2 \right)
\end{align*}
with $Y \sim \mathcal N(0,1)$. Due to the choice of $c_1$
\begin{align*}
	a_n \sqrt{\tn} \max_{f \in \mathcal F_n^p} \left\vert \P_p f \right\vert \leq a_n \frac{L^* \Vert K \Vert_1}{\sqrt{M}} 2^{-u_n/2} = o(1),
\end{align*}
which proves \eqref{max-max}. Following the same steps as before
\begin{align*}
	&\inf_{p \in \mathscr P_n} \P_p^{\otimes n} \left( a_n \left( \sqrt{\tn} \, Z_{n,p}^0 - b_n \right) \leq x_{2,n} \Big\vert \chi_2 \right) \\
	&\hspace{4cm} \geq \inf_{p \in \mathscr P_n} \P\left( a_n \left( \sqrt{\tn} \, \max_{f \in \mathcal F_n^p} G_{\P_p} f - b_n \right) \leq x_{3,n} \Big\vert \chi_2 \right) + o(1)
\end{align*}
with $x_{3,n} = x_{2,n}-\varepsilon_{3,n}$.

\medskip

\noindent Finally we conduct a further approximation 
\begin{align*}
	 \left( Y_{n,p}(k) \right)_{k \in T_n} = \left( \frac{1}{\sqrt{\hat h_{n,k}^{loc}}} \int K \left( \frac{k\delta_n-x}{\hat h_{n,k}^{loc}} \right) \diff W(x) \right)_{k \in T_n}
\end{align*}
to the process
\begin{align*}
	\left( \sqrt{\tn} \int f_{n,k}(x) \sqrt{p(x)} \diff W(x) \right)_{k \in T_n} \overset{\mathcal D}{=} \left(  \sqrt{\tn} \, G_{\P_p} f_{n,k} \right)_{k \in T_n}
\end{align*}
in order to obtain to a suitable intermediate process for Step 3. With
%$\E_W$ denoting the expectation with respect to the law $\P^W$ of $W$, the It\^o isometry yields
%\begin{align*}
%	\E_W \left[ G_{\P_p}\tilde f_{n,k} \cdot G_{\P_p} \tilde f_{n,l} \right] &= \int \tilde f_{n,k}(x) \, \tilde f_{n,l}(x) \, p(x) \diff x = \E_W \left[ W (\tilde f_{n,k} \sqrt{p}) \cdot W(\tilde f_{n,l} \sqrt{p}) \right],
%\end{align*}
%so that
\begin{align*}
	V_{n,p}(k)&= \sqrt{\tn} \, W( f_{n,k} \sqrt{p}) -  Y_{n,p}(k) \\
		& =\sqrt{\tn} \int  f_{n,k}(x) \left( \sqrt{p(x)} - \sqrt{p(k\delta_n)} \right) \diff W(x),
\end{align*}
it remains to show that
\begin{align} \label{pisier}
	\lim_{n \to \infty} \sup_{p \in \mathscr P_n} \P^W \left( a_n \max_{k \in T_n} \vert V_{n,p}(k) \vert > \varepsilon_{4,n}  \right) = 0
\end{align}
for some sequence $(\varepsilon_{4,n})_{n \in \N}$ converging to zero. Note first that
\begin{align*}
 &\max_{k \in T_n}\E^W V_{n,k}^2 \\
		&=  \max_{k \in T_n}\tn\int f_{n,k}(x)^2 \left( \sqrt{p(x)} - \sqrt{p(k\delta_n)} \right)^2 \diff x \\
		& = \max_{k \in T_n}\frac{1}{p(k\delta_n)} \int K(x)^2 \left( \sqrt{p(k\delta_n + \hat h_{n,k}^{loc}x)} - \sqrt{p(k\delta_n)} \right)^2 \diff x \\
	& \leq \max_{k \in T_n}\frac{1}{p(k\delta_n)} \int K(x)^2 \left\vert p(k\delta_n + \hat h_{n,k}^{loc}x) - p(k\delta_n) \right\vert \diff x \\
	& \leq \max_{k \in T_n}\frac{L^* \Vert K \Vert_2^2}{p(k\delta_n)} \left( \hat h_{n,k}^{loc} \right)^{\beta_*} \\
	& \leq \frac{L^* \Vert K \Vert_2^2}{M} \left( \log \tn \right)^{-c_1 \beta_*  \log 2}.
\end{align*}
Denoting by $\Vert \cdot \Vert_{\psi_2}$ the Orlicz norm corresponding to $\psi_2(x) = \exp(x^2)-1$, we deduce for sufficiently large $n \geq n_0(c_1,\beta_*,L^*,K,M)$
\begin{align*}
	 &\sup_{p \in \mathscr P_n} \left\Vert a_n \cdot \max_{k \in T_n} \vert V_{n,p}(k) \vert \right\Vert_{\psi_2} \\
	&\quad \leq  \sup_{p \in \mathscr P_n} a_n \cdot c(\psi_2) \, \psi_2^{-1} \left( \delta_n^{-1} \right) \, \max_{k \in T_n} \Vert V_{n,p}(k) \Vert_{\psi_2} \\
	&\quad\leq a_n \cdot c(\psi_2) \, \sqrt{\log \left( 1+ \delta_n^{-1} \right)} \, \left( \frac{L^* \Vert K \Vert_2^2}{M} \left( \log \tn \right)^{-c_1 \beta_*  \log 2} \right)^{\frac{1}{2}} \Vert Y \Vert_{\psi_2} \\
	&\quad\leq a_n \cdot c \, \sqrt{\log \tn} \, \left( \log \tn \right)^{-\frac{1}{2} c_1 \beta_*  \log 2}.
\end{align*}
The latter expression converges to zero due to the choice of $c_1$ in \eqref{c22}. Thus, \eqref{pisier} is established.
Following the same steps as before, we obtain
\begin{align*}
	&\inf_{p \in \mathscr P_n} \P_p^{\otimes n} \left( a_n \left( \sqrt{\tn} \max_{k \in T_n}  G_{\P_p} f_{n,k}  - b_n \right) \leq x_{3,n} \Big\vert \chi_2 \right) \\
	&\hspace{3cm} \geq \inf_{p \in \mathscr P_n} \P^W \left( a_n \left( \max_{k \in T_n} Y_{n,p}(k) - b_n \right) \leq x_{4,n} \right) + o(1) 
\end{align*}
for $n \to \infty$, with $x_{4,n} = x_{3,n} - \varepsilon_{4,n}$. 

\paragraph{Step 3 (Reduction to the supremum over a stationary Gaussian process)}
We now use Slepian's comparison inequality in order to pass over to the least favorable case. Since $K$ is symmetric and of bounded variation, it possesses a representation
\begin{align*}
	K(x) = \int_{-1}^x g \diff P
\end{align*}
for all but at most countably many $x \in [-1,1]$, where $P$ is some symmetric probability measure on $[-1,1]$ and $g$ is some measurable odd function with $\vert g \vert \leq TV(K)$. Using this representation, and denoting by
\begin{align}
	W_{k,l}(z) &= \sqrt{\frac{1}{\hat h_{n,k}^{loc}}} \left\{ W(k\delta_n+\hat h_{n,k}^{loc}) - W(k\delta_n+z\hat h_{n,k}^{loc}) \right\} \notag\\
		&\qquad - \sqrt{\frac{1}{\hat h_{n,l}^{loc}}} \left\{ W(l\delta_n+\hat h_{n,l}^{loc}) - W(l\delta_n+z\hat h_{n,l}^{loc}) \right\} \notag \\
\begin{split}\label{tildeW}
	\tilde W_{k,l}(z) &= \sqrt{\frac{1}{\hat h_{n,k}^{loc}}} \left\{ W(k\delta_n-z\hat h_{n,k}^{loc}) - W(k\delta_n+z\hat h_{n,k}^{loc}) \right\} \\
		&\qquad + \sqrt{\frac{1}{\hat h_{n,l}^{loc}}} \left\{ W(l\delta_n+z\hat h_{n,l}^{loc}) - W(l\delta_n-z\hat h_{n,l}^{loc}) \right\},
\end{split}
\end{align}
Fubini's theorem with one stochastic integration and the Cauchy-Schwarz inequality yield for any $k,l \in T_n$ 
\begin{align*}
	&\E_W \Big( Y_{n,p}(k) - Y_{n,p}(l) \Big)^2 \\
	&\quad = \E_W \left( \sqrt{\frac{1}{\hat h_{n,k}^{loc}  }} \int \int_{-1}^{\frac{x-k\delta_n}{\hat h_{n,k}^{loc}}} g(z) \diff P(z)  \mathbbm{1} \left\{ \vert x-k\delta_n \vert \leq \hat h_{n,k}^{loc} \right\} \diff W(x) \right. \\
	&\hspace{2cm} \left. - \sqrt{\frac{1}{\hat h_{n,l}^{loc}  }} \int \int_{-1}^{\frac{x-l\delta_n}{\hat h_{n,l}^{loc}}} g(z) \diff P(z) \mathbbm{1} \left\{ \vert x-l\delta_n \vert \leq \hat h_{n,l}^{loc} \right\} \, \diff W(x) \right)^2 \\
	&\quad = \E_W \left( \int_{-1}^1 g(z) \left\{ \sqrt{\frac{1}{\hat h_{n,k}^{loc}  }} \int\mathbbm{1} \left\{ k\delta_n+z \hat h_{n,k}^{loc} \leq x \leq k\delta_n+\hat h_{n,k}^{loc} \right\} \diff W(x) \right. \right. \\
	&\hspace{2cm} \left. \left. - \sqrt{\frac{1}{\hat h_{n,l}^{loc}  }} \int\mathbbm{1} \left\{ l\delta_n+z \hat h_{n,l}^{loc} \leq x \leq l\delta_n+\hat h_{n,l}^{loc} \right\} \diff W(x) \right\} \diff P(z) \right)^2 \\
	&\quad = \E_W \left( \int_{-1}^1 g(z) W_{k,l}(z) \diff P(z) \right)^2 \\
	&\quad = \E_W \left( \int_0^1 g(z) \Big( W_{k,l}(z) - W_{k,l}(-z) \Big) \diff P(z) \right)^2 \\
	&\quad = \E_W \int_0^1 \int_0^1 g(z) g(z') \tilde W_{k,l}(z) \tilde W_{k,l}(z') \diff P(z) \diff P(z') \\
	&\quad \leq \int_0^1 \int_0^1 \vert g(z) g(z') \vert \left\{ \E_W \tilde W_{k,l}(z)^2 \E_W \tilde W_{k,l}(z')^2 \right\}^{1/2} \diff P(z) \diff P(z').
\end{align*}
We verify in Lemma \ref{tildeWmoments} that
\begin{align*}
	\E_W \tilde W_{k,l}(z)^2 \leq 4
\end{align*}
for $z \in [0,1]$, so that
\begin{align*}
	\E_W ( Y_{n,p}(k) - Y_{n,p}(l) )^2 &\leq 4 \left( \int_0^1 \vert g(z) \vert \diff P(z) \right)^2\leq TV(K)^2
\end{align*}
for all $k,l \in T_n$. Consider now the Gaussian process
\begin{align*}
	Y_{n,\min}(k) = \frac{c_{15}°}{\sqrt{\delta_n}} \int K \left( \frac{k\delta_n-x}{\delta_n/2} \right) \diff W(x), \quad k \in T_n,
\end{align*}
with
$$c_{15}° =\frac{TV(K)}{\Vert K \Vert_2}.$$
Furthermore,
\begin{align*}
	\E_W \left( Y_{n,\min}(k) - Y_{n,\min}(l) \right)^2 = \E_W Y_{n,\min}(k)^2 + \E_W Y_{n,\min}(l)^2 = TV(K)^2
\end{align*}
for all $k,l \in T_n$ with $k \neq l$, so that
\begin{align} \label{slepian1}
	\E_W ( Y_{n,p}(k) - Y_{n,p}(l) )^2 \leq \E_W \left( Y_{n,\min}(k) - Y_{n,\min}(l) \right)^2
\end{align}
for all $k,l \in T_n$. In order to apply Slepian's comparison inequality we however need  coinciding second moments. For this aim, we analyze the modified Gaussian processes
\begin{align*}
	\bar Y_{n,p}(k) &= Y_{n,p}(k) + c_{16}° Z \\
	\bar Y_{n,\min}(k) &= Y_{n,\min}(k) + c_{17}° Z
\end{align*}
with
\begin{align*}
	c_{16}° = c_{16}°(K) = \frac{TV(K)}{\sqrt{2}}, \qquad 	c_{17}° = c_{17}°(K) = \Vert K \Vert_2,
\end{align*}
and for some standard normally distributed random variable $Z$ independent of $(Y_{n,p}(k))_{k \in T_n}$ and $(Y_{n,\min}(k))_{k \in T_n}$. Note that these processes have the same increments as the processes before. In particular
\begin{align*}
	\E_W \left( \bar Y_{n,p}(k) - \bar Y_{n,p}(l) \right)^2 &= \E_W  \left(  Y_{n,p}(k) -  Y_{n,p}(l) \right)^2 \\
		&\leq \E_W \left( Y_{n,\min}(k) - Y_{n,\min}(l) \right)^2 \\
		&= \E_W \left( \bar Y_{n,\min}(k) - \bar Y_{n,\min}(l) \right)^2
\end{align*}
for all $k,l \in T_n$ by inequality \eqref{slepian1}. With this specific choice of $c_{16}$ and $c_{17}$, they furthermore have coinciding second moments
\begin{align*}
	\E_W \bar Y_{n,p}(k)^2 = \E_W \bar Y_{n,\min}(k)^2 = \frac{TV(K)^2}{2} + \Vert K \Vert_2^2
\end{align*}
for all $k \in T_n$. Then, 
\begin{align*}
	&\inf_{p \in \mathscr P_n} \P^W \left( a_n \left( \max_{k \in T_n}  Y_{n,p}(k)  - b_n \right) \leq x_{3,n}  \right) \\
		&\hspace{2cm} =\inf_{p \in \mathscr P_n}  \P^W \left( a_n \left( \max_{k \in T_n}  \bar Y_{n,p}(k)  - c_{16}° Z - b_n \right) \leq x_{4,n}  \right) \\
		&\hspace{2cm} \geq \inf_{p \in \mathscr P_n} \P^W \left( a_n \left( \max_{k \in T_n}  \bar Y_{n,p}(k)  - c_{16}° Z - b_n \right) \leq x_{4,n}, \  - Z  \leq \frac{1}{3c_{16}°} b_n \right) \\
		&\hspace{2cm} \geq \inf_{p \in \mathscr P_n} \P^W \left( a_n \left( \max_{k \in T_n}  \bar Y_{n,p}(k)  - \frac{2}{3} b_n \right) \leq x_{4,n} \right) - \P \left( - Z  > \frac{1}{3c_{16}° } b_n \right) \\
		&\hspace{2cm} \geq \inf_{p \in \mathscr P_n} \P^W \left( a_n \left( \max_{k \in T_n}  \bar Y_{n,p}(k)  - \frac{2}{3} b_n \right) \leq x_{4,n} \right) + o(1)
\end{align*}
for $n \to \infty$. Slepian's inequality in the form of Corollary 3.12 in \cite{ledoux_talagrand1991} yields
\begin{align*}
	 &\inf_{p \in \mathscr P_n}\P^W \left( a_n \left( \max_{k \in T_n}  \bar Y_{n,p}(k)  -\frac{2}{3}b_n \right) \leq x_{4,n} \right) \\
	 	&\hspace{2cm} \geq \P^W \left( a_n \left( \max_{k \in T_n} \bar Y_{n,\min}(k) - \frac{2}{3}b_n \right) \leq x_{4,n} \right).
\end{align*}

\paragraph{Step 4 (Limiting distribution theory)}
Finally, we pass over to an iid sequence and apply extreme value theory. Together with
\begin{align*}
	&\P^W \left( a_n \left( \max_{k \in T_n} \bar Y_{n,\min}(k) - \frac{2}{3}b_n \right) \leq x_{4,n} \right) \\
	&\qquad \geq \P^W \left( a_n \left( \max_{k \in T_n} Y_{n,\min}(k) + c_{17}° Z - \frac{2}{3}b_n \right) \leq x_{4,n}, \ Z \leq \frac{1}{3c_{17}°} b_n  \right) \\
	&\qquad \geq \P^W \left( a_n \left( \max_{k \in T_n} Y_{n,\min}(k) - \frac{1}{3}b_n \right) \leq x_{4,n}  \right) - \P \left(Z>\frac{1}{3c_{17}°}b_n \right) \\
	&\qquad = \P^W \left( a_n \left( \max_{k \in T_n} Y_{n,\min}(k) - \frac{1}{3} b_n \right) \leq x_{4,n} \right) + o(1) 
\end{align*}
as $n \to \infty$, we finally obtain
\begin{align*}
	&\inf_{p \in \mathscr P_n} \P_p^{\otimes n} \left( a_n \left( \sup_{t \in [0,1]} \sqrt{\tn \hat h_n^{loc}(t)}\left\vert \hat p_n^{loc}(t, \hat h_n^{loc}(t)) - p(t) \right\vert - b_n \right) \leq x \right) \\
	&\hspace{2cm} \geq 2 \, \P \left( a_n \left( \max_{k \in T_n} Y_{n,\min}(k) - \frac{1}{3} b_n \right) \leq x_{4,n} \right) - 1 + o(1).
\end{align*}
Theorem 1.5.3 in \cite{leadbetter_lindgren_rootzen1983} yields now
\begin{align} \label{conv_fix}
	F_n(x) = \P^W \left( a_n \left( \max_{k\in T_n}  Y_{n,\min}(k) - \frac{1}{3}b_n \right) \leq x \right) \longrightarrow F(x) = \exp(-\exp(-x))
\end{align}
for any $x \in \R$. It remains to show, that $F_n(x_n) \to F(x)$ for some sequence $x_n \rightarrow x$ as $n \to \infty$. Because $F$ is continuous in $x$, there exists for any $\varepsilon > 0$ some $\delta = \delta(\varepsilon) > 0$ such that $\vert y-x \vert \leq \delta$ imlies $\vert F(x) - F(y) \vert \leq \varepsilon/2$. In particular, for $y=x \pm \delta$,
\begin{align} \label{x+delta}
	\vert F(x) - F(x+\delta) \vert \leq \frac{\varepsilon}{2} \quad \text{and} \quad \vert F(x) - F(x-\delta) \vert \leq \frac{\varepsilon}{2}.
\end{align}
As $x_n \to x$, there exists some $N_1 = N_1(\varepsilon)$, such that $\vert x_n - x \vert \leq \delta$ for all $n \geq N_1$. Therefore, employing the monotonicity of $F_n$, 
\begin{align*}
	\vert F_n(x_n) - F(x) \vert \leq \vert F_n(x+\delta) - F(x) \vert \vee \vert F_n(x-\delta) - F(x) \vert
\end{align*}
for $n \geq N_1$, where
\begin{align*}
	\vert F_n(x \pm \delta) - F(x) \vert \leq \vert F_n(x \pm \delta) - F(x \pm \delta) \vert + \vert F(x \pm \delta) - F(x) \vert \leq \varepsilon
\end{align*}
for $n \geq N_2 = N_2(\varepsilon)$ due to \eqref{conv_fix} and \eqref{x+delta}. Consequently,
\begin{align*}
	&\lim_{n \to \infty} \inf_{p \in \mathscr P_n} \P_p^{\otimes n} \left( a_n \left( \sup_{t \in [0,1]} \sqrt{\tn \hat h_n^{loc}(t)} \left\vert \hat p_n^{loc}(t, \hat h_n^{loc}(t)) - p(t) \right\vert - b_n \right) \leq x \right) \\
	&\hspace{2cm} \geq 2 \lim_{n \to \infty} \P \left( a_n \left( \max_{k \in T_n}  Y_{n,\min}(k) - \frac{1}{3}b_n \right) \leq x_{4,n} \right) - 1 + o(1) \\
	&\hspace{2cm} = 2 \, \P \left( \sqrt{L^*} G \leq x \right) - 1 + o(1), \quad n \to \infty,
\end{align*}
for some standard Gumbel distributed random variable $G$. 
\end{proof}

\begin{proof}[Proof of Proposition \ref{optimal_adaptation}]
The proof is based on a reduction of the supremum over the class to a maximum over two distinct hypotheses. 

\paragraph{Part 1}
For $\beta \in [\beta_*, 1)$, the construction of the hypotheses is based on the Weierstra{\ss} function as defined in \eqref{wf}. As in the proof of Proposition \ref{low_ptw_riskbound} consider the function $p_0: \R \to \R$ with
\begin{align*}
	p_0(x) = \begin{cases}
	0, \quad &\text{if } \ \vert x-t \vert \geq \frac{10}{3} \\
	\frac{1}{4} + \frac{3}{16}(x-t+2), \quad &\text{if } \  -\frac{10}{3} < x-t  < -2 \\
	\frac{1}{6} + \frac{1-2^{-\beta}}{12} W_{ \beta}(x-t), \quad &\text{if } \  \vert x-t \vert \leq 2 \\
	\frac{1}{4} - \frac{3}{16}(x-t-2), \quad &\text{if } \  2 < x-t  < \frac{10}{3} \\
	\end{cases}
\end{align*}
and the functions $p_{1,n}, \, p_{2,n} : \R \to \R$ with
\begin{align*}
	p_{1,n}(x) &= p_0(x) + q_{t+\frac{9}{4},n}(x;g_{\beta,n}) - q_{t,n}(x;g_{\beta,n}), \quad x \in \R \\
	p_{2,n}(x) &= p_0(x) + q_{t+\frac{9}{4},n}(x;c_{18}° \cdot g_{\beta,n}) - q_{t,n}(x;c_{18}° \cdot g_{\beta,n}), \quad x \in \R
\end{align*}
for $g_{\beta,n} = \frac{1}{4} n^{-1/(2\beta+1)}$ and $c_{18}° = c_{18}°(\beta) = (2 L_W(\beta))^{-1/\beta}$, where 
\begin{align*}
	q_{a,n}(x;g) = \begin{cases}
	0, \quad &\text{if } \ \vert x-a \vert > g \\
	\frac{1-2^{-\beta}}{12} \Big(W_{\beta}(x-a) - W_{\beta}(g)\Big), \quad &\text{if } \ \vert x-a \vert \leq g
	\end{cases}
\end{align*}
for $a \in \R$ and $g>0$. 

		\begin{figure}[H] 
		\centering
  		\includegraphics[width=.95\textwidth]{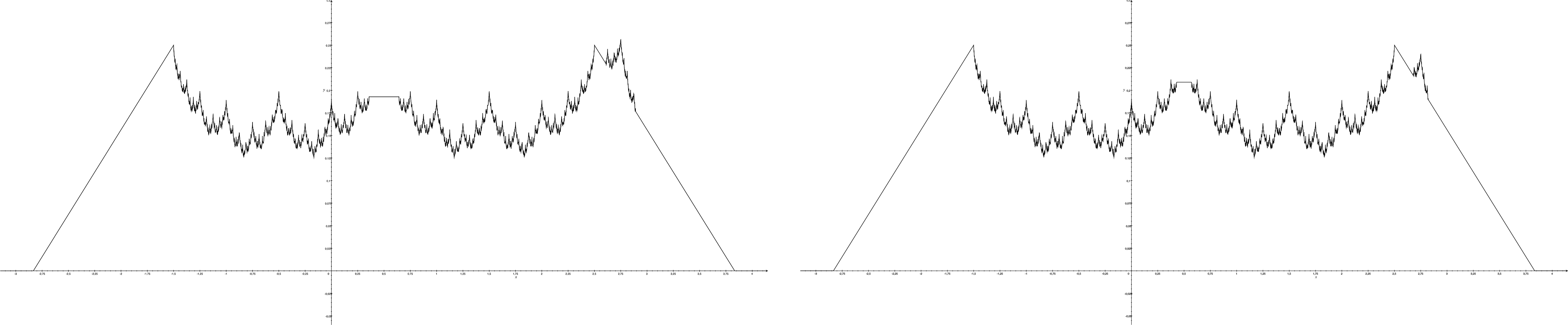}
		\caption{Functions $p_{1,n}$ and $p_{2,n}$ for $t=0.5$, $\beta=0.5$ and $n=50$}
		\label{fig:w2}
		\end{figure}
		\vspace{-5mm}

\smallskip
\noindent Following the lines of the proof of Proposition \ref{low_ptw_riskbound}, both $p_{1,n}$ and $p_{2,n}$ are contained in the class $\mathscr P_k(L,\beta_*,M,K_R,\varepsilon)$ for sufficiently large $k \geq k_0(\beta_*)$. Moreover, both $p_{1,n}$ and $p_{2,n}$ are constant on $B(t,c_{18}° \cdot g_{\beta,n})$, so that
\begin{align*}
	p_{1,n \vert B(t,c_{18}° \cdot g_{\beta,n})}, \, p_{2,n \vert B(t,c_{18}° \cdot g_{\beta,n})} \in \mathcal H_{B(t,c_{18}° \cdot g_{\beta,n})}(\infty,L) \
\end{align*}
for some constant $L=L(\beta)$. Using Lemma \ref{weier_lemma} and \eqref{dist_weier}, the absolute distance of the two hypotheses in $t$ is at least
\begin{align*}
	\vert p_{1,n}(t) - p_{2,n}(t) \vert &= \vert q_{t,n}(t;g_{\beta,n}) - q_{t,n}(t;c_{18}° \cdot g_{\beta,n}) \vert \\
		&= \frac{1-2^{- \beta}}{12} \vert W_{\beta}(g_{\beta,n}) - W_{\beta}(c_{18}° \cdot g_{\beta,n}) \vert \\
		&\geq \frac{1-2^{- \beta_*}}{12} \Big( \vert W_{\beta}(g_{\beta,n}) - W_{\beta}(0) \vert - \vert W_{\beta}(c_{18}° \cdot g_{\beta,n}) - W_{\beta}(0) \vert \Big) \\
		&\geq \frac{1-2^{- \beta_*}}{12} \Big( g_{\beta,n}^{\beta} - L_W(\beta) \left( c_{18}° \cdot g_{\beta,n} \right)^{\beta} \Big) \\*
		&\geq 2 c_{19}° g_{\beta,n}^{\beta}
\end{align*}
where 
\begin{align*}
	c_{19}° = c_{19}°(\beta_*) = \frac{1-2^{-\beta_*}}{48}.
\end{align*}
Since furthermore
\begin{align*}
	\int (p_{2,n}(x) - p_{1,n}(x)) \diff x = 0,
\end{align*}
and $\log(1+x) \leq x$ for $x > -1$, the Kullback-Leibler divergence between the associated product probability measures $\P_{1,n}^{\otimes n}$ and $\P_{2,n}^{\otimes n}$ is bounded from above by
\begin{align*}
	K(\P_{2,n}^{\otimes n},\P_{1,n}^{\otimes n}) &\leq n \, \int \frac{(p_{2,n}(x) - p_{1,n}(x))^2}{p_{1,n}(x)} \diff x \\
		&\leq 12 \, n \, \int (p_{2,n}(x) - p_{1,n}(x))^2 \diff x \\
		&= 24 \, n \, \int (q_{0,n}(x;g_{\beta,n}) - q_{0,n}(x,c_{18}° \cdot g_{\beta,n}))^2 \diff x \\
		&= 24 \, n \, \left( \frac{1-2^{-\beta}}{12} \right)^2 \Bigg( 2 \int_{c_{18}° \cdot g_{\beta,n}}^{g_{\beta,n}} \Big(W_{\beta}(x) - W_{\beta}(g_{\beta,n})\Big)^2 \diff x \\
		&\hspace{3cm}+ \int_{-c_{18}° \cdot g_{\beta,n}}^{c_{18}° \cdot g_{\beta,n}} \Big(W_{\beta}(c_{18}° \cdot g_{\beta,n}) - W_{\beta}(g_{\beta,n}) \Big)^2 \diff x \Bigg) \\
		&\leq 24 \, n \, L_W(\beta)^2 \left( \frac{1-2^{-\beta}}{12} \right)^2 \Bigg( 2 \int_{c_{18}° \cdot g_{\beta,n}}^{g_{\beta,n}} ( g_{\beta,n}-x )^{2\beta} \diff x \\
		&\hspace{7cm}+ 2(1-c_{18}°)^2 c_{18}°  g_{\beta,n}^{2\beta+1} \Bigg) \\
		&= c_{20}°		
\end{align*}
with
\begin{align*}
	c_{20}° = c_{20}°(\beta) = 48 \, L_W(\beta)^2 4^{-(2\beta+1)} \left( \frac{1-2^{-\beta}}{12} \right)^2 \Bigg( \frac{(1-c_{18}°)^{2\beta+1}}{2\beta+1} + (1-c_{18}°)^2 c_{18}°  \Bigg),
\end{align*} 
where we used Lemma \ref{weier_lemma} in the last inequality. Theorem 2.2 in \cite{tsybakov2009} then yields
\begin{align*}
	&\inf_{T_n} \sup_{p \in \mathscr S_k(\beta)} \P_p^{\otimes n} \left( n^{\frac{\beta}{2\beta+1}} \left\vert T_n(t) - p(t) \right\vert \geq c_{19}° \right) \\
	&\hspace{5cm} \geq \max \left\{ \frac{1}{4} \exp(-c_{20}°), \, \frac{1-\sqrt{c_{20}°/2}}{2} \right\} > 0.
\end{align*}

\paragraph{Part 2}
For $\beta=1$, consider the function $p_0: \R \to \R$ with
\begin{align*}
	p_0(x) = \begin{cases}
	0, \quad &\text{if } \ \vert x-t \vert > 4 \\
	\frac{1}{4} - \frac{1}{16} \vert x-t \vert, \quad &\text{if } \  \vert x-t \vert \leq 4
	\end{cases}
\end{align*}
and the functions $p_{1,n}, p_{2,n} : \R \to \R$ with
\begin{align*}
	p_{1,n}(x) &= p_0(x) + q_{t+\frac{9}{4},n}(x;g_{1,n}) - q_{t,n}(x;g_{1,n}) \\
	p_{2,n}(x) &= p_0(x) + q_{t+\frac{9}{4},n}(x;g_{1,n}/2) - q_{t,n}(x;g_{1,n}/2)
\end{align*}
for $g_{1,n} = \frac{1}{4} n^{-1/3}$, where
\begin{align*}
	q_{a,n}(x;g) = \begin{cases}
	0, \quad &\text{if } \ \vert x-a \vert > g \\
	\frac{1}{16} (g-\vert x-a \vert), \quad &\text{if } \ \vert x-a \vert \leq g
	\end{cases}
\end{align*}
for $a \in \R$ and $g>0$. Following the lines of the proof of Proposition \ref{low_ptw_riskbound}, both $p_{1,n}$ and $p_{2,n}$ are contained in the class $\mathscr P_k$ for sufficiently large $k \geq k_0(\beta_*)$. Moreover, both $p_{1,n}$ and $p_{2,n}$ are constant on $B(t, g_{1,n}/2)$, so that
\begin{align*}
	p_{1,n \vert B(t,g_{1,n}/2)}, \, p_{2,n \vert B(t,g_{1,n}/2)} \in \mathcal H_{B(t, g_{1,n}/2)}(\infty,1/4). 
\end{align*}
The absolute distance of $p_{1,n}$ and $p_{2,n}$ in $t$ is given by
\begin{align*}
	\vert p_{1,n}(t) - p_{2,n}(t) \vert = \frac{1}{32} g_{1,n},
\end{align*}
whereas the Kullback-Leibler divergence between the associated product probability measures $\P_{1,n}^{\otimes n}$ and $\P_{2,n}^{\otimes n}$ is upper bounded by
\begin{align*}
	K(\P_{2,n}^{\otimes n},\P_{1,n}^{\otimes n}) &\leq n \, \int \frac{(p_{2,n}(x) - p_{1,n}(x))^2}{p_{1,n}(x)} \diff x \\
		&\leq 16 \, n \, \int (p_{2,n}(x) - p_{1,n}(x))^2 \diff x \\
		&= 32 \, n \, \int (q_{0,n}(x;g_{1,n}) - q_{0,n}(x, g_{1,n}/2))^2 \diff x \\
		&= 32 \, n \,  \Bigg( 2 \int_{g_{1,n}/2}^{g_{1,n}} \left(\frac{1}{16} (g_{1,n}- x) \right)^2 \diff x + \int_{-g_{1,n}/2}^{g_{1,n}/2} \Big( \frac{g_{1,n}}{32} \Big)^2 \diff x  \Bigg) \\	
		&= \frac{2}{3\cdot 32^2} + \frac{1}{32}.
\end{align*}
Together with Theorem 2.2 in \cite{tsybakov2009} the result follows.
\end{proof}

\begin{proof}[Proof of Theorem 3.12]
\noindent Recall the notation of Subsection 3.2, in particular the definitions of $\hat h_n^{loc}(t)$ in (3.12), of $q_n(\alpha)$ in (3.14), of $\beta_{n,p}(t)$ in (3.15), and of $\bar h_n(t)$ in (4.1). Furthermore, set $\tilde \gamma= \tilde \gamma(c_1)=\frac{1}{2} (c_1\log 2 - 1)$. To show that the confidence band is adaptive, note that according to Proposition 4.1 and Lemma 4.2 for any $\delta > 0$ there exists some $n_0(\delta)$, such that
\begin{align*}
	&\sup_{p \in \mathscr P_n} \P_p^{\chi_2} \left( \sup_{t \in [0,1]} \vert C_{n,\alpha}(t) \vert \cdot \left( \frac{\log \tn}{\tn} \right)^{\frac{-\beta_{n,p}(t)}{2\beta_{n,p}(t)+1}} \geq \sqrt{6} \cdot 2^{1-\frac{j_{\min}}{2}} q_n(\alpha) (\log \tn)^{\tilde \gamma} \right) \\
		&\quad = \sup_{p \in \mathscr P_n} \P_p^{\chi_2} \left(  \sup_{t \in [0,1]} \frac{\bar h_n(t)}{\hat h_n^{loc}(t)} \cdot 2^{-u_n} \geq 6 \right) \\
		&\quad =\sup_{p \in \mathscr P_n} \P_p^{\chi_2} \left(  \max_{k \in T_n} \, \sup_{t \in I_k} \, \frac{\bar h_n(t)}{\min \left\{ 2^{-\hat j_n((k-1) \delta_n)}, 2^{-\hat j_n(k \delta_n)} \right\}}  \geq 6 \right) \\
		&\quad \leq\sup_{p \in \mathscr P_n} \P_p^{\chi_2} \left(  \max_{k \in T_n}  \, \frac{\min \left\{ \bar h_n((k-1)\delta_n), \bar h_n(k\delta_n) \right\}}{\min \left\{ 2^{-\hat j_n((k-1) \delta_n)}, 2^{-\hat j_n(k \delta_n)} \right\}}  \geq 2 \right) \\
		&\quad \leq\sup_{p \in \mathscr P_n} \P_p^{\chi_2} \left(  \exists \, k \in T_n : \frac{\min \left\{ 2^{-\bar j_n((k-1) \delta_n)}, 2^{-\bar j_n(k \delta_n)} \right\}}{\min \left\{ 2^{-\hat j_n((k-1) \delta_n)}, 2^{-\hat j_n(k \delta_n)} \right\}}  \geq 1 \right) \\
		&\quad = \sup_{p \in \mathscr P_n} \left\{ 1 - \P_p^{\chi_2} \left(  \forall \, k \in T_n : \frac{\min \left\{ 2^{-\bar j_n((k-1) \delta_n)}, 2^{-\bar j_n(k \delta_n)} \right\}}{\min \left\{ 2^{-\hat j_n((k-1) \delta_n)}, 2^{-\hat j_n(k \delta_n)} \right\}}  < 1 \right) \right\} \\
		&\quad \leq \sup_{p \in \mathscr P_n} \left\{ 1-\P_p^{\chi_2} \left( \hat j_n(k\delta_n) < \bar j_n(k\delta_n) \text{ for all } k \in T_n \right) \right\} \\
		&\quad \leq \delta
\end{align*} 
for all $n \geq n_0(\delta)$.
\end{proof}

\subsection{Proofs of the results in Section \ref{sec:aux}} \label{subsec:prooflemma}

\begin{proof}[Proof of Proposition \ref{bw_band}]

We prove first that
\begin{align} \label{bw_large}
	\lim_{n \to \infty} \sup_{p \in \mathscr P_n} \P_p^{\chi_2} \left( \hat j_n(k\delta_n) > \bar j_n(k\delta_n)+1 \text{ for some } k \in T_n \right) = 0.
\end{align}
Note first that if $\hat j_n(k\delta_n) > \bar j_n(k\delta_n)+1$ for some $k \in T_n$, then $\bar{j}_n(k\delta_n)+1$ cannot be an admissible exponent according to the construction of the bandwidth selection scheme in \eqref{setA}, %because otherwise $\hat j_n(k\delta_n)$ would not have been chosen in the minimization problem \eqref{lepski_bw}. 
that is, $\bar{j}_n(k\delta_n) + 1 \notin \mathcal{A}_n(k\delta_n)$. By definition of $\mathcal{A}_n(k\delta_n)$ there exist exponents $m_{n,k}, m_{n,k}'  \in \mathcal{J}_n$ with $m_{n,k} > m_{n,k}' \geq \bar{j}_n(k\delta_n)+4$ such that
\begin{align*}
	\max_{s \in B\left(k\delta_n,\frac{7}{8}\cdot2^{-(\bar j_n(k\delta_n)+1)}\right) \cap \mathcal H_n} \vert \hat{p}_{n}^{(2)} (s,m_{n,k}) - \hat{p}_{n}^{(2)}(s,m_{n,k}') \vert > c_2 \sqrt{\frac{\log \tn}{\tn 2^{-m_{n,k}}}}.
\end{align*}
Consequently,
\begin{align*}
	&\P_p^{\chi_2} \left( \hat j_n(k\delta_n) > \bar j_n(k\delta_n)+1 \text{ for some } k \in T_n \right) \\
	& \leq \P_p^{\chi_2} \Bigg( \exists k \in T_n \text{ and } \exists \, m_{n,k}, m_{n,k}' \in \mathcal{J}_n \text{ with } m_{n,k} > m_{n,k}' \geq \bar{j}_n(k\delta_n)+4 \text{ such that } \\
	& \max_{s \in B\left(k\delta_n,\frac{7}{8}\cdot2^{-(\bar j_n(k\delta_n)+1)}\right) \cap \mathcal H_n} \vert \hat{p}_{n}^{(2)} (s,m_{n,k}) - \hat{p}_{n}^{(2)}(s,m_{n,k}') \vert > c_2 \sqrt{\frac{\log \tn}{\tn 2^{-m_{n,k}}}} \, \Bigg) \\
	& \leq \sum_{m \in \mathcal{J}_n} \sum_{m' \in \mathcal{J}_n} \P_p^{\chi_2} \Bigg( m > m' \geq \bar{j}_n(k\delta_n)+4 \text{ and } \\
	&\hspace{2.7cm} \max_{s \in B\left(k\delta_n,\frac{7}{8}\cdot2^{-(\bar j_n(k\delta_n)+1)}\right) \cap \mathcal H_n} \vert \hat{p}_{n}^{(2)} (s,m) - \hat{p}_{n}^{(2)}(s,m') \vert \\
	&\hspace{7cm} > c_2 \sqrt{\frac{\log \tn}{\tn 2^{-m}}} \text{ for some } k \in T_n \Bigg).
\end{align*}
We furthermore use the following decomposition into two stochastic terms and two bias terms
\begin{align*}
	&\max_{s \in B\left(k\delta_n,\frac{7}{8}\cdot2^{-(\bar j_n(k\delta_n)+1)}\right) \cap \mathcal H_n} \left\vert \hat{p}_{n}^{(2)} (s,m) - \hat{p}_{n}^{(2)}(s,m') \right\vert \\
	&\quad \leq \max_{s \in B\left(k\delta_n,\frac{7}{8}\cdot2^{-(\bar j_n(k\delta_n)+1)}\right) \cap \mathcal H_n} \left\vert \hat{p}_{n}^{(2)} (s,m) - \E_p^{\chi_2} \hat{p}_{n}^{(2)} (s,m) \right\vert \\
	&\hspace{2cm} + \max_{s \in B\left(k\delta_n,\frac{7}{8}\cdot2^{-(\bar j_n(k\delta_n)+1)}\right) \cap \mathcal H_n} \left\vert \hat{p}_{n}^{(2)}(s,m') - \E_p^{\chi_2} \hat{p}_{n}^{(2)}(s,m') \right\vert \\
	&\qquad + \sup_{s \in B\left(k\delta_n,\frac{7}{8}\cdot2^{-(\bar j_n(k\delta_n)+1)}\right)} \left\vert \E_p^{\chi_2} \hat{p}_{n}^{(2)} (s,m) - p(s) \right\vert \\
	&\qquad + \sup_{s \in B\left(k\delta_n,\frac{7}{8}\cdot2^{-(\bar j_n(k\delta_n)+1)}\right)} \left\vert \E_p^{\chi_2} \hat{p}_{n}^{(2)}(s,m') - p(s) \right\vert.
\end{align*}
In order to bound the two bias terms, note first that for any $m > m' \geq \bar j_n(k\delta_n)+4$ both
\begin{align*}
	 \frac{7}{8}\cdot2^{-(\bar j_n(k\delta_n)+1)} = 2^{-(\bar j_n(k\delta_n)+1)} - \frac{1}{8} \cdot 2^{-(\bar j_n(k\delta_n)+1)} \leq 2^{-(\bar j_n(k\delta_n)+1)} -  2^{-m}
\end{align*}
and
\begin{align*}
	 \frac{7}{8}\cdot2^{-(\bar j_n(k\delta_n)+1)} = 2^{-(\bar j_n(k\delta_n)+1)} - \frac{1}{8} \cdot 2^{-(\bar j_n(k\delta_n)+1)} \leq 2^{-(\bar j_n(k\delta_n)+1)} -  2^{-m'}.
\end{align*}
According to Assumption \ref{self_sim} and Lemma \ref{remark2},
\begin{align*}
	p_{\vert B \left( k\delta_n, 2^{-(\bar j_n(k\delta_n)+1)} \right)} \in \mathcal H_{\beta^*, B \left( k\delta_n, 2^{-(\bar j_n(k\delta_n)+1)} \right)} \left( \beta_p \left( B \left( k\delta_n, 2^{-\bar j_n(k\delta_n)} \right) \right), L^* \right),
\end{align*}
so that Lemma \ref{bias_up} yields,
\begin{align*}
	 &\sup_{s \in B\left(k\delta_n,\frac{7}{8}\cdot2^{-(\bar j_n(k\delta_n)+1)}\right)} \left\vert \E_p^{\chi_2} \hat{p}_{n}^{(2)} (s,m) - p(s) \right\vert \\
	 &\hspace{4cm} \leq  \sup_{s \in B(k\delta_n,2^{-(\bar j_n(k\delta_n)+1)} -  2^{-m})} \left\vert \E_p^{\chi_2} \hat{p}_{n}^{(2)} (s,m) - p(s) \right\vert \\
	 &\hspace{4cm} \leq b_2 2^{-m \beta_p\left(B\left(k\delta_n,2^{-\bar j_n(k\delta_n)}\right)\right)} \\
	  &\hspace{4cm} \leq b_2 2^{-m \beta_p\left(B\left(k\delta_n,\bar h_n(k\delta_n)\right)\right)} \\
	  &\hspace{4cm} \leq b_2 2^{-m \beta_{n,p}(k\delta_n)},
\end{align*}
with the bandwidth $\bar h_n(\cdot)$ as defined in \eqref{h_opt}, and analogously
\begin{align*}
	 &\sup_{s \in B\left(k\delta_n,\frac{7}{8}\cdot2^{-(\bar j_n(k\delta_n)+1)}\right)} \left\vert \E_p^{\chi_2} \hat{p}_{n}^{(2)} (s,m') - p(s) \right\vert \leq b_2 2^{-m' \beta_{n,p}(k\delta_n)}.
\end{align*}
Thus, the sum of the two bias terms is bounded from above by $2b_2 \bar h_n(k\delta_n)^{\beta_{n,p}(k\delta_n)}$, such that
\begin{align*}
	&\sqrt{\frac{\tn 2^{-m}}{\log \tn}} \Bigg( \sup_{s \in B\left(k\delta_n,\frac{7}{8}\cdot2^{-(\bar j_n(k\delta_n)+1)})\right)} \left\vert \E_p^{\chi_2} \hat{p}_{n}^{(2)} (s,m) - p(s) \right\vert \\
	&\hspace{3cm}+ \sup_{s \in B\left(k\delta_n,\frac{7}{8}\cdot2^{-(\bar j_n(k\delta_n)+1)})\right)} \left\vert \E_p^{\chi_2} \hat{p}_{n}^{(2)}(s,m') - p(s) \right\vert \Bigg) \\
	&\quad \leq \sqrt{\frac{\tn \bar h_n(k\delta_n)}{\log \tn}} \cdot 2b_2 \bar h_n(k\delta_n)^{\beta_{n,p}(k\delta_n)} \\
	&\quad \leq c_{21}°,
\end{align*}
where $c_{21}° = c_{21}°(\beta_*,L^*,\varepsilon) = 2b_2 \cdot 2^{-j_{\min}(2\beta_*+1)/2}$. Thus, it holds
\begin{align*}
	&\P_p^{\chi_2} \left( \hat j_n(k\delta_n) > \bar j_n(k\delta_n)+1 \text{ for some } k \in T_n \right) \\
	&\quad \leq \sum_{m \in \mathcal J_n} \sum_{m' \in \mathcal J_n} \Bigg\{ \P_p^{\chi_2} \Bigg( \max_{k \in T_n} \max_{s \in B\left(k\delta_n,\frac{7}{8}\cdot2^{-(\bar j_n(k\delta_n)+1)}\right) \cap \mathcal H_n} \left\vert \hat{p}_{n}^{(2)} (s,m) - \E_p^{\chi_2} \hat{p}_{n}^{(2)} (s,m) \right\vert \\
	&\hspace{9.5cm} > \frac{c_2-c_{21}°}{2} \sqrt{\frac{\log \tn}{\tn 2^{-m}}} \, \Bigg)  \\
	&\hspace{1.9cm}+ \P_p^{\chi_2} \Bigg( \max_{k \in T_n} \max_{s \in B\left(k\delta_n,\frac{7}{8}\cdot2^{-(\bar j_n(k\delta_n)+1)}\right) \cap \mathcal H_n} \left\vert \hat{p}_{n}^{(2)} (s,m') - \E_p^{\chi_2} \hat{p}_{n}^{(2)} (s,m') \right\vert \\
	&\hspace{8.7cm}> \frac{c_2-c_{21}°}{2} \sqrt{\frac{\log \tn}{\tn 2^{-m'}}} \, \Bigg) \Bigg\} \\
	&\quad \leq 2 \, \vert \mathcal J_n \vert^2 \cdot \P_p^{\chi_2} \left( \sup_{s \in \mathcal H_n} \max_{h \in \mathcal G_n} \sqrt{\frac{\tn h}{\log \tn}} \left\vert \hat{p}_{n}^{(2)} (s,h) - \E_p^{\chi_2} \hat{p}_{n}^{(2)} (s,h) \right\vert > \frac{c_2-c_{21}°}{2} \right).
\end{align*} 
Choose $c_2 = c_2(A, \nu, \beta_*, L^*,K,\varepsilon)$ sufficiently large such that $c_2 \geq c_{21}°+2 \eta_0$, where $\eta_0$ is given in Lemma \ref{lemma_deviation}. Then, Lemma \ref{lemma_deviation} and the logarithmic cardinality of $\mathcal J_n$ yield \eqref{bw_large}. In addition, we show that 
\begin{align} \label{bw_small}
	\lim_{n \to \infty} \sup_{p \in \mathscr P_n} \P_p^{\chi_2} \left( \hat{j}_n(k\delta_n) < k_n(k\delta_n) \text{ for some } k \in T_n \right) = 0.
\end{align}
%If $\beta_{n,p}(k\delta_n) = \infty$, then $\bar j_n(k\delta_n) = j_{\min}$. Since furthermore $\hat j_n(k\delta_n) \in \mathcal J_n$ and therefore $\hat j_n(k\delta_n) \geq j_{\min}$, this immediately contradicts $\hat{j}_n(k\delta_n) \leq k_n(k\delta_n)-1$. That is, $\hat{j}_n(k\delta_n) \leq k_n(k\delta_n)-1$ implies that $\beta_{n,p}(k\delta_n) < \infty$, which in turn implies $\beta_{n,p}(k\delta_n) \leq \beta^*$ according to Remark \ref{stern}, so that
%\begin{align*}
%	\left\{ k \in T_n : \hat{j}_n(k\delta_n) \leq k_n(k\delta_n)-1 \right\} \subset \left\{ k \in T_n : \beta_{n,p}(k\delta_n) \leq \beta^* \right\}.
%\end{align*}
For $t \in [0,1]$, due to the sequential definition of the set of admissible bandwidths $\mathcal A_n(t)$ in \eqref{setA}, if $\hat j_n(t) < j_{\max}$, then both $\hat j_n(t)$ and $\hat j_n(t)+1$ are contained in $\mathcal A_n(t)$. Note furthermore, that $k_n(t) < j_{\max}$ for any $t \in [0,1]$. Thus, if $\hat{j}_n(k\delta_n) < k_n(k\delta_n)$ for some $k \in T_n$, there exists some index $j < k_n(k\delta_n)+1$ with $j \in \mathcal A_n(k\delta_n)$ and satisfying \eqref{assumption1} and \eqref{admissible_density} for $u= 2^{-j}$ and $t = k\delta_n$.
%, as well as
%\begin{align*}
%	\beta_p(B(k\delta_n,2^{-j})) \leq \beta_p(B(k\delta_n,\bar h_n(k\delta_n))) \leq \beta^*
%\end{align*}
%for sufficiently large $n \geq n_0(c_1)$ according to Lemma \ref{a4}. 
In particular,
\begin{align*}
	\max_{s \in B\left(k\delta_n,\frac{7}{8}\cdot2^{-j}\right) \cap \mathcal H_n} \left\vert \hat{p}_{n}^{(2)} (s,j+3) - \hat{p}_{n}^{(2)}(s,\bar j_n(k\delta_n)) \right\vert \leq c_2 \sqrt{\frac{\log \tn}{\tn 2^{-\bar j_n(k\delta_n)}}}
\end{align*}
for sufficiently large $n \geq n_0(c_1)$, using that $\bar j_n(k\delta_n) \in \mathcal J_n$ for any $k \in T_n$. Consequently
\begin{align}\label{c3/2}
	&\P_p^{\chi_2} \left( \hat{j}_n(k\delta_n) < k_n(k\delta_n) \text{ for some } k \in T_n \right) \notag \\
	\begin{split} 
	& \leq \sum_{j \in \mathcal J_n} \P_p^{\chi_2} \Bigg( \exists \, k \in T_n : j < k_n(k\delta_n)+1 %\text{ and } \beta_p(B(k\delta_n,2^{-j})) \leq \beta^* 
	\text{ and } p_{\vert B(k\delta_n,2^{-j})} \in \mathcal H_{\beta^*,B(k\delta_n,2^{-j})}(\beta,L^*) \\
	&\hspace{2cm}\text{and} \sup_{s \in B(k\delta_n,2^{-j}-g)} \left\vert (K_g \ast p)(s) - p(s) \right\vert \geq \frac{g^{\beta}}{\log n} \text{ for all } g \in \mathcal G_{\infty} \text{ with } \\
	&\hspace{2cm}g \leq 2^{-(j+3)} \text{ and } \max_{s \in B\left(k\delta_n,\frac{7}{8}\cdot2^{-j}\right) \cap \mathcal H_n} \left\vert \hat{p}_{n}^{(2)} (s,j+3) - \hat{p}_{n}^{(2)}(s,\bar j_n(k\delta_n)) \right\vert \\
	 &\hspace{9.5cm}\leq c_2 \sqrt{\frac{\log \tn}{\tn 2^{-\bar j_n(k\delta_n)}}} \, \Bigg).
	\end{split}
\end{align}
The triangle inequality yields 
\begin{align*}
	&\max_{s \in B\left(k\delta_n,\frac{7}{8}\cdot2^{-j}\right) \cap \mathcal H_n} \left\vert \hat{p}_{n}^{(2)} (s,j+3) - \hat{p}_{n}^{(2)}(s,\bar j_n(k\delta_n)) \right\vert \\
	&\hspace{2cm} \geq \max_{s \in B\left(k\delta_n,\frac{7}{8}\cdot2^{-j}\right) \cap \mathcal H_n} \left\vert \E_p^{\chi_2} \hat{p}_{n}^{(2)} (s,j+3) - \E_p^{\chi_2} \hat{p}_{n}^{(2)}(s,\bar j_n(k\delta_n)) \right\vert \\
	&\hspace{3cm} - \max_{s \in B\left(k\delta_n,\frac{7}{8}\cdot2^{-j}\right) \cap \mathcal H_n} \left\vert \hat{p}_{n}^{(2)} (s,j+3) - \E_p^{\chi_2} \hat{p}_{n}^{(2)} (s,j+3) \right\vert \\
	&\hspace{3cm} - \max_{s \in B\left(k\delta_n,\frac{7}{8}\cdot2^{-j}\right) \cap \mathcal H_n} \left\vert \hat{p}_{n}^{(2)} (s,\bar j_n(k\delta_n)) - \E_p^{\chi_2} \hat{p}_{n}^{(2)} (s,\bar j_n(k\delta_n)) \right\vert.
\end{align*}
We further decompose
\begin{align*}
	&\max_{s \in B\left(k\delta_n,\frac{7}{8}\cdot2^{-j}\right) \cap \mathcal H_n} \left\vert \E_p^{\chi_2} \hat{p}_{n}^{(2)} (s,j+3) - \E_p^{\chi_2} \hat{p}_{n}^{(2)}(s,\bar j_n(k\delta_n)) \right\vert \\
	&\hspace{4cm} \geq \max_{s \in B\left(k\delta_n,\frac{7}{8}\cdot2^{-j}\right) \cap \mathcal H_n} \left\vert \E_p^{\chi_2} \hat{p}_{n}^{(2)} (s,j+3) - p(s) \right\vert \\
	&\hspace{5cm} \quad - \sup_{s \in B\left(k\delta_n,\frac{7}{8}\cdot2^{-j}\right)} \left\vert \E_p^{\chi_2} \hat{p}_{n}^{(2)}(s,\bar j_n(k\delta_n))  - p(s) \right\vert.
\end{align*}
As Assumption \ref{self_sim} is satisfied for $u = 2^{-j}$ and $t=k\delta_n$, together with Lemma~\ref{remark2} we both have
\begin{align} \label{bdg1}
	p_{\vert B\left(k\delta_n, 2^{-j}\right)} \in \mathcal H_{\beta^*,B\left(k\delta_n, 2^{-j} \right)}\Big(\beta_p\left(B\left(k\delta_n, 2^{-j}\right)\right), L^*\Big)
\end{align}
and
\begin{align} \label{bdg2}
	\sup_{s \in B \left( k\delta_n, 2^{-j} - g \right)} \left\vert (K_g \ast p)(s) - p(s) \right\vert \geq \frac{g^{\beta_p \left( B \left( k\delta_n, 2^{-j} \right) \right)}}{\log n}
\end{align}
for all $g \in \mathcal G_{\infty}$ with $g \leq 2^{-(j+3)}$. In particular, \eqref{bdg1} together with Lemma~\ref{bias_up} gives the upper bias bound
\begin{align*}
	\sup_{s \in B\left(k\delta_n,\frac{7}{8}\cdot2^{-j}\right)} \Big\vert \E_p^{\chi_2} \hat{p}_{n}^{(2)} (s,\bar j_n(k\delta_n)) - p(s) \Big\vert \leq b_2 \cdot 2^{-\bar j_n(k\delta_n) \beta_p \left( B \left( k\delta_n, 2^{-j} \right) \right)}
\end{align*}
for sufficiently large $n \geq n_0(c_1)$, whereas \eqref{bdg2} yields the bias lower bound
\begin{align} \label{assumption_conclusion}
	&\sup_{s \in B\left(k\delta_n,\frac{7}{8}\cdot2^{-j}\right)} \left\vert \E_p^{\chi_2} \hat{p}_{n}^{(2)} (s,j+3) - p(s) \right\vert \notag \\
	&\hspace{2cm} = \sup_{s \in B(k\delta_n,2^{-j} - 2^{-(j+3)})} \left\vert \E_p^{\chi_2} \hat{p}_{n}^{(2)} (s,j+3) - p(s) \right\vert \notag \\
	&\hspace{2cm}\geq \frac{2^{-(j+3)\beta_p \left( B \left( k\delta_n, 2^{-j} \right) \right)}}{\log n}.
\end{align}
To show that the above lower bound even holds for the maximum over the set  $B\left(k\delta_n,\frac{7}{8}\cdot2^{-j}\right) \cap \mathcal H_n$, note that for any point $k\delta_n - \frac{7}{8} 2^{-j} \leq \tilde t \leq k\delta_n + \frac{7}{8} 2^{-j}$ there exists some $t \in \mathcal H_n$ with $\vert t - \tilde t \vert \leq \delta_n$, and
\begin{align} \label{assumption_conclusion2}
	&\left\vert \E_p^{\chi_2} \hat{p}_{n}^{(2)} (t,j+3) - p( t) \right\vert \notag \\
	&\hspace{2cm} = \left\vert \int K(x) \Big\{ p( t+2^{-(j+3)}x)-p( t) \Big\} \diff x \right\vert \notag\\
		&\hspace{2cm}  \geq \left\vert \int K(x) \Big\{ p\left(\tilde t+2^{-(j+3)}x\right)-p\left(\tilde t \,\right) \Big\} \diff x \right\vert \notag\\*
		&\hspace{2cm} \qquad - \int \vert K(x) \vert \cdot \left\vert p(t+2^{-(j+3)}x) - p\left(\tilde t + 2^{-(j+3)}x\right) \right\vert \diff x \notag\\*
		&\hspace{2cm} \qquad - \int \vert K(x) \vert \cdot \vert p(t) - p\left(\tilde t  \right) \vert \diff x \notag\\
		&\hspace{2cm}  \geq \left\vert \int K(x) \Big\{ p\left(\tilde t+2^{-(j+3)}x\right)-p\left(\tilde t \right) \Big\} \diff x \right\vert - 2 \Vert K \Vert_1 L^* \cdot \vert t-\tilde t \vert^{\beta_*},
\end{align}
where
\begin{align*}
	\vert t-\tilde t \vert^{\beta_*} &\leq \delta_n^{\beta_*} \\
		&\leq 2^{-j_{\min}} \left( \frac{\log \tn}{\tn} \right)^{\frac{1}{2}} (\log \tn)^{-2} \\
		&\leq \frac{\bar h_n(k\delta_n)^{\beta_{n,p}(k\delta_n)}}{(\log \tn)^2} \\
		&\leq \frac{2^{-(\bar j_n(k\delta_n)-1) \beta_{n,p}(k\delta_n)}}{(\log \tn)^2} \\
		&\leq \frac{2^{-(j+3) \beta_{n,p}(k\delta_n)}}{(\log \tn)^2}
\end{align*}
for sufficiently large $n \geq n_0(c_1)$. For $n \geq n_0(c_1)$ and $j \in \mathcal J_n$ with $j < k_n(k\delta_n)+1$,
\begin{align*} 
	 2^{-j} > 2^{m_n-1} \cdot 2^{-\bar j_n(k\delta_n)} > \bar h_n(k\delta_n).
\end{align*}
Together with \eqref{bdg1}, this implies
\begin{align}\label{relation_tilde_bar}
	\beta_p(B(k\delta_n, 2^{-j})) \leq \beta_{n,p}(k\delta_n)
\end{align}
since otherwise $p$ would be $\beta$-H\"older smooth with $\beta > \beta_{n,p}(k\delta_n)$ on a ball $B(k\delta_n, r)$ with radius $r > \bar h_n(t)$, which would contradict the definition of $\beta_{n,p}(k\delta_n)$ together with Lemma \ref{ineq_holdernorm1}. This implies
\begin{align*}
	\vert t-\tilde t \vert^{\beta_*} \leq \frac{2^{-(j+3)\beta_p(B(k\delta_n, 2^{-j}))}}{(\log \tn)^2}.
\end{align*}
Together with inequalities \eqref{assumption_conclusion} and \eqref{assumption_conclusion2},
\begin{align*}
	&\max_{s \in B\left(k\delta_n,\frac{7}{8}\cdot 2^{-j}\right) \cap \mathcal H_n} \left\vert \E_p^{\chi_2} \hat{p}_{n}^{(2)} (s,j+3) - p(s) \right\vert \\
	&\qquad \qquad \quad \geq \sup_{s \in B\left(k\delta_n,\frac{7}{8}\cdot 2^{-j}\right)} \left\vert \E_p^{\chi_2} \hat{p}_{n}^{(2)} (s,j+3) - p(s) \right\vert - 2 \Vert K \Vert_1 L^* \frac{2^{-(j+3)\beta_p(B(k\delta_n, 2^{-j}))}}{(\log \tn)^2} \\
	&\qquad \qquad \quad\geq \frac{1}{2} \cdot\frac{2^{-(j+3)\beta_p(B(k\delta_n, 2^{-j}))}}{\log \tn}
\end{align*}
for sufficiently large $n \geq n_0(L^*,K,c_1)$. Altogether, we get for $j < k_n(k\delta_n)+1$,
\begin{align*}
	&\sqrt{\frac{\tn 2^{-\bar j_n(k\delta_n)}}{\log \tn}} \max_{s \in B\left(k\delta_n,\frac{7}{8}\cdot 2^{-j}\right) \cap \mathcal H_n} \left\vert \E_p^{\chi_2} \hat{p}_{n}^{(2)} (s,j+3) - \E_p^{\chi_2} \hat{p}_{n}^{(2)}(s,\bar j_n(k\delta_n)) \right\vert \\*
	& \geq \sqrt{\frac{\tn 2^{-\bar j_n(k\delta_n)}}{\log \tn}} \Bigg( \max_{s \in B\left(k\delta_n,\frac{7}{8}\cdot 2^{-j}\right) \cap \mathcal H_n} \left\vert \E_p^{\chi_2} \hat{p}_{n}^{(2)} (s,j+3) - p(s) \right\vert \\
	&\hspace{4cm} -  \max_{s \in B\left(k\delta_n,\frac{7}{8}\cdot 2^{-j}\right) \cap \mathcal H_n} \left\vert \E_p^{\chi_2} \hat{p}_{n}^{(2)}(s,\bar j_n(k\delta_n)) -p(s) \right\vert \Bigg)\\
	& \geq  \sqrt{\frac{\tn \bar h_n(k\delta_n)}{2\log \tn}} \left( \frac{1}{2} \cdot\frac{2^{-(j+3)\beta_p(B(k\delta_n, 2^{-j}))}}{\log \tn} - b_2 \cdot 2^{-\bar j_n(k\delta_n) \beta_p \left( B \left( k\delta_n, 2^{-j} \right) \right)} \right) \\
	& \geq \sqrt{\frac{\tn \bar h_n(k\delta_n)}{2\log \tn}} 2^{-(\bar j_n(k\delta_n)-1)\beta_p(B(k\delta_n, 2^{-j}))} \left( \frac{1}{2} \cdot \frac{2^{(\bar j_n(k\delta_n)-j-4) \beta_p(B(k\delta_n, 2^{-j}))}}{\log \tn} - b_2 2^{-\beta_*} \right) \\
	& >  \sqrt{\frac{\tn \bar h_n(k\delta_n)}{2\log \tn}} 2^{-(\bar j_n(k\delta_n)-1) \beta_p(B(k\delta_n, 2^{-j}))} \left( \frac{2^{(m_n-5)\beta_*}}{2 \, \log \tn} - b_2 2^{-\beta_*} \right).
\end{align*}
We now show that for $j \in \mathcal J_n$ with $j < k_n(k\delta_n)+1$, we have that
\begin{align} \label{betap_finite}
	\beta_p(B(k\delta_n,2^{-j})) \leq \beta^*.
\end{align}
According to \eqref{relation_tilde_bar}, it remains to show that $\beta_{n,p}(k\delta_n) \leq \beta^*$. If $\beta_{n,p}(k\delta_n) = \infty$, then $\bar j_n(k\delta_n) = j_{\min}$. Since furthermore $j \in \mathcal J_n$ and therefore $j \geq j_{\min}$, this immediately contradicts $j < k_n(k\delta_n)+1$. That is, $j < k_n(k\delta_n)+1$ implies that $\beta_{n,p}(k\delta_n) < \infty$, which in turn implies $\beta_{n,p}(k\delta_n) \leq \beta^*$ according to Remark \ref{stern}. Due to \eqref{c22} and \eqref{betap_finite}, the last expression is again lower bounded by
\begin{align*}
3 c_2 \sqrt{\frac{\tn \bar h_n(k\delta_n)}{\log \tn}} \bar h_n(k\delta_n)^{\beta_p(B(k\delta_n, 2^{-j}))} 2^{j_{\min} \frac{2\beta_p(B(k\delta_n, 2^{-j}))+1}{2}}
\end{align*}
for sufficiently large $n \geq n_0(L^*,K,\beta_*,\beta^*,c_1,c_2)$. Recalling \eqref{relation_tilde_bar}, we obtain
\begin{align*}
	&\sqrt{\frac{\tn 2^{-\bar j_n(k\delta_n)}}{\log \tn}} \max_{s \in B\left(k\delta_n,\frac{7}{8}\cdot 2^{-j}\right) \cap \mathcal H_n} \left\vert \E_p^{\chi_2} \hat{p}_{n}^{(2)} (s,j+3) - \E_p^{\chi_2} \hat{p}_{n}^{(2)}(s,\bar j_n(k\delta_n)) \right\vert \\
	&\qquad \geq 3 c_2 \sqrt{\frac{\tn \bar h_n(k\delta_n)}{\log \tn}} \bar h_n(k\delta_n)^{\beta_{n,p}(k\delta_n)} 2^{j_{\min} \frac{2\beta_p(B(k\delta_n, 2^{-j}))+1}{2}}\\
	&\qquad = 3 c_2.
\end{align*}
Thus, by the above consideration and \eqref{c3/2},
\begin{align*}
	&\P_p^{\chi_2} \left( \hat{j}_n(k\delta_n) < k_n(k\delta_n) \text{ for some } k \in T_n \right) \leq \sum_{j \in \mathcal J_n} \left( P_{j,1} + P_{j,2} \right)
\end{align*}
for sufficiently large $n \geq n_0(L^*,K,\beta_*,\beta^*,c_1,c_2)$, with
\begin{align*}
	P_{j,1} &= \P_p^{\chi_2} \Bigg( \exists k \in T_n : j < k_n(k\delta_n)+1 \text{ and } \sqrt{\frac{\tn 2^{-(j+3)}}{\log \tn}}  \\
	&\hspace{1.5cm} \cdot \max_{s \in B\left(k\delta_n,\frac{7}{8} \cdot 2^{-j}\right) \cap \mathcal H_n} \left\vert \hat{p}_{n}^{(2)} (s,j+3) - \E_p^{\chi_2} \hat{p}_{n}^{(2)} (s,j+3) \right\vert \geq c_2 \Bigg) \\
	P_{j,2} &= \P_p^{\chi_2} \Bigg( \exists k \in T_n : j < k_n(k\delta_n)+1 \text{ and } \sqrt{\frac{\tn 2^{-\bar j_n(k\delta_n)}}{\log \tn}} \\
	&\hspace{0.2cm} \cdot \max_{s \in B\left(k\delta_n,\frac{7}{8} \cdot 2^{-j}\right) \cap \mathcal H_n} \left\vert \hat{p}_{n}^{(2)} (s,\bar j_n(k\delta_n)) - \E_p^{\chi_2} \hat{p}_{n}^{(2)} (s,\bar j_n(k\delta_n)) \right\vert \geq c_2 \Bigg).
\end{align*}
Both $P_{j,1}$ and $P_{j,2}$ are bounded by
\begin{align*}
	P_{j,i} \leq \P_p^{\chi_2} \left( \sup_{s \in \mathcal H_n} \max_{h \in \mathcal G_n} \sqrt{\frac{\tn h}{\log \tn}} \left\vert \hat p_{n}^{(2)}(s,h) - \E_p^{\chi_2} \hat p_{n}^{(2)}(s,h) \right\vert \geq c_2 \right), \quad i=1,2.
\end{align*}
For sufficiently large $c_2 \geq \eta_0$, Lemma \ref{lemma_deviation} and the logarithmic cardinality of $\mathcal J_n$ yield \eqref{bw_small}.
\end{proof}

\begin{proof}[Proof of Lemma \ref{grid_opt}] 
%We first introduce discretized analogons to the $n$-dependent local H\"older exponent $\beta_{n,p}(t)$ in \eqref{beta_n} and to $\beta_p(U)$. For the grid
%\begin{align*}
%	\Delta_n = \left\{ \frac{k}{\tn}Ê: k \in \N \right\}Ê\cup \{Ê\infty \},
%\end{align*}
%we define
%\begin{align} \label{beta_diskret}
%	 \tilde \beta_{n,p}(t) = \sup \Big\{ \beta \in \Delta_n : p_{\vert B(t,h_{\beta,n})} \in \mathcal H_{\beta^*,n,t }(\beta,L^*) \Big\},
%\end{align}
%and
%\begin{align*}
%	\tilde \beta_p(U) = \sup \Big\{ \beta \in \Delta_n : p_{\vert U} \in \mathcal H_{\beta^*,U }(\beta,L^*) \Big\}.
%\end{align*}
%for some open interval $U \subset \R$. The discretization $\Delta_n$ ensures that the maximum on the right-hand side is well-defined, that is the supremum is attained. The motivation behind is that a function $p:U\to\R$ is not necessarily H\"older continuous to its exponent $\beta_p(U)$ as defined in \eqref{betaU}, see Remark \ref{remark1} $(i)$. 
We prove both inequalities separately.

\paragraph{Part (i)}
First, we show that the density $p$ cannot be substantially unsmoother at $z \in (s,t)$ than at the boundary points $s$ and $t$. Precisely, we shall prove that $\min \{ \bar h_n(s), \bar h_n(t) \} \leq 2 \bar h_n(z)$. In case 
$$\beta_{n,p}(s)= \beta_{n,p}(t)= \infty,$$
that is $\bar h_n(s) = \bar h_n(t) = 2^{-j_{\min}}$, we immediately obtain $\bar h_n(z) \geq \frac{1}{2} 2^{-j_{\min}}$ since
\begin{align*}
	B\left(z,\frac{1}{2}2^{-j_{\min}}\right) \subset B(s,\bar h_n(s)) \cap B(t,\bar h_n(t)).
\end{align*}
Hence, we subsequently assume that 
$$\min \{ \beta_{n,p}(s),\beta_{n,p}(t) \}  < \infty. $$
%Recalling the definition of $\Vert \cdot \Vert_{\beta,\beta^*,n,t}$ in \eqref{beta-t-norm}, this also implies that if $\Vert p \Vert_{\beta',\beta^*,n,t} \leq L^*$ for some $t$ and some $\beta'$, then for $0<\beta \leq \beta'$ also
%\begin{align*}
%	\Vert p \Vert_{\beta,\beta^*,n,t} \leq \Vert p \Vert_{\beta,\beta^*,B(t,h_{\beta',n})} \leq \Vert p \Vert_{\beta',\beta^*,B(t,h_{\beta',n})} \leq L^*
%\end{align*}
%and therefore 
%\begin{align} \label{beta_n_mon}
%	\Vert p \Vert_{\beta,\beta^*,n,t} \leq L^* \quad \text{ for all } 0 < \beta \leq \beta_{n,p}(t)
%\end{align}
%according to the definition of $\beta_{n,p}(t)$ in \eqref{beta_n}.
Note furthermore that
\begin{align} \label{minbeta}
	\min \left\{\bar h_n(s), \bar h_n(t) \right\} = h_{\min \{ \beta_{n,p}(s), \beta_{n,p}(t) \},n}.
\end{align}
In a first step, we subsequently conclude that 
\begin{align}\label{fall1}
	z + \frac{1}{2} h_{\min \{ \beta_{n,p}(s), \beta_{n,p}(t) \},n} < s + \bar h_n(s)
\end{align}
%\begin{align} \label{fall1}
%	B\left(z,\frac{1}{2} h_{\min \{ \beta_{n,p}(s), \beta_{n,p}(t) \},n}\right) \subset B\left(s,\bar h_n(s)\right)
%\end{align}
or 
\begin{align}\label{fall2}
	z - \frac{1}{2} h_{\min \{ \beta_{n,p}(s), \beta_{n,p}(t) \},n} > t - \bar h_n(t).
\end{align}
%\begin{align} \label{fall2}
%	B\left(z,\frac{1}{2} h_{\min \{ \beta_{n,p}(s), \beta_{n,p}(t) \},n}\right) \subset B\left(t, \bar h_n(t)\right).
%\end{align}
Note first that $\vert s-t \vert < h_{\beta,n}$ for all $\beta \geq \beta_*$ by condition \eqref{stclose}. Assume now that \eqref{fall1} does not hold. Then, inequality \eqref{fall2} directly follows as
\begin{align*}
	z - \frac{1}{2} \min\{ \bar h_n(s), \bar h_n(t) \} &= z + \frac{1}{2} \min\{ \bar h_n(s), \bar h_n(t) \} - \min\{ \bar h_n(s), \bar h_n(t) \} \\
		&\geq s +  \bar h_n(s) - \min\{ \bar h_n(s), \bar h_n(t) \} \\
		&\geq t - (t-s) \\
		&> t - \bar h_n(t).
\end{align*}
Vice versa, if \eqref{fall2} does not hold, then a similar calculation as above shows that \eqref{fall1} is true. Subsequently, we assume without loss of generality that \eqref{fall1} holds. That is,
\begin{align} \label{ball_contained}
	s- \bar h_n(s) &< z - \frac{1}{2} h_{\min \{ \beta_{n,p}(s), \beta_{n,p}(t) \},n} \notag \\
	 &< z + \frac{1}{2} h_{\min \{ \beta_{n,p}(s), \beta_{n,p}(t) \},n} \\
	 &< s + \bar h_n(s). \notag
\end{align}
There exists some $\tilde \beta > 0$ with
\begin{align} \label{einschachtelung_min}
	h_{\tilde \beta,n} = \frac{1}{2}Ê\min \{Ê\bar h_n(t), \bar h_n(s) \}.
\end{align}
for sufficiently large $n \geq n_0(\beta_*)$. Equation \eqref{einschachtelung_min} implies that
\begin{align} \label{tildebeta_klein}
	\tilde \beta < \min\{ \beta_{n,p}(s), \beta_{n,p}(t) \} \leq \beta_{n,p}(s).
\end{align}
Finally, we verify that
\begin{align} \label{betazgross}
	\beta_{n,p}(z) \geq \tilde \beta.
\end{align}
Using Lemma \ref{ineq_holdernorm1} as well as \eqref{ball_contained}, \eqref{einschachtelung_min}, and \eqref{tildebeta_klein} we obtain
\begin{align*}
	&\Vert p \Vert_{\tilde \beta, \beta^*,B(z,h_{\tilde \beta,n})} \\
	&\quad = \sum_{k=0}^{\lfloor \tilde \beta \wedge \beta^* \rfloor} \Vert p^{(k)} \Vert_{B\left(z,\frac{1}{2} \min\{\bar h_n(t),\bar h_n(s) \}\right)} \\
	&\qquad + \sup_{\substack{x,y \, \in \, B\left(z,\frac{1}{2} \min\{\bar h_n(t),\bar h_n(s) \}\right) \\ x \neq y}} \frac{\vert p^{(\lfloor \tilde \beta \wedge \beta^* \rfloor)}(x) - p^{(\lfloor \tilde \beta \wedge \beta^*\rfloor)}(y) \vert}{\vert x-y \vert^{\tilde \beta - \lfloor \tilde \beta \wedge \beta^* \rfloor}} \\
	&\quad\leq L^*.
\end{align*}
Consequently, we conclude \eqref{betazgross}. With \eqref{minbeta} and \eqref{einschachtelung_min}, this in turn implies
\begin{align*}
	\min \left\{ \bar h_n(s), \bar h_n(t) \right\} = 2 h_{\tilde \beta,n} \leq 2 h_{\beta_{n,p}(z),n} = 2 \bar h_n(z).
\end{align*}

\paragraph{Part (ii)}
Now, we show that the density $p$ cannot be substantially smoother at $z \in (s,t)$ than at the boundary points $s$ and $t$. Without loss of generality, let $\beta_{n,p}(t) \leq \beta_{n,p}(s)$. We prove the result by contradiction: assume that
\begin{align} \label{assumption_contra}
	\min \left\{ \bar h_n(s), \bar h_n(t) \right\} < \frac{8}{17} \cdot \bar h_n(z).
\end{align}
Since $t-z \leq h_{\beta,n}/8$ for all $\beta \geq \beta_*$ by condition \eqref{stclose}, so that in particular $t-z \leq \bar h_n(t)/8$, we obtain together with \eqref{assumption_contra} that
\begin{align} \label{betastrich}
	\frac{1}{2}\left(z-t+\bar h_n(z)\right) > \frac{1}{2} \left( -\frac{1}{8} \bar h_n(t) + \frac{17}{8} \bar h_n(t) \right) = \bar h_n(t)  > 0.
\end{align}
Because furthermore $\frac{1}{2}(z-t+\bar h_n(z))<1$, there exists some $\beta' = \beta'(n)>0$ with
\begin{align*}
	h_{\beta',n} = \frac{1}{2} \left( z-t+\bar h_n(z) \right).
\end{align*}
This equation in particular implies that $h_{\beta',n} < \frac{1}{2} \bar h_n(z)$ and thus $\beta' < \beta_{n,p}(z)$. Since furthermore $t-z < \bar h_n(z)$ by condition \eqref{stclose} and therefore also
\begin{align*}
	z-\bar h_n(z) < t - h_{\beta',n} < t + h_{\beta',n} < z + \bar h_n(z),
\end{align*}
we immediately obtain
\begin{align*}
	&\Vert p \Vert_{\beta',\beta^*,B(t,h_{\beta',n})}  \leq L^*,
\end{align*}
so that
\begin{align*}
	\beta_{n,p}(t) \geq \beta'.
\end{align*}
This contradicts inequality \eqref{betastrich}.
\end{proof}

\begin{proof}[Proof of Lemma \ref{lemma_deviation}]
Without loss of generality, we prove the inequality for the estimator $\hat p_{n}^{(1)}(\cdot,h)$ based on $\chi_1$. Note first, that
\begin{align*}
	 \sup_{s \in \mathcal H_n} \sup_{h \in \mathcal G_n} \sqrt{\frac{\tn h}{\log \tn}} \left\vert \hat{p}_{n}^{(1)} (s,h) - \E_p^{\chi_1} \hat{p}_{n}^{(1)}(s,h) \right\vert = \sup_{f \in \mathscr E_n} \left\vert \sum_{i=1}^{\tn} \left( f(X_i) - \E_p f(X_i) \right) \right\vert
\end{align*}
with
\begin{align*}
	\mathscr E_n = \left\{ f_{n,s,h}(\cdot) = (\tn h \log \tn)^{-\frac{1}{2}} K \left( \frac{\cdot - s}{h} \right) : s \in \mathcal H_n, \, h \in \mathcal G_n \right\}.
\end{align*}
%Due to the Theorem of dominated convergence, it suffices to restrict the class to
%\begin{align*}
%	\mathscr E_n^* = \left\{ f_{n,s,h}(\cdot) : s \in \Q \cap [0,1], \, h \in \mathcal G_n \right\}.
%\end{align*}
Observe first that
\begin{align*}
	\sup_{p \in \mathscr P_n} \Var_p(f_{n,s,h}(X_1)) &\leq \sup_{p \in \mathscr P_n} \E_p f_{n,s,h}(X_1)^2 \\
		&= \sup_{p \in \mathscr P_n} \frac{1}{\tn h \log \tn} \int K \left( \frac{x - s}{h} \right)^2 p(x) \diff x \\
		&\leq \frac{L^* \Vert K \Vert_2^2}{\tn \log \tn} \\
		&=: \sigma_n^2
\end{align*}
uniformly over all $f _{n,s,h} \in \mathscr E_n$, and
\begin{align*}
	\sup_{s \in \mathcal H_n} \max_{h \in \mathcal G_n} \Vert f_{n,s,h} \Vert_{\sup} &\leq \max_{h \in \mathcal G_n} \frac{\Vert K \Vert_{\sup}}{\sqrt{\tn h \log \tn}} \\
	&= \Vert K \Vert_{\sup} (\log \tn)^{-\frac{\kappa_2+1}{2}} \\
	&\leq \frac{\Vert K \Vert_{\sup}}{(\log \tn)^{3/2}} =: U_n,
\end{align*}
where the last inequality holds true because by definition of $\kappa_2 \geq 2$ in \eqref{c22}. In particular $\sigma_n \leq U_n$ for sufficiently large $n \geq n_0(L^*, K)$. Since $(\tn h \log \tn)^{-1/2} \leq 1$ for all $h \in \mathcal G_n$ and for all $n \geq n_0$, the class $\mathscr E_n$ satisfies the VC property
\begin{align*}
	\limsup_{n \to \infty}Ê\sup_{Q} N\left(\mathcal E_n, \Vert \cdot \Vert_{L^2(Q)}, \varepsilon \Vert K \Vert_{\sup} \right)  \leq  \left( \frac{A''}{\varepsilon} \right)^{\nu''}
\end{align*}
for some VC characteristics $A''=A''(A,K)$ and $\nu'=\nu+1$, by the same arguments as in \eqref{FnVC}. According to Proposition 2.2 in \cite{gine_guillou2001}, there exist constants $c_{22}° = c_{22}°(A'',\nu'')$ and $c_5° = c_5°(A'',\nu'')$, such that
\begin{align} \label{gineguillou}
	 &\P_p^{\chi_1} \left( \sup_{s \in \mathcal H_n} \max_{h \in \mathcal G_n} \sqrt{\frac{\tn h}{\log \tn}} \left\vert \hat{p}_{n}^{(1)} (s,h) - \E_p^{\chi_1} \hat{p}_{n}^{(1)}(s,h) \right\vert > \eta \right) \notag \\
	 &\quad \leq c_5° \exp \left( - \frac{\eta}{c_5° U_n} \log \left( 1 + \frac{\eta U_n}{c_5° \left( \sqrt{\tn \sigma_n^2} + U_n \sqrt{\log(A'' U_n/\sigma_n)} \right)^2} \right) \right) \\
	 &\quad \leq c_5° \exp \left( - \frac{\eta}{c_5° U_n} \log \left( 1 + c_{23}° \eta U_n \log \tn \right) \right) \notag
\end{align}
uniformly over all $p \in \mathscr P_n$, for all $n \geq n_0(A'',K,L^*)$ with $c_{23}° = c_{23}°(A'',\nu'',L^*,K)$, whenever
\begin{align} \label{gineguillou_condition}
	\eta \geq c_{22}° \left( U_n \log \left( \frac{A''U_n}{\sigma_n} \right) + \sqrt{\tn \sigma_n^2} \sqrt{\log \left( \frac{A''U_n}{\sigma_n} \right)} \ \right).
\end{align}
Since the right hand side in \eqref{gineguillou_condition} is bounded from above by some positive constant $\eta_0 = \eta_0(A'', \nu'', L^*, K)$ for sufficiently large $n \geq n_0(A'', \nu'', L^*, K)$, inequality \eqref{gineguillou} holds in particular for all $n \geq n_0(A'',\nu,K,L^*)$ and for all $\eta \geq \eta_0$. Finally, using the inequality $\log (1+x) \geq \frac{x}{2}$ for $0 \leq x \leq 2$ (Lemma \ref{A2}), we obtain for all $\eta \geq \eta_0$
\begin{align*}
	 &\P_p^{\chi_1} \left( \sup_{s \in \mathcal H_n} \max_{h \in \mathcal G_n} \sqrt{\frac{\tn h}{\log \tn}} \left\vert \hat{p}_{n}^{(1)} (s,h) - \E_p^{\chi_1} \hat{p}_{n}^{(1)}(s,h) \right\vert > \eta \right) \\
	% &\quad \leq c_5° \exp \left( - \frac{\eta}{c_5° U_n} \cdot \frac{1}{2} c_{13}° \eta U_n \log \tn \right) \\
	 &\hspace{3cm} \leq c_5° \exp \left( - c_{24}° \eta (\log \tn)^{3/2} \log \left( 1 + c_{25}° \frac{\eta_0}{\sqrt{\log \tn}} \right) \right) \\
	 &\hspace{3cm} \leq c_5° \exp \left( - \frac{1}{2} c_{24}° c_{25}° \eta_0 \eta \log \tn \right)
\end{align*}
uniformly over all $p \in \mathscr P_n$, for all $n \geq n_0(A'',\nu'',K,L^*)$ and positive constants $c_{24}°=c_{24}°(A'',\nu'',K)$ and $c_{25}°=c_{25}°(A'',\nu'',L^*,K)$, which do not depend on $n$ or $\eta$.

\end{proof}

\begin{proof}[Proof of Lemma \ref{bias_up}]
Let $t \in \R$, $g,h > 0$, and
\begin{align*}
	p_{\vert B(t,g+h)} \in \mathcal H_{\beta^*,B(t,g+h)}(\beta,L).
\end{align*}
The three cases $\beta \leq 1$, $1<\beta<\infty$, and $\beta=\infty$ are analyzed separately. In case $\beta \leq 1$, we obtain 
\begin{align*}
	\sup_{s \in B(t,g)} \left\vert (K_h \ast p)(s) - p(s) \right\vert \leq \int \vert K(x) \vert \sup_{s \in B(t,g)} \left\vert p(s + h x) - p(s) \right\vert \diff x,
\end{align*}
where 
\begin{align*}
	&\sup_{s \in B(t,g)} \left\vert p(s + h x) - p(s) \right\vert \leq h^{\beta} \cdot \sup_{\substack{s,s' \in B(t,g+h) \\ s \neq s'}} \frac{\vert p(s') - p(s) \vert}{\vert s' - s \vert^{\beta}} \leq L h^{\beta}.
\end{align*}
In case $1<\beta < \infty$, we use the Peano form for the remainder of the Taylor polynomial approximation. Note that $\beta^* \geq 2$ because $K$ is symmetric by assumption, and $K$ is a kernel of order $\lfloor \beta^* \rfloor = \beta^*-1$ in general, such that
%\begin{align*}
%	&\sup_{s \in B_g(t)} \left\vert \E_p^{\otimes \tn} \hat p_{n}^{(i)}(s,h) - p(s) \right\vert \\
%		&\quad = \sup_{s \in B_g(t)} \left\vert \int K(x) \Big( p(s + h x) - p(s) \Big) dx \right\vert \\
%		&\quad = \sup_{s \in B_g(t)} \left\vert \int K(x) \Big( p(s + h x) - P_{s,\lfloor \beta_n(t) \wedge \beta^* \rfloor}(s+hx) + \sum_{k=1}^{\lfloor \beta_n(t) \wedge \beta^* \rfloor} \frac{p^{(k)}(s)}{k!} \cdot (hx)^k \Big) dx \right\vert \\
%		&\quad \leq \int \vert K(x) \vert \sup_{s \in B_g(t)} \left\vert p(s + h x) - P_{s,\lfloor \beta_n(t) \wedge \beta^* \rfloor}(s+hx) \right\vert dx,
%\end{align*}
\begin{align} \label{bias_beta_large}
	&\sup_{s \in B(t,g)} \left\vert (K_h \ast p)(s) - p(s) \right\vert \notag \\
		&\quad = \sup_{s \in B(t,g)} \left\vert \int K(x) \Big\{ p(s + h x) - p(s) \Big\} \diff x \right\vert\notag \\
		&\quad = \sup_{s \in B(t,g)} \left\vert \int K(x) \left\{ p(s + h x) - P^p_{s,\lfloor \beta \wedge \beta^* \rfloor}(s+hx) + \sum_{k=1}^{\lfloor \beta \wedge \beta^* \rfloor} \frac{p^{(k)}(s)}{k!} \cdot (hx)^k \right\} \diff x \, \right\vert\notag \\
		&\quad \leq \int \vert K(x) \vert \sup_{s \in B(t,g)} \left\vert p(s + h x) - P^p_{s,\lfloor \beta \wedge \beta^* \rfloor}(s+hx) \right\vert \diff x \notag\\
		&\quad \leq \int \vert K(x) \vert \sup_{s \in B(t,g)} \sup_{s' \in B(s,h)} \Bigg\vert \frac{p^{(\lfloor \beta \wedge \beta^* \rfloor)}(s') - p^{(\lfloor \beta \wedge \beta^* \rfloor)}(s)}{\lfloor \beta \wedge \beta^* \rfloor !} (hx)^{\lfloor \beta \wedge \beta^* \rfloor} \Bigg\vert \diff x\notag \\
		&\quad \leq \frac{h^{\lfloor\beta \wedge \beta^* \rfloor} h^{\beta - \lfloor \beta \wedge \beta^* \rfloor}}{\lfloor \beta \wedge \beta^* \rfloor !} \cdot \int \vert K(x) \vert \sup_{s \in B(t,g)} \sup_{\substack{s' \in B(s,h) \\Ês' \neq s}} \frac{\left\vert p^{(\lfloor \beta \wedge \beta^* \rfloor)}(s') - p^{(\lfloor \beta \wedge \beta^* \rfloor)}(s) \right\vert}{\vert s - s' \vert^{\beta - \lfloor \beta \wedge \beta^* \rfloor}}  \diff x \notag \\
		&\quad \leq L \Vert K \Vert_1 h^{\beta}.
\end{align}
In case $\beta = \infty$, the density $p$ satisfies $p_{\vert B(t,g+h)} \in \mathcal H_{\beta^*,B(t,g+h)}(\beta,L^*)$ for all $\beta > 0$. That is, the upper bound \eqref{bias_beta_large} on the bias holds for any $\beta>0$, implying that 
\begin{align*}
	\sup_{s \in B(t,g)} \left\vert (K_h \ast p)(s) - p(s) \right\vert = 0.
\end{align*}
This completes the proof.
\end{proof}

\begin{proof}[Proof of Lemma \ref{bias_zygmund}]
Note that by symmetry of $K$
\begin{align*}
	(K_h \ast p)(s) - p(s) = \frac{1}{2} \int_{-1}^1 K(x) \Big( p(s+hx)+p(s-hx)-2p(s) \Big) \diff x.
\end{align*}
The upper bound can thus be deduced exactly as in the proof of Lemma \ref{bias_up}. 
\end{proof}

\begin{appendix}

\section{Auxiliary results} \label{AuxResults}

\begin{lemma} \label{tildeWmoments}
For $z \in [0,1]$, the second moments of $\tilde W_{k,l}(z), k,l \in T_n$ as defined in \eqref{tildeW} are bounded by
\begin{align*}
	\E_W \tilde W_{k,l}(z)^2 \leq 4.
\end{align*}
\end{lemma}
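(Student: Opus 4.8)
## Proof Plan for Lemma A.1 ($\E_W \tilde W_{k,l}(z)^2 \leq 4$)

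The plan is to compute the second moment directly from the definition \eqref{tildeW}, exploiting that $\tilde W_{k,l}(z)$ is a linear combination of increments of a standard Brownian motion $W$, so its variance is the sum of variances of disjoint-interval increments plus cross-covariances. First I would write $\tilde W_{k,l}(z) = \sqrt{1/\hat h_{n,k}^{loc}}\,\Delta_k(z) + \sqrt{1/\hat h_{n,l}^{loc}}\,\Delta_l(z)$, where $\Delta_k(z) = W(k\delta_n - z\hat h_{n,k}^{loc}) - W(k\delta_n + z\hat h_{n,k}^{loc})$ and $\Delta_l(z) = W(l\delta_n + z\hat h_{n,l}^{loc}) - W(l\delta_n - z\hat h_{n,l}^{loc})$ (note the sign convention in \eqref{tildeW}). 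Each $\Delta_k(z)$ is an increment of $W$ over an interval of length $2z\hat h_{n,k}^{loc}$ centered at $k\delta_n$, so $\E_W \Delta_k(z)^2 = 2z\hat h_{n,k}^{loc} \leq 2\hat h_{n,k}^{loc}$ using $z \in [0,1]$; hence $\sqrt{1/\hat h_{n,k}^{loc}}\,\Delta_k(z)$ has variance at most $2$, and likewise for the $l$-term.

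Next I would handle the cross term. By Cauchy–Schwarz applied to the bound just obtained,
\begin{align*}
	\E_W \tilde W_{k,l}(z)^2 &= \frac{\E_W \Delta_k(z)^2}{\hat h_{n,k}^{loc}} + \frac{\E_W \Delta_l(z)^2}{\hat h_{n,l}^{loc}} + \frac{2\,\E_W[\Delta_k(z)\Delta_l(z)]}{\sqrt{\hat h_{n,k}^{loc}\hat h_{n,l}^{loc}}} \\
	&\leq 2 + 2 + 2\sqrt{\frac{\E_W \Delta_k(z)^2}{\hat h_{n,k}^{loc}}\cdot\frac{\E_W \Delta_l(z)^2}{\hat h_{n,l}^{loc}}} \leq 8.
\end{align*}
This only gives the bound $8$, so the naive Cauchy–Schwarz estimate is too lossy by a factor of $2$; the point I would stress is that the two intervals $[k\delta_n - z\hat h_{n,k}^{loc},\, k\delta_n + z\hat h_{n,k}^{loc}]$ and $[l\delta_n - z\hat h_{n,l}^{loc},\, l\delta_n + z\hat h_{n,l}^{loc}]$ need not be disjoint for $k \neq l$ (adjacent grid points with comparable bandwidths), so the covariance is genuinely present. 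The cleaner route is to bound the variance of a sum of two zero-mean Gaussians by $2$ times the sum of the individual variances only when they could be anticorrelated, but here I would instead observe that one can always write $\tilde W_{k,l}(z)$ as $\pm$ a single increment when the supports are nested, and more generally split $W$ into independent pieces over the (at most three) subintervals created by the union of the two symmetric windows. On each such subinterval the contribution of $\tilde W_{k,l}(z)$ is $(\sqrt{1/\hat h_{n,k}^{loc}}\,\varepsilon_k \pm \sqrt{1/\hat h_{n,l}^{loc}}\,\varepsilon_l)$ times that piece's increment, with $\varepsilon_k,\varepsilon_l \in \{0,\pm 1\}$ indicator signs; the key structural fact is that wherever both windows overlap, the signs are \emph{opposite} (because $\Delta_k$ runs from right endpoint back to left, while $\Delta_l$ runs from left to right relative to its center — or vice versa — so in the overlap the two increments partially cancel).

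The main obstacle, then, is the bookkeeping of these signs: I would carefully check, using the explicit endpoints in \eqref{tildeW}, that on the overlap region the coefficient is a \emph{difference} $\sqrt{1/\hat h_{n,k}^{loc}} - \sqrt{1/\hat h_{n,l}^{loc}}$ rather than a sum, whose square is at most $\max\{1/\hat h_{n,k}^{loc}, 1/\hat h_{n,l}^{loc}\}$, while on the non-overlap parts of window $k$ (resp. $l$) the coefficient squared is $1/\hat h_{n,k}^{loc}$ (resp. $1/\hat h_{n,l}^{loc}$). Summing length $\times$ coefficient$^2$ over all pieces, the total is bounded by $\frac{1}{\hat h_{n,k}^{loc}}\cdot 2z\hat h_{n,k}^{loc} + \frac{1}{\hat h_{n,l}^{loc}}\cdot 2z\hat h_{n,l}^{loc} \leq 4$, since each window contributes its full length weighted by at most the reciprocal of its own bandwidth and the cross (difference-squared) terms are dominated by these. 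If the sign analysis turns out to be more delicate than anticipated (e.g. because of how the centers $k\delta_n, l\delta_n$ interlace with the window radii), the fallback is the fully general identity $\E_W\tilde W_{k,l}(z)^2 = \E_W(\sqrt{1/\hat h_{n,k}^{loc}}\Delta_k)^2 + \E_W(\sqrt{1/\hat h_{n,l}^{loc}}\Delta_l)^2 + 2\,\mathrm{Cov}$, together with the elementary observation that $\tilde W_{k,l}(z) = Y_{n,p}$-type increments are \emph{themselves} differences of the form appearing in the Slepian step, where by construction $\E_W(Y_{n,p}(k) - Y_{n,p}(l))^2 \leq TV(K)^2$ was already being used; one can then trace back that each $\tilde W_{k,l}(z)$ has variance exactly $\E_W\Delta_k^2/\hat h_{n,k}^{loc} + \E_W\Delta_l^2/\hat h_{n,l}^{loc} - (\text{nonnegative overlap term}) \leq 2z + 2z \leq 4$, which closes the argument.
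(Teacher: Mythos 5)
Your core argument is correct and reaches the right bound, but it is organized differently from the paper's proof, and the comparison is instructive. The paper proceeds by brute force: it expands $\E_W \tilde W_{k,l}(z)^2$ into ten pairwise covariances $E_1,\dots,E_{10}$ of Brownian motion values, evaluates each via $\E_W W(s)W(t)=\min(s,t)$, and then runs a case analysis on how the two windows interlace to show the resulting expression $4z + (\text{cross terms})$ is at most $4$. Your route instead represents $\tilde W_{k,l}(z)$ as a stochastic integral of the simple function $-a_k\mathbbm{1}_{I_k}+a_l\mathbbm{1}_{I_l}$ with $a_k=(\hat h_{n,k}^{loc})^{-1/2}$ and $I_k=[k\delta_n-z\hat h_{n,k}^{loc},\,k\delta_n+z\hat h_{n,k}^{loc}]$, so that the It\^o isometry gives exactly
\begin{align*}
\E_W \tilde W_{k,l}(z)^2 \;=\; a_k^2\,|I_k| + a_l^2\,|I_l| - 2a_ka_l\,|I_k\cap I_l| \;=\; 4z - 2a_ka_l\,|I_k\cap I_l| \;\leq\; 4z \;\leq\; 4,
\end{align*}
and your sign bookkeeping is the one nontrivial point: the $k$-block in \eqref{tildeW} is (left endpoint) minus (right endpoint) while the $l$-block is the reverse, so on the overlap the coefficients are $-a_k$ and $+a_l$ and the cross term is indeed nonpositive. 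This is cleaner than the paper's computation — it eliminates the case analysis entirely and makes transparent why the answer is $4z$ up to a helpful correction. Two remarks on your writeup: the opening Cauchy--Schwarz attempt (bound $8$) and the closing ``fallback'' invoking $TV(K)^2$ are detours you should delete — the first is strictly weaker than what you need and the second imports machinery from the Slepian step that plays no role here; and your intermediate claim that the overlap coefficient squared is ``at most $\max\{1/\hat h_{n,k}^{loc},1/\hat h_{n,l}^{loc}\}$'' is true but unnecessary once you have the exact isometry identity, which already exhibits the overlap contribution as a subtracted nonnegative quantity.
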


\begin{proof}[Proof of Lemma \ref{tildeWmoments}]
As $\tilde W_{k,l}(\cdot) = - \tilde W_{l,k}(\cdot)$, we assume $k \leq l$ without loss of generality. For any $k,l \in T_n$
\begin{align*}
	\E_W \tilde W_{k,l}(z)^2 = \sum_{i=1}^{10} E_i
\end{align*}
with
\begin{align*}
	E_1 &= \frac{1}{\hat h_{n,k}^{loc}  } \E_W W(k\delta_n-z\hat h_{n,k}^{loc})^2 \\
	&= \frac{k\delta_n-z\hat h_{n,k}^{loc}}{\hat h_{n,k}^{loc}  } \\
	E_2 &= -\frac{2}{\hat h_{n,k}^{loc}  } \E_W W(k\delta_n-z\hat h_{n,k}^{loc}) W(k\delta_n+z\hat h_{n,k}^{loc}) \\
	&= -2\frac{k\delta_n-z\hat h_{n,k}^{loc}}{\hat h_{n,k}^{loc}  } \\
	E_3 &= \frac{2}{\sqrt{\hat h_{n,k}^{loc} \hat h_{n,l}^{loc}}} \E_W W(k\delta_n-z\hat h_{n,k}^{loc}) W(l\delta_n+z\hat h_{n,l}^{loc}) \\
	&= 2\frac{k\delta_n-z\hat h_{n,k}^{loc}}{\sqrt{\hat h_{n,k}^{loc} \hat h_{n,l}^{loc}}  } \\
	E_4 &= -\frac{2}{\sqrt{\hat h_{n,k}^{loc} \hat h_{n,l}^{loc}}  } \E_W W(k\delta_n-z\hat h_{n,k}^{loc}) W(l\delta_n-z\hat h_{n,l}^{loc}) \\
	&= -2\frac{\min \{ k\delta_n-z\hat h_{n,k}^{loc}, \ l \delta_n - z \hat h_{n,l}^{loc} \}}{\sqrt{\hat h_{n,k}^{loc} \hat h_{n,l}^{loc}}  } \\
	E_5 &= \frac{1}{\hat h_{n,k}^{loc}  } \E_W W(k\delta_n+z\hat h_{n,k}^{loc})^2 \\*
	&= \frac{k\delta_n+z\hat h_{n,k}^{loc}}{\hat h_{n,k}^{loc}  } \\
	E_6 &= -\frac{2}{\sqrt{\hat h_{n,k}^{loc} \hat h_{n,l}^{loc}}  } \E_W W(k\delta_n+z\hat h_{n,k}^{loc}) W(l\delta_n+z\hat h_{n,l}^{loc}) \\
	&= -2\frac{\min \{ k\delta_n+z\hat h_{n,k}^{loc}, \ l\delta_n+z\hat h_{n,l}^{loc} \}}{\sqrt{\hat h_{n,k}^{loc} \hat h_{n,l}^{loc}}  } \\
	E_7 &= \frac{2}{\sqrt{\hat h_{n,k}^{loc} \hat h_{n,l}^{loc}}  } \E_W W(k\delta_n+z\hat h_{n,k}^{loc}) W(l\delta_n-z\hat h_{n,l}^{loc}) \\
	&= 2\frac{\min \{k\delta_n+z\hat h_{n,k}^{loc}, \l\delta_n-z\hat h_{n,l}^{loc} \}}{\sqrt{\hat h_{n,k}^{loc} \hat h_{n,l}^{loc}}  } \\
	E_8 &= \frac{1}{\hat h_{n,l}^{loc}  } \E_W W(l\delta_n+z\hat h_{n,l}^{loc})^2 \\
	&= \frac{l\delta_n+z\hat h_{n,l}^{loc}}{\hat h_{n,l}^{loc}  } \\
	E_9 &= -\frac{2}{\hat h_{n,l}^{loc}  } \E_W W(l\delta_n+z\hat h_{n,l}^{loc}) W(l\delta_n-z\hat h_{n,l}^{loc}) \\
	&= -2\frac{l\delta_n-z\hat h_{n,l}^{loc}}{\hat h_{n,l}^{loc}  } \\
	E_{10} &= \frac{1}{\hat h_{n,l}^{loc}  } \E_W W(l\delta_n-z\hat h_{n,l}^{loc})^2 \\
	&= \frac{l\delta_n-z\hat h_{n,l}^{loc}}{\hat h_{n,l}^{loc}  }.
\end{align*}
Altogether,
\begin{align*}
	\E_W \tilde W_{k,l}(z)^2 &= 4z + \frac{2}{\sqrt{\hat h_{n,k}^{loc} \hat h_{n,l}^{loc}}} \Bigg( k\delta_n - z \hat h_{n,k}^{loc} - \min \left\{ k\delta_n-z\hat h_{n,k}^{loc}, l\delta_n-z\hat h_{n,l}^{loc} \right\} \\
	&\quad - \min \left\{ k\delta_n + z \hat h_{n,k}^{loc}, l\delta_n + z \hat h_{n,l}^{loc} \right\} + \min \left\{ k\delta_n + z\hat h_{n,k}^{loc}, l\delta_n -z\hat h_{n,l}^{loc} \right\} \Bigg).
\end{align*}
We distinguish between the two cases
\begin{align*}
	(i) \ k\delta_n -z\hat h_{n,k}^{loc} \leq l\delta_n-z\hat h_{n,l}^{loc} \qquad \text{and} \qquad (ii) \ k\delta_n -z\hat h_{n,k}^{loc} > l\delta_n-z\hat h_{n,l}^{loc}.
\end{align*}
In case $(i)$, we obtain
\begin{align*}
	\E_W \tilde W_{k,l}(z)^2 &= 4z + \frac{2}{\sqrt{\hat h_{n,k}^{loc} \hat h_{n,l}^{loc}}} \Bigg(  \min \left\{ k\delta_n + z\hat h_{n,k}^{loc}, l\delta_n -z\hat h_{n,l}^{loc} \right\} \\*
	&\hspace{4cm}- \min \left\{ k\delta_n + z \hat h_{n,k}^{loc}, l\delta_n + z \hat h_{n,l}^{loc} \right\}\Bigg) \\*
	&\leq 4.
\end{align*}
In case $(ii)$, we remain with
\begin{align*}
	\E_W \tilde W_{k,l}(z)^2 &= 4z + \frac{2}{\sqrt{\hat h_{n,k}^{loc} \hat h_{n,l}^{loc}}} \Bigg( k\delta_n - z \hat h_{n,k}^{loc} - \min \left\{ k\delta_n + z \hat h_{n,k}^{loc}, l\delta_n + z \hat h_{n,l}^{loc} \right\}  \Bigg).
\end{align*}
If in the latter expression $k\delta_n +z\hat h_{n,k}^{loc} \leq l\delta_n+z\hat h_{n,l}^{loc}$, then 
\begin{align*}
	\E_W \tilde W_{k,l}(z)^2 &= 4z - \frac{4z\hat h_{n,k}^{loc}}{\sqrt{\hat h_{n,k}^{loc} \hat h_{n,l}^{loc}}} \leq 4.
\end{align*}
Otherwise, if $k\delta_n +z\hat h_{n,k}^{loc} > l\delta_n+z\hat h_{n,l}^{loc}$, we arrive at
\begin{align*}
	\E_W \tilde W_{k,l}(z)^2 &= 4z + \frac{2}{\sqrt{\hat h_{n,k}^{loc} \hat h_{n,l}^{loc}}} \Bigg( (k-l)\delta_n - z \left(\hat h_{n,k}^{loc}+ \hat h_{n,l}^{loc} \right) \Bigg) \leq 4
\end{align*}
because $k \leq l$ and $z \in [0,1]$.
Summarizing,
\begin{align*}
	\E_W \tilde W_{k,l}(z)^2 \leq 4.
\end{align*}
\end{proof}

\begin{lemma} \label{A2}
 For any $x \in [0,1]$, we have
 \begin{align*}
 	e^x - 1 \leq 2x.
 \end{align*}
\end{lemma}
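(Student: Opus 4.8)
The statement to prove is Lemma \ref{A2}: for any $x \in [0,1]$, $e^x - 1 \leq 2x$.

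The plan is to use elementary calculus, specifically convexity of the exponential function. First I would define the auxiliary function $f(x) = 2x - e^x + 1$ on the interval $[0,1]$, and observe that it suffices to show $f(x) \geq 0$ throughout $[0,1]$. Evaluating at the endpoints gives $f(0) = 2\cdot 0 - e^0 + 1 = 0$ and $f(1) = 2 - e + 1 = 3 - e > 0$, since $e < 3$.

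The key step is to exploit concavity: $f''(x) = -e^x < 0$ for all $x$, so $f$ is strictly concave on $[0,1]$. A concave function on an interval lies above the chord connecting its endpoint values; here the chord from $(0, f(0)) = (0,0)$ to $(1, f(1)) = (1, 3-e)$ is the line $\ell(x) = (3-e)x$, which is nonnegative on $[0,1]$. Hence $f(x) \geq \ell(x) \geq 0$ for all $x \in [0,1]$, which is exactly the claimed inequality after rearranging. Alternatively, one could argue via the power series $e^x = \sum_{k\geq 0} x^k/k!$ and bound the tail, but the concavity argument is cleaner and avoids any estimation of series remainders.

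There is essentially no obstacle here: the only thing to be careful about is invoking the standard fact that a concave function dominates its chords, and recording the trivial numerical bound $e < 3$. I would present the two endpoint evaluations, note $f'' < 0$, and conclude via the chord inequality in a couple of lines.

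\begin{proof}[Proof of Lemma \ref{A2}]
Set $f(x) = 2x - e^x + 1$ for $x \in [0,1]$; the claimed inequality is equivalent to $f(x) \geq 0$ on $[0,1]$. At the endpoints, $f(0) = 1 - e^0 = 0$ and $f(1) = 3 - e > 0$ since $e < 3$. Moreover $f''(x) = -e^x < 0$ for all $x \in \R$, so $f$ is concave on $[0,1]$. A concave function on an interval lies above the chord through its endpoint values, hence for every $x \in [0,1]$,
\begin{align*}
	f(x) \geq (1-x) f(0) + x f(1) = x(3-e) \geq 0.
\end{align*}
Therefore $2x - e^x + 1 \geq 0$, that is $e^x - 1 \leq 2x$, for all $x \in [0,1]$.
\end{proof}
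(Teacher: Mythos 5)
Your proof is correct and uses essentially the same idea as the paper: the paper checks the endpoints ($e^0-1=0$ and $e-1\leq 2$) and invokes convexity of the exponential so that $e^x$ lies below its chord, which is just the mirror image of your concavity-of-$f$ argument. No substantive difference.
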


\begin{proof}
	Equality holds for $x=0$, while $e-1 \leq 2$. Hence, the result follows by convexity of the exponential function.
\end{proof}

%\noindent We now state that for two neighboring elements $\beta_1$ and $\beta_2$ in the smoothness parameter grid $\Delta_n$ the distance of the bandwidths $h_{\beta_1,n}$ and $h_{\beta_2,n}$ is at most $\log \tn / \tn$.

%\begin{lemma} \label{lemma_gridclose}
%For $\beta_1,\beta_2 \in \Delta_n$ with $\beta_2-\beta_1 = 1/\tn$, we obtain
%\begin{align*}
%	h_{\beta_2,n} - h_{\beta_1,n} \leq \frac{\log \tn}{\tn}.
%\end{align*}
%\end{lemma}
%
%\begin{proof}
%For $\beta_1 = k/\tn$ and $\beta_2 = (k+1)/\tn$, we have
%\begin{align} 
%	h_{\beta_2,n} - h_{\beta_1,n} &= 2^{-j_{\min}} \left\{ \left( \frac{\log \tn}{\tn} \right)^{\frac{1}{2(k+1)/\tn+1}} - \left( \frac{\log \tn}{\tn} \right)^{\frac{1}{2k/\tn+1}} \right\} \notag\\
%	&\leq 2^{-j_{\min}}\left( \frac{\log \tn}{\tn} \right)^{\frac{1}{2k/\tn+1}} \left\{  \left( \frac{\tn}{\log \tn} \right)^{\frac{2}{\tn}} - 1 \right\} \notag \\
%		&\leq 2^{-j_{\min}}\left( \frac{\log \tn}{\tn} \right)^{\frac{1}{2k/\tn+1}} \left\{ \exp \left( 2 \, \frac{\log \tn}{\tn} \right) - 1 \right\}\notag \\
%		&\leq 2^{-j_{\min}} \cdot 4 \, \frac{\log \tn}{\tn} \label{beta_grid} \\
%		&\leq \frac{\log \tn}{\tn} \notag
%\end{align}
%for $\tn \geq 3$, where inequality \eqref{beta_grid} follows from Lemma \ref{A2}.
%\end{proof}

\begin{lemma} \label{A3}
For any $x \in \R \setminus \{0\}$, we have
$$1-\frac{\sin(x)}{x }\leq \frac{x^2}{6}.$$
\end{lemma}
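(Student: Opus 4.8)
The statement to prove is the elementary inequality $1-\frac{\sin x}{x}\le \frac{x^2}{6}$ for all $x\in\R\setminus\{0\}$.

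\medskip

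\textbf{Approach.} The plan is to reduce to an analytic inequality for a single-variable function and exploit the Taylor series of $\sin$. First I would observe that both sides are even in $x$, so it suffices to treat $x>0$. Multiplying through by $x>0$, the claim is equivalent to $g(x):=x-\sin x-\frac{x^{3}}{6}\le 0$ for $x>0$. I would verify this by repeated differentiation: $g(0)=0$, $g'(x)=1-\cos x-\frac{x^{2}}{2}$, $g'(0)=0$, $g''(x)=\sin x-x$, $g''(0)=0$, and $g'''(x)=\cos x-1\le 0$ for all $x$. Hence $g''$ is nonincreasing with $g''(0)=0$, so $g''(x)\le 0$ for $x\ge 0$; therefore $g'$ is nonincreasing with $g'(0)=0$, so $g'(x)\le 0$ for $x\ge 0$; therefore $g$ is nonincreasing with $g(0)=0$, giving $g(x)\le 0$ for $x\ge 0$. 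Dividing by $x>0$ yields $1-\frac{\sin x}{x}\le\frac{x^{2}}{6}$, and evenness extends it to all nonzero $x$.

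\medskip

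\textbf{Alternative.} One could instead write the alternating series $\frac{\sin x}{x}=\sum_{k\ge 0}\frac{(-1)^{k}x^{2k}}{(2k+1)!}$, so that $1-\frac{\sin x}{x}=\frac{x^{2}}{6}-\frac{x^{4}}{120}+\cdots=\frac{x^{2}}{6}-\sum_{k\ge 2}\frac{(-1)^{k}x^{2k}}{(2k+1)!}$; grouping consecutive terms of the alternating tail shows the tail is nonnegative, which gives the bound directly. This is clean but requires the (routine) justification that the grouped alternating tail has the right sign, uniformly in $x$; the derivative argument avoids even that and needs nothing beyond the mean value theorem.

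\medskip

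\textbf{Main obstacle.} There is essentially no obstacle here — the result is a calculus exercise. The only thing to be mildly careful about is not to divide by $x$ before fixing its sign (the inequality direction flips for $x<0$, which is why I pass to the function $g$ and use evenness at the end rather than manipulating $1-\frac{\sin x}{x}$ directly).
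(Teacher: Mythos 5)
Your proposal is correct and takes essentially the same approach as the paper: reduce to $x>0$ by evenness, pass to the auxiliary cubic-corrected function, and iterate differentiation from a zero boundary value. The only cosmetic differences are the sign convention (your $g=-f$) and that you take one extra derivative to $g'''=\cos x-1\le 0$, whereas the paper stops at $f''(x)=-\sin x+x\ge 0$, taking $\sin x\le x$ as known.
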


\begin{proof}
Since both sides of the inequality are symmetric in zero, we restrict our considerations to $x > 0$. For positive $x$, it is equivalent to
\begin{align*}
	f(x) = \sin(x) - x + \frac{x^3}{6} \geq 0.
\end{align*}
As $f(0)=0$, it suffices to show that 
$$f'(x) = \cos(x) -  1 + \frac{x^2}{2} \geq 0$$
for all $x > 0$. Since furthermore $f'(0)=0$ and
\begin{align*}
	f''(x) = -\sin(x)+x \geq 0
\end{align*}
for all $x > 0$, the inequality follows.
\end{proof}

\noindent The next lemma shows that the monotonicity of the H\"older norms $\Vert \cdot \Vert_{\beta_1,U} \leq \Vert \cdot \Vert_{\beta_2,U}$ with $0 < \beta_1 \leq \beta_2$ stays valid for the modification $\Vert \cdot \Vert_{\beta,\beta^*,U}$.

\begin{lemma} \label{ineq_holdernorm1}
For $0<\beta_1 \leq \beta_2 < \infty$ and $p \in \mathcal H_{\beta^*,U}(\beta_2)$,
\begin{align*}
	\Vert p \Vert_{\beta_1,\beta^*,U} \leq \Vert p \Vert_{\beta_2,\beta^*,U} 
\end{align*}
for any open interval $U \subset \R$ with length less or equal than $1$.
\end{lemma}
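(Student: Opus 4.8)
The plan is to compare the two norms term by term, distinguishing the two cases according to whether the truncated integer indices $m_i := \lfloor \beta_i \wedge \beta^* \rfloor$, $i=1,2$, coincide. Since $0<\beta_1\le\beta_2$ we always have $m_1\le m_2$. The one elementary fact I would use repeatedly is that, because $|U|\le 1$, for all $x,y\in U$ and exponents $0\le a\le b$ one has $|x-y|^{b}\le|x-y|^{a}$; this is the only place where the length restriction $|U|\le 1$ is needed. We may assume $p\in\mathcal H_{\beta^*,U}(\beta_2)$, so in particular $\Vert p\Vert_{\beta_2,\beta^*,U}<\infty$ and $p$ admits derivatives up to order $m_2$.

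First I would treat the case $m_1=m_2=:m$. Here the polynomial parts $\sum_{k=0}^{m}\Vert p^{(k)}\Vert_U$ in \eqref{mod_hoelder} are literally identical for $\beta_1$ and $\beta_2$, so it remains to compare the two H\"older seminorms. For fixed $x\neq y$ in $U$ the exponents satisfy $0\le\beta_1-m\le\beta_2-m$, hence $|x-y|^{\beta_1-m}\ge|x-y|^{\beta_2-m}$ by the fact above, so that
\begin{align*}
\frac{\big|p^{(m)}(x)-p^{(m)}(y)\big|}{|x-y|^{\beta_1-m}}\ \le\ \frac{\big|p^{(m)}(x)-p^{(m)}(y)\big|}{|x-y|^{\beta_2-m}}.
\end{align*}
Taking the supremum over $x\neq y$ and adding the common polynomial parts yields $\Vert p\Vert_{\beta_1,\beta^*,U}\le\Vert p\Vert_{\beta_2,\beta^*,U}$ in this case.

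Next I would handle the case $m_1<m_2$. A preliminary observation is that this forces $\beta_1<\beta^*$: if $\beta_1\ge\beta^*$ then also $\beta_2\ge\beta^*$, whence $m_1=m_2=\lfloor\beta^*\rfloor$, a contradiction. Consequently $m_1=\lfloor\beta_1\rfloor$ and $\gamma_1:=\beta_1-m_1\in(0,1]$, and since $m_1+1\le m_2$ the derivative $p^{(m_1+1)}$ exists on $U$ with $\Vert p^{(m_1+1)}\Vert_U<\infty$ (it is a summand of the finite sum $\sum_{k=0}^{m_2}\Vert p^{(k)}\Vert_U$). By the mean value theorem $|p^{(m_1)}(x)-p^{(m_1)}(y)|\le\Vert p^{(m_1+1)}\Vert_U\,|x-y|$ for $x,y\in U$, and because $|x-y|\le|U|\le 1$ and $1-\gamma_1\ge 0$ we have $|x-y|=|x-y|^{\gamma_1}|x-y|^{1-\gamma_1}\le|x-y|^{\gamma_1}$; dividing shows that the $\gamma_1$-H\"older seminorm of $p^{(m_1)}$ is at most $\Vert p^{(m_1+1)}\Vert_U$. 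Hence
\begin{align*}
\Vert p\Vert_{\beta_1,\beta^*,U}\ \le\ \sum_{k=0}^{m_1}\Vert p^{(k)}\Vert_U+\Vert p^{(m_1+1)}\Vert_U\ =\ \sum_{k=0}^{m_1+1}\Vert p^{(k)}\Vert_U\ \le\ \sum_{k=0}^{m_2}\Vert p^{(k)}\Vert_U\ \le\ \Vert p\Vert_{\beta_2,\beta^*,U},
\end{align*}
the last step merely dropping the nonnegative $\beta_2$-H\"older seminorm term. This finishes the proof.

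Both cases are routine once set up; the only point requiring a little care — and the one I would flag as the main obstacle — is the interplay with the cap at $\beta^*$ in \eqref{mod_hoelder}, namely recognizing that $m_1<m_2$ can occur only when $\beta_1<\beta^*$, so that in that case one genuinely has an extra derivative $p^{(m_1+1)}$ available inside the $\beta_2$-norm and the mean value theorem applies with a H\"older exponent $\gamma_1\le 1$.
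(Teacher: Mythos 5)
Your proof is correct and follows essentially the same approach as the paper: split into the cases $\lfloor\beta_1\wedge\beta^*\rfloor=\lfloor\beta_2\wedge\beta^*\rfloor$ (compare H\"older seminorms directly using $|x-y|\le 1$) and $\lfloor\beta_1\wedge\beta^*\rfloor<\lfloor\beta_2\wedge\beta^*\rfloor$ (observe $\beta_1<\beta^*$, apply the mean value theorem to bound the $\beta_1$-seminorm of $p^{(\lfloor\beta_1\rfloor)}$ by $\Vert p^{(\lfloor\beta_1\rfloor+1)}\Vert_U$, then drop into the polynomial part of the $\beta_2$-norm). The only difference is cosmetic: you spell out why $m_1<m_2$ forces $\beta_1<\beta^*$, which the paper states without elaboration.
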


\begin{proof}
If $\beta_1 \leq \beta_2$, but $\lfloor \beta_1 \wedge \beta^* \rfloor = \lfloor \beta_2 \wedge \beta^* \rfloor$, the statement follows directly with
\begin{align*}
	\Vert p \Vert_{\beta_1,\beta^*,U} &= \sum_{k=0}^{\lfloor \beta_2 \wedge \beta^* \rfloor} \Vert p^{(k)} \Vert_{U} + \sup_{\substack{x,y \, \in \, U \\ x \neq y}} \frac{\vert p^{(\lfloor \beta_2 \wedge \beta^* \rfloor)}(x) - p^{(\lfloor \beta_2 \wedge \beta^*\rfloor)}(y) \vert}{\vert x-y \vert^{\beta_1 - \lfloor \beta_2 \wedge \beta^* \rfloor}} \leq \Vert p \Vert_{\beta_2,\beta^*,U}.
\end{align*}
If $\beta_1 < \beta_2$ and also $\lfloor \beta_1 \wedge \beta^* \rfloor < \lfloor \beta_2 \wedge \beta^* \rfloor$, we deduce that $\beta_1 < \beta^*$ and $\lfloor \beta_1 \rfloor + 1 \leq \lfloor \beta_2 \wedge \beta^* \rfloor$. Then, the mean value theorem yields
\begin{align*}
	\Vert p \Vert_{\beta_1,\beta^*,U} &= \sum_{k=0}^{\lfloor \beta_1 \rfloor} \Vert p^{(k)} \Vert_{U} + \sup_{\substack{x,y \, \in \, U \\ x \neq y}} \frac{\vert p^{(\lfloor \beta_1 \rfloor)}(x) - p^{(\lfloor \beta_1 \rfloor)}(y) \vert}{\vert x-y \vert^{\beta_1 - \lfloor \beta_1 \rfloor}} \\
			&\leq \sum_{k=0}^{\lfloor \beta_1 \rfloor} \Vert p^{(k)} \Vert_{U} + \Vert p^{(\lfloor \beta_1 \rfloor + 1)} \Vert_U \sup_{\substack{x,y \, \in \, U \\ x \neq y}} \vert x-y \vert^{1-(\beta_1 - \lfloor \beta_1 \rfloor)} \\
			&\leq \sum_{k=0}^{\lfloor \beta_1 \rfloor + 1} \Vert p^{(k)} \Vert_{U} \\
			&\leq \sum_{k=0}^{\lfloor \beta_2 \wedge \beta^* \rfloor} \Vert p^{(k)} \Vert_{U} \\
			&\leq \Vert p \Vert_{\beta_2,\beta^*,U}.
\end{align*}
\end{proof}

\end{appendix}

\bibliography{reference}
\bibliographystyle{imsart-nameyear}

\end{document}